\newtheorem{theorem}{Theorem}[section]
\newtheorem{lemma}[theorem]{Lemma}
\newtheorem{corollary}[theorem]{Corollary}
\newtheorem{proposition}[theorem]{Proposition}
\theoremstyle{definition}
\newtheorem{definition}[theorem]{Definition}
\newtheorem{example}[theorem]{Example}
\newtheorem{remark}[theorem]{Remark}
\newcommand{\sshf}[1]{\mathcal{O}_{#1}}
\newcommand{\shf}[1]{\mathscr{#1}}
\newcommand{\prj}[1]{\mathbb{P}^{#1}}
\newcommand{\iso}{\simeq}
\newcommand{\ses}[3]{0\rightarrow#1\rightarrow#2\rightarrow#3\rightarrow{0}}
\newcommand{\rk}[1]{\textrm{rank}(#1)}
\newcommand{\is}[1]{\mathscr{I}_{#1}}
\newcommand{\paren}[1]{\left(#1\right)}
\newcommand{\bC}{{\mathbb C}}
\newcommand{\bP}{{\mathbb P}}
\newcommand{\bZ}{{\mathbb Z}}
\newcommand{\cO}{{\mathcal O}}
  \newcommand{\<}{\langle}
  \renewcommand{\>}{\rangle}
\newcommand{\suml}{\sum\limits}
\title{Mirror symmetry for certain blowups of Grassmannians}
\author{Jianxun Hu}
\address{School of Mathematics, Sun Yat-sen University, Guangzhou 510275, P.R. China}
\email{stsjxhu@mail.sysu.edu.cn}
\thanks{
 }
\author{Huazhong Ke}
\address{School of Mathematics, Sun Yat-sen University, Guangzhou 510275, P.R. China}
\email{kehuazh@mail.sysu.edu.cn}
\thanks{ 
 }
\author{Changzheng Li}
 \address{School of Mathematics, Sun Yat-sen University, Guangzhou 510275, P.R. China}
\email{lichangzh@mail.sysu.edu.cn}
\author{Lei Song}
\address{School of Mathematics, Sun Yat-sen University, Guangzhou 510275, P.R. China}
\email{songlei3@mail.sysu.edu.cn}
\thanks{ 
 }
\thanks{2010 Mathematics Subject Classification. Primary 14N35. Secondary 14J45, 14J33.}
\date{%
      }
 \keywords{Mirror symmetry, blowup, Gromov-Witten invariant, Grassmannian, Schubert variety. }
\begin{document}
\begin{abstract}
We classify when the blowup of a complex Grassmannian $G(k, n)$ along a smooth Schubert subvariety $Z$ is Fano. We compute almost all the two-point, genus zero Gromov-Witten invariants of the blowup when $Z=G(k, n-1)$. We further prove a mirror symmetry statement for the blowup $X_{2, n}$
 of $G(2, n)$ along $G(2, n-1)$, by introducing a toric superpotential $f_{\rm tor}$ and showing the isomorphism between the Jacobi ring of $f_{\rm tor}$ and the small quantum cohomology ring $QH^*(X_{2, n})$.
\end{abstract}
\maketitle
\tableofcontents

\section{Introduction}
The blowup  $X=\rm{Bl}_ZY$ of a smooth complex projective variety $Y$ along a smooth subvariety $Z\subset Y$ is an elementary  birational  transformation in algebraic geometry. Blowup formulae of Gromov-Witten invariants play an important role to explore the structure of Gromov-Witten theory. There have   been  lots of studies of the Gromov-Witten theory of $X$, mainly in terms of blowup formulae for Gromov-Witten invariants such as
 \cite{GP98, Hu00,Gat01, Hu01,  Bay04, MaPa, HLR, Lai, HHKQ, Ke20, CLS, HKLS24,  LLW16, ChDu23, MX24}. A remarkable   decomposition of the quantum $D$-module of the blowup $X$ was proved by Iritani \cite{Iri23}. It is related to a surprising application to birational geometry (see e.g.  Kontsevich's talks \cite{Kon21} on his joint project with Katzarkov, Pantev and Yu), which draws more researchers' great interest to the Gromov-Witten theory of blowups.

Here we will study the (small) quantum cohomology ring $QH^*(X)=(H^*(X)\otimes \mathbb{C}[q_1, q_2], \star)$ of  a  concrete example $X$ of blowups in details, which is a deformation of the classical cohomology ring of $X$ by incorporating (three-point) genus-zero Gromov-Witten invariants. We were partially motivated by the exploration of  Gamma conjecture I for Fano manifolds  proposed by Galkin-Golyshev-Iritani \cite{GGI16}. The conjecture was proved for some cases including del Pezzo surfaces \cite{HKLY21} as well as toric Fano manifolds that satisfy certain condition \cite{GaIr19}. The former example is the blowup of $\mathbb{P}^2$ at points, and the latter examples include the toric blowups $\rm{Bl}_{\mathbb{P}^r}\mathbb{P}^n$ \cite{Yang}. While counterexamples of Gamma conjecture I were recently discovered in \cite{GHIKLS}, modifications of the conjecture were made and interesting connections   to birational geometry were also discussed therein.
  By \cite[Theorem 1.1]{BCW02}, it is rather restrictive for the blowup of a complex manifold at a point being Fano. It is natural to investigate the  blowup of a flag variety $Y$ (of general Lie type) along a smooth Schubert variety $Z$, in order to generalize the aforementioned examples. We were therefore led to the first case when $Y=G(k, n)=G(k, \mathbb{C}^n)$ is the complex Grassmannian parameterizing $k$-dimensional linear subspaces in $\mathbb{C}^n$. Our earlier study  \cite{HKLS24} in the special case of $Y=G(2, 4)$ provides toy examples of such blowup.

 Given a partition $\lambda=(\lambda_1, \cdots, \lambda_k) \in \mathbb{Z}^k$ with $n-k\geq \lambda_1\geq\cdots\geq \lambda_k\geq 0$,   the  Schubert subvariety  $X^\lambda$ of $G(k, n)$ is defined by   $X^\lambda=\{V\in G(k, n)\mid \dim V\cap \Lambda_{n-k+i-\lambda_i}\geq i, \, 1\leq i\leq k\}$ and has codimension $\mbox{codim} X^\lambda= \sum_i\lambda_i$. Here $\Lambda_j$   denotes the     vector subspace spanned by the first $j$ elements of the   standard basis
  $\{e_i\}_i$   of $\mathbb{C}^n$.
By \cite[Theorem 5.3]{LaWe},  the Schubert variety $X^\lambda$   is smooth if and only if the dual partition $\lambda^\vee=(n-k-\lambda_k, \cdots, n-k-\lambda_1)$ is a rectangle. When this holds,  there exist $0\le a\le k$ and $0\le b\le n-k$ such that
 \[\lambda =(\underbrace{n-k, \cdots, n-k}_a, \underbrace{n-k-b, \cdots, n-k-b}_{k-a})\]
  and   $X^\lambda=\{V\in G(k, n)\mid \Lambda_a\leq V\leq \Lambda_{k+b}\} \cong G(k-a, k+b-a)$.

  As the first main result of this paper, we show the following, the  special cases of $k=1$ or $n-1$ in which   give the above blowup $\rm{Bl}_{\mathbb{P}^r}\mathbb{P}^n$.
    \begin{theorem}\label{thm: Fano}
 Let $Z$ be a smooth Schubert subvariety of  $G(k, n)$. The blowup  $\rm{Bl}_ZG(k, n)$  is Fano if and only if
  ${\rm codim} Z\leq n$.
  \end{theorem}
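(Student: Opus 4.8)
The plan is to reduce the Fano property to a computation in the two-dimensional Néron–Severi space of $X:=\mathrm{Bl}_ZG(k,n)$, whose Picard number is $\rho(X)=\rho(G(k,n))+1=2$. Write $\pi\colon X\to G(k,n)$ for the blowup, $E$ for the exceptional divisor, $H$ for the ample generator of $\mathrm{Pic}\,G(k,n)$ (the Plücker class), and $c=\codim Z$. Since $-K_{G(k,n)}=nH$, the standard blowup formula for the canonical class gives $-K_X=n\pi^*H-(c-1)E$. By Kleiman's criterion $X$ is Fano exactly when $-K_X$ is positive on the two extremal rays of the Mori cone $\overline{NE}(X)$, so the whole problem is to determine this cone. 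Throughout I record a curve class $\beta$ by the pair $(d,m)=(\pi^*H\cdot\beta,\,E\cdot\beta)$.

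First I would identify two effective curves giving candidate extremal rays. A line $f$ in a fibre $\mathbb{P}^{c-1}$ of $E=\mathbb{P}(N_{Z/G(k,n)})\to Z$ has $(\pi^*H\cdot f,E\cdot f)=(0,-1)$, using $\mathcal{O}_E(E)=\mathcal{O}_{\mathbb{P}(N_{Z/G(k,n)})}(-1)$. For the opposite ray I would take the strict transform $\tilde\ell$ of a line $\ell\subset G(k,n)$ meeting $Z$ transversally in a single point, so that $(\pi^*H\cdot\tilde\ell,E\cdot\tilde\ell)=(1,1)$. Such a line exists: at a smooth point $z\in Z$ the directions tangent to lines through $z$ span $T_zG(k,n)$, so some line is transverse to the proper subspace $T_zZ$; since a smooth Schubert variety is a linear section $Z=G(k,n)\cap\mathbb{P}(W)$ (cut out by the vanishing of a set of Plücker coordinates), a line not contained in $\mathbb{P}(W)$ meets it in at most one point, and transversality forces $E\cdot\tilde\ell=1$.

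The crux is the reverse inclusion $\overline{NE}(X)\subseteq\mathrm{Cone}\big((0,-1),(1,1)\big)$, which I would get by exhibiting two nef classes. Clearly $\pi^*H$ is nef (pullback of an ample class), giving $d\ge0$ on every effective curve. The essential point is that $\pi^*H-E$ is nef. Here I would use that, $Z$ being a smooth Schubert variety, the twisted ideal sheaf $\mathcal{I}_Z\otimes\mathcal{O}(H)$ is globally generated — indeed $Z=G(k,n)\cap\mathbb{P}(W)$ is cut out scheme-theoretically by linear (Plücker) forms. Via the tautological surjection $\pi^*\mathcal{I}_Z\twoheadrightarrow\mathcal{O}_X(-E)$, twisting by $\pi^*H$ shows $\mathcal{O}_X(\pi^*H-E)$ is globally generated, hence nef; thus $m\le d$ on every effective curve. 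Since $\{(d,m):d\ge0,\;m\le d\}$ is exactly the cone spanned by $(0,-1)$ and $(1,1)$, combining the two inclusions gives $\overline{NE}(X)=\mathrm{Cone}(f,\tilde\ell)$.

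Finally I would evaluate $-K_X$ on the two rays: $-K_X\cdot f=c-1$ and $-K_X\cdot\tilde\ell=n-(c-1)$. As $c\ge2$ for a nontrivial blowup the first is automatically positive, while the second is positive precisely when $c\le n$. By Kleiman's criterion this is equivalent to $-K_X$ being ample, proving the theorem. I expect the main obstacle to be the nefness of $\pi^*H-E$: everything hinges on the structural fact that a smooth Schubert subvariety is a linear section of the Grassmannian, so that $\mathcal{I}_Z\otimes\mathcal{O}(H)$ is globally generated; once this input is in place, the determination of the Mori cone and the numerical conclusion are routine.
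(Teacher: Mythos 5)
Your proof is correct and follows essentially the same route as the paper: both determine $\overline{\mathrm{NE}}(X)$ to be the cone spanned by the classes with $(H,E)$-degrees $(0,-1)$ and $(1,1)$ by combining the nefness of $H$ and of $H-E$ --- the latter deduced from the global generation of $\mathcal{I}_Z\otimes\mathcal{O}_{\mathbb{G}}(H)$, for which the paper cites Ramanathan's result that Schubert varieties are cut out scheme-theoretically by sections of $H$ --- and then apply Kleiman's criterion to $-K_X=nH-(c-1)E$. The only cosmetic difference is that you realize the class $(1,1)$ as the strict transform of a general line through a point of $Z$, whereas the paper realizes it as a section of $E\to L$ over a line $L\subset Z$ obtained from the splitting type of $N^*_{Z/\mathbb{G}}|_L$.
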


Genus-zero Gromov-Witten invariants $\langle\alpha_1, \cdots, \alpha_m\rangle^X_\mathbf{d}$ of $X= \rm{Bl}_ZG(k, n)$ are given by the integration of the cup product of the pull-back of $\{\alpha_i\}\subset H^*(X)$ over the moduli space $\overline{\mathcal{M}}_{0, m}(X, \mathbf{d})$ of stable maps (with virtual fundamental class involved if needed). In particular, it vanishes unless $\mathbf{d}\in H_2(X,\bZ)$ lies in the Mori cone $\overline{\rm{NE}}(X)$ of effective curve classes of $X$.
As we will show in Proposition \ref{iso. of cone of curves}, $\overline{\rm{NE}}(X)=\mathbb{R}_{\geq 0}e+\mathbb{R}_{\geq 0}(\ell-e)$. Here
  $e$ is from the homology class of a line in the exceptional divisor $E$ of $X$, and $\ell$ is from the homology class of certain lifting of a line in $Z$ to $E$.
 We  consider  the case when   $Z=X^\lambda\cong G(k, n-1)$, for which $\lambda=(1, \cdots, 1)$, and  denote
\begin{equation}
  X_{k, n}:= \rm{Bl}_{G(k, n-1)}G(k, n).
\end{equation}
The blowup $X_{k, n}$ is special in several senses. As we will prove in \textbf{Theorem \ref{bundle over Grassmannian}}, it is the unique  blowup among $\rm{Bl}_ZG(k,n)$ (where $2\leq k\leq n-2$) that admits the structure as a projective bundle over a Grassmannian, up to the isomorphism $G(k, n)\cong G(n-k, n)$. It can be realized as a smooth Schubert variety in the two-step partial flag variety $F\ell_{k-1, k; n}$ \cite[Propositin5.4]{BC12}.  It is the only smooth Schubert variety indexed by a special partition of the form $(1, \cdots, 1, 0, \cdots, 0)$ whenever $2\leq k\leq n-2$. Moreover, the cohomology class of the blowup center $X^{(1,\cdots, 1)}$ is the generator of Seidel action on the quantum cohomology of $G(k,n)$ \cite{Sei97}.

As the second main result of this paper, we compute two-point, genus-zero Gromov-Witten invariants of $X_{k, n}$. We refer to \textbf{Theorem \ref{thm:2ptGW-precise}} for more precise statements, for ease of the notations here.
 \begin{theorem}\label{thm:2ptGW}
   For any $\alpha, \beta\in H^*(X_{k,n})$ and $\mathbf{d}\in H_2(X_{k,n}, \mathbb{Z})$, we have $\langle \alpha, \beta\rangle^{X_{k,n}}_\mathbf{d}=0$ unless $\mathbf{d}\in \{e,\ell-e, \ell\}$. Furthermore, $\langle \alpha, \beta\rangle^{X_{k,n}}_{e}$ and $\langle \alpha, \beta\rangle^{X_{k,n}}_{\ell-e}$ can be explicitly computed.
 \end{theorem}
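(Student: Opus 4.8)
The plan is to exploit the two distinguished morphisms carried by $X:=X_{k,n}$. The first is the blowdown $\pi\colon X\to G(k,n)$, whose exceptional divisor $E=\mathbb{P}(N_{Z/G(k,n)})$ fibers over $Z=G(k,n-1)$ in $\mathbb{P}^{k-1}$'s, with $e$ the class of a fiber line. The second is the projective bundle structure $\rho\colon X\to G(k-1,n-1)$ provided by Theorem~\ref{bundle over Grassmannian}, whose fibers are $\mathbb{P}^{n-k}$'s. Since $X$ is a Fano manifold of Picard rank two (Theorem~\ref{thm: Fano}), its two extremal contractions are exactly $\pi$ and $\rho$, contracting the rays $\mathbb{R}_{\ge 0}e$ and $\mathbb{R}_{\ge 0}(\ell-e)$ respectively; in particular a $\rho$-fiber line has class $\ell-e$. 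I first record the numerology. From $-K_X=\pi^*(-K_{G(k,n)})-(k-1)E$ one gets $c_1(X)\cdot e=k-1$, while $\ell-e$ being a $\rho$-fiber line gives $c_1(X)\cdot(\ell-e)=n-k+1$, hence $c_1(X)\cdot\ell=n$. Moreover $\pi_*e=0$ and $\pi_*(\ell-e)=[\mathrm{line}]$, so for $\mathbf{d}=pe+q(\ell-e)$ with $p,q\ge 0$ the image $\pi_*\mathbf{d}$ has degree $q$ in $G(k,n)$, whereas the $\rho$-degree is $p$. The dimension axiom $\deg\alpha+\deg\beta=2(\dim X+c_1(X)\cdot\mathbf{d}-1)$ leaves only finitely many $\mathbf{d}$ for fixed $\alpha,\beta$, and the theorem reduces to two vanishing claims, namely $\langle\alpha,\beta\rangle^{X}_{\mathbf{d}}=0$ whenever $p\ge 2$ or $q\ge 2$, together with the explicit evaluation of $\mathbf{d}=e$ and $\mathbf{d}=\ell-e$.

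For the vanishing I would argue through the fiber geometry of the two morphisms. When $q=0$ the class $pe$ is $\pi$-contracted, so every genus-zero stable map of class $pe$ lands in a single $\mathbb{P}^{k-1}$-fiber of $E\to Z$; this identifies $\overline{\mathcal{M}}_{0,2}(X,pe)$ with a family over $Z$ of copies of $\overline{\mathcal{M}}_{0,2}(\mathbb{P}^{k-1},p)$, and, after accounting for the obstruction bundle coming from $N_{E/X}=\mathcal{O}_E(-1)$ and the directions along $Z$, the two-point invariant factors through the two-point invariants of $\mathbb{P}^{k-1}$, which vanish for $p\ge 2$ by the dimension axiom in $\mathbb{P}^{k-1}$. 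Symmetrically, a pure fiber class $q(\ell-e)$ is $\rho$-contracted into a $\mathbb{P}^{n-k}$-fiber, and its invariants reduce to two-point invariants of $\mathbb{P}^{n-k}$, vanishing for $q\ge 2$. The remaining mixed classes, with $p\ge 1$ and $q\ge 2$ or $p\ge 2$ and $q\ge 1$, are the crux: here a generic stable map moves in both base and fiber directions, so no single projection contracts it and the clean fiber reduction is unavailable. I would treat these by degenerating along one of the fibrations, using $\rho$ to split the virtual class into a base/relative part and a fiber part and invoking the fiber vanishing to kill the fiber-degree-$\ge 2$ contributions, while using $\pi$ and the exceptional $\mathbb{P}^{k-1}$ to control the $e$-degree.

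For the explicit evaluations I would use that each of the two classes is a fiber line of one of the projections. The invariant $\langle\alpha,\beta\rangle^{X}_{e}$ counts lines in the $\mathbb{P}^{k-1}$-fibers of $E$; restricting $\alpha,\beta$ to $E$ and pushing forward along $E\to Z$ via Leray--Hirsch turns the computation into a classical intersection number on the relative variety of lines in $\mathbb{P}^{k-1}$, corrected by the Euler class of the obstruction bundle built from $\mathcal{O}_E(-1)$, yielding a closed formula in the Schubert data of $Z$. The invariant $\langle\alpha,\beta\rangle^{X}_{\ell-e}$ counts $\rho$-fiber lines, so $\overline{\mathcal{M}}_{0,2}(X,\ell-e)$ is the relative space of lines in the $\mathbb{P}^{n-k}$-fibers; the invariant is then the classical line incidence number in $\mathbb{P}^{n-k}$ integrated over the base against the projective-bundle relation among $\xi=c_1(\mathcal{O}(1))$ and the pullback Schubert classes. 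Matching the outputs with the Schubert basis of $H^*(X)$ produces the precise statements of Theorem~\ref{thm:2ptGW-precise}.

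I expect the genuinely hard step to be the vanishing for the mixed degrees, where neither projection contracts the curve and the reduction to $\mathbb{P}^{m}$ is not automatic. The delicate points are controlling the excess-intersection terms from contracted components lying inside $E$ and bounding the contributions of reducible nodal maps; the robust fallback is a full degeneration-formula analysis (deformation to the normal cone of $Z$, or the relative theory over $G(k-1,n-1)$) that tracks how degree is distributed between base and fiber and forces the fiber-degree-$\ge 2$ pieces to contribute nothing.
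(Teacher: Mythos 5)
Your treatment of the pure fiber classes and of the two explicit evaluations is essentially the paper's: curves of class $de$ lie in $E$, one pushes forward along $E\to G_+$ with the obstruction bundle built from $N_{E/X_{k,n}}=\cO_E(-1)$, and the dimension constraint forces $d=1$ and singles out the dual pairs $\<E^{k-1}\sigma_\mu,E^{k-1}\sigma_{\mu_+^\vee}\>_e=1$; the degree-$(\ell-e)$ case is the mirror argument through the $\bP^{n-k}$-bundle $\pi_2$. That part of your proposal is sound and matches Theorem \ref{thm:2ptGW-precise}(b),(c).

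The genuine gap is exactly where you flag it: the mixed degrees $\mathbf{d}=pe+q(\ell-e)$ with $p\ge1,\ q\ge2$ or $p\ge2,\ q\ge1$. Your plan to ``split the virtual class into a base/relative part and a fiber part'' along $\rho$ and kill fiber-degree $\ge2$ contributions is not an available tool: quantum cohomology of a projective bundle does not factor as base times fiber, there is no general degeneration formula that distributes degree between base and fiber of a $\bP^{n-k}$-bundle in the way you need, and deformation to the normal cone of $Z$ would land you in relative Gromov--Witten theory with no evident mechanism forcing the required vanishing. The paper closes this gap by an entirely different route: it realizes $X_{k,n}$ as the smooth Schubert variety $Y_\varpi\subset F\ell_{k-1,k;n}$ and uses curve neighborhoods. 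By the Buch--Mihalcea formula (Proposition \ref{prop-cnBM}), $\Gamma_{\mathbf{d}}(Y(u))=Y(u\cdot z_{\mathbf{d}}^P)$ is again a Schubert variety, and a careful combinatorial estimate of $len(z_{\mathbf{d}}^P)$ (Lemma \ref{lenzdp}) shows that for $d_1\ge2$ or $d_2\ge2$ the locus swept by degree-$\mathbf{d}$ curves through $X(u)$ has dimension strictly less than $\deg\mathbf{q}^{\mathbf{d}}-1+len(u)$, so the pushforward $({\rm ev}_2)_*\big({\rm ev}_1^*[X_u]\cap[\overline{\mathcal{M}}_{0,2}]^{\rm vir}\big)$ vanishes; two borderline cases, $(d_1,d_2)=(2,1)$ and $(d_1,d_2,k)=(2,0,2)$, require the extra geometric input of Lemma \ref{lem--compX} and Corollary \ref{cor:devanish}. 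This dimension-of-sweep argument handles all excess degrees at once, including the mixed ones your fiberwise reductions cannot reach, so to complete your proof you would need to import this (or an equivalent) mechanism rather than a bundle-splitting heuristic.
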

\noindent
To obtain the above Gromov-Witten invariants, we have intentionally used different information arose from the various geometric structures of $X_{k, n}$ , including   the curve neighborhood technique \cite{BCMP13, BM15}. In this way, at least  part of the results will be potentially useful in the future study of generalizations of $X_{k, n}$ in different directions.
\begin{remark}
    It is sufficient to derive a precise ring presentation of $QH^*(X_{k, n})$ by applying Theorem \ref{thm:2ptGW}. We also computed  $\langle \alpha, \beta\rangle^{X_{2,n}}_{\ell}$ in the case of  $k=2$ with slightly more involved arguments, and expect   to obtain them for general $k$ by similar arguments. Combining with these invariants of degree $\ell$, one shall be able to obtain the quantum Chevalley formula
    for the smooth Schubert variety $X_{k, n}$. We plan to investigate such formula  elsewhere, which has its   own interest in quantum Schubert calculus.
\end{remark}

Mirror symmetry is a fascinating phenomenon arising in string theory. The (closed string)
mirror symmetry was first stated for Calabi-Yau manifolds, and  was extended to Fano manifolds $X$ by
Givental \cite{Giv95, Giv98} and Eguchi-Hori-Xiong \cite{EHX}. In this case, the mirror object is
a Landau-Ginzburg model $(\check X, f)$, consisting of a non-compact K\"ahler manifold $\check X$ and a holomorphic function $f:\check X\to \mathbb{C}$ called the superpotential.
Mirror symmetry predicts equivalences between the A-side $X$ and the B-side $(\check X, f)$ on various levels. As the first level, the (small)
quantum cohomology ring $QH^*(X)$ should be isomorphic to the Jacobi ring $\mbox{Jac}(f)$. Nevertheless, it is far from being extensively studied  beyond the cases of toric Fano varieties and flag varieties (see e.g. \cite{Cha20} and \cite{LRYZ24} and references therein).
To start  the exploration of mirror symmetry for those Fano blowups of the form $\rm{Bl}_ZG(k, n)$, we may consider $X_{k, n}$.
The partial flag variety $F\ell_{k-1, k; n}$ admits a weak Landau-Ginzburg model by using  toric degeneration \cite{BCFKS,NNU}, with the price that the resulting toric superpotential may not produce enough critical points at some specialization of the K\"ahler parameters.
We notice that $X_{k, n}$ also admits a natural  toric degeneration by restriction of that for $F\ell_{k-1, k; n}$ \cite{KM05, HLLL22}, and therefore a toric superpotential $f_{\rm tor}$ can be written down in this way. When $k=2$, we notice $X_{2, n}\cong\{V_1\leq V_2\leq \mathbb{C}^n\mid V_1\leq \Lambda_{n-1}\}$ is a $\mathbb{P}^{n-2}$-bundle over $\mathbb{P}^{n-2}$. Therefore its (quantum) cohomology is generated by divisor classes, and
 the superpotential $f_{\rm tor}$ should produce all the expected critical points.

As the third main result of this paper, we verify the above expectation and obtain the following theorem, by combining \textbf{Theorem \ref{thm: QHX2n}} and \textbf{Theorem \ref{thm:QHJac}}.
Precisely, we define the toric superpotential $f_{\rm tor}: (\mathbb{C}^*)^{2(n-2)}\times (\mathbb{C}^*)^2\to \mathbb{C}$ by
 \begin{equation}
    f_{\rm tor}(z_{i,j}, \mathbf{q}):=z_{0,1}+\sum_{j=2}^{n-2} {z_{0, j}\over z_{0, j-1}}+ {z_{1,1}\over q_1}+\sum_{j=2}^{n-2} {z_{1, j}\over z_{1, j-1}}+\sum_{j=1}^{n-2} {z_{1, j}\over z_{0, j}}+{q_1q_2\over z_{1, n-2}},
 \end{equation}
and consider the (relative) Jacobi ring
\begin{equation}
   {\rm Jac}(f_{\rm tor}):= {\mathbb{C}[z_{i,j}^{\pm 1}, q_1^{\pm 1}, q_2^{\pm 1}\mid 0\leq i\leq 1 \leq j\leq n-2]\over \big(\partial_{z_{i, j}}f_{\rm tor}\mid  0\leq i\leq 1 \leq j\leq n-2\big)}.
\end{equation}

\begin{theorem}\label{mainthm:QHJac}
  Let $ R_{\rm b}(n), R_{\sigma}(n)$ be polynomials in $\mathbb{C}[h, x, q_1, q_2]$ recursively defined by
 \begin{equation*}
    R_{\rm b}(n)=(h+x)R_{\rm b}(n-1)-(h+q_1)x R_{\rm b}(n-2),\quad \forall n\geq 2;\quad  R_{\sigma}(n)=\sum_{j=0}^{n} x^{n-j} R_{\rm b}(j),\,\,\forall n,
 \end{equation*}
 where $R_{\rm b}(0)=1, R_{\rm b}(1)=h$. Then  we have the following ring isomorphisms\footnote{Strictly speaking, a natural extension of the spectrum of the Jacobi ring has also been taken in the isomorphism \eqref{isoJac}.}
 \begin{align}
    QH^*(X_{2, n})&\cong \mathbb{C}[h, x, q_1, q_2]/(R_{\rm b}(n-1), R_{\sigma}(n-1)-q_2),\\
 \label{isoJac}    {\rm Jac}(f_{\rm tor})&\cong \mathbb{C}[h^{\pm 1}, x^{\pm 1}, q_1^{\pm 1}, q_2^{\pm 1}]/(R_{\rm b}(n-1), R_{\sigma}(n-1)-q_2).
 \end{align}
 In particular, up to a natural extension of
the spectrum of the Jacobi ring, we have
  $$ QH^*(X_{2, n}) \cong       {\rm Jac}(f_{\rm tor}).$$
\end{theorem}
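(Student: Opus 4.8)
The plan is to prove the two ring presentations separately and then identify their common target. Concretely, I would first establish $QH^*(X_{2,n})\cong \mathbb{C}[h,x,q_1,q_2]/(R_{\rm b}(n-1),R_{\sigma}(n-1)-q_2)$ (the content of Theorem \ref{thm: QHX2n}), then establish ${\rm Jac}(f_{\rm tor})\cong \mathbb{C}[h^{\pm1},x^{\pm1},q_1^{\pm1},q_2^{\pm1}]/(R_{\rm b}(n-1),R_{\sigma}(n-1)-q_2)$ (the content of Theorem \ref{thm:QHJac}), and finally observe that the second ring is exactly the localization of the first at $h,x,q_1,q_2$. The displayed isomorphism $QH^*(X_{2,n})\cong {\rm Jac}(f_{\rm tor})$ then follows after passing to this localization, which is the ``natural extension of the spectrum'' referred to in the footnote; I would check that $h$ and $x$ act invertibly on $QH^*(X_{2,n})$ once $q_1,q_2$ are inverted (the recurrence forces $h^{n-1}$ and, together with $R_{\sigma}(n-1)=q_2$, a power of $x$ to equal units), so that the localization discards no information on the $A$-side.

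For the quantum side I would exploit that $X_{2,n}$ is the projective bundle $\mathbb{P}(\mathcal{E})\to \mathbb{P}^{n-2}$ with $\mathcal{E}=(\mathbb{C}^n\otimes\mathcal{O})/\mathcal{O}(-1)$ over the base $\mathbb{P}(\Lambda_{n-1})$, so that $H^*(X_{2,n})$ is generated by the base hyperplane class $x$ and the relative hyperplane class $h$. Classically $H^*(X_{2,n})=\mathbb{C}[h,x]/(x^{n-1},\sum_{i=0}^{n-1}x^i h^{n-1-i})$; specializing $q_1=q_2=0$ in the asserted presentation recovers $\mathbb{C}[h,x]/(h^{n-1},R_{\sigma}(n-1))$, which I would check at the outset is isomorphic to $H^*(X_{2,n})$. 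Since $QH^*$ is a deformation of $H^*$ generated over $\mathbb{C}[q_1,q_2]$ by $h$ and $x$, it then suffices to determine the quantum deformation of these two relations. Here I would feed in the two-point invariants of degrees $e$ and $\ell-e$ from Theorem \ref{thm:2ptGW} together with the degree-$\ell$ invariants mentioned in the Remark; via the divisor axiom these determine quantum multiplication by $h$ and by $x$ completely. Writing the matrix of quantum multiplication by $h$ on the basis adapted to the fibration, its entries are of tridiagonal type with corrections weighted by $x$ and $q_1$, and the resulting minimal relation is precisely the three-term recurrence defining $R_{\rm b}$, while the deformed Grothendieck relation becomes $R_{\sigma}(n-1)=q_2$. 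To conclude I would verify that $\mathbb{C}[h,x,q_1,q_2]/(R_{\rm b}(n-1),R_{\sigma}(n-1)-q_2)$ is a free $\mathbb{C}[q_1,q_2]$-module of rank $(n-1)^2=\dim_{\mathbb{C}}H^*(X_{2,n})$, so that the evident surjection onto $QH^*(X_{2,n})$ is an isomorphism.

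For the $B$-side I would compute the logarithmic critical-point equations $z_{i,j}\,\partial_{z_{i,j}}f_{\rm tor}=0$, which express a conservation law along each of the two chains $\{z_{0,j}\}_j$ and $\{z_{1,j}\}_j$, coupled through the cross terms $z_{1,j}/z_{0,j}$. Telescoping along the chains produces the relation $z_{0,1}=\sum_{j=1}^{n-2}z_{1,j}/z_{0,j}$ and a conserved quantity $B_j+D_j$, in the notation $B_j=z_{0,j}/z_{0,j-1}$ and $D_j=z_{1,j}/z_{1,j-1}$, which lets me eliminate all the intermediate variables and reduce ${\rm Jac}(f_{\rm tor})$ to a quotient in two generators that I identify with $h$ and $x$. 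The elimination is a continuant-type computation: the nested ratios along the chains satisfy exactly the recurrence $R_{\rm b}(n)=(h+x)R_{\rm b}(n-1)-(h+q_1)x\,R_{\rm b}(n-2)$, and the closing term $q_1q_2/z_{1,n-2}$ contributes the summed relation $R_{\sigma}(n-1)=q_2$.

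I expect the main obstacle to be this elimination on the Jacobi side: showing that after removing the $2(n-2)$ toric variables one lands on \emph{exactly} the recurrence $R_{\rm b}$, with the correct coefficient $h+q_1$ and the correct closing relation $R_{\sigma}(n-1)-q_2$, rather than merely on some equivalent presentation, requires carefully tracking how the coupled additive and multiplicative structure of the critical equations generates a continuant, and the sign and constant bookkeeping is delicate. A secondary obstacle is the freeness statement on the quantum side, needed to guarantee that the computed relations generate the \emph{full} ideal; and finally one must justify the localization at $h,x,q_1,q_2$ so that the polynomial presentation of $QH^*(X_{2,n})$ and the Laurent presentation of ${\rm Jac}(f_{\rm tor})$ are genuinely identified, which is precisely where the natural extension of the spectrum enters.
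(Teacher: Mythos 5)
Your overall architecture (prove the two presentations separately, then match the ideals) is the same as the paper's, and your B-side plan -- eliminating the $2(n-2)$ toric variables via the logarithmic critical equations and recognizing a continuant satisfying $R_{\rm b}(n)=(h+x)R_{\rm b}(n-1)-(h+q_1)xR_{\rm b}(n-2)$, with the closing term yielding $R_\sigma(n-1)=q_2$ -- is essentially the paper's argument (their Lemmas on $z_{1,j+1}=A^{j}z_{11}/z_{0j}$ with $A=hx+q_1x$, and $z_{0j}=R_{\rm b}(j)$). On the A-side, however, there is a concrete gap: you propose to determine quantum multiplication by the two divisor classes ``completely'' by feeding in the degree-$\ell$ two-point invariants ``mentioned in the Remark.'' Those invariants are not established anywhere in the paper; the authors explicitly defer them. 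Since $\ell=e+(\ell-e)$, divisor multiplication a priori does receive $q_1q_2$-contributions, so your route genuinely needs this missing input. The paper avoids it by a degree count: the relations to be quantized have degree at most $n-1$, while $\deg q_1=1$ and $\deg q_2=n-1$, so no $q_1q_2$ term can appear in them; one then computes with the two specializations $\star_1$ (only $q_1$) and $\star_2$ (only $q_2$), each of which is controlled by the degree-$e$ and degree-$(\ell-e)$ invariants alone. You should either adopt this degree trick or supply the degree-$\ell$ invariants. (Your closing freeness argument to see that the relations generate the whole ideal is a reasonable substitute for the paper's appeal to Siebert--Tian.)

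The final identification step is also off. You read the ``natural extension of the spectrum'' as localizing $QH^*(X_{2,n})$ at $h,x,q_1,q_2$ and claim this loses nothing because $h$ and $x$ become units once $q_1,q_2$ are inverted. The claim for $x$ is fine ($x\cdot R_\sigma(n-2)=q_2$ in the quotient), but the claim for $h$ fails already for $n=5$: one computes $R_{\rm b}(4)\big|_{h=0}=q_1x^2(q_1-x)$, so the point $h=0$, $x=q_1$, $q_2=R_\sigma(4)(0,q_1,q_1)=-q_1^4$ lies in $\tspec\big(\mathbb{C}[h,x,q_1^{\pm1},q_2^{\pm1}]/(R_{\rm b}(4),R_\sigma(4)-q_2)\big)$, whence $h$ is a zero divisor there and not a unit; localizing at $h$ genuinely discards components. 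The intended reading is the opposite direction: one \emph{enlarges} the spectrum of ${\rm Jac}(f_{\rm tor})$ from the torus to affine space, i.e.\ replaces the Laurent quotient by the polynomial quotient by the same two generators, which then literally coincides with the presentation of $QH^*(X_{2,n})$. No invertibility of $h$ or $x$ needs to be (or can be) established.
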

\begin{remark}
    Under the above isomorphisms, $c_1(X_{2, n})=nh+(n-1)x$ is sent to the class $[f_{\rm tor}]$ in ${\rm Jac}(f_{\rm tor})$ by direct computations.
    \end{remark}
\begin{remark}
     Despite of the very strong restriction to $k=2$, the case  $X_{2, n}$ and the Schubert varieties   in $F\ell_{1, n-1; n}$ \cite{KLSong} are  important and the first toy examples in future study of mirror symmetry for smooth Schubert varieties.
We also notice that the $I$-function and quantum $D$-module of a general projective bundle was studied in \cite{IK23}, while this seems difficult to deduce precise information of $QH^*(X_{k, n})$ as we are discussing here.
\end{remark}

\begin{remark}
    As we learned from Elana Kalashnikov, the blowup $X_{2, n}$ is a $Y$-shaped quiver flag variety \cite{Kal24} with dimension vector $(1,n-2,n-2)$ and $n_{12}=n-1$, $n_{13}=1, n_{23}=1$, where $n_{ij}$ denotes  the number of arrows from vertices $i$ to $j$ of the quiver diagram. The aforementioned toric degeneration of $X_{2, n}$ obtained by restriction of that for $F\ell_{1, 2; n}$ should coincide with the generalized Gelfand-Cetlin degeneration for $Y$-shaped quiver flag varieties. There was a ring presentation of the quantum cohomology of quiver flag varieties in \cite{GK24}, together with a  mirror theorem   with respect to the superpotential proposed by Gu-Sharp \cite{GS18}. Such approach used the Abelian/non-Abelian correspondence, where  many more variables were involved and the ring presentation is quite different from the usual description even for complex Grassmannians.
    We expect that our study of the mirror symmetry for $X_{2, n}$ with respect to the Laurent polynomial superpotential $f_{\rm tor}$ will play an important role  in understanding the richer cluster structure in analogy with the case of complex Grassmannians \cite{MR20}.
\end{remark}
The paper is organized as follows. In Section 2, we characterize when the blowup ${\rm Bl}_ZG(k, n)$ is a Fano. In Section 3, we classify those   ${\rm Bl}_ZG(k, n)$ isomorphic to a projective bundle over a Grassmannian. In Section 4, we compute two-point, genus-zero Gromov-Witten invariants of  $X_{k, n}={\rm Bl}_{G(k, n-1)}G(k, n)$.
In Section 5, we provide ring presentation of the quantum cohomology of
$X_{2, n}$. In Section 6, we verify the mirror symmetry for $X_{2, n}$ by showing a ring isomorphism between $QH^*(X_{2, n})$ and the Jacobi ring of a toric superpotential $f_{\rm tor}$.

Convention: We use the symbol $\mathbb{P}(\cdot)$ to denote the space of one dimensional subspaces. So for a vector bundle $\shf{E}$ on a variety, $\mathbb{P}(\shf{E})\iso \text{Proj}\paren{\text{sym}\shf{E}^*}$.

\subsection*{Acknowledgements}

 The authors would like to thank  Rong Du, Elana Kalashnikov, Zhan Li, Leonardo Mihalcea, Heng Xie, Jinxing Xu and Weihong Xu for helpful discussions. The authors are  supported   by the National Key R \& D Program of China No.~2023YFA1009801.
H.~Ke is also supported in part by NSFC Grant 12271532.
C.~Li is also supported in part by NSFC Grant  12271529.
L.~Song is also supported in part by NSFC Grant 12471043.

\section{Fano property of the blowup $\rm{Bl}_ZG(k, n)$}
Let $Z$ be a smooth Schubert subvariety of the complex Grassmannian $$\mathbb{G}:=G(k, n)=G(k, \mathbb{C}^n)=\{V\leq \mathbb{C}^n\mid \dim V=k\}.$$
 Denote by $\mbox{codim}Z=\mbox{codim}(Z, \mathbb{G})$ the codimension of $Z$ in $\mathbb{G}$. In this section, we prove  Theorem \ref{thm: Fano}, which is restated below and characterizes of Fano property of the blowups.
 \begin{theorem}\label{Fano}
 The blowup $X={\rm  Bl}_Z\mathbb{G}$  of $\mathbb{G}$ along $Z$  is Fano if and only if $\rm{codim}Z\leq n$.
 \end{theorem}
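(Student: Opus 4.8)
The plan is to test ampleness of the anticanonical class $-K_X$ directly against the cone of curves, using Kleiman's criterion together with the description $\overline{\mathrm{NE}}(X)=\mathbb{R}_{\geq 0}e+\mathbb{R}_{\geq 0}(\ell-e)$ recorded in the introduction. Write $\pi\colon X=\mathrm{Bl}_Z\mathbb{G}\to\mathbb{G}$ for the blowup, $E$ for the exceptional divisor, $c=\codim Z$, and $h=\pi^*\sigma_1$ for the pullback of the Schubert divisor class. Two standard inputs open the computation: the Grassmannian $\mathbb{G}=G(k,n)$ has Fano index $n$, so $-K_{\mathbb{G}}=n\sigma_1$; and the blowup of a smooth variety along a smooth center of codimension $c$ has discrepancy $c-1$, so $K_X=\pi^*K_{\mathbb{G}}+(c-1)E$. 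Combining these gives
\[
  -K_X = nh-(c-1)E .
\]
We may assume $c\geq 2$, since for $c=1$ the center is a Cartier divisor, $\pi$ is an isomorphism onto the Fano variety $\mathbb{G}$, and the asserted inequality $c\leq n$ holds automatically.

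Next I would record the intersection numbers of the two extremal classes against $h$ and $E$. The class $e$ is a line in a fibre $\mathbb{P}^{c-1}$ of $E=\mathbb{P}(N_{Z/\mathbb{G}})\to Z$; it is contracted by $\pi$, so $h\cdot e=\sigma_1\cdot\pi_*e=0$, while $\mathcal{O}_X(E)|_e=\mathcal{O}_{\mathbb{P}(N)}(-1)|_e$ has degree $-1$, giving $E\cdot e=-1$. The class $\ell$ is the chosen lift to $E$ of a line $C\subset Z$; since $\pi_*\ell=[C]$ is a line in $\mathbb{G}$ we get $h\cdot\ell=\sigma_1\cdot[C]=1$, and the lift is selected precisely so that the section meets $E$ in degree zero, i.e. $E\cdot\ell=0$. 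With these values, $-K_X\cdot e=c-1$ and
\[
  -K_X\cdot(\ell-e)= \big(n\cdot 1-(c-1)\cdot 0\big)-\big(n\cdot 0-(c-1)(-1)\big)=n-(c-1)=n-c+1 .
\]

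Finally I would assemble the equivalence. Because $\overline{\mathrm{NE}}(X)$ is the closed rational polyhedral cone spanned by $e$ and $\ell-e$, a class is strictly positive on $\overline{\mathrm{NE}}(X)\setminus\{0\}$ if and only if it is positive on both generators; by Kleiman's criterion this is equivalent to ampleness of $-K_X$, hence to $X$ being Fano. The number $-K_X\cdot e=c-1$ is automatically positive for $c\geq 2$, so the criterion collapses to the single condition $-K_X\cdot(\ell-e)=n-c+1>0$, that is, $c\leq n$. Conversely, if $c\geq n+1$ then $-K_X\cdot(\ell-e)\leq 0$ on a genuine effective class, so $-K_X$ fails to be ample and $X$ is not Fano. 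This yields the stated characterization.

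The hard part will be the honest justification of the intersection numbers attached to $\ell$, above all $E\cdot\ell=0$: one must exhibit the correct lift of a line $C\subset Z$, which amounts to splitting the restricted normal bundle $N_{Z/\mathbb{G}}|_C$ and sectioning $\mathbb{P}(N_{Z/\mathbb{G}})|_C$ along a degree-zero sub-line-bundle. Establishing that such a lift exists, and that $e$ and $\ell-e$ are indeed the only extremal rays of $\overline{\mathrm{NE}}(X)$, rests on the geometry of lines in the smooth Schubert variety $Z$ and is exactly the content carried by the Mori-cone proposition; once that input is granted, the Fano criterion above is purely formal.
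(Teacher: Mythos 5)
Your proposal is correct and follows essentially the same route as the paper: Kleiman's criterion applied to $-K_X=nH-(c-1)E$ against the Mori cone $\overline{\mathrm{NE}}(X)=\mathbb{R}_{\geq 0}e+\mathbb{R}_{\geq 0}(\ell-e)$, with the same intersection numbers $-K_X\cdot e=c-1$ and $-K_X\cdot(\ell-e)=n-c+1$. You correctly identify that the real content lies in the Mori cone description and the degree-zero lift $\ell$ of a line in $Z$ (which the paper supplies via the splitting $N^*_{Z/\mathbb{G}}|_L=\oplus\mathcal{O}_L\oplus\oplus\mathcal{O}_L(-1)$), and your separate treatment of the trivial case $c=1$ is a small but welcome tidying of the argument.
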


The following consequence is immediate.
\begin{corollary}
The blowup of $\mathbb{G}$ at a point is Fano if and only if $\mathbb{G}\cong\prj{n-1}$ or $G(2, 4)$.\qed
\end{corollary}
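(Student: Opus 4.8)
The plan is to deduce the corollary directly from Theorem \ref{Fano}, once a point is recognized as a smooth Schubert subvariety. Since $\mathbb{G}=G(k,n)$ is homogeneous under the natural $\mathrm{GL}_n$-action, any two points are projectively equivalent, so the blowup at a point is well defined up to isomorphism and I may take the point to be the unique zero-dimensional Schubert variety $Z=X^{\lambda}$ with $\lambda=(n-k,\dots,n-k)$. Its dual partition is $\lambda^\vee=(0,\dots,0)$, the (degenerate) rectangle, so the smoothness criterion recalled in the introduction applies; in any case a point is trivially smooth. Because $\dim\mathbb{G}=k(n-k)$, we have $\mathrm{codim}\,Z=\sum_i\lambda_i=k(n-k)$, and Theorem \ref{Fano} says $\mathrm{Bl}_Z\mathbb{G}$ is Fano if and only if $k(n-k)\le n$.

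The remaining step is purely numerical. I would rewrite the inequality $k(n-k)\le n$ in factored form: expanding $(k-1)(n-k-1)=k(n-k)-n+1$ shows that $k(n-k)\le n$ is equivalent to $(k-1)(n-k-1)\le 1$. Setting $a=k$ and $b=n-k$, with $a,b\ge 1$ integers and $a+b=n$, the condition becomes $(a-1)(b-1)\le 1$.

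I then split into boundary and interior cases. If $a=1$ or $b=1$, i.e. $k=1$ or $k=n-1$, the left-hand side is $0$ and the inequality holds; here $\mathbb{G}\cong\mathbb{P}^{n-1}$ (using $G(k,n)\cong G(n-k,n)$ in the case $k=n-1$). If instead $2\le k\le n-2$, then $a-1$ and $b-1$ are positive integers, so $(a-1)(b-1)\le 1$ forces $a-1=b-1=1$, giving $a=b=2$ and $n=4$, that is $\mathbb{G}\cong G(2,4)$. Conversely $\mathbb{P}^{n-1}=G(1,n)$ and $G(2,4)$ manifestly satisfy the inequality, which yields the stated equivalence.

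I do not anticipate any genuine obstacle here. The only points requiring a little care are the bookkeeping that identifies a point with an honest smooth Schubert variety of codimension $k(n-k)$, so that Theorem \ref{Fano} is legitimately applicable, and the use of the isomorphism $G(k,n)\cong G(n-k,n)$ to recognize the two boundary cases as $\mathbb{P}^{n-1}$. Everything after the reduction is elementary integer arithmetic.
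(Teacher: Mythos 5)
Your proof is correct and follows exactly the route the paper intends: the corollary is stated as an immediate consequence of Theorem \ref{Fano}, obtained by taking $Z$ to be a point of codimension $k(n-k)$ and solving $k(n-k)\le n$, which via $(k-1)(n-k-1)\le 1$ yields precisely $\mathbb{P}^{n-1}$ or $G(2,4)$. The bookkeeping identifying a point with the Schubert variety $X^{(n-k,\dots,n-k)}$ and the use of $G(k,n)\cong G(n-k,n)$ are both sound.
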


\subsection{Smooth Schubert varieties in complex Grassmannians} Let $\{e_1, \cdots, e_n\}$ denote the standard basis of $\mathbb{C}^n$, and $\Lambda_1\leq \cdots\leq \Lambda_{n-1}\leq \Lambda_n=\mathbb{C}^n$ be the standard complete flag of $\mathbb{C}^n$, where $\Lambda_j=\mbox{Span}\{e_1,   \cdots, e_j\}$.
Let $\mathcal{P}_{k, n}:=\{(\lambda_1, \cdots, \lambda_k) \in \mathbb{Z}^k \mid n-k\geq \lambda_1\geq\cdots\geq \lambda_k\geq 0\},$ elements in which are called partitions inside $k\times (n-k)$ rectangle.

The Schubert varieties $X^\lambda$ in $\mathbb{G}$, indexed by partitions $\lambda=(\lambda_1, \cdots, \lambda_k)$ in $\mathcal{P}_{k, n}$ (and associated to $\Lambda_\bullet$), are defined by
\begin{equation}\label{SchVar}
   X^\lambda=\{V\in G(k, n)\mid \dim V\cap \Lambda_{n-k+i-\lambda_i}\geq i, \, 1\leq i\leq k\}.
\end{equation}
These are subvarieties of $\mathbb{G}$ of codimension $|\lambda|=\sum_{i=1}^k\lambda_i$.  That is, $X^\lambda$ is of dimension $|\lambda^\vee|$, where $\lambda^\vee$ denotes
 the dual partition of $\lambda$, defined by $\lambda^\vee=(n-k-\lambda_k, \cdots, n-k-\lambda_1)$.

\begin{proposition}[\protect{\cite[Theorem 5.3]{LaWe}}]
The Schubert variety $X^\lambda$ is smooth if and only if $\lambda^\vee$ is a rectangle, i.e. there exist  $0\le a\le k$, $0\le b\le n-k$ such that
\begin{equation}\label{rectanglepartition}
   \lambda =(\underbrace{n-k, \cdots, n-k}_a, \underbrace{n-k-b, \cdots, n-k-b}_{k-a}).
\end{equation}
\end{proposition}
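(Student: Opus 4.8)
The plan is to prove the two implications by analyzing the local structure of $X^\lambda$ at torus-fixed points, handling each direction by different means. For the ``if'' direction I would avoid any tangent-space computation and argue directly from the defining equations \eqref{SchVar}: when $\lambda$ has the rectangular form \eqref{rectanglepartition}, the conditions collapse, the indices $i\le a$ (for which $\lambda_i=n-k$, hence $d_i:=n-k+i-\lambda_i=i$) forcing $\Lambda_a\le V$, the index $i=k$ forcing $V\le \Lambda_{k+b}$, and the intermediate conditions being automatic by the dimension estimate $\dim(V\cap\Lambda_{i+b})\ge \dim V+\dim\Lambda_{i+b}-(k+b)=i$. Thus $X^\lambda=\{V\mid \Lambda_a\le V\le \Lambda_{k+b}\}$, and $V\mapsto V/\Lambda_a$ identifies $X^\lambda$ with $G(k-a,k+b-a)$, which is smooth. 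For the ``only if'' direction it suffices to exhibit a single singular point, so no homogeneity argument is needed; the natural candidate is the most degenerate fixed point $p_0:=\Lambda_k\in X^\lambda$, which lies in $X^\lambda$ since $\dim(\Lambda_k\cap\Lambda_{d_i})=\min(k,d_i)\ge i$.

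The technical heart is a local computation at $p_0$. In the standard affine chart I would write a nearby $V$ as the graph of a homomorphism $\Phi\in\mathrm{Hom}(\Lambda_k,\mathbb{C}^n/\Lambda_k)$, i.e. $V=\text{colspace}\left(\begin{smallmatrix} I_k\\ \Phi\end{smallmatrix}\right)$ for an $(n-k)\times k$ matrix $\Phi$, so that $p_0$ corresponds to $\Phi=0$ and $T_{p_0}\mathbb{G}$ is the full space of such $\Phi$. Each Schubert condition $\dim(V\cap\Lambda_{d_i})\ge i$ then becomes a rank condition on a coordinate submatrix of $\Phi$: when $d_i\ge k$ it reads $\mathrm{rank}(\Phi_{>d_i})\le k-i$ on the bottom $n-d_i$ rows, and when $d_i<k$ it reads $\mathrm{rank}(\Phi_{\le d_i})\le d_i-i$ on the first $d_i$ columns.

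To read off $T_{p_0}X^\lambda$ I would linearize these determinantal conditions at $\Phi=0$. The key observation is that $\mathrm{rank}(M)\le r$ is cut out by $(r+1)\times(r+1)$ minors, which vanish to order $\ge 2$ at $M=0$ as soon as $r\ge 1$ and hence impose no linear constraint; only the extreme conditions with $r=0$ contribute. These come exactly from $i=k$ (forcing the bottom $\lambda_k$ rows of $\Phi$ to vanish) and from the indices with $\lambda_i=n-k$, the strongest of which forces the first $a$ columns to vanish. Counting these coordinate equations with their $a\lambda_k$ overlap gives $\dim T_{p_0}X^\lambda=(k-a)(n-k-\lambda_k)$, whereas $\dim X^\lambda=k(n-k)-|\lambda|$. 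Subtracting and using $\lambda_1=\cdots=\lambda_a=n-k$ yields the clean identity
\[
\dim T_{p_0}X^\lambda-\dim X^\lambda=\sum_{i=a+1}^{k}(\lambda_i-\lambda_k),
\]
a sum of nonnegative terms that vanishes precisely when $\lambda_{a+1}=\cdots=\lambda_k$, i.e. precisely when $\lambda$ has the rectangular form \eqref{rectanglepartition}. Hence $p_0$ is singular whenever $\lambda^\vee$ is not a rectangle, which completes the converse.

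I expect the main obstacle to be the bookkeeping in the linearization step: rigorously justifying that the non-extreme rank conditions contribute no linear terms (this is where the degree-$\ge 2$ nature of the defining minors is essential) and correctly accounting for the overlap between the row- and column-constraints, so that $(k-a)(n-k-\lambda_k)$ is the exact tangent dimension and not merely a bound. An alternative route to the converse is the Lakshmibai--Sandhya pattern-avoidance criterion: the Grassmannian permutation attached to $\lambda$ has a single descent, so it can never contain $4231$, and avoidance of $3412$ is readily seen to be equivalent to $\lambda$ being a rectangle; this trades the explicit tangent computation for the (nontrivial) input of the pattern-avoidance theorem.
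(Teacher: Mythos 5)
The paper itself does not prove this proposition: it is quoted verbatim from Lakshmibai--Weyman \cite[Theorem~5.3]{LaWe}, and only the identification $X^\lambda\cong G(k-a,k+b-a)$ in the rectangular case is used downstream. So you are supplying a proof where the authors supply a citation. Your ``if'' direction is correct and is exactly the argument implicit in the paper (the dimension count $\dim(V\cap\Lambda_{i+b})\ge k+(i+b)-(k+b)=i$ disposes of the intermediate conditions, and $G(k-a,k+b-a)$ is smooth).

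The ``only if'' direction has a genuine gap, and it is not quite the one you flag. Linearizing a collection of functions that merely \emph{vanish} on $X^\lambda$ shows only that $T_{p_0}X^\lambda$ is \emph{contained} in the common kernel of their differentials, i.e.\ it gives the upper bound $\dim T_{p_0}X^\lambda\le(k-a)(n-k-\lambda_k)$. Singularity of $p_0$ requires the opposite inequality $\dim T_{p_0}X^\lambda\ge(k-a)(n-k-\lambda_k)>\dim X^\lambda$, and verifying that each non-extreme minor has vanishing linear part does not produce it: a priori the local ideal $I(X^\lambda)_{p_0}$ could contain further elements, not in the ideal generated by your minors, whose linear parts cut the tangent space down to the correct dimension. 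To close the gap you must either invoke the nontrivial fact that these determinantal conditions generate the reduced ideal of $X^\lambda$ in the chart (standard monomial theory, or the theory of ladder determinantal ideals), or --- more elementarily and in the spirit of your computation --- exhibit the tangent vectors directly: for each $a<j\le k$ and $k<l\le n-\lambda_k$, the curve $t\mapsto\mathrm{span}(e_1,\dots,e_{j-1},\,e_j+te_l,\,e_{j+1},\dots,e_k)$ lies in $X^\lambda$ (a short check against \eqref{SchVar}, using that $d_i=i$ with $j>i$ for $i\le a$, that $d_i>i$ for $i>a$, and that $d_k=n-\lambda_k\ge l$), and its velocity at $t=0$ is the matrix with a single $1$ in position $(l,j)$. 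These $(k-a)(n-k-\lambda_k)$ vectors are independent, so equality holds and your identity $\dim T_{p_0}X^\lambda-\dim X^\lambda=\sum_{i=a+1}^{k}(\lambda_i-\lambda_k)$ goes through; the rest of your bookkeeping (the $a\lambda_k$ overlap, the identification of the extreme conditions as $i=k$ and $i\le a$) is correct.
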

It follows from \eqref{SchVar} that  Schubert variety $X^\lambda$ with $\lambda$ of the   form \eqref{rectanglepartition} is   given by \begin{equation}
    X^\lambda=\{V\in \mathbb{G}\mid \Lambda_a\leq V\leq \Lambda_{k+b}\} \cong G(k-a, \Lambda_{k+b}/\Lambda_{a})\cong G(k', n'),
\end{equation}
 {where } $k':=k-a,\,\, n':=k+b-a$. Throughout the rest of this paper, we always denote by  $Z=X^{\lambda}$ one such  Schubert variety, whose
 codimension is given by
 $$c:=\mbox{codim} Z=k(n-k)-k'(n'-k').$$

 The Schubert (cohomology) classes $\sigma_\lambda:=\mbox{P.D.}[X^\lambda]\in H^{2|\lambda|}(\mathbb{G}, \mathbb{Z})$ form a  $\mathbb{Z}$-additive basis of the classical cohomology $H^*(\mathbb{G}, \mathbb{Z})$. Via the natural Pl\"ucker embedding $\mathbb{G}\hookrightarrow \mathbb{P}(\wedge^k\mathbb{C}^n)$, the cut of $\mathbb{G}$ of a hyperplane of $\mathbb{P}(\wedge^k\mathbb{C}^n)$ gives an ample divisor $H$ of $\mathbb{G}$. Then  $K_{\mathbb{G}}=-nH$ is a canonical divisor of $\mathbb{G}$, and the divisor class $[H]=\sigma_{(1, 0, \cdots,0)}$ is the Schubert divisor class.

 Denote by $\pi$ the blowup $\pi: X={\rm Bl}_Z\mathbb{G}\to \mathbb{G}$, and let $E$ be the exceptional divisor.
   By abuse of notation, we simply denote the pullback  $\pi^*H$  as $H$. Moreover, we also use the same notation of a divisor for its divisor class, whenever there is no confusion.
We have the following facts,  where the isomorphism for $H^*(X)=H^*(X, \mathbb{C})$ is as vector spaces.
 \begin{equation}
    H^*(X)\cong H^*(\mathbb{G})\oplus \oplus_{i=1}^{c-1} H^*(Z), \quad  {\rm Pic}(X)=\mathbb{Z}H\oplus \mathbb{Z}E,\quad c_1(X)=-K_X=nH-(c-1)E.
 \end{equation}

\subsection{Proof of Theorem \ref{Fano}}

We start with some results on the normal bundle. Denote by $\is{Z}$ the ideal sheaf of $Z$ in $\mathbb{G}$. Note that the hyperplane section $H_Z$ of $Z$ coincides with the restriction $H|_Z$ from $\mathbb{G}$.

\begin{proposition}\label{linear section}
 The sheaf $\sshf{\mathbb{G}}(H)\otimes\is{Z}$ is globally generated. Consequently, $N^*_{Z/{\mathbb{G}}}(H)$ is globally generated.
\end{proposition}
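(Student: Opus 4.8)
The plan is to argue directly via the Plücker embedding $\mathbb{G}=G(k,n)\hookrightarrow\mathbb{P}(\wedge^k\mathbb{C}^n)$, under which $\sshf{\mathbb{G}}(H)=\sshf{\mathbb{G}}(1)$ and $H^0(\mathbb{G},\sshf{\mathbb{G}}(H))=\wedge^k(\mathbb{C}^n)^*$ is spanned by the Plücker coordinates $p_I$ ($I\subseteq\{1,\dots,n\}$, $|I|=k$). First I would pin down sections of $\is{Z}(H):=\sshf{\mathbb{G}}(H)\otimes\is{Z}$. Writing $\wedge^kV=(e_1\wedge\cdots\wedge e_a)\wedge\omega$ for $V\in Z=\{V:\Lambda_a\leq V\leq\Lambda_{k+b}\}$, one checks that $p_I$ vanishes identically on $Z$ exactly for the ``bad'' indices $I$ with $\{1,\dots,a\}\not\subseteq I$ or $I\not\subseteq\{1,\dots,k+b\}$; these bad $p_I$ are the global sections of $\is{Z}(H)$ that I will use to generate the sheaf.

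Global generation away from $Z$ is immediate: if $x=[\wedge^kV]\in\mathbb{G}\setminus Z$, then either $\Lambda_a\not\leq V$ or $V\not\leq\Lambda_{k+b}$, and in either case some bad $p_I$ is nonzero at $x$; as $\is{Z}$ is invertible near $x$, one nonvanishing section suffices. For points of $Z$ I would reduce to a single point by homogeneity: the parabolic subgroup $Q\leq\mathrm{GL}_n$ stabilizing the partial flag $\Lambda_a\leq\Lambda_{k+b}$ preserves $Z$, hence preserves $\is{Z}$ and $H^0(\is{Z}(H))$, and acts transitively on $Z\cong G(k-a,\Lambda_{k+b}/\Lambda_a)$. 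So it is enough to verify global generation at the base point $x_0=[\Lambda_k]\in Z$.

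The crux, and the step I expect to be the main obstacle, is this local computation at $x_0$. In the affine chart $V=\mathrm{rowspan}[\,I_k\mid A\,]$ with $A=(a_{ij})_{1\leq i\leq k<j\leq n}$, the center $Z$ is the linear subspace $\{a_{ij}=0:\ i\leq a\ \text{or}\ j>k+b\}$, so the conormal space $N^*_{Z/\mathbb{G},x_0}$ is spanned by the matching differentials $da_{ij}$ and has dimension $\codim Z$. Since $p_{\{1,\dots,k\}}=1$ in this chart, I would compute the leading terms of the bad coordinates: a one-step index $I=(\{1,\dots,k\}\setminus\{s\})\cup\{t\}$ gives $dp_I=\pm\,da_{st}$, while bad indices differing from $\{1,\dots,k\}$ in more than one place vanish to higher order. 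A direct count shows the one-step bad indices are precisely those with $s\leq a$ or $t>k+b$, so their differentials give a basis of $N^*_{Z/\mathbb{G},x_0}$. By Nakayama these sections generate $\is{Z,x_0}$, proving global generation at $x_0$, hence along $Z$ by $Q$-equivariance.

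The final clause is then formal. The conormal sheaf is the quotient $\is{Z}\twoheadrightarrow\is{Z}/\is{Z}^2=N^*_{Z/\mathbb{G}}$, which upon twisting by $\sshf{\mathbb{G}}(H)$ yields a surjection $\is{Z}(H)\twoheadrightarrow N^*_{Z/\mathbb{G}}(H)$. As any quotient of a globally generated sheaf is globally generated, the global generation of $\is{Z}(H)$ forces that of $N^*_{Z/\mathbb{G}}(H)$.
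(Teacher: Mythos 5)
Your proof is correct, but it takes a genuinely different route from the paper's. The paper disposes of the first claim in one line by citing Ramanathan's theorem that any Schubert variety is cut out scheme-theoretically by global sections of $\sshf{\mathbb{G}}(H)$ (linear sections in the Pl\"ucker embedding), which immediately yields a surjection $\oplus\,\sshf{\mathbb{G}}(-H)\twoheadrightarrow\is{Z}$ and hence global generation of $\sshf{\mathbb{G}}(H)\otimes\is{Z}$; the second claim then follows by restricting to $Z$ exactly as in your last paragraph. What you do instead is essentially reprove the scheme-theoretic cut-out statement by hand in the special case where $Z$ is a sub-Grassmannian $\{V:\Lambda_a\leq V\leq\Lambda_{k+b}\}$: you isolate the ``bad'' Pl\"ucker coordinates, handle points off $Z$ by a set-theoretic vanishing argument, reduce to the base point $[\Lambda_k]$ by equivariance under the parabolic stabilizing the flag, and verify by a local coordinate computation plus Nakayama that the one-step bad coordinates span the conormal fiber. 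This is self-contained and elementary, and the local analysis at $[\Lambda_k]$ is a correct and complete substitute for the citation. What the paper's approach buys is brevity and generality: as the authors note in the remark following the proposition, their argument applies verbatim to an arbitrary (possibly singular) Schubert variety, whereas your computation is tied to the explicit description of the smooth ones. What your approach buys is an explicit identification of which sections of $\is{Z}(H)$ do the generating, which could be useful elsewhere.
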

\begin{proof}
$Z$ is cut out scheme-theoretically by some global sections of $H$ \cite[Theorem 3.11, Remark 3.12]{Ramanathan87}, i.e.
\[\oplus\sshf{\mathbb{G}}(-H)  \twoheadrightarrow \is{Z}. \]
That is,  $\sshf{\mathbb{G}}(H)\otimes\is{Z}$ is globally generated.
Tensoring the above map with $\sshf{Z}$, we get
\begin{equation}\label{eval map}
\oplus \sshf{Z}(-H)\twoheadrightarrow \is{Z}\otimes_{\sshf{\mathbb{G}}}\sshf{Z}\iso N^*_{Z/{\mathbb{G}}},
\end{equation}
which yields
$\oplus \sshf{Z}\twoheadrightarrow N^*_{Z/{\mathbb{G}}}(H).$
This finishes the proof.
\end{proof}
\begin{remark}
  The above proposition holds for any Schubert variety $Z$ by the same proof.
\end{remark}
\begin{example}
Let $\mathbb{G}=G(2, 4)$ and $Z=G(2, 3)\cong\prj{2}$. In the Plucker embedding, $\mathbb{G}$ is defined by $F=p_{12}p_{34}-p_{13}p_{24}+p_{14}p_{23}=0$. And the defining equations of $Z$ are $p_{12}=p_{13}=p_{14}=0$. We have $\sshf{\mathbb{G}}(H)\otimes\is{Z}$ is globally generated, because $\{p_{12}, p_{13}, p_{14}\}$ is a basis for $ H^0(\sshf{\mathbb{G}}(H)\otimes\is{Z})$ and a general element in the linear system vanishes only once along $Z$.


\end{example}

\begin{lemma}\label{g.g. of normal bundle}
The normal bundle $N_{Z/\mathbb{G}}$ is globally generated and
$c_1(N_{Z/\mathbb{G}})=(n-n')H_Z.$
\end{lemma}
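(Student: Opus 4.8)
The plan is to get global generation directly from the tautological presentation of $T_{\mathbb{G}}$, and to compute $c_1(N_{Z/\mathbb{G}})$ from the normal bundle sequence by reading off the restrictions of the tautological sub- and quotient bundles to $Z$. Writing $0\to S\to \sshf{\mathbb{G}}^{\oplus n}\to Q\to 0$ for the tautological sequence on $\mathbb{G}=G(k,n)$, I would recall $T_{\mathbb{G}}\iso S^\vee\otimes Q$. Here $Q$ is globally generated (being a quotient of $\sshf{\mathbb{G}}^{\oplus n}$) and so is $S^\vee$ (a quotient of $(\sshf{\mathbb{G}}^{\oplus n})^\vee\iso \sshf{\mathbb{G}}^{\oplus n}$), hence so is their tensor product $T_{\mathbb{G}}$. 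Since $Z$ is smooth there is a normal bundle sequence
\[ 0\to T_Z\to T_{\mathbb{G}}|_Z\to N_{Z/\mathbb{G}}\to 0, \]
and because restriction preserves global generation, $T_{\mathbb{G}}|_Z$ is globally generated; as a quotient of it, $N_{Z/\mathbb{G}}$ is globally generated too.

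For the Chern class I would take $c_1$ of the displayed sequence, giving $c_1(N_{Z/\mathbb{G}})=c_1(T_{\mathbb{G}})|_Z-c_1(T_Z)$, and then evaluate the two terms through the way $S$ and $Q$ restrict to $Z$. Using $Z=\{V\mid \Lambda_a\leq V\leq \Lambda_{k+b}\}\cong G(k',\Lambda_{k+b}/\Lambda_a)$, the constant inclusion $\Lambda_a\leq V$ and the constant projection $\mathbb{C}^n/V\twoheadrightarrow \mathbb{C}^n/\Lambda_{k+b}$ furnish filtrations
\[ 0\to \Lambda_a\otimes\sshf{Z}\to S|_Z\to S'\to 0, \qquad 0\to Q'\to Q|_Z\to (\mathbb{C}^n/\Lambda_{k+b})\otimes\sshf{Z}\to 0, \]
where $S',Q'$ are the tautological sub- and quotient bundles of $Z\cong G(k',n')$. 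As the outer terms are trivial, $c_1(S|_Z)=c_1(S')$ and $c_1(Q|_Z)=c_1(Q')$.

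Finally I would set $H_Z=c_1(Q')=-c_1(S')$, the Schubert divisor class of $Z$; this is consistent with the identification in the paper since $H=c_1(Q)$ on $\mathbb{G}$ gives $H|_Z=c_1(Q)|_Z=c_1(Q')=H_Z$. The rank-weighted formula for $c_1$ of a tensor product then yields
\[ c_1(T_{\mathbb{G}})|_Z=(n-k)\,c_1(S^\vee|_Z)+k\,c_1(Q|_Z)=(n-k)H_Z+kH_Z=nH_Z, \]
and, since $Q'$ has rank $b$ and $(S')^\vee$ has rank $k'$,
\[ c_1(T_Z)=c_1\big((S')^\vee\otimes Q'\big)=b\,H_Z+k'H_Z=(k'+b)H_Z=n'H_Z, \]
so that $c_1(N_{Z/\mathbb{G}})=(n-n')H_Z$. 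I do not expect a genuine obstacle here: the only step needing care is the bookkeeping in the second paragraph, namely checking that the restricted tautological bundles carry exactly the claimed trivial sub/quotient, which is what pins $H|_Z$ to the intrinsic polarization $c_1(Q')$ of $Z$. Once those filtrations are in place, both assertions follow formally. (Alternatively one could deduce global generation from homogeneity of $\mathbb{G}$ and obtain $c_1(T_Z)=-K_Z=n'\sigma_1$ directly from $Z\cong G(k',n')$, but the tautological route handles both claims uniformly.)
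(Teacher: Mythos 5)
Your proof is correct and follows essentially the same route as the paper: both deduce global generation of $N_{Z/\mathbb{G}}$ from the normal bundle sequence as a quotient of the globally generated $T_{\mathbb{G}}|_Z$ (the paper cites homogeneity of $\mathbb{G}$, you use the tautological presentation $T_{\mathbb{G}}\iso S^\vee\otimes Q$ — the same fact), and both compute $c_1(N_{Z/\mathbb{G}})=c_1(T_{\mathbb{G}})|_Z-c_1(T_Z)=nH|_Z-n'H_Z$. The only difference is that you spell out, via the filtrations of $S|_Z$ and $Q|_Z$, the standard identifications $c_1(T_{\mathbb{G}})=nH$, $c_1(T_Z)=n'H_Z$ and $H|_Z=H_Z$ that the paper takes for granted; this bookkeeping is accurate.
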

\begin{proof}
Consider the exact sequence
\begin{equation}\label{seq for normal bundle}
    \ses{T_Z}{T_{\mathbb{G}}|_Z}{N_{Z/\mathbb{G}}}.
\end{equation}
Since $\mathbb{G}$ is a homogeneous variety, the tangent bundle $T_{\mathbb{G}}$ is globally generated, so is the quotient $N_{Z/\mathbb{G}}$. Moreover, noting $H|_Z=H_Z$,
we have
$$c_1(N_{Z/\mathbb{G}})
 =  c_1(T_{\mathbb{G}})|_Z-c_1(T_Z)
 =  nH|_Z-n'H_Z
 =  (n-n')H_Z.$$
 \end{proof}

Denote the universal subbundle (resp. quotient bundle) of $\mathbb{G}$ by $\mathcal{S}$ (resp.
$\mathcal{Q}$), and denote the universal subbundle (resp. quotient bundle) of $Z=X^\lambda$ by $\mathcal{S}_\lambda$ (resp. $\mathcal{Q}_\lambda$). Recall that $X^\lambda=\{V\in\mathbb{G}|\Lambda_a\le V\le \Lambda_{k+b}\}$. So for $V\in X^\lambda\subset\mathbb{G}$, we have
\[
\mathcal{S}_V=V,\quad\mathcal{Q}_V=\bC^n/V,\quad(\mathcal{S}_\lambda)_V=V/\Lambda_a,\quad(\mathcal{Q}_\lambda)_V=\Lambda_{k+b}/V.
\]
\begin{lemma}\label{lemma-normalbundleofsubgrassmannian}
    We have $N_{Z/\mathbb{G}}\cong\mathcal{Q}_\lambda^{\oplus a}\oplus(\mathcal{S}^\vee|_Z)^{\oplus(n-k-b)}$.
\end{lemma}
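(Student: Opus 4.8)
The plan is to realize both $T_Z$ and $T_{\mathbb{G}}|_Z$ as tensor (Hom) bundles and then compute the normal bundle as a quotient, exploiting the constant-rank flag that $Z$ carries inside $\mathbb{C}^n$. Recall $T_{\mathbb{G}}=\mathcal{S}^\vee\otimes\mathcal{Q}$, so $T_{\mathbb{G}}|_Z=\mathcal{S}^\vee|_Z\otimes\mathcal{Q}|_Z$, while $T_Z=\mathcal{S}_\lambda^\vee\otimes\mathcal{Q}_\lambda$. On $Z$ we have the constant-rank filtration $\Lambda_a\le\mathcal{S}|_Z\le\Lambda_{k+b}\le\mathbb{C}^n$, in which $\Lambda_a$ and $B:=\mathbb{C}^n/\Lambda_{k+b}$ are trivial of ranks $a$ and $n-k-b$; it induces $0\to\Lambda_a\to\mathcal{S}|_Z\to\mathcal{S}_\lambda\to 0$ and $0\to\mathcal{Q}_\lambda\to\mathcal{Q}|_Z\to B\to 0$. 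Dualizing the first gives $0\to\mathcal{S}_\lambda^\vee\to\mathcal{S}^\vee|_Z\to\Lambda_a^\vee\to 0$, and these identifications exhibit $T_Z$ as the subbundle $\mathcal{S}_\lambda^\vee\otimes\mathcal{Q}_\lambda\subset\mathcal{S}^\vee|_Z\otimes\mathcal{Q}|_Z$ in the defining sequence \eqref{seq for normal bundle}, so that $N_{Z/\mathbb{G}}=(\mathcal{S}^\vee|_Z\otimes\mathcal{Q}|_Z)/(\mathcal{S}_\lambda^\vee\otimes\mathcal{Q}_\lambda)$.

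Next I would produce a short exact sequence whose sub- and quotient bundles are already the two desired summands. Tensoring $0\to\mathcal{Q}_\lambda\to\mathcal{Q}|_Z\to B\to 0$ with $\mathcal{S}^\vee|_Z$ gives a subbundle $\mathcal{S}^\vee|_Z\otimes\mathcal{Q}_\lambda\subset T_{\mathbb{G}}|_Z$ which contains $T_Z$, with quotient $\mathcal{S}^\vee|_Z\otimes B\cong(\mathcal{S}^\vee|_Z)^{\oplus(n-k-b)}$. Quotienting this subbundle further by $T_Z$ and using $0\to\mathcal{S}_\lambda^\vee\to\mathcal{S}^\vee|_Z\to\Lambda_a^\vee\to 0$ tensored with $\mathcal{Q}_\lambda$ identifies $(\mathcal{S}^\vee|_Z\otimes\mathcal{Q}_\lambda)/(\mathcal{S}_\lambda^\vee\otimes\mathcal{Q}_\lambda)\cong\Lambda_a^\vee\otimes\mathcal{Q}_\lambda\cong\mathcal{Q}_\lambda^{\oplus a}$. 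Assembling the two steps yields
\[
0\to\mathcal{Q}_\lambda^{\oplus a}\to N_{Z/\mathbb{G}}\to(\mathcal{S}^\vee|_Z)^{\oplus(n-k-b)}\to 0 .
\]
A rank count (together with the $c_1$ already computed in Lemma \ref{g.g. of normal bundle}) matches the asserted decomposition, so it remains only to prove that this sequence splits.

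The main obstacle is precisely this splitting: there is no canonical algebraic section, since neither $0\to\Lambda_a\to\mathcal{S}|_Z\to\mathcal{S}_\lambda\to 0$ nor $0\to\mathcal{Q}_\lambda\to\mathcal{Q}|_Z\to B\to 0$ splits as homogeneous bundles over the moving space $Z$. I would instead show that the extension class vanishes cohomologically. Since the two summands are copies of $\mathcal{S}^\vee|_Z$ and $\mathcal{Q}_\lambda$, the class lies in $\mathrm{Ext}^1\big((\mathcal{S}^\vee|_Z)^{\oplus(n-k-b)},\mathcal{Q}_\lambda^{\oplus a}\big)$, a direct sum of copies of $H^1(Z,\mathcal{S}|_Z\otimes\mathcal{Q}_\lambda)$. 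Tensoring $0\to\mathcal{O}_Z^{\oplus a}\to\mathcal{S}|_Z\to\mathcal{S}_\lambda\to 0$ with $\mathcal{Q}_\lambda$ reduces its vanishing to $H^1(Z,\mathcal{Q}_\lambda)=0$ and $H^1(Z,\mathcal{S}_\lambda\otimes\mathcal{Q}_\lambda)=0$ on $Z\cong G(k',n')$. The first holds because $\mathcal{Q}_\lambda$ has no higher cohomology (Bott). For the second, tensoring the tautological sequence $0\to\mathcal{S}_\lambda\to\mathcal{O}_Z^{\oplus n'}\to\mathcal{Q}_\lambda\to 0$ with $\mathcal{Q}_\lambda$ and noting $H^1(\mathcal{Q}_\lambda^{\oplus n'})=0$ while the multiplication $H^0(\mathcal{Q}_\lambda)\otimes H^0(\mathcal{Q}_\lambda)\to H^0(\mathcal{Q}_\lambda^{\otimes 2})$ is surjective (both sides being $\mathbb{C}^{n'}\otimes\mathbb{C}^{n'}$ via $\mathrm{Sym}^2\oplus\wedge^2$) forces $H^1(Z,\mathcal{S}_\lambda\otimes\mathcal{Q}_\lambda)=0$; alternatively Bott's algorithm gives this at once. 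Hence the relevant $\mathrm{Ext}^1$ vanishes, the displayed sequence splits, and $N_{Z/\mathbb{G}}\cong\mathcal{Q}_\lambda^{\oplus a}\oplus(\mathcal{S}^\vee|_Z)^{\oplus(n-k-b)}$.
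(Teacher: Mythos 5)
Your argument reaches the correct conclusion and all of its actual steps are valid, but it takes a longer route than the paper and rests on one false motivational claim. You assert that $0\to\mathcal{Q}_\lambda\to\mathcal{Q}|_Z\to B\to 0$ does not split as a sequence of bundles on $Z$; in fact it splits \emph{canonically}, because every $V\in Z$ lies inside $\Lambda_{k+b}$, so the constant subspace $\mathrm{Span}\{e_{k+b+1},\dots,e_n\}$ injects into each fiber $\mathbb{C}^n/V$ and gives a trivial complement of $\mathcal{Q}_\lambda$ in $\mathcal{Q}|_Z$. This is exactly the observation the paper exploits: writing $\mathcal{Q}|_Z=\mathcal{Q}_\lambda\oplus\cO_Z^{\oplus(n-k-b)}$ splits $\mathrm{Hom}(\mathcal{S}|_Z,\mathcal{Q}|_Z)$ into $\mathrm{Hom}(\mathcal{S}|_Z,\mathcal{Q}_\lambda)\oplus(\mathcal{S}^\vee|_Z)^{\oplus(n-k-b)}$ with $T_Z$ landing entirely in the first summand, so the normal bundle is the direct sum of $(\mathcal{S}^\vee|_Z)^{\oplus(n-k-b)}$ and $\mathrm{Hom}(\mathcal{S}|_Z,\mathcal{Q}_\lambda)/\mathrm{Hom}(\mathcal{S}_\lambda,\mathcal{Q}_\lambda)\cong\mathcal{Q}_\lambda^{\oplus a}$, with no cohomology needed. (Only the subbundle sequence for $\mathcal{S}|_Z$ lacks a canonical splitting, and the paper never needs one.) Your substitute — exhibiting the two-step filtration of $N_{Z/\mathbb{G}}$ and killing the extension class in $\mathrm{Ext}^1\big((\mathcal{S}^\vee|_Z)^{\oplus(n-k-b)},\mathcal{Q}_\lambda^{\oplus a}\big)\cong H^1(Z,\mathcal{S}|_Z\otimes\mathcal{Q}_\lambda)^{\oplus a(n-k-b)}$ via $H^1(Z,\mathcal{Q}_\lambda)=0$ and $H^1(Z,\mathcal{S}_\lambda\otimes\mathcal{Q}_\lambda)=0$ — is correct (the only quibble being that $H^0(\mathcal{Q}_\lambda^{\otimes 2})$ is all of $\mathbb{C}^{n'}\otimes\mathbb{C}^{n'}$ only when $\mathrm{rank}\,\mathcal{Q}_\lambda\geq 2$; surjectivity of the multiplication map, which is all you use, holds regardless). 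What your approach buys is robustness: it would still apply in situations where the quotient sequence genuinely fails to split. What it costs is the Bott-type vanishings and the fact that it only produces a non-canonical isomorphism, whereas the paper's decomposition is the canonical one that is implicitly used later (e.g.\ in identifying $E=\bP_{G_+}(\mathcal{S}^\vee_{G_+})$ in Section 4).
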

\begin{proof}
Recall that the natural identifications of vector bundles
\[
T_Z\cong \mathrm{Hom}(\mathcal{S}_\lambda,\mathcal{Q}_\lambda),\quad T_{\mathbb{G}}\cong \mathrm{Hom}(\mathcal{S},\mathcal{Q}).
\]
Note that we have the short exact sequence
\begin{align}\label{eq-sesforuniversalsubbundleonsubgrassmannian}
0\to\cO_Z^{\oplus a}\to\mathcal{S}|_Z\to\mathcal{S}_\lambda\to0,
\end{align}
which is obtained by identifying the fiber of $\cO_Z^{\oplus a}$ over $V\in Z$ with $\Lambda_a$.
 Applying $\mathrm{Hom}(\cdot,\mathcal{Q}_\lambda)$ to the short exact sequence, we get
\[
\mathrm{Hom}(\mathcal{S}_\lambda,\mathcal{Q}_\lambda)\hookrightarrow \mathrm{Hom}(\mathcal{S}|_Z,\mathcal{Q}_\lambda),\text{ and }{\mathrm{Hom}(\mathcal{S}|_Z,\mathcal{Q}_\lambda)\over \mathrm{Hom}(\mathcal{S}_\lambda,\mathcal{Q}_\lambda)}\cong \mathrm{Hom}(\cO_Z^{\oplus a},\mathcal{Q}_\lambda)\cong\mathcal{Q}_\lambda^{\oplus a}.
\]
Note that $\mathcal{Q}|_Z=\mathcal{Q}_\lambda\oplus\cO_Z^{\oplus(n-k-b)}$, where we identify the fiber of $\cO_Z^{\oplus(n-k-b)}$ over $V\in Z$ with $\mathrm{Span}\{e_{k+b+1},...,e_n\}$. Then we get
\[
\mathrm{Hom}(\mathcal{S}|_Z,\mathcal{Q}|_Z)=\mathrm{Hom}(\mathcal{S}|_Z,\mathcal{Q}_\lambda)\oplus\mathrm{Hom}(\mathcal{S}|_Z,\cO_Z^{\oplus(n-k-b)}).
\]
The inclusion $T_Z\hookrightarrow T_\mathbb{G}|_Z$ is given by the composite
\[
\mathrm{Hom}(\mathcal{S}_\lambda,\mathcal{Q}_\lambda)\hookrightarrow \mathrm{Hom}(\mathcal{S}|_Z,\mathcal{Q}|_Z)=\mathrm{Hom}(\mathcal{S}|_Z,\mathcal{Q}_\lambda)\oplus\mathrm{Hom}(\mathcal{S}|_Z,\cO_Z^{\oplus(n-k-b)}),
\]
where $\mathrm{Hom}(\mathcal{S}_\lambda,\mathcal{Q}_\lambda)$ is naturally mapped to $\mathrm{Hom}(\mathcal{S}|_Z,\mathcal{Q}_\lambda)$ via \eqref{eq-sesforuniversalsubbundleonsubgrassmannian}. So
\[
T_\mathbb{G}|_Z/T_Z\cong {\mathrm{Hom}(\mathcal{S}|_Z,\mathcal{Q}|_Z)\over \mathrm{Hom}(\mathcal{S}_\lambda,\mathcal{Q}_\lambda)}={\mathrm{Hom}(\mathcal{S}|_Z,\mathcal{Q}_\lambda)\over \mathrm{Hom}(\mathcal{S}_\lambda,\mathcal{Q}_\lambda)}\oplus\mathrm{Hom}(\mathcal{S}|_Z,\cO_Z^{\oplus(n-k-b)})\cong\mathcal{Q}_\lambda^{\oplus a}\oplus(\mathcal{S}^\vee|_Z)^{\oplus(n-k-b)}.
\]
\end{proof}

\begin{corollary}\label{splitting type}
Let $L$ be a line in $Z$. Then
$N^*_{Z/{\mathbb{G}}}|_L=\oplus^{c-(n-n')} \sshf{L}\oplus \oplus^{n-n'} \sshf{L}(-1)$.
Moreover, $c-(n-n')=0$ if and only if $k=k'=1$.
\end{corollary}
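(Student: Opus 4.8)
The plan is to reduce the statement to the splitting types of the two universal bundles that already appear in Lemma \ref{lemma-normalbundleofsubgrassmannian}. Dualizing the decomposition there gives
$$N^*_{Z/{\mathbb{G}}}\iso (\mathcal{Q}_\lambda^\vee)^{\oplus a}\oplus(\mathcal{S}|_Z)^{\oplus(n-k-b)},$$
so it suffices to restrict each factor to $L$ and reassemble. First I would observe that a line $L$ in $Z=G(k',n')$ is automatically a line of $\mathbb{G}$: writing $L=\{W\mid A'\le W\le B'\}$ for a flag $A'\subset B'$ with $\dim A'=k'-1$, $\dim B'=k'+1$ in the space $\Lambda_{k+b}/\Lambda_a$, and taking preimages through $Z=\{V\mid\Lambda_a\le V\le\Lambda_{k+b}\}$, one obtains a flag $A\subset B$ in $\mathbb{C}^n$ with $\dim A=k-1$, $\dim B=k+1$ and $L=\{V\mid A\le V\le B\}\cong\mathbb{P}(B/A)\cong\mathbb{P}^1$. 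In particular the Pl\"ucker polarizations of $Z$ and of $\mathbb{G}$ both restrict to $\sshf{L}(1)$.

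Next I would compute the splitting types over $L\cong\mathbb{P}^1$ using the tautological sequences. Over $L=\mathbb{P}(B/A)$ the fiber of $\mathcal{S}$ is $V$, which contains the constant subspace $A$ and has $V/A$ equal to the tautological subline; hence $\mathcal{S}|_Z|_L=\mathcal{S}|_L$ fits into an extension of $\sshf{L}(-1)$ by $\sshf{L}^{\oplus(k-1)}$, which splits since $\mathrm{Ext}^1(\sshf{L}(-1),\sshf{L})=H^1(\mathbb{P}^1,\sshf{L}(1))=0$. This yields $\mathcal{S}|_Z|_L=\sshf{L}(-1)\oplus\sshf{L}^{\oplus(k-1)}$. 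Dually, the universal quotient $\mathcal{Q}_\lambda$ of $Z$ (of rank $n'-k'=b$) restricts to an extension of $\sshf{L}^{\oplus(b-1)}$ by $\sshf{L}(1)$, again split by the vanishing of $H^1(\mathbb{P}^1,\sshf{L}(1))$, so $\mathcal{Q}_\lambda^\vee|_L=\sshf{L}(-1)\oplus\sshf{L}^{\oplus(b-1)}$.

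Assembling, each of the $a$ copies of $\mathcal{Q}_\lambda^\vee$ and each of the $n-k-b$ copies of $\mathcal{S}|_Z$ contributes exactly one $\sshf{L}(-1)$, giving $a+(n-k-b)=n-n'$ copies of $\sshf{L}(-1)$ and $a(b-1)+(n-k-b)(k-1)=c-(n-n')$ copies of $\sshf{L}$, which is the claimed splitting. As consistency checks, the ranks sum to $c$ and the total degree is $-(n-n')$, matching $c_1(N^*_{Z/{\mathbb{G}}})=-(n-n')H_Z$ from Lemma \ref{g.g. of normal bundle}. For the final assertion, the count $c-(n-n')=a(b-1)+(n-k-b)(k-1)$ is a sum of two products of nonnegative integers (here $b\ge1$, since $L\subset Z$ forces $\dim Z\ge1$), so it vanishes iff $a(b-1)=0$ and $(n-k-b)(k-1)=0$. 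When $k=k'=1$ we have $a=0$ and $k=1$, so both terms vanish; conversely, after excluding the trivial case $Z=\mathbb{G}$ and presenting projective space as $G(1,\cdot)$ via $G(k,n)\cong G(n-k,n)$, the vanishing forces $k=1$ and hence $k'=1$.

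I expect the only genuine subtlety to be this last equivalence: literal vanishing of $c-(n-n')$ also occurs for a linear subspace $\mathbb{P}^{k'}\subset\mathbb{P}^k$ written as $G(k,k+1)$ with $k\ge2$, so the clean characterization $k=k'=1$ is correct only after normalizing the presentation of $\mathbb{G}$ by the duality $G(k,n)\cong G(n-k,n)$. The bundle computations themselves are routine once $L$ is identified as a line of $\mathbb{G}$ and the two tautological sequences over $\mathbb{P}^1$ are in place.
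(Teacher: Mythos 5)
Your proof is correct, and it takes a genuinely different route from the paper's. The paper does not use the explicit decomposition of the normal bundle at this point: it invokes Grothendieck's theorem to write $N^*_{Z/{\mathbb{G}}}|_L=\oplus_i\sshf{L}(a_i)$, bounds $a_i\ge -1$ by the global generation of $\sshf{\mathbb{G}}(H)\otimes\is{Z}$ (Proposition \ref{linear section}) and $a_i\le 0$ by the global generation of $N_{Z/\mathbb{G}}$ (Lemma \ref{g.g. of normal bundle}), and then counts the $(-1)$'s using $c_1(N_{Z/\mathbb{G}})=(n-n')H_Z$. You instead dualize Lemma \ref{lemma-normalbundleofsubgrassmannian} and restrict each tautological summand to the line via the two standard split extensions over $\mathbb{P}^1$; this is equally valid, slightly more informative (it identifies which summands carry the $\sshf{L}(-1)$'s, and gives the splitting of $N^*_{Z/\mathbb{G}}$ itself, not just its restriction to one line), but it leans on Lemma \ref{lemma-normalbundleofsubgrassmannian}, whereas the paper's route needs only the two positivity statements. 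For the final assertion your count $a(b-1)+(n-k-b)(k-1)$ matches the paper's expression $(k-1)(n-k-1)-(k'-1)(n'-k'-1)$ in the case $k>k'$, and you are right to flag the edge case: the paper's case (ii) asserts $n-k-1\ge 1$ without justification, and for $\mathbb{G}=G(n-1,n)\cong\mathbb{P}^{n-1}$ with $Z$ a linear subspace one literally has $c-(n-n')=0$ while $k=n-1$; the clean characterization $k=k'=1$ holds only after normalizing the presentation via $G(k,n)\cong G(n-k,n)$, exactly as you observe.
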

\begin{proof}
By Grothendieck, $N^*_{Z/{\mathbb{G}}}|_L=\oplus^c_{i=1} \sshf{L}(a_i)$ for some $a_i\in \mathbb{Z}$. In view of Proposition \ref{linear section}, we deduce that $a_i\ge -1$. On the other hand, by Lemma \ref{g.g. of normal bundle}, we obtain that $a_i\le 0$. So either $a_i=0$ or $-1$; among all $a_i$, number of $-1$ is $(n-n')$.

Finally consider $c-(n-n')$, the number of occurrences of $\sshf{L}$ in the splitting of $N^*_{Z/{\mathbb{G}}}|_L$. \\
(i) If $k=k'$, then
\[c-(n-n')=k(n-n')-(n-n')=(k-1)(n-n')>0,\]
unless $k=1$. \\
(ii) If $k>k'$, then
\begin{eqnarray*}
 c-(n-n')
&=&(k-1)n-(k'-1)n'-(k^2-1)+({k'}^2-1)\\
&=&(k-1)(n-k-1)-(k'-1)(n'-k'-1).
\end{eqnarray*}
We have
\[k-1\ge 1, \quad n-k-1\ge 1 \quad k'-1\ge 0, \quad n'-k'-1\ge 0\]
and
\[k-1 > k'-1, \quad n-k-1\ge n'-k'-1.\]
These inequalities imply that $c-(n-n')>0$.
\end{proof}

\begin{corollary}
Suppose $k(n-k)>2k'(n'-k')$. Then there exists a quotient $N^*\twoheadrightarrow\sshf{Z}$.
\end{corollary}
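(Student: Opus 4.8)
The plan is to dualize the desired statement. A surjection $N^*_{Z/\mathbb{G}}\twoheadrightarrow\sshf{Z}$ of vector bundles is the same datum as a sub\emph{bundle} inclusion $\sshf{Z}\hookrightarrow N_{Z/\mathbb{G}}$ (one whose cokernel is locally free), which in turn is the same as a nowhere-vanishing global section $s\in H^0(Z, N_{Z/\mathbb{G}})$: a section that vanishes nowhere is fiberwise injective, hence defines a line subbundle, and dualizing a locally split inclusion gives a bundle surjection onto $\sshf{Z}^\vee=\sshf{Z}$. So it suffices to produce such a section $s$. I would rely on two facts already in hand: that $N:=N_{Z/\mathbb{G}}$ is globally generated (Lemma \ref{g.g. of normal bundle}), and that its rank is large relative to $\dim Z$.

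First I would record the numerical input. The rank of $N$ is $\rk{N}=c=\codim Z=k(n-k)-k'(n'-k')$, while $\dim Z=k'(n'-k')$. Hence the hypothesis $k(n-k)>2k'(n'-k')$ is precisely the inequality
\[
\rk{N}=k(n-k)-k'(n'-k')>k'(n'-k')=\dim Z.
\]
Thus we are exactly in the regime where the rank of a globally generated bundle strictly exceeds the dimension of the base.

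The core step is the standard fact that a general section of such a bundle is nowhere vanishing. Concretely, set $V=H^0(Z, N)$ (a finite-dimensional vector space, since $Z$ is projective) and consider the incidence locus $\Sigma=\{(s, z)\in V\times Z\mid s(z)=0\}$. Global generation means the evaluation $V\to N_z$ is surjective for every $z\in Z$, so the fiber of $\Sigma$ over $z$ is the kernel of this map, of dimension $\dim V-\rk{N}$; therefore $\dim\Sigma=\dim V-\rk{N}+\dim Z$. Since $\rk{N}>\dim Z$, we get $\dim\Sigma<\dim V$, so the first projection $\Sigma\to V$ is not dominant and its image lies in a proper closed subset. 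A general $s\in V$ then lies outside this image, i.e. $s$ is nowhere vanishing, and dualizing the subbundle inclusion $\sshf{Z}\xrightarrow{\,s\,}N$ yields the desired surjection $N^*\twoheadrightarrow\sshf{Z}$.

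I do not expect a serious obstacle. Global generation of $N$ is supplied by Lemma \ref{g.g. of normal bundle}, the translation of the hypothesis into $\rk{N}>\dim Z$ is immediate, and the general-section argument is a routine dimension count over the algebraically closed field $\mathbb{C}$. The only point deserving care is that the section produced be genuinely nowhere vanishing, so that it defines a \emph{subbundle} rather than merely a nonzero subsheaf; this is exactly what the strict inequality $\rk{N}>\dim Z$ guarantees.
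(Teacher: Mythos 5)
Your proof is correct and follows essentially the same route as the paper: translate the hypothesis into $\mathrm{rank}(N)>\dim Z$, use global generation to produce a nowhere-vanishing section, and dualize the resulting subbundle inclusion. The only difference is that the paper cites this existence statement as a lemma of Serre (from Mumford's \emph{Lectures on Curves on an Algebraic Surface}), whereas you prove it inline via the standard incidence-variety dimension count.
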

\begin{proof}
The assumption amounts to saying $\rk{N}=c=\rm{codim}(Z)>\dim Z$. Since $N$ is globally generated, by a lemma of Serre \cite[Lecture 21]{Mum66}, there exists a nowhere vanishing section $\psi\in H^0(Z, N_{Z/{\mathbb{G}}})$, defining an exact sequence
\[\ses{\sshf{Z}}{N}{N'},\]
such that $N'$ is locally free. Taking dual gives the desired quotient.
\end{proof}


Abusing notation, let $\pi$ also denote the projective bundle morphism $E=\mathbb{P}(N_{Z/{\mathbb{G}}})\rightarrow Z$. Denote by $e$ the class of a line in a fibre of $\pi$, and denote by $c_1(\sshf{E}(1))$ the first Chern class of $\sshf{E}(1)$.

Let $L$ be a line in $Z$. In view of Corollary \ref{splitting type}, there exists a quotient
\[N^*_{Z/{\mathbb{G}}}|_L\twoheadrightarrow \sshf{L},\]
which defines a closed embedding of $L$ to $E$
\[
\xymatrix{
   &  E\ar[d]_{\pi}\\
L \ar@{^{(}->}[r]\ar@{^{(}->}[ru]^{\psi} & Z.}
\]
Denote by $\ell$ the class of $\psi(L)$. We have the intersection numbers
\begin{equation}
   c_1(\sshf{E}(1))\cdot \ell=0, \: c_1(\sshf{E}(1))\cdot e=1, \:H\cdot \ell=1, \:H\cdot e=0.
\end{equation}
From these intersection numbers, we can see that $\ell$ is independent of the choice of $L$.
\begin{lemma}
  The effective cone of curves $\overline{\rm{NE}}(E)$ is spanned by the classes $e$ and $\ell-e$.
\end{lemma}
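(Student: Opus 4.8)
\emph{The plan.} Since $\pi\colon E=\mathbb{P}(N_{Z/\mathbb{G}})\to Z$ is a projective bundle over the Grassmannian $Z=G(k',n')$, whose Picard group has rank one (generated by $H_Z$), the space $N^1(E)_{\mathbb{R}}$ is two-dimensional with basis $\{\xi,H\}$, where $\xi:=c_1(\mathcal{O}_E(1))$ and $H=\pi^*H_Z$; dually $N_1(E)_{\mathbb{R}}=\mathbb{R}^2$. By the intersection numbers recorded above, $\{e,\ell\}$ is exactly the basis of $N_1(E)_{\mathbb{R}}$ dual to $\{\xi,H\}$, so I will identify a curve class $\gamma$ with the pair $(\xi\cdot\gamma,\,H\cdot\gamma)\in\mathbb{R}^2$. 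Under this identification $e=(1,0)$, $\ell=(0,1)$, $\ell-e=(-1,1)$, and the target cone $\mathbb{R}_{\geq 0}e+\mathbb{R}_{\geq 0}(\ell-e)$ is precisely $\{(x,y):y\geq 0,\ x+y\geq 0\}$, i.e. the cone cut out by the two functionals $H\cdot(-)$ and $(\xi+H)\cdot(-)$. I will prove the two inclusions separately.

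\emph{The inclusion $\overline{\mathrm{NE}}(E)\subseteq \mathbb{R}_{\geq 0}e+\mathbb{R}_{\geq 0}(\ell-e)$.} It suffices to show that $H$ and $\xi+H$ are nef, since then every effective class $\gamma$ satisfies $H\cdot\gamma\geq 0$ and $(\xi+H)\cdot\gamma\geq 0$. Nefness of $H=\pi^*H_Z$ is immediate from the ampleness of $H_Z$ on $Z$. For $\xi+H$ I will use the tautological surjection $\pi^*N^*_{Z/\mathbb{G}}\twoheadrightarrow \mathcal{O}_E(1)$ (valid for the convention $\mathbb{P}(N)=\mathrm{Proj}\,\mathrm{Sym}(N^*)$): twisting by $\pi^*\mathcal{O}_Z(H_Z)$ yields a surjection $\pi^*\!\big(N^*_{Z/\mathbb{G}}(H)\big)\twoheadrightarrow \mathcal{O}_E(1)\otimes\pi^*\mathcal{O}_Z(H_Z)$, whose target has class $\xi+H$. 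By Proposition \ref{linear section} the bundle $N^*_{Z/\mathbb{G}}(H)$ on $Z$ (with $H|_Z=H_Z$) is globally generated, hence so is its pullback, and therefore so is the quotient line bundle of class $\xi+H$; in particular $\xi+H$ is nef. Dualizing, $\overline{\mathrm{NE}}(E)\subseteq\{H\cdot\gamma\geq 0,\ (\xi+H)\cdot\gamma\geq 0\}=\mathbb{R}_{\geq 0}e+\mathbb{R}_{\geq 0}(\ell-e)$.

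\emph{The inclusion $\mathbb{R}_{\geq 0}e+\mathbb{R}_{\geq 0}(\ell-e)\subseteq \overline{\mathrm{NE}}(E)$.} Here I must exhibit effective representatives of the two generating rays. The class $e$ is represented by a line in a fibre of $\pi$, hence is effective. To realize $\ell-e$, fix a line $L\subset Z$ and restrict: dualizing Corollary \ref{splitting type} gives $N_{Z/\mathbb{G}}|_L\cong \mathcal{O}_L^{\oplus(c-(n-n'))}\oplus \mathcal{O}_L(1)^{\oplus(n-n')}$ with $n-n'\geq 1$ (as $Z$ is a proper subvariety). The inclusion of one $\mathcal{O}_L(1)$-summand is a saturated sub-line-bundle of $N_{Z/\mathbb{G}}|_L$, and hence defines a section $\sigma\colon L\to \pi^{-1}(L)\subset E$. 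Since $\sigma^*\mathcal{O}_E(-1)\cong\mathcal{O}_L(1)$ and $\sigma$ maps $L$ isomorphically onto a line in $Z$, the image satisfies $\xi\cdot\sigma(L)=-1$ and $H\cdot\sigma(L)=1$, i.e. $[\sigma(L)]=\ell-e$. Thus $\ell-e$ is effective, and combined with the previous inclusion this gives the claimed equality.

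\emph{Main obstacle and remarks.} The two substantive inputs are the nefness of $\xi+H$, which rests entirely on the global generation of $N^*_{Z/\mathbb{G}}(H)$ from Proposition \ref{linear section}, and the construction of an effective curve in class $\ell-e$, which relies on the explicit splitting type in Corollary \ref{splitting type}; everything else is two-dimensional cone duality once the dual pairing of $\{\xi,H\}$ and $\{e,\ell\}$ is in place. The only point requiring care is the hypothesis $\dim Z\geq 1$ (equivalently $k'\geq 1$ and $n'>k'$), needed for lines $L\subset Z$ to exist; when $Z$ is a point one has $E\cong\mathbb{P}^{c-1}$ and $\overline{\mathrm{NE}}(E)$ degenerates to the single ray $\mathbb{R}_{\geq 0}e$.
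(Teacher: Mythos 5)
Your proof is correct and follows essentially the same route as the paper's: both inclusions are obtained by pairing against the nef classes $H$ and $\xi+H$, and by realizing $\ell-e$ as the image of the section of $E|_L\to L$ coming from the $\sshf{L}(-1)$ quotient (equivalently the $\sshf{L}(1)$ summand) in the splitting of Corollary \ref{splitting type}. The only additions are welcome bookkeeping: you spell out why $\xi+H$ is nef via the tautological surjection and Proposition \ref{linear section} (which the paper asserts without detail), and you flag the degenerate case $\dim Z=0$, where $\ell$ is undefined and the cone is a single ray.
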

\begin{proof}
Let $C$ be an irreducible curve in $E$ with $[C]=\alpha \ell+\beta e$ for some $\alpha, \beta\in \mathbb{Z}$. Since $H$ and $\xi+H$ are nef, we have
\[\alpha=[C]\cdot H\ge 0, \alpha+\beta=[C]\cdot (\xi+H)\ge 0.\]
For any line $L$ in $Z$, in view of Corollary \ref{splitting type}, there exits a quotient
\[N^*_{Z/{\mathbb{G}}}|_L\twoheadrightarrow \sshf{L}(-1),\]
defining a section
\[
\xymatrix{
   &  E\ar[d]_{\pi}\\
L \ar@{^{(}->}[r]\ar@{^{(}->}[ru]^{\psi'} & Z.}
\]
We have $[\psi'(L)]\cdot \xi=-1$ and $[\psi'(L)]\cdot H=1$. Therefore
\[[\psi'(L)]=\ell-e.\]
We deduce that $\overline{\rm{NE}}(E)=\rm{NE}(E)$ is spanned by $e$ and $\ell-e$.
\end{proof}
The closed embedding $\iota_E: E\hookrightarrow X$ induces the map between the closed cones of curves
\[{\iota_E}_*: \overline{\rm{NE}}(E)\rightarrow \overline{\rm{NE}}(X).\]
\begin{proposition}\label{iso. of cone of curves}
$\overline{\rm{NE}}(X)={\iota_E}_*\overline{\rm{NE}}(E)$ is generated by the classes $e$ and $\ell-e$.
\end{proposition}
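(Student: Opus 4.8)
The plan is to prove the two inclusions of $\overline{\rm{NE}}(X)={\iota_E}_*\overline{\rm{NE}}(E)=\mathbb{R}_{\geq 0}e+\mathbb{R}_{\geq 0}(\ell-e)$ by exploiting that $N_1(X)$ has rank two, so that the whole cone can be read off from pairings against just two divisors. Since ${\rm Pic}(X)=\mathbb{Z}H\oplus\mathbb{Z}E$, the space $N_1(X)_{\mathbb{R}}$ is two–dimensional, and I would write an arbitrary curve class as $[C]=\alpha\ell+\beta e$ with $\alpha,\beta\in\mathbb{R}$. The inclusion ${\iota_E}_*\overline{\rm{NE}}(E)\subseteq\overline{\rm{NE}}(X)$ is immediate: the previous lemma gives $\overline{\rm{NE}}(E)=\mathbb{R}_{\geq 0}e+\mathbb{R}_{\geq 0}(\ell-e)$, and both $e$ (a line in a fibre of $E\to Z$) and $\ell-e=[\psi'(L)]$ are classes of honest curves contained in $E\subset X$, hence effective on $X$; their images under ${\iota_E}_*$ span the cone $\mathbb{R}_{\geq 0}e+\mathbb{R}_{\geq 0}(\ell-e)$.

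For the reverse inclusion, the key is to produce two nef divisors on $X$ cutting out precisely the two rays. The first is $H=\pi^*H$, which is nef as the pullback of the ample class $H$ on $\mathbb{G}$. The second, and this is the crux, is $H-E$, whose nefness I would deduce from Proposition \ref{linear section}. Concretely, the surjection $\oplus\,\sshf{\mathbb{G}}(-H)\twoheadrightarrow\is{Z}$ furnished there pulls back along $\pi$; since the inverse image ideal sheaf of the blowup is $\is{Z}\cdot\sshf{X}=\sshf{X}(-E)$, the pullback is a surjection $\oplus\,\sshf{X}(-H)\twoheadrightarrow\sshf{X}(-E)$. Twisting by $\sshf{X}(H)$ yields $\oplus\,\sshf{X}\twoheadrightarrow\sshf{X}(H-E)$, so $\sshf{X}(H-E)$ is globally generated and in particular nef.

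It then remains to compute the two relevant pairings. Against $H$ one gets $H\cdot[C]=\alpha$ directly from the listed numbers $H\cdot\ell=1$, $H\cdot e=0$. For $H-E$ I would use the standard blowup identity $E|_E=-\xi$ with $\xi=c_1(\sshf{E}(1))$ (consistent with the subspace convention for $\mathbb{P}(\cdot)$ and with $\xi\cdot e=1$, $\xi\cdot\ell=0$), which gives $E\cdot e=-1$ and $E\cdot\ell=0$, whence $(H-E)\cdot\ell=1$, $(H-E)\cdot e=1$, and so $(H-E)\cdot[C]=\alpha+\beta$. Nefness of $H$ and $H-E$ now forces $\alpha\geq 0$ and $\alpha+\beta\geq 0$ for every effective $[C]$, and these two inequalities describe exactly the cone $\mathbb{R}_{\geq 0}e+\mathbb{R}_{\geq 0}(\ell-e)$; note also that the pairing matrix of $\{H,H-E\}$ against $\{\ell,e\}$ is nondegenerate, confirming that $e$ and $\ell-e$ are independent and span a genuine two–dimensional cone. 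Combining the two inclusions finishes the proof. The only substantive point is the nefness of $H-E$; once Proposition \ref{linear section} is in hand this is routine, and the remaining steps are pure intersection-number bookkeeping on the rank-two lattice $N_1(X)$.
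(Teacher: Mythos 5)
Your proof is correct and follows essentially the same route as the paper: both arguments rest on the nefness of $H$ and of $H-E$, the latter deduced from the global generation in Proposition \ref{linear section}, together with the intersection numbers $H\cdot\ell=(H-E)\cdot\ell=(H-E)\cdot e=1$, $H\cdot e=0$. The only difference is cosmetic — the paper runs a case analysis on irreducible curves (contracted by $\pi$, mapped into $Z$, or not), whereas you apply the two nef divisors uniformly to cut out the dual description of the cone.
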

\begin{proof}
Let $C$ be an irreducible curve in $X$. If $C$ is contracted by $\pi$, then $[C]=me$ for some $m>0$. If not, but $\pi(C)$ is contained in $Z$, then $[C]\in {\iota_E}_*\overline{\rm{NE}}(E)$ and hence $[C]=a_1\ell+a_2e$ for some $a_1, a_2\ge 0$ subject to $a_1+a_2\ge 0$. Suppose $\pi(C)$ is not contained in $Z$, we put $[C]=a_1\ell-a_2e$ with $a_1, a_2\in \mathbb{Z}$. By the projection formula,
\[a_1=[C]\cdot H=\pi_*[C]\cdot H>0.\]
Moreover, since $H-E$ is globally generated by Proposition \ref{linear section},
\[a_1-a_2=[C]\cdot (H-E)\ge 0.\]
This finishes the proof.
\end{proof}

\begin{lemma}[Kleimann's criterion]
Let $Y$ be a smooth projective variety and $D$ an $\mathbb{R}$-divisor. Then $D$ is ample if and only if
\[\overline{\rm{NE}}(Y)\backslash\{0\}\subseteq D_{>0}.\]
\end{lemma}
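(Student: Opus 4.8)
The statement is Kleiman's ampleness criterion, so the plan is to reproduce its standard proof, whose content lies almost entirely in the reverse implication and rests on the Nakai--Moishezon criterion. Write $N_1(Y)_{\mathbb{R}}$ and $N^1(Y)_{\mathbb{R}}$ for the real N\'eron--Severi spaces of curves and of divisors, and read $D_{>0}=\{\gamma\in N_1(Y)_{\mathbb{R}}\mid D\cdot\gamma>0\}$. Since $Y$ is projective, $\overline{\rm{NE}}(Y)$ is a pointed, full-dimensional closed cone, so by convex-cone duality the hypothesis $\overline{\rm{NE}}(Y)\setminus\{0\}\subseteq D_{>0}$ is equivalent to $D$ lying in the interior of the dual cone $\mathrm{Nef}(Y)=\{D'\mid D'\cdot\gamma\ge 0\ \text{for all }\gamma\in\overline{\rm{NE}}(Y)\}$. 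Thus the lemma reduces to the equality $\mathrm{Amp}(Y)=\mathrm{int}\,\mathrm{Nef}(Y)$.

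For the forward inclusion $\mathrm{Amp}(Y)\subseteq\mathrm{int}\,\mathrm{Nef}(Y)$, I would use that the ample cone is open in $N^1(Y)_{\mathbb{R}}$ (ampleness is a numerical open condition: $A+\sum_i t_iD_i$ remains ample for $|t_i|$ small) and that every ample class is nef, being strictly positive on curves after passing to a very ample multiple. An open subset of $\mathrm{Nef}(Y)$ lies in its interior, which gives this inclusion formally.

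For the reverse inclusion $\mathrm{int}\,\mathrm{Nef}(Y)\subseteq\mathrm{Amp}(Y)$, take $D\in\mathrm{int}\,\mathrm{Nef}(Y)$ and fix an ample divisor $H$. Choosing a norm on $N_1(Y)_{\mathbb{R}}$, the slice $S=\{\gamma\in\overline{\rm{NE}}(Y)\mid\|\gamma\|=1\}$ is compact, and since both $D$ and $H$ are strictly positive and continuous on $S$, the ratio attains a positive minimum; hence $D-\epsilon H$ is nef for some $\epsilon>0$. Writing $D=(D-\epsilon H)+\epsilon H$, I would verify the Nakai--Moishezon criterion: for each $k$-dimensional subvariety $V\subseteq Y$, expand $D^k\cdot V$ binomially in $(D-\epsilon H)$ and $\epsilon H$. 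Every mixed term is $\ge 0$ by Kleiman's positivity inequality for nef classes, while the leading term $(\epsilon H)^k\cdot V>0$ since $H$ is ample; therefore $D^k\cdot V>0$ for all $V$, and $D$ is ample.

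The hard part will be the input to the last step, namely Kleiman's inequality $N_1\cdots N_k\cdot V\ge 0$ for nef classes $N_1,\dots,N_k$ and a subvariety $V$. The plan for it is induction on $\dim V$: restrict to $V$, approximate each nef class by ample ones after perturbing by $\epsilon H$, slice with a general member of a very ample linear system to drop dimension, and control the leading intersection number through asymptotic Riemann--Roch. This inequality, together with the Nakai--Moishezon criterion itself --- both standard but genuinely nontrivial --- supplies the reverse implication, the duality reformulation and the forward implication being formal.
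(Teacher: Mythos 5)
The paper does not prove this lemma at all: it is stated as a classical result (Kleiman's ampleness criterion) purely so that it can be applied to $-K_X$ in the proof of Theorem \ref{Fano}, so there is no in-paper argument to compare yours against. Your outline is the standard textbook proof (as in Lazarsfeld or Koll\'ar--Mori): reduce via cone duality to $\mathrm{Amp}(Y)=\mathrm{int}\,\mathrm{Nef}(Y)$, get the forward inclusion from openness of the ample cone, and get the reverse inclusion by writing $D=(D-\epsilon H)+\epsilon H$ and feeding the binomial expansion of $D^k\cdot V$ into Nakai--Moishezon together with Kleiman's nonnegativity of nef intersection products. Two caveats worth flagging. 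First, the lemma is stated for $\mathbb{R}$-divisors, and the Nakai--Moishezon criterion you invoke at the end is classically a statement about integral (or $\mathbb{Q}$-) divisors; its $\mathbb{R}$-divisor extension is the Campana--Peternell theorem, which is genuinely harder, so you should either cite it or handle the $\mathbb{R}$-case by expressing a class in $\mathrm{int}\,\mathrm{Nef}(Y)$ as a positive combination of integral ample classes (this is the route usually taken, and it is all that is needed here since $-K_X$ is integral). Second, asserting at the outset that $\overline{\mathrm{NE}}(Y)$ is pointed is mildly circular --- pointedness is most easily deduced from the criterion applied to an ample divisor --- but your duality argument only uses closedness of the cone and nondegeneracy of the intersection pairing on $N_1(Y)_{\mathbb{R}}$, so nothing breaks.
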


\begin{proof}[Proof of Theorem \ref{Fano}]
By Kleimann's criterion and Theorem \ref{iso. of cone of curves}, $-K_X$ is ample if and only if the following hold
\begin{eqnarray*}
-K_X\cdot e &=& k(n-m)-1>0\\
-K_X\cdot (\ell-e) &=& n-[k(n-k)-k'(n'-k')-1]>0,
\end{eqnarray*}
which amounts to $k(n-k)-k'(n'-k')<n+1$, i.e. $c=\rm{codim}Z\le n$.
\end{proof}

We next describe the nef cones:
\begin{lemma}\label{nef cone}
\begin{eqnarray*}
\text{Nef}(X)&=&\mathbb{R}_{>0} H+\mathbb{R}_{>0} (H-E). \\
\text{Nef}(E) &=& \mathbb{R}_{>0} H|_E+\mathbb{R}_{>0} (H|_E+\sshf{E}(1)).
\end{eqnarray*}
\end{lemma}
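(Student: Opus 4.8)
The plan is to obtain both cones by duality with the cones of curves, which have already been determined: $\overline{\mathrm{NE}}(X)$ is spanned by $e$ and $\ell-e$ (Proposition \ref{iso. of cone of curves}), and $\overline{\mathrm{NE}}(E)$ is spanned by $e$ and $\ell-e$ as well (the lemma immediately preceding that proposition). For a smooth projective variety the nef cone is exactly the dual of the closed cone of curves under the intersection pairing $N^1\times N_1\to\mathbb{R}$, so it suffices to dualize these two rays in each case. Both $N^1(X)_{\mathbb{R}}$ and $N^1(E)_{\mathbb{R}}$ are two-dimensional: the former has basis $\{H,E\}$ since ${\rm Pic}(X)=\mathbb{Z}H\oplus\mathbb{Z}E$, and the latter has basis $\{H|_E,\xi\}$ with $\xi:=c_1(\sshf{E}(1))$, because $E=\mathbb{P}(N_{Z/\mathbb{G}})$ is a projective bundle over the Grassmannian $Z$, whose Picard group is generated by $H_Z=H|_Z$.

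The one ingredient not yet recorded is the pair $E\cdot e$ and $E\cdot \ell$. Here I would invoke the standard identity for a blowup along a smooth center, $\sshf{X}(E)|_E\cong \sshf{E}(-1)$, valid in the paper's subspace convention $E=\mathbb{P}(N_{Z/\mathbb{G}})$; equivalently $E|_E=-\xi$ in ${\rm Pic}(E)$. Combined with the numbers $\xi\cdot e=1,\ \xi\cdot\ell=0$ already listed, this yields $E\cdot e=-1$ and $E\cdot\ell=0$, consistent with the fact that a blowup always satisfies $E\cdot e=-1$ on a fibre line. Since the curves $e,\ell$ lie in $E$, these intersection numbers may be computed by restriction to $E$, so no further geometry is needed. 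Assembling everything, the pairing matrices in the bases above are $\left(\begin{smallmatrix}0&1\\-1&0\end{smallmatrix}\right)$ for $X$ and $\left(\begin{smallmatrix}0&1\\1&0\end{smallmatrix}\right)$ for $E$, both nondegenerate, so the pairing $N^1\times N_1\to\mathbb{R}$ is perfect and the nef--Mori duality applies cleanly.

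It then remains to solve two pairs of linear inequalities. Writing $D=xH+yE$, nefness on $X$ reads $D\cdot e=-y\ge 0$ and $D\cdot(\ell-e)=x+y\ge 0$, whose extremal rays are $y=0$ (generator $H$) and $x+y=0$ (generator $H-E$); this gives $\mathrm{Nef}(X)=\mathbb{R}_{\ge 0}H+\mathbb{R}_{\ge 0}(H-E)$. Likewise, writing $D=x(H|_E)+y\xi$, nefness on $E$ reads $D\cdot e=y\ge 0$ and $D\cdot(\ell-e)=x-y\ge 0$, with extremal rays $y=0$ (generator $H|_E$) and $x=y$ (generator $H|_E+\xi$), giving $\mathrm{Nef}(E)=\mathbb{R}_{\ge 0}(H|_E)+\mathbb{R}_{\ge 0}(H|_E+c_1(\sshf{E}(1)))$, as claimed (the interiors being the ample cones recorded with strict inequalities in the statement). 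The only genuinely nontrivial point is fixing the signs in $E|_E=-\xi$; once this is pinned down and cross-checked against the given $\xi\cdot e=1$, the remainder is elementary convex duality.
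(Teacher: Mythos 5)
Your proof is correct and takes essentially the same route as the paper: both identify $\mathrm{Nef}$ as the dual of the Mori cone determined in Proposition~\ref{iso. of cone of curves} and the lemma preceding it, and then read off the two extremal rays. The only cosmetic difference is that the paper quotes the nefness of $H$ and $H-E$ already established via global generation (Proposition~\ref{linear section}) and checks that each vanishes on one extremal curve class, whereas you rederive everything from the pairing matrix using $E|_E=-c_1(\sshf{E}(1))$; the intersection numbers you obtain agree with those tabulated in Section~3.
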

\begin{proof}
In the previous section, we have seen that $H$ and $H-E$ are nef. Moreover, they are not ample, because
\[H\cdot e=0, (H-E)\cdot (\ell-e)=0.\]
For $\text{Nef}(E)$, we use Theorem \ref{iso. of cone of curves} and the fact $(H-E)|_E=H_E+\sshf{E}(1)$.
\end{proof}

\begin{corollary}
$E$ is Fano if and only if $c<n$.
\end{corollary}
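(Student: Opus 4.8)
The plan is to apply \textbf{Kleimann's criterion} to $-K_E$ directly, exploiting that the entire cone-of-curves geometry of $E$ has already been pinned down. Since $\overline{\rm{NE}}(E)$ has been shown to be the two-dimensional cone spanned by $e$ and $\ell-e$, the anticanonical class $-K_E$ is ample precisely when $-K_E\cdot e>0$ and $-K_E\cdot(\ell-e)>0$. So the whole problem reduces to evaluating two intersection numbers, exactly in parallel with the proof of Theorem \ref{Fano} (where one tests $-K_X$ against the same two generators).

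To obtain $-K_E$, I would use adjunction: $K_E=(K_X+E)|_E$. Since $c_1(X)=nH-(c-1)E$, we get $K_X+E=-nH+cE$, so that $-K_E=(nH-cE)|_E=nH|_E-c\,E|_E$. In the proof of Lemma \ref{nef cone} the identity $(H-E)|_E=H|_E+c_1(\sshf{E}(1))$ was recorded; writing $\xi:=c_1(\sshf{E}(1))$ this rearranges to $E|_E=-\xi$, and hence $-K_E=nH|_E+c\,\xi$. (The one point to keep straight is the sign here: it is the relation from Lemma \ref{nef cone} that fixes $E|_E=-\xi$ rather than $+\xi$, and this accounts for the adjunction correction that makes $-K_E\cdot e$ one larger than $-K_X|_E\cdot e$.)

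Substituting the tabulated intersection numbers $H\cdot e=0$, $H\cdot\ell=1$, $\xi\cdot e=1$, $\xi\cdot\ell=0$, I compute $-K_E\cdot e=c$ and $-K_E\cdot(\ell-e)=n\cdot 1+c\cdot(0-1)=n-c$. Since $Z$ is a proper subvariety we have $c\geq 1>0$, so the first inequality is automatic and the second, $n-c>0$, is the only binding condition. By Kleimann's criterion this yields that $-K_E$ is ample, i.e. $E$ is Fano, if and only if $c<n$. There is no genuine obstacle remaining: all the substantive input (the splitting type of $N^*_{Z/\mathbb{G}}|_L$ in Corollary \ref{splitting type}, the computation of $\overline{\rm{NE}}(E)$, and the nef-cone description in Lemma \ref{nef cone}) is already in place, and what remains is the short bookkeeping above, with care taken only over the convention for $\sshf{E}(1)$ and the sign of $E|_E$.
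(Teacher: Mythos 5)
Your proof is correct and is essentially the paper's argument: both compute $-K_E=nH|_E+c\,\xi$ by adjunction (using $E|_E=-c_1(\sshf{E}(1))$) and then check ampleness against the two-dimensional cone structure of $E$. The only difference is that you pair with the Mori cone generators $e$ and $\ell-e$ via Kleiman's criterion, whereas the paper writes $-K_E=(n-c)H|_E+c(H|_E+c_1(\sshf{E}(1)))$ and reads off positivity of the coefficients against the nef cone of Lemma \ref{nef cone}; these are dual formulations of the same check.
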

\begin{proof}
By the adjunction formula,
\[K_E=(K_X+E)|_E=-nH|_E+cE|_E=-nH|_E-c\sshf{E}(1).\]
In view of Lemma \ref{nef cone},
\[-K_E=(n-c)H|_E+c(H|_E+c\sshf{E}(1)\in \text{Amp}(E),\]
if and only if $c<n$.
\end{proof}

We compute the Fano index of $X$, where $\text{gcd}$ denotes the greatest common divisor.
\begin{proposition}
Suppose $X= {\rm Bl}_Z\mathbb{G}$ is Fano. Then the Fano index is
\[r={\rm gcd}(n, \rm{codim}(Z)-1).\]
\end{proposition}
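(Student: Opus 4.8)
The plan is to read off the Fano index directly from the expression of the anticanonical class in the basis of $\mathrm{Pic}(X)$. Recall that the Fano index of a Fano variety $X$ is by definition the largest positive integer $r$ for which there exists a divisor class $L \in \mathrm{Pic}(X)$ with $-K_X = rL$; equivalently, it is the largest $r$ dividing $-K_X$ inside the group $\mathrm{Pic}(X)$. Since $X$ is assumed Fano, $-K_X$ is ample and in particular nonzero, so this maximum is a well-defined positive integer.

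First I would invoke the two structural facts already recorded in the excerpt: that $\mathrm{Pic}(X) = \mathbb{Z}H \oplus \mathbb{Z}E$ is free abelian of rank two with basis $\{H, E\}$, and that $-K_X = c_1(X) = nH - (c-1)E$, where $c = \mathrm{codim}(Z)$. Writing a candidate class as $L = xH + yE$ with $x, y \in \mathbb{Z}$, the equation $-K_X = rL$ becomes, coordinatewise in the free basis, the pair of integer equations $rx = n$ and $ry = -(c-1)$.

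Next I would observe that such integers $x, y$ exist precisely when $r$ divides both $n$ and $c-1$. Hence the set of admissible $r$ is exactly the set of common divisors of $n$ and $c-1$, and the largest of these is $r = \gcd(n, c-1) = \gcd(n, \mathrm{codim}(Z) - 1)$, as claimed. Because $\{H, E\}$ is a genuine $\mathbb{Z}$-basis and not merely a $\mathbb{Q}$-basis, there is no hidden divisibility beyond the gcd of the two coordinates.

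There is essentially no serious obstacle here: once freeness of $\mathrm{Pic}(X)$ with basis $\{H, E\}$ and the formula $-K_X = nH - (c-1)E$ are in hand, the statement reduces to the elementary fact that the maximal divisibility of an element of a free abelian group equals the gcd of its coordinates. The only point demanding a moment of care is to confirm that the definition of Fano index used here (maximal integral divisibility of $-K_X$ in $\mathrm{Pic}(X)$) is the intended one, and that the Fano hypothesis guarantees $-K_X \neq 0$ so that the gcd is taken of a genuinely nonzero pair.
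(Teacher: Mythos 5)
Your proof is correct, and it is organized somewhat differently from the paper's. The paper writes $-K_X=(n-c+1)H+(c-1)(H-E)=r_0\bigl[\tfrac{n-c+1}{r_0}H+\tfrac{c-1}{r_0}(H-E)\bigr]$ with $r_0=\gcd(n,c-1)$ and then invokes its nef-cone lemma to check that the bracketed class is ample; this explicitly exhibits $r_0$ as a divisor of $-K_X$ with ample quotient, but leaves the maximality of $r_0$ implicit (it follows from the gcd of the coordinates in the basis $\{H,H-E\}$, since $\gcd(n-c+1,c-1)=\gcd(n,c-1)$). You instead work directly in the basis $\{H,E\}$ and make the maximality the centerpiece, while correctly observing that ampleness of the quotient class $L$ is automatic: $L=\tfrac{1}{r}(-K_X)$ is an integral class that is a positive rational multiple of an ample class, so no appeal to the nef cone is needed. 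In that respect your argument is cleaner and actually more complete on the point the paper glosses over. One small caveat: the identity $\mathrm{Pic}(X)=\mathbb{Z}H\oplus\mathbb{Z}E$ that you (and the paper) quote holds only when the blowup is nontrivial, i.e.\ $c\geq 2$; when $c=1$ (equivalently $k=1$ or $k=n-1$) the blowup is an isomorphism, $\mathrm{Pic}(X)=\mathbb{Z}H$ has rank one, and $E$ is not an independent generator. The paper disposes of this case separately by noting $X\cong\mathbb{G}$, whose index is $n=\gcd(n,0)$; your coordinate computation still returns the right answer because the $E$-coefficient vanishes, but you should flag the degenerate case rather than rely on the rank-two basis there.
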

\begin{proof}
Recall $c=\rm{codim}(Z)$. Put $r_0=\rm{gcd}(n, c-1)$
\begin{eqnarray*}
-K_X&=&nH-(c-1)E\\
&=&(n-c+1)H+(c-1)(H-E)\\
&=& r_0[ \frac{(n-c+1)}{r_0}H+\frac{(c-1)}{r_0}(H-E)].
\end{eqnarray*}
Since $X$ is Fano, $n-c+1>0$. If $c>1$, then
\[\frac{(n-c+1)}{r_0}H+\frac{(c-1)}{r_0}(H-E)\]
is ample by Lemma \ref{nef cone}. If $c=1$ (which happens if and only if $k=1$ or $n-1$), then $X\iso \mathbb{G}$, the statement holds too. In any event, we are done.
\end{proof}

\section{Extremal contraction of the Fano blowup $\rm{Bl}_ZG(k, n)$}
In this section, for ease of notations, we set $e'=\ell-e$, $H'=H-E$ and  $N=\dim X=k(n-k)$. We consider the extremal contraction $\varphi$ of $e'$ by the linear system $|H'|$
\[
\xymatrix{
  X\ar[r]^{\varphi}\ar[d]^{\pi} &  X'\\
\mathbb{G} &}
\]
and classify when the morphism $\varphi$ is a projective bundle over a Grassmannian. There has been great interest to classify the varieties of Picard number two with both blowup and projective bundle structures  in the literature, e.g. \cite{Li21, BSV24}.

\subsection{Projective bundle case}
To determine whether the extremal contraction $\varphi$ is a fibration or birational morphism, one needs to consider whether the intersection number, whose vanishing is a sufficient and necessary condition for $\varphi$ being a fibration,
\begin{equation}\label{condition for fibration}
{H'}^N=H^N-\sum^{\dim{Z}}_{i=0}{N\choose i} s_{\dim{Z}-i}(N_{Z/G})\cdot H^i.
\end{equation}
Here $s_i(\cdot)$ denotes the $i$-th Segre class, cf. \cite{Fu98}.

\begin{example}
Let $X$ be the blowup of $G(2, 5)$ along $G(1, 3)$. Then by (\ref{condition for fibration}), the contraction $\varphi$ is a Fano fibration, however it is not a projective bundle over a Grassmannian, see Proposition \ref{bundle over Grassmannian}.
\end{example}

In the subsection, we shall obtain several necessary conditions for $\varphi$ being a projective bundle morphism. We assume that $X\iso \mathbb{P}(\shf{E})$ for some vector bundle $\shf{E}$ on $X'$ and $\varphi$ is the natural morphism $\mathbb{P}(\shf{E})\rightarrow X'$, and denote the tautological line bundle by $\sshf{\varphi}(1)$.


\begin{lemma}\label{surjectivity of restriction}[\cite[Lemma 2]{Li21}]
The restriction morphism $\varphi_E=\varphi|_E: E\rightarrow X'$ is surjective, and both $X$ and $X'$ are smooth Fano varieties.\qed
\end{lemma}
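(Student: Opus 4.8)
The statement breaks into three parts of rather different character: (i) surjectivity of $\varphi_E$, (ii) smoothness of $X'$ (smoothness and the Fano property of $X$ being already in hand), and (iii) the Fano property of $X'$. The plan for (i) is a one-line intersection computation showing that $E$ meets every fibre of $\varphi$. Writing $\xi:=c_1(\mathcal{O}_E(1))$, the recorded intersection numbers give $\xi\cdot e=1$ and $\xi\cdot\ell=0$, while the self-intersection relation $E|_E=-\xi$ — which is exactly the identity $(H-E)|_E=H|_E+c_1(\mathcal{O}_E(1))$ used in the proof of Lemma \ref{nef cone} — yields $E\cdot e=-1$ and $E\cdot\ell=0$, hence $E\cdot e'=E\cdot(\ell-e)=1$. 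Since $\varphi$ is the projective bundle $\mathbb{P}(\mathscr{E})\to X'$, every fibre is a $\mathbb{P}^r$ with $r\ge 1$ and every line in a fibre represents $e'$. If some fibre $F$ were disjoint from $E$, a line $C\subset F$ would satisfy $E\cdot C=0$, contradicting $E\cdot e'=1$. Thus $E$ meets every fibre, so $\varphi_E$ is surjective; in fact $E$ restricts to a relative hyperplane on each fibre.

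For (ii), since $\varphi$ is a projective bundle it is a smooth morphism, Zariski-locally of the form $U\times\mathbb{P}^r$. As $X$ is smooth (being the blowup of the smooth $\mathbb{G}$ along the smooth $Z$), each such $U$, and therefore $X'$, is smooth. By the contraction theorem $X'$ is in addition normal and projective with $\rho(X')=\rho(X)-1=1$, so that $N^1(X')$ is one-dimensional and $\varphi^*$ identifies it with $\mathbb{R}H'$; here $H'=\varphi^*A$ for an ample class $A$ on $X'$, because $H'$ is nef with $H'\cdot e'=0$ and $H'\cdot e=1$, so $H'$ defines precisely the contraction $\varphi$.

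The genuine content, and the main obstacle, is (iii). I would first record that $\varphi$ contracts a $K_X$-negative extremal ray: from $-K_X=nH-(c-1)E$ one computes $-K_X\cdot e'=-K_X\cdot(\ell-e)=n-(c-1)>0$ under the standing Fano hypothesis $c\le n$. The target of a fibre-type (Mori) contraction of a smooth Fano variety is again Fano, which gives the claim. Concretely, using $\rho(X')=1$ one writes $\varphi^*(-K_{X'})=\mu H'$ and must verify $\mu=-K_{X'}\cdot\varphi_* e>0$; equivalently, in the relative canonical bundle formula $-K_X=\varphi^*(-K_{X'}+c_1\mathscr{E})+(r+1)\,c_1(\mathcal{O}_\varphi(1))$ one must argue that the ampleness of $-K_X$ forces $-K_{X'}$ itself, and not merely the twist $-K_{X'}+c_1\mathscr{E}$, to be ample. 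This positivity is exactly the substance of the cited \cite[Lemma 2]{Li21} and is supplied by the general theory of contractions of Fano manifolds; by contrast the surjectivity in (i) and the smoothness in (ii) are formal. I expect that separating out this positivity statement, rather than the intersection-theoretic bookkeeping, is where the real work lies.
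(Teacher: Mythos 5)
Your reconstruction is sound, but note first that the paper offers no proof of this lemma at all: the statement is imported verbatim from \cite[Lemma 2]{Li21} and closed with an immediate \qed, so there is no internal argument to compare against, and anything you supply is by definition a different (because self-contained) route. Your parts (i) and (ii) are correct and genuinely formal: $E\cdot e'=1$ forces $E$ to meet every positive-dimensional fibre of $\varphi$ (and the fibres are positive-dimensional because $\varphi$ contracts the curves of class $e'$ constructed in Section 2), and smoothness of $X'$ descends along the smooth surjection $\varphi$. You also correctly isolate (iii) as the only substantive point. Two remarks there. First, your relative canonical bundle formula carries the opposite sign on $c_1(\shf{E})$ from the one the paper itself uses in the proof of Proposition \ref{dim of fibre}, where $K_X=\varphi^*K_{X'}+\varphi^*c_1(\shf{E})-(\dim X-\dim X'+1)c_1(\sshf{\varphi}(1))$, hence $-K_X=\varphi^*\bigl(-K_{X'}-c_1(\shf{E})\bigr)+(\dim X-\dim X'+1)c_1(\sshf{\varphi}(1))$; this is harmless since you never use the formula quantitatively. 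Second, rather than leaving the positivity of $-K_{X'}$ to an unproved general principle about fibre-type contractions, you can close it off cheaply in the present setting: $\varphi$ is a smooth morphism, so the Koll\'ar--Miyaoka--Mori theorem that smooth images of Fano manifolds are Fano applies directly; alternatively, $X'$ is smooth with $\rho(X')=1$ and is uniruled (being the image of the rationally connected Fano $X$), so $K_{X'}$ is not pseudo-effective and is therefore a negative multiple of the ample generator, i.e. $-K_{X'}$ is ample. Either finish makes your argument fully self-contained, which is more than the paper provides for this lemma.
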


\begin{proposition}\label{dim of fibre}
Suppose $\varphi$ is a projective bundle. Then
\begin{itemize}
    \item[(i)] $\text{Pic}(X')=\mathbb{Z}H'$.
    \item[(ii)] $\varphi$ is a $\mathbb{P}^{n-c}$-bundle. In particular, $c<n$.
\end{itemize}
\end{proposition}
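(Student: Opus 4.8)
The plan is to translate both statements into lattice- and intersection-theoretic computations inside $\operatorname{Pic}(X)=\mathbb{Z}H\oplus\mathbb{Z}E$, using the standard Picard-group formula for a projective bundle together with adjunction on a fibre. Throughout I write $\eta:=c_1(\sshf{\varphi}(1))$ for the tautological class and recall from Lemma \ref{nef cone} that $H'\cdot e'=0$, while $H\cdot e'=H\cdot(\ell-e)=1$ by the intersection numbers preceding Proposition \ref{iso. of cone of curves}.

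For (i), first I would identify the sublattice of $\operatorname{Pic}(X)$ orthogonal to the contracted class $e'$. Since $E\cdot e'=(H-H')\cdot e'=1$, a class $aH+bE$ pairs with $e'$ to give $a+b$, so the orthogonal lattice is exactly $\mathbb{Z}H'$, with $H'$ primitive. Because $\varphi$ contracts $e'$, every class pulled back from $X'$ is $e'$-orthogonal, so $\varphi^*\operatorname{Pic}(X')\subseteq\mathbb{Z}H'$; as $X'$ is a smooth Fano variety (Lemma \ref{surjectivity of restriction}) its Picard group is torsion-free, and the bundle structure forces the Picard number $\rho(X')=\rho(X)-1=1$. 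Hence $\operatorname{Pic}(X')=\mathbb{Z}L$ for the ample generator $L$, and $\varphi^*L=dH'$ for some integer $d\ge 1$. The key input is the projective-bundle splitting $\operatorname{Pic}(X)=\varphi^*\operatorname{Pic}(X')\oplus\mathbb{Z}\eta$, which says $\{dH',\eta\}$ is a $\mathbb{Z}$-basis of $\operatorname{Pic}(X)$. Writing $\eta=\alpha H+\beta E$ with $\alpha,\beta\in\mathbb{Z}$ and computing the determinant of the change of basis from $\{H,E\}$ to $\{dH',\eta\}$ gives $\pm 1=d(\alpha+\beta)$, forcing $d=1$. Thus $\varphi^*L=H'$ and $\operatorname{Pic}(X')=\mathbb{Z}H'$.

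For (ii), I would compute the fibre dimension by restricting $-K_X$ to a fibre $F\cong\mathbb{P}^r$, where $r=\operatorname{rank}\shf{E}-1$. Since $\varphi$ is smooth, $N_{F/X}$ is trivial, so adjunction gives $-K_X|_F=-K_F=\sshf{F}(r+1)$, whence $-K_X\cdot\ell_0=r+1$ for a line $\ell_0\subset F$. The class $[\ell_0]$ lies in the contracted ray $\mathbb{R}_{\ge 0}e'$, and $e'$ is primitive (as $H\cdot e'=1$), so $[\ell_0]=m e'$ for some $m\ge 1$; using $\eta\cdot\ell_0=1$ together with $H'\cdot\ell_0=\varphi^*L\cdot\ell_0=0$, the relation $1=\eta\cdot\ell_0=m(\eta\cdot e')$ forces $m=1$, i.e. $[\ell_0]=e'$. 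Comparing with the value $-K_X\cdot e'=n-c+1$ already obtained in the proof of Theorem \ref{Fano} yields $r+1=n-c+1$, that is $r=n-c$. Finally, as $\varphi$ is a nontrivial extremal contraction its fibres are positive-dimensional, so $r\ge 1$ and $c<n$.

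The main obstacle will be the two \emph{primitivity} points: showing that the generator of $\varphi^*\operatorname{Pic}(X')$ is exactly $H'$ rather than a proper multiple, and that a fibre line has class exactly $e'$ rather than some $m e'$. Both are resolved by the same mechanism — the unimodularity of the projective-bundle basis $\{H',\eta\}$ of $\operatorname{Pic}(X)$ — so the real content is setting up this basis correctly and carefully tracking the pairings against $e'$ and $\ell_0$; once these are in place, the numerical identities reduce immediately to the two linear equations above.
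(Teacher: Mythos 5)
Your proof is correct and follows essentially the same route as the paper's: part (i) reduces to the primitivity of $H'$ together with $\rho(X')=1$, and part (ii) to equating $-K_X\cdot e'=n+1-c$ with the fibre dimension plus one via the projective-bundle canonical class (equivalently, adjunction on a fibre). You are in fact more explicit than the paper about the two primitivity points — that $\varphi^*$ of the ample generator is $H'$ itself rather than a multiple, and that a fibre line has class exactly $e'$ — which the paper dispatches with the single observation $H'\cdot e=1$.
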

\begin{proof}
Since $\pi_*(e)\cdot H'=e\cdot (H-E)=1$, $H'$ has to be primitive, hence generates $\text{Pic}(X')$.

Since $\varphi$ is a projective bundle,
\[K_X=\varphi^*K_{X'}+\varphi^*c_1(\shf{E})-(\dim{X}-\dim{X'}+1)c_1(\sshf{\varphi}(1)).\]
It follows that
\[
-K_X\cdot e'= \dim{X}-\dim{X'}+1.
\]
On the other hand, $-K_X\cdot e'=n+1-c$.
So $\dim{X}-\dim{X'}=n-c$.
\end{proof}

\begin{proposition}
Suppose $\varphi$ is a projective bundle. Then
\begin{itemize}
    \item[(i)] $E=(a-1)H'+c_1(\sshf{\varphi}(1))$ for some $a\in \mathbb{Z}$.
    \item[(ii)] $\varphi_E: E\rightarrow X'$ is a contraction.
\end{itemize}
\end{proposition}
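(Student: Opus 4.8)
The plan is to prove (i) by comparing two natural $\mathbb{Z}$-bases of $\mathrm{Pic}(X)$ and fixing the one undetermined coefficient with a single intersection number, and then to deduce (ii) from (i) by pushing the structure sequence of $E$ forward along $\varphi$. Under the standing assumption $X\iso\mathbb{P}(\shf{E})$, the relative Picard formula for a projective bundle gives $\mathrm{Pic}(X)=\varphi^*\mathrm{Pic}(X')\oplus\mathbb{Z}\,c_1(\sshf{\varphi}(1))$. By Proposition \ref{dim of fibre}(i) we have $\mathrm{Pic}(X')=\mathbb{Z}H'$ with $H'$ pulled back from the ample generator, so $\{H',c_1(\sshf{\varphi}(1))\}$ is a $\mathbb{Z}$-basis of $\mathrm{Pic}(X)$. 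Since $H=H'+E$, the classes $\{H',E\}$ form another $\mathbb{Z}$-basis of $\mathrm{Pic}(X)=\mathbb{Z}H\oplus\mathbb{Z}E$. Writing $c_1(\sshf{\varphi}(1))=pH'+qE$ with $p,q\in\mathbb{Z}$, the fact that $\{H',c_1(\sshf{\varphi}(1))\}$ is again a basis forces $q=\pm1$.

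To pin down $q=1$, I would intersect $c_1(\sshf{\varphi}(1))=pH'+qE$ against the class $e'=\ell-e$. First, $H'\cdot e'=0$ because $H'$ is pulled back from $X'$ while $e'$ is $\varphi$-contracted. Next, $E\cdot e'=1$: indeed the relation $(H-E)|_E=H|_E+c_1(\sshf{E}(1))$ from the proof of Lemma \ref{nef cone} gives $E|_E=-c_1(\sshf{E}(1))$, whence $E\cdot e=-1$ and $E\cdot\ell=0$ from the intersection numbers $c_1(\sshf{E}(1))\cdot e=1$ and $c_1(\sshf{E}(1))\cdot\ell=0$, so that $E\cdot e'=E\cdot(\ell-e)=1$. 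Therefore $c_1(\sshf{\varphi}(1))\cdot e'=q$. On the other hand $c_1(\sshf{\varphi}(1))\cdot e'>0$, since $\sshf{\varphi}(1)$ is $\varphi$-relatively ample and $e'$ is an effective class contracted by $\varphi$. Combined with $q=\pm1$, this forces $q=1$, so $E=c_1(\sshf{\varphi}(1))-pH'=(a-1)H'+c_1(\sshf{\varphi}(1))$ with $a:=1-p$, proving (i).

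For (ii), I would push the exact sequence $\ses{\sshf{X}(-E)}{\sshf{X}}{\sshf{E}}$ forward along $\varphi$. By (i) we have $\sshf{X}(-E)\iso\varphi^*\sshf{X'}((1-a)H')\otimes\sshf{\varphi}(-1)$, so the projection formula together with the vanishing $H^i(\mathbb{P}^{n-c},\sshf{\mathbb{P}^{n-c}}(-1))=0$ for all $i$ (valid since $c<n$ by Proposition \ref{dim of fibre}(ii), so the fibres are positive-dimensional projective spaces) yields $R^i\varphi_*\sshf{X}(-E)=0$ for every $i$. Hence the long exact sequence of higher direct images gives $\varphi_*\sshf{E}\iso\varphi_*\sshf{X}=\sshf{X'}$, the last equality because $\varphi$ is a projective bundle. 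As $\varphi_E=\varphi|_E$, this reads $(\varphi_E)_*\sshf{E}=\sshf{X'}$; combined with the surjectivity of $\varphi_E$ from Lemma \ref{surjectivity of restriction} and the normality of $X'$, it exhibits $\varphi_E$ as a contraction (proper surjective with connected fibres onto $X'$).

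The Picard comparison and the pushforward computation are routine; the one place demanding care is the sign bookkeeping in (i), namely correctly identifying $E|_E=-c_1(\sshf{E}(1))$ and checking $E\cdot e'=1$ against the positivity $c_1(\sshf{\varphi}(1))\cdot e'>0$, since a sign slip there would flip the coefficient of $c_1(\sshf{\varphi}(1))$ and destroy the clean identity $\sshf{X}(-E)\iso\varphi^*\sshf{X'}((1-a)H')\otimes\sshf{\varphi}(-1)$ on which the vanishing argument for (ii) rests.
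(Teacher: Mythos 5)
Your proposal is correct and follows essentially the same route as the paper: part (i) is the same Picard-basis comparison pinned down by intersecting with $e'$ (the paper expands $H=aH'+b\,c_1(\sshf{\varphi}(1))$ and reads off $b=H\cdot e'=1$ directly, while you expand $c_1(\sshf{\varphi}(1))$ in $\{H',E\}$ and combine unimodularity with relative ampleness — the same computation with slightly different bookkeeping), and part (ii) is the identical projection-formula vanishing of $R^i\varphi_*\sshf{X}(-E)$ applied to the ideal-sheaf sequence of $E$. Your sign checks ($E|_E=-c_1(\sshf{E}(1))$, $E\cdot e'=1$) agree with the paper's intersection table, so there is no gap.
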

\begin{proof}
(i) We can uniquely put $H=aH'+bc_1(\sshf{\varphi}(1))$ for some $a, b\in \mathbb{Z}$. Since $1=H\cdot e'=0+b$, $H=aH'+c_1(\sshf{\varphi}(1))$. It follows that
$E=H-H'=(a-1)H+c_1(\sshf{\varphi}(1)).$

(ii) Thanks to (i) and the projection formula,
\[R^i\varphi_*\sshf{X}(-E)\iso R^i\varphi_*
\sshf{\varphi}(-1)\otimes \sshf{X'}(-(a-1)H')=0\] for all $i\ge 0$. Thus applying $\varphi_*$ to the exact sequence
\[\ses{\sshf{X}(-E)}{\sshf{X}}{\sshf{E}}\]
yields the desired isomorphism $\sshf{X'}\iso \varphi_*\sshf{X}\rightarrow {\varphi_E}_*\sshf{E}$. This finishes the proof.
\end{proof}

The following tables summarize the relations among generators of $\text{Pic}(X)$ and the intersection numbers.
\[\begin{bmatrix}
H \\
E
\end{bmatrix}=\begin{bmatrix}
a & 1 \\
a-1 & 1
\end{bmatrix}\begin{bmatrix}
H' \\
c_1\paren{\sshf{\varphi}(1)}
\end{bmatrix}
\quad\quad
\begin{tabular}{ | m{0.8em} | m{1em}| m{1em} | m{1em} | m{3.6em} | }
  \hline
  & $H$ & $H'$ & $E$ & $c_1\paren{\sshf{\varphi}(1)}$  \\
  \hline
  $e'$  & $1$ & $0$ & $1$ & $1$\\
  \hline
  $e$ & $0$ & $1$ & $-1$ & $-a$\\
  \hline
\end{tabular}
\]


Recall we have assumed $\varphi: X\iso \mathbb{P}(\shf{E})\rightarrow X'$ for some vector bundle $\shf{E}$. As explained in \cite[II Lemma 7.9]{Hartshorne77}, such $\shf{E}$ is not unique: one can tensor $\shf{E}$ by any line bundle $\shf{L}$ on $X'$ without changing the isomorphic type of $X$ over $X'$. Thus by tensoring $\shf{E}$ by the line bundle $\sshf{X}((a-1)H')$, we can always make
\\
\textbf{Assumption:} $a=1$, and hence $\sshf{X}(E)\iso \sshf{\varphi}(1)$.

\subsection{VMRT and projective bundle over Grassmannian}
The variety of minimal rational tangents (VMRT for short) was introduced by Hwang and Mok, and it has become a powerful tool to characterize Fano varieties of Picard number one (see e.g. the surveys \cite{Hwa12, Mok08} and references therein). In this subsection, we shall utilize the VMRT to characterize the type of image $X'$ provided it is a Grassmannian.

Fix a point $x'\in X'$. Let $\varphi^{-1}_E(x')$ be the fiber of $\varphi_E$ over $x'$ and put $Z_{x'}=\pi(\varphi^{-1}_E(x'))$. Since $H'\cdot e=1$, $\pi: \varphi^{-1}_E(x')\rightarrow Z_{x'}$ is an injection, and indeed an isomorphism.

Consider the morphism
\[\varphi_{x'}: E_{x'}:=\mathbb{P}(N_{Z/\mathbb{G}}|_{Z_{x'}})\rightarrow X',\]
which fits into the commutative diagram
$$\xymatrix{
\varphi^{-1}_E(x')\ar@{^(->}[d]\ar[rr]&&\{x'\}\ar@{^(->}[d]\\
E_{x'}\ar@/^1.4pc/[rr]|{\varphi_{x'}} \ar[d]\ar@{^(->}[r] & E\ar[d]^{\pi} \ar[r]_{\varphi_E} & X'\\
Z_{x'}\ar@{^(->}[r] & Z& }.$$

Now consider the induced morphism by $\varphi_{x'}$ from a $\mathbb{P}^{c-2}$-bundle over $Z_{x'}$ to the VMRT $\shf{C}_{x'}$ at $x'$
\[\eta: \mathbb{P}(T_{E_{x'}/Z_{x'}}|_{\varphi^{-1}_E(x')})\rightarrow \shf{C}_{x'},\]
where $T_{E_{x'}/Z_{x'}}|_{\varphi^{-1}_E(x')}$ denotes the relative tangent bundle of $E_{x'}$ over $Z_{x'}$ restricted to the subvariety $\varphi^{-1}_E(x')$.
\begin{lemma}\label{finite morphism}
$\eta$ is a finite surjective morphism.
\end{lemma}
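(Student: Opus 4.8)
The plan is to exhibit $\eta$ as the tangent map of a minimal dominating family of rational curves on $X'$, and then prove surjectivity and finiteness separately. First I would take as minimal dominating family $\mathcal{K}$ the (closure of the) family of images $\varphi_E(L)$, where $L$ runs over the lines contained in the fibres of $\pi\colon E\to Z$ (the curves of class $e$). Since $H'\cdot e=1$ and $\mathrm{Pic}(X')=\mathbb{Z}H'$ by Proposition \ref{dim of fibre}, each $\varphi_E(L)$ is a rational curve of minimal positive $H'$-degree, hence of minimal anticanonical degree; and since $\varphi_E$ is surjective by Lemma \ref{surjectivity of restriction}, these curves sweep out $X'$, so $\mathcal{K}$ is a minimal dominating family and $\shf{C}_{x'}$ is the closure of the image of the associated tangent map $\tau_{x'}\colon \mathcal{K}_{x'}\to\mathbb{P}(T_{x'}X')$, where $\mathcal{K}_{x'}$ denotes the members through $x'$. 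A point of $\mathbb{P}(T_{E_{x'}/Z_{x'}}|_{\varphi^{-1}_E(x')})$ is a pair $(p,v)$ with $p\in\varphi^{-1}_E(x')$ and $v\in\mathbb{P}(T_{F_{\pi(p)},p})$ a direction tangent to the fibre $F_{\pi(p)}=\mathbb{P}(N_{Z/\mathbb{G}}|_{\pi(p)})$; it determines the unique line $L_{p,v}\subset F_{\pi(p)}$ through $p$ in direction $v$. Setting $\nu(p,v):=[\varphi_E(L_{p,v})]\in\mathcal{K}_{x'}$, the definition of $\eta$ gives the factorization $\eta=\tau_{x'}\circ\nu$, because $\varphi_E|_{L_{p,v}}$ carries the direction $v$ at $p$ to the tangent direction of $\varphi_E(L_{p,v})$ at $x'$.

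Surjectivity of $\eta$ reduces to surjectivity of $\nu$. If $C'\in\mathcal{K}_{x'}$, then $C'=\varphi_E(L)$ for a line $L$ in some fibre $F_z$; since $x'\in C'$, the line $L$ meets $\varphi^{-1}_E(x')$ in a point $p$, and then $z=\pi(p)\in\pi(\varphi^{-1}_E(x'))=Z_{x'}$, so $L=L_{p,v}$ for a suitable $v$, i.e. $[C']=\nu(p,v)$. Hence $\nu$ is surjective onto $\mathcal{K}_{x'}$ and $\eta$ surjects onto $\mathrm{Im}(\tau_{x'})$; as $\mathbb{P}(T_{E_{x'}/Z_{x'}}|_{\varphi^{-1}_E(x')})$ is projective, $\eta$ is proper, so its image is closed and equals all of $\shf{C}_{x'}$.

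For finiteness, properness of $\eta$ reduces the claim to the finiteness of its fibres, and I would combine two facts. First, by Kebekus's theorem the tangent map $\tau_{x'}$ of a minimal dominating family is finite onto the VMRT for general $x'$ (see the surveys \cite{Hwa12, Mok08}); thus $\eta^{-1}(w)=\nu^{-1}(\tau_{x'}^{-1}(w))$ is a finite union of fibres of $\nu$, and it remains to show $\nu$ has finite fibres. A fibre $\nu^{-1}([C'])$ consists of the $\pi$-vertical lines $L\subset E$ of class $e$ with $\varphi_E(L)=C'$; as $\varphi_E|_L$ is an isomorphism onto $C'$ (because $H'\cdot e=1$), each such $L$ is recovered from $C'$ together with its intersection point with $\varphi^{-1}_E(x')$, and I would show that $C'$ admits only finitely many such lifts. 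The main obstacle is exactly this last point: ruling out a positive-dimensional family $\{L_t\}$ of lines lying in distinct fibres $F_{z_t}$ yet all mapping onto the single curve $C'$. I would attack it by a dimension count---the variety $\mathcal{L}$ of lines in the fibres of $\pi$ has dimension $\dim Z+2(c-2)=\dim X+c-4$, matching the expected dimension of $\mathcal{K}$, so the tautological map $\mathcal{L}\to\mathcal{K}$ is generically finite---together with the splitting type $N^*_{Z/\mathbb{G}}|_L=\oplus^{c-(n-n')}\sshf{L}\oplus\oplus^{n-n'}\sshf{L}(-1)$ of Corollary \ref{splitting type}, which controls $\varphi_E$ on the fibres $F_z$ and should prevent two lines in different fibres from sharing their entire image. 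A standard general-position argument, choosing $x'$ general and invoking upper semicontinuity of fibre dimension, would then upgrade generic finiteness to finiteness of $\nu$ over all of $\mathcal{K}_{x'}$, and hence of $\eta$.
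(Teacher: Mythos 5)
Your surjectivity argument is essentially the paper's: since $\mathrm{Pic}(X')=\mathbb{Z}H'$ and $H'\cdot\varphi_*e=1$, every minimal rational curve through $x'$ has class $\varphi_*e$, hence by Proposition \ref{iso. of cone of curves} is the image of a curve of class $e$, i.e.\ of a line in a fibre of $\pi$ meeting $\varphi^{-1}_E(x')$. The finiteness half, however, has a genuine gap at exactly the point you yourself flag as ``the main obstacle.'' Generic finiteness of the tautological map $\mathcal{L}\to\mathcal{K}$ obtained from a dimension count does not exclude positive-dimensional fibres of $\nu$ over special members of $\mathcal{K}_{x'}$, and the assertion that the splitting type ``should prevent two lines in different fibres from sharing their entire image'' is left as an assertion rather than an argument. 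Moreover, your fallback of ``choosing $x'$ general and invoking upper semicontinuity'' is not available: the lemma is later applied in Proposition \ref{flatness} at precisely a non-general point $x'$, namely one where the fibre of $\varphi$ jumps to $\mathbb{P}^{n-c}$, so finiteness of $\eta$ is needed at every $x'$. The appeal to Kebekus's finiteness of the tangent map has the same defect, being itself a general-point statement; the paper does not factor $\eta$ through the tangent map at all and instead proves quasi-finiteness of $\eta$ directly.

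The missing step is closed in the paper by a concrete counting argument you should supply. If $\dim\eta^{-1}(v)>0$ for some $v$, there is an irreducible curve $\Gamma\subset Z_{x'}$ such that for every $t\in\Gamma$ the fibre $\pi^{-1}(t)$ contains a line $L_t$ meeting $\varphi^{-1}_E(x')$ whose image gives rise to $v$. Sweeping out the intersection points of the $L_t$ with $\varphi^{-1}_E(x')$ and with the $\varphi_E$-preimages of $s\ge n-n'$ further points of the image curve produces pairwise disjoint curves $\Gamma_0,\Gamma_1,\dots,\Gamma_s\subset E$, each contracted by $\varphi$ to a single point and each mapping isomorphically onto $\Gamma$ under $\pi$. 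Since $\pi\colon\varphi^{-1}_E(x')\to Z_{x'}$ is an isomorphism, $\Gamma$ is a line in $Z$ with $\Gamma\cdot H=1$, and each $\Gamma_i$ is a lift of $\Gamma$ of class $\ell-e$. Corollary \ref{splitting type}, i.e.\ $N^*_{Z/\mathbb{G}}|_\Gamma=\sshf{\Gamma}^{\oplus(c-(n-n'))}\oplus\sshf{\Gamma}(-1)^{\oplus(n-n')}$, bounds the number of such lifts by $n-n'$, giving a contradiction with $s+1\ge n-n'+1$. This is the precise form of the obstruction you gesture at, and it is valid at an arbitrary $x'$, which is what the later applications require.
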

\begin{proof}
 Since any minimal rational curve $C'$ (with respect to $H'$) passing through $x'$ represents the class $\varphi_*e$, it is the image of a rational curve $C\subset E$ with $[C]=e$, cf. Proposition \ref{iso. of cone of curves}, and hence $\eta$ is surjective.

It remains to show $\eta$ is quasi-finite. Suppose to the contrary that $\dim \eta^{-1}(v)>0$ for some $v\in \shf{C}_{x'}$, then there exists an irreducible curve $\Gamma\subset Z_{x'}$, such that for any $t\in \Gamma$, there exists a rational curve $L_t\subset \pi^{-1}(t)$ such that $L_t$ intersects $\varphi^{-1}_E(x')$ and that $\varphi_E(L_t)$ gives rise to $v$. It follows that one can find disjoint curves $\Gamma_0, \Gamma_1, \cdots, \Gamma_s\subset E$, where integer $s\ge n-n'$ is fixed, satisfying that
\begin{itemize}
\item $\Gamma_0$ is contracted by $\varphi$ to $x'$.
\item $\Gamma_i$, $1\le i\le s$ are contracted by $\varphi$ to $s$ distinct points different from $x'$.
\item $\pi(\Gamma_i)=\Gamma$, $0\le i\le s$.
\end{itemize}
Since $\pi: \varphi^{-1}_E(x')\rightarrow Z_{x'}$ is an isomorphism, it follows that $\Gamma\iso \prj{1}$ with $\Gamma\cdot H=1$. Moreover, for any $1\le i\le s$, $[\Gamma_i]=e'$, so $\pi: \Gamma_i\rightarrow \Gamma$ is an isomorphism. Thus $\Gamma_0$ and $\Gamma_i$ are distinct lifts of $\Gamma$ to $E$. On the other hand, by Corollary \ref{splitting type}, there are at most $n-n'$ lifts for $\Gamma$. This gives a contradiction.
\end{proof}

\begin{lemma}\label{dimension equality}
Suppose $X'\iso G(p, p+q)$ for some $p, q\in \mathbb{N}$. Then
\[{p+q\choose p}+{n'\choose k'}={n\choose k}. \]
\end{lemma}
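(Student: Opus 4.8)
The plan is to recognize each of the three binomial coefficients as the topological Euler characteristic of the corresponding Grassmannian, and then to compute the Euler characteristic of the common open set $X\setminus E$ in two different ways, one for each of the two contractions of $X$. Recall that for any Grassmannian $G(m,N)$ the Schubert cells give a decomposition into $\binom{N}{m}$ even-dimensional cells, so $\chi(G(m,N))=\binom{N}{m}$; in particular $\chi(\mathbb{G})=\binom{n}{k}$, $\chi(Z)=\binom{n'}{k'}$ and $\chi(X')=\binom{p+q}{p}$. Throughout I would work with the compactly supported Euler characteristic $\chi_c$, which for complex varieties agrees with the topological one and is both additive over locally closed decompositions and multiplicative along nice fibrations.

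First I would compute $\chi_c(X\setminus E)$ from the blow-up structure. Since $\pi\colon X\to\mathbb{G}$ restricts to an isomorphism $X\setminus E\xrightarrow{\sim}\mathbb{G}\setminus Z$, additivity gives
\[
\chi_c(X\setminus E)=\chi_c(\mathbb{G})-\chi_c(Z)=\binom{n}{k}-\binom{n'}{k'}.
\]
Next I would compute the same quantity from the projective bundle $\varphi\colon X\to X'$. By Proposition \ref{dim of fibre} this is a $\mathbb{P}^{n-c}$-bundle, and the intersection numbers computed above give $E\cdot e'=1$, where $e'$ is the class of a line in a fibre of $\varphi$. Hence on each fibre $\mathbb{P}^{n-c}$ the irreducible divisor $E$ restricts to an effective divisor of degree one, i.e. a hyperplane $\mathbb{P}^{n-c-1}$ (in particular no fibre is contained in $E$). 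Consequently $\varphi$ induces a fibration $X\setminus E\to X'$ whose fibres are the complements $\mathbb{P}^{n-c}\setminus\mathbb{P}^{n-c-1}\cong\mathbb{A}^{n-c}$, which after shrinking a trivialising open set of $\varphi$ and absorbing the varying hyperplane by a family of projective transformations is a Zariski-locally trivial $\mathbb{A}^{n-c}$-bundle. Multiplicativity of $\chi_c$ then yields
\[
\chi_c(X\setminus E)=\chi_c(X')\cdot\chi_c(\mathbb{A}^{n-c})=\chi_c(X')=\binom{p+q}{p}.
\]

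Equating the two expressions gives $\binom{n}{k}-\binom{n'}{k'}=\binom{p+q}{p}$, which is the claim. The only genuinely delicate point is the second computation: I must justify that $E$ meets every fibre of $\varphi$ in a genuine (reduced, expected-dimensional) hyperplane, and that the resulting $\mathbb{A}^{n-c}$-fibration is regular enough for multiplicativity of $\chi_c$ to apply. The first follows from $E\cdot e'=1$ together with the irreducibility of $E$ and the fact that it cannot contain the generic fibre; the second can be avoided entirely, since $\chi_c$ is multiplicative over any morphism of complex varieties with constant fibrewise Euler characteristic. One could alternatively bypass $X\setminus E$ and argue purely with the two expressions for $\chi(X)$ (the blow-up formula $\chi(X)=\chi(\mathbb{G})+(c-1)\chi(Z)$ and the $\mathbb{P}^{n-c}$-bundle count $(n-c+1)\chi(X')$) together with the two expressions for $\chi(E)$ (as a $\mathbb{P}^{c-1}$-bundle over $Z$ and a $\mathbb{P}^{n-c-1}$-bundle over $X'$), but the open-complement computation is shorter and conceptually cleaner.
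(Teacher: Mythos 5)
Your route is genuinely different from the paper's: the paper identifies the three binomial coefficients as $h^0$ of the Pl\"ucker line bundles on $X'$, $\mathbb{G}$ and $Z$, and gets the identity from the short exact sequence $0\to H^0(\is{Z}\otimes\sshf{\mathbb{G}}(H))\to H^0(\sshf{\mathbb{G}}(H))\to H^0(\sshf{Z}(H_Z))\to 0$ (projective normality of Schubert varieties, \cite{RR85}) together with $h^0(X',\sshf{X'}(H'))=h^0(X,\sshf{X}(H-E))$ from Proposition \ref{dim of fibre} and the projection formula. Your Euler-characteristic computation is an attractive alternative, but as written it has a genuine gap at the one point you yourself flag as delicate.

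The gap is the assertion that no fibre of $\varphi$ is contained in $E$. The reasons you give --- $E\cdot e'=1$, irreducibility of $E$, and the fact that $E$ cannot contain the \emph{generic} fibre --- only rule out the generic fibre. Since $\dim X'\geq 2$ here, a fibre $\varphi^{-1}(x')\cong\mathbb{P}^{n-c}$ has codimension $\geq 2$ in $X$, so a special fibre can sit inside the irreducible divisor $E$ without forcing $E$ to coincide with it; numerically, $E=c_1(\sshf{\varphi}(1))$ corresponds to a section $s\in H^0(X',\shf{E}^*)$, and a fibre lies in $E$ exactly where $s$ vanishes as a section of the rank-$(n-c+1)$ bundle $\shf{E}^*$, which is not excluded by degree considerations. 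If the closed locus $V=\{x'\mid \varphi^{-1}(x')\subset E\}$ were nonempty, your second computation would give $\chi_c(X\setminus E)=\chi_c(X')-\chi_c(V)$ rather than $\chi_c(X')$, and there is no reason for $\chi_c(V)$ to vanish. The fact you need --- that $\varphi^{-1}(x')\cap E$ is a hyperplane for \emph{every} $x'$, equivalently that $\varphi|_E$ is equidimensional --- is exactly Proposition \ref{flatness} in the paper, which is proved by a nontrivial VMRT argument (Lemma \ref{finite morphism}) using the homogeneity of the Grassmannian $X'$; it is not a formal consequence of the intersection numbers. Your fallback suggestion (comparing $\chi(X)$ and $\chi(E)$ computed from both structures) runs into the same issue, since the count $\chi(E)=(n-c)\chi(X')$ again presupposes that flatness; note also that the single identity $\chi(X)=(n-c+1)\chi(X')=\chi(\mathbb{G})+(c-1)\chi(Z)$ yields $(n-c)\binom{n}{k}=n\binom{n'}{k'}$, which is a different relation and does not by itself imply the one claimed. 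Your argument can be completed by first establishing Proposition \ref{flatness} (there is no circularity in doing so), but without that input it is incomplete; the remaining steps (cell decompositions, additivity and multiplicativity of $\chi_c$ over morphisms with constant fibrewise Euler characteristic) are fine.
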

\begin{proof}
By \cite[Theorem 2]{RR85}, the natural morphism $H^0(\mathbb{G}, \sshf{\mathbb{G}}(H))\rightarrow H^0(Z, \sshf{Z}(H|_Z))$ is surjective. So the standard sequence
\[\ses{\is{Z}}{\sshf{\mathbb{G}}}{\sshf{Z}},\]
yields the exact sequence
\[\ses{H^0(\mathbb{G}, \is{Z}\otimes \sshf{\mathbb{G}}(H))}{H^0(\mathbb{G}, \sshf{\mathbb{G}}(H))}{H^0(Z, \sshf{Z}(H_Z))}. \]
Finally by Proposition \ref{dim of fibre} and the projection formula,
\[{p+q\choose p}=h^0(X', \sshf{X'}(H'))=h^0(X, \sshf{X}(H-E))=h^0(X, \mathbb{G}, \is{Z}\otimes \sshf{\mathbb{G}}(H)). \]
Moreover ${n\choose k}=h^0(\mathbb{G}, \sshf{\mathbb{G}}(H))$, and
${n'\choose k'}=h^0(Z, \sshf{Z}(H_Z))$. Putting these facts together, we finish the proof.
\end{proof}

In the following, we classify all the blowups ${\rm Bl}_{Z}\mathbb{G}$ that
admit a projective bundle structure over a Grassmannian.
The projective bundle structures of the  first two situations below were contained in \cite[Proposition 5.4]{BC12}, and also appeared in  \cite[Theorem B]{BSV24}.

\begin{theorem}\label{bundle over Grassmannian}
Suppose $\varphi$ is a projective bundle over a Grassmannian. Then $\varphi$ is one of the following types (not mutually exclusive):
\begin{itemize}
    \item[(i)] a $\mathbb{P}^{n-k}$-bundle over $X'\iso G(n-k, n-1) \iso G(k-1, n-1)$, in which case $Z\iso X^{(1^k)}$.
    \item[(ii)] a $\mathbb{P}^{k}$-bundle over $X'\iso G(k, n-1)\iso G(n-k-1, n-1)$; in which case $Z\iso X^{(n-k, 0^{k-1})}$.
    \item[(iii)] when $k=1$ or $n-k=1$, a $\mathbb{P}^{n'}$-bundle over $\mathbb{P}^{n-n'-1}$.
\end{itemize}
\end{theorem}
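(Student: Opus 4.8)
The plan is to combine the projective-bundle constraints of Proposition \ref{dim of fibre} with the VMRT comparison of Lemma \ref{finite morphism} and the section count of Lemma \ref{dimension equality}, treating the genuine Grassmannian range $2\le k\le n-2$ and the projective-space cases $k=1$ or $n-k=1$ separately. In the latter cases $\mathbb{G}\cong\mathbb{P}^{n-1}$ and $Z$ is a linear subspace $\mathbb{P}^{n'-1}$, so I would simply invoke the classical two-ruling description of $\mathrm{Bl}_{\mathbb{P}^{n'-1}}\mathbb{P}^{n-1}$ as a $\mathbb{P}^{n'}$-bundle over $\mathbb{P}^{n-n'-1}$; this is exactly case (iii) (and it degenerates the VMRT, since the VMRT of a projective space fills the whole projectivized tangent space), so the substance lies in the Grassmannian range. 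There, by Proposition \ref{dim of fibre}, $\varphi$ is a $\mathbb{P}^{n-c}$-bundle onto $X'\cong G(p,p+q)$ with $\mathrm{Pic}(X')=\mathbb{Z}H'$ and $c<n$, and by Corollary \ref{splitting type} we have $c\ge 2$.

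First I would pin down $p+q$ by a dimension count through the VMRT. Since $\varphi_E\colon E\to X'$ is dominant with $\dim E=\dim X-1=k(n-k)-1$ and $\dim X'=k(n-k)-(n-c)$, the general fibre $\varphi_E^{-1}(x')=E\cap\varphi^{-1}(x')$ has dimension $n-c-1$; indeed, using the Assumption $\sshf{X}(E)\cong\sshf{\varphi}(1)$, it is the hyperplane cut out in the $\mathbb{P}^{n-c}$-fibre of $\varphi$, and $\pi$ identifies it with $Z_{x'}\cong\mathbb{P}^{n-c-1}$. Hence the source $\mathbb{P}(T_{E_{x'}/Z_{x'}}|_{\varphi_E^{-1}(x')})$ of the map $\eta$ in Lemma \ref{finite morphism}, a $\mathbb{P}^{c-2}$-bundle over $\varphi_E^{-1}(x')$, is irreducible of dimension $(n-c-1)+(c-2)=n-3$. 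On the other hand the VMRT of $G(p,p+q)$ at a general point is the Segre variety $\mathbb{P}^{p-1}\times\mathbb{P}^{q-1}$, of dimension $p+q-2$. Since $\eta$ is finite surjective, equating dimensions yields $p+q=n-1$, so $X'\cong G(p,n-1)$.

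With $p+q=n-1$ fixed, I would close using the numerics. Lemma \ref{dimension equality} reads $\binom{n-1}{p}=\binom{n}{k}-\binom{n'}{k'}$, while comparing fibre dimensions gives $p(n-1-p)=k(n-k)-(n-c)$ with $c=k(n-k)-k'(n'-k')$. The goal is to force $n'=n-1$: once this holds, the smooth-Schubert constraint $n'=k+b-a$ with $0\le a\le k$, $0\le b\le n-k$ forces $a\le 1$, hence $k'\in\{k-1,k\}$, i.e. $Z\cong X^{(n-k,0^{k-1})}$ or $Z\cong X^{(1^k)}$; the Pascal identity $\binom{n}{k}=\binom{n-1}{k}+\binom{n-1}{k-1}$ then pins down $p\in\{k,k-1\}$ and identifies $X'$ as in (i) and (ii). A useful intermediate step is to exploit the product structure of the Segre VMRT: matching its two rulings against the $\mathbb{P}^{c-2}$-bundle-over-$\mathbb{P}^{n-c-1}$ structure of the source of $\eta$ suggests $\{p,q\}=\{c-1,n-c\}$, which together with $p(n-1-p)=k(n-k)-(n-c)$ collapses to $c(n-c)=k(n-k)$, whence $c\in\{k,n-k\}$, greatly restricting the search.

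I expect the main obstacle to be precisely this last combinatorial step: ruling out the deeper sub-Grassmannians $n'\le n-2$, equivalently showing that the system formed by the binomial identity of Lemma \ref{dimension equality} and the dimension identity admits no spurious solutions among smooth-Schubert data. The delicate point is that unimodality of binomial coefficients alone need not preclude sporadic coincidences $\binom{n'}{k'}=\binom{n-1}{p}$ for $(k',n')$ unrelated to $(k,n)$; the dimension identity $k'(n'-k')=2k(n-k)-n-p(n-1-p)$ must be used in tandem, and I would organise the argument around the reduction $c\in\{k,n-k\}$ above to make the residual case analysis finite and transparent. The example $\mathrm{Bl}_{G(1,3)}G(2,5)$, where $\dim X'=5$ is not of the form $p(4-p)$, is a good sanity check that the constraint $p+q=n-1$ is what does the essential cutting.
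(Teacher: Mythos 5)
Your overall strategy coincides with the paper's: reduce to $2\le k\le n-2$, use finiteness of the VMRT map $\eta$ to get $p+q=n-1$, identify $\{p,q\}$ with $\{c-1,n-c\}$ to derive $(k-c)(k+c-n)=0$ (your $c(n-c)=k(n-k)$ is the same equation), and then finish with Lemma \ref{dimension equality}. However, there are two genuine gaps, both at the places you yourself flag as uncertain. First, the identification $\{p,q\}=\{c-1,n-c\}$ is only ``suggested'' in your write-up by matching rulings; this is a real step that the paper proves carefully: one shows via the rigidity lemma that $\eta$ restricted to a fibre $F\cong\mathbb{P}^{c-2}$ of the source bundle is non-constant, that exactly one of the two compositions $F\to\mathbb{P}^{p-1}$, $F\to\mathbb{P}^{q-1}$ must be constant (otherwise the pullback of $\sshf{\mathbb{P}^{p-1}}(1)\boxtimes\sshf{\mathbb{P}^{q-1}}(1)$ would be $\sshf{F}(m)$ with $m\ge2$), and then that $F$ maps onto a full ruling, giving $c-2=p-1$ and $n-c-1=q-1$. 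Without this, the passage from $p+q=n-1$ to $c\in\{k,n-k\}$ is unsupported.

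Second, the concluding combinatorial step, which you explicitly leave as ``the main obstacle,'' is not optional: after $c=k$ (say), the identity $k(n-k-1)=k'(n'-k')$ together with $k'\le k$ and $n'-k'\le n-k$ splits into two sub-cases. When $n'-k'<n-k$ the inequalities immediately force $k'=k$ and $n'-k'=n-k-1$ (case (i)); but when $n'-k'=n-k$ one must rule out $k-k'\ge2$, and this is exactly where sporadic binomial coincidences could a priori occur. The paper closes this with a dedicated inequality (Lemma \ref{combinatorics}): setting $\alpha=n-k=n'-k'$ and $\beta=k-k'$, the identity ${n'\choose n-k}={n-1\choose n-k-1}$ coming from Lemma \ref{dimension equality} can only hold when $\beta=1$, yielding case (ii). Your proposal correctly isolates both difficulties and the sanity check with $\mathrm{Bl}_{G(1,3)}G(2,5)$ is apt, but as written the argument is a correct skeleton with the two hardest vertebrae missing.
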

\begin{proof}
(iii) is well-known, where $X$ is the blow up of $\mathbb{P}^{n-1}$ along a linear subspace. So we assume that $k\ge 2$ and $n-k\ge 2$.

Put $X'=G(p, p+q)$ for some $p, q\in\mathbb{N}$. For a general point $x'\in X'$, the VMRT $\shf{C}_{x'}\iso \mathbb{P}^{p-1}\times \mathbb{P}^{q-1}\hookrightarrow \mathbb{P}(T_{X', x'})\iso \mathbb{P}^{pq-1}$. By Lemma \ref{finite morphism}, $p+q=n-1$. Let $F$ be a fibre of $\mathbb{P}(T_{E_{x'}/Z_{x'}}|_{\varphi^{-1}_E(x')})\rightarrow Z_{x'}$. The induced map $\eta|_F: F\rightarrow \mathbb{P}^{p-1}\times \mathbb{P}^{q-1}$ is non-constant, otherwise by the rigidity lemma (see \cite[Lemma 1.6]{KM08}),  every fibre is contracted by $\eta$. Then $\eta$ factors through a morphism $Z_{x'}\rightarrow \mathbb{P}^{p-1}\times\mathbb{P}^{q-1}$, which cannot be surjective and hence contradicts to Lemma \ref{finite morphism}. Therefore by \cite[II. Ex. 7.3]{Hartshorne77}, we deduce that $\dim{F}=c-2\le \max\{p-1, q-2\}$.

We claim either the map $F\rightarrow \mathbb{P}^{p-1}$ or $F\rightarrow\mathbb{P}^{q-1}$ is constant. For in the contrary case,
\[\eta^*_F\paren{\sshf{\mathbb{P}^{p-1}}(1)\boxtimes \sshf{\mathbb{P}^{q-1}}(1)}\iso \sshf{F}(m)\] for some $m\ge 2$, this is impossible.

We can assume without loss of generality that $F\rightarrow\mathbb{P}^{q-1}$ is constant, thus $q-1<\dim F\le p-1$  by \cite[II. Ex. 7.3]{Hartshorne77} again. In view of the proof of Lemma \ref{finite morphism}, we deduce that $\eta|_F$ maps $F$ on to a fibre of $p_2$. Therefore $c-2=\dim F=p-1$, and hence $n-c-1=q-1$.

Now by Proposition \ref{dim of fibre}, we have
\[N=pq+(n-c)= (c-1)(n-c)+(n-c),\]
which amounts to
\[(k-c)(k+c-n)=0.\]
There are two cases:
\begin{itemize}
    \item[Case I:] $k-c=0$. Then $k(n-k-1)=(n'-k')k'$.
    There are two sub-cases:
    \begin{itemize}
        \item[case I.1:] $n-k>n'-k'$, then $n-k=n'-k'+1$ and $k=k'$.
        \item [case I.2:] $n-k=n'-k'$. By Lemma \ref{dimension equality},
      \({n-1\choose c-1}={n \choose k}-{n'\choose k'}. \)
       It follows that
       \({n-1\choose k-1}={n \choose k}-{n'\choose n-k},\)
       that is
       \[{n'\choose n-k}={n-1\choose n-k-1}.\]
       By putting $\alpha=n-k=n'-k'$ and $\beta=k-k'$ and applying Lemma \ref{combinatorics}, we conclude that this is is possible only when $k-k'=1$.
    \end{itemize}
    \item [Case II:] With analogous argument as in Case I, we obtain that $c=n-k=n'-k'$.
\end{itemize}
\end{proof}

\begin{lemma}\label{combinatorics}
Let $\alpha, \beta\in\mathbb{N}$. Then the following holds
\[
{\alpha(\beta+1)-\beta\choose \alpha}\ge {\alpha(\beta+1)-1\choose \alpha-1},
\]
and the equality occurs if and only $\alpha=1$ or $\beta=1$.
\end{lemma}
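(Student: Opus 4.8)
The plan is to pass to the quotient of the two sides and show it is at least $1$. Writing both binomial coefficients as products of consecutive integers and cancelling $\alpha!$ against $(\alpha-1)!$, the asserted inequality is equivalent to
\[
\prod_{j=1}^{\alpha}\big((\alpha-1)\beta+j\big)\ \ge\ \alpha\prod_{j=1}^{\alpha-1}\big(\alpha\beta+j\big),
\]
all factors being positive. Equivalently, setting
\[
R(\alpha,\beta):=\frac{1}{\alpha}\cdot\frac{\prod_{j=1}^{\alpha}\big((\alpha-1)\beta+j\big)}{\prod_{j=1}^{\alpha-1}\big(\alpha\beta+j\big)}=\binom{(\alpha-1)\beta+\alpha}{\alpha}\Big/\binom{\alpha\beta+\alpha-1}{\alpha-1},
\]
I must show $R(\alpha,\beta)\ge 1$, with equality exactly when $\alpha=1$ or $\beta=1$. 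The case $\alpha=1$ is immediate, since then $R(\alpha,\beta)=1$ identically, so I would assume $\alpha\ge 2$ throughout.

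First I would record the base case $\beta=1$: there the numerator product is $\alpha(\alpha+1)\cdots(2\alpha-1)$ and the denominator product is $(\alpha+1)\cdots(2\alpha-1)$, so their ratio is exactly $\alpha$ and $R(\alpha,1)=1$. This accounts for the second equality case. The heart of the argument is then to show that for fixed $\alpha\ge 2$ the quantity $R(\alpha,\beta)$ is \emph{strictly increasing} in $\beta$. Here I would compute the one-step ratio $R(\alpha,\beta+1)/R(\alpha,\beta)$ directly: increasing $\beta$ by one shifts the arguments in the numerator product up by $\alpha-1$ and those in the denominator product up by $\alpha$, and after cancelling the overlapping runs of consecutive factors the ratio telescopes to
\[
\frac{R(\alpha,\beta+1)}{R(\alpha,\beta)}=\prod_{i=1}^{\alpha-1}\frac{(c+\alpha+i)(d+i)}{(c+i)(d+\alpha+i)},\qquad c:=(\alpha-1)\beta,\ \ d:=\alpha\beta .
\]
Each factor exceeds $1$ by virtue of the elementary bilinear identity
\[
(c+\alpha+i)(d+i)-(c+i)(d+\alpha+i)=\alpha(d-c)=\alpha\beta>0 .
\]
Hence $R(\alpha,\beta+1)>R(\alpha,\beta)$ for every $\beta\ge 1$, and combined with $R(\alpha,1)=1$ this gives $R(\alpha,\beta)>1$ for all $\beta\ge 2$, completing both the inequality and the equality characterization simultaneously.

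I expect the only real obstacle to be locating this clean monotonicity rather than executing it. A direct term-by-term comparison of the two products fails, because the individual factors on the left are each smaller than the corresponding ones on the right, the imbalance being recovered only through the extra factor together with the $1/\alpha$; and naive analytic estimates such as $\log(1+x)\le x$ are far too lossy for small $\alpha$ and $\beta$ (indeed they already break down at $\alpha=\beta=2$). Passing instead to the discrete derivative in $\beta$, i.e. forming the telescoping product above and exploiting the single identity $(c+\alpha+i)(d+i)-(c+i)(d+\alpha+i)=\alpha\beta$, is what makes the estimate both sharp and transparent, and it has the bonus of pinning the equality cases down automatically.
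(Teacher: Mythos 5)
Your proof is correct, and it takes a genuinely different route from the one in the paper. Both arguments begin by cancelling factorials to reduce the claim to an inequality between products of consecutive integers, but you cancel in the opposite direction: your two products have $\alpha$ and $\alpha-1$ factors and you then induct on $\beta$, whereas the paper's have $\beta$ and $\beta-1$ factors (the equivalent form $\prod_{j=0}^{\beta-1}(\alpha\beta-j)>\alpha\prod_{j=1}^{\beta-1}\paren{\alpha(\beta+1)-j}$ for $\alpha,\beta\ge 2$), after which the paper finishes with a direct analytic estimate: dividing by $\alpha^{\beta}$ turns the claim into $\beta>\prod_{j=1}^{\beta-1}\bigl(1+\tfrac{1}{\beta-j/\alpha}\bigr)$, whose right-hand side is bounded above by $\bigl(\tfrac{\beta+3}{\beta+1}\bigr)^{\beta-1}<\tfrac{(\beta+1)^2}{\beta+3}<\beta$. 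Your discrete-monotonicity argument --- exhibiting $R(\alpha,\beta+1)/R(\alpha,\beta)$ as a telescoped product of $\alpha-1$ factors, each exceeding $1$ by the exact bilinear identity $(c+\alpha+i)(d+i)-(c+i)(d+\alpha+i)=\alpha(d-c)=\alpha\beta$ --- is entirely exact, avoids any chain of estimates, and delivers the equality characterization automatically from the base case $R(\alpha,1)=1$ together with strictness of the increase for $\alpha\ge 2$; the paper instead verifies the equality cases separately by hand. I checked your reduction, the base cases $R(1,\beta)=R(\alpha,1)=1$, the telescoping of the shifted products, and the bilinear identity; all are correct and the induction closes. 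One cosmetic remark applying to both proofs: the statement implicitly takes $\mathbb{N}=\{1,2,\dots\}$, since for $\beta=0$ both binomial coefficients equal $1$ and equality would also occur.
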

\begin{proof}
It is easy to check the equality holds when $\alpha=1$ or $\beta=1$. Now assume that $\alpha, \beta\ge 2$ and proves it is a strict inequality. We have
\begin{eqnarray*}
&&{\alpha(\beta+1)-\beta\choose \alpha}> {\alpha(\beta+1)\choose \alpha-1}\\
&\Leftrightarrow& (\alpha\beta)(\alpha\beta-1)\cdots \paren{\alpha\beta-(\beta-1)}>\alpha\paren{\alpha(\beta+1)-1}\cdots \paren{\alpha(\beta+1)-(\beta-1)}\\
&\Leftrightarrow& \beta(\beta-\frac{1}{\alpha})(\beta-\frac{2}{\alpha})\cdots (\beta-\frac{\beta-1}{\alpha})>(\beta-\frac{1}{\alpha}+1)(\beta-\frac{2}{\alpha}+1)\cdots (\beta-\frac{\beta-1}{\alpha}+1)\\
&\Leftrightarrow& \beta>\paren{1+\frac{1}{\beta-\frac{1}{\alpha}}}\paren{1+\frac{1}{\beta-\frac{2}{\alpha}}}\cdots \paren{1+\frac{1}{\beta-\frac{\beta-1}{\alpha}}}.
\end{eqnarray*}
On the other hand,
\begin{eqnarray*}
\text{RHS}&\le&  \paren{1+\frac{1}{\beta-\frac{\beta-1}{\alpha}}}^{\beta-1}
\le \paren{1+\frac{1}{\beta-\frac{\beta-1}{2}}}^{\beta-1}
= \paren{\frac{\beta+3}{\beta+1}}^{\beta-1}
< \frac{(\beta+1)^{\beta+1}}{(\beta+1)^{\beta-1}(\beta+3)},
\end{eqnarray*}
where in the second last inequality we have used the simple fact that $(\beta+3)^{\beta}<(\beta+1)^{\beta+1}$ unless $\beta=1$.
Therefore $\mbox{RHS}< \frac{(\beta+1)^2}{\beta+3}
< \beta
=\text{LHS}.$
This finishes the proof.
\end{proof}

As   another application of VMRT, we obtain that
\begin{proposition}\label{flatness}
Suppose $X'$ is a Grassmannian. Then $\varphi|_E: E\rightarrow X'$ is a projective $(n-c-1)$-bundle. Equivalently, $E$ is flat over $X'$.
\end{proposition}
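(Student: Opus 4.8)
The plan is to reduce the statement to the equidimensionality of the fibration $\varphi_E = \varphi|_E$, and to extract that equidimensionality from the finiteness of the morphism $\eta$ proved in Lemma \ref{finite morphism}. The crucial point I would emphasize is that Lemma \ref{finite morphism} holds at \emph{every} point $x'\in X'$, not merely at a general one: its proof only uses the bound of at most $n-n'$ lifts of a line of $Z$ to $E$ coming from Corollary \ref{splitting type}, which is a statement about the splitting type of $N^*_{Z/\mathbb{G}}$ along an arbitrary line and is therefore uniform in $x'$. Hence for every $x'$ the induced map $\eta\colon \mathbb{P}(T_{E_{x'}/Z_{x'}}|_{\varphi^{-1}_E(x')})\to \shf{C}_{x'}$ is finite surjective.

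First I would run the dimension count. Because $X'$ is homogeneous, the VMRT $\shf{C}_{x'}\iso\mathbb{P}^{p-1}\times\mathbb{P}^{q-1}$ has the same dimension at every point, equal to its value at a general $x'$. There the fibre $\varphi^{-1}_E(x')$ has the expected dimension $\dim E-\dim X'=n-c-1$, and the source of $\eta$, being a $\mathbb{P}^{c-2}$-bundle over it (as $T_{E_{x'}/Z_{x'}}$ has rank $c-1$ and $\varphi^{-1}_E(x')\iso Z_{x'}$ is a section of $E_{x'}\to Z_{x'}$), has dimension $(n-c-1)+(c-2)=n-3$. Thus $\dim\shf{C}_{x'}=n-3$ for all $x'$. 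Running the same computation at an arbitrary $x'$ and using that the finite surjection $\eta$ preserves dimension gives $\dim\varphi^{-1}_E(x')+(c-2)=n-3$, i.e. $\dim\varphi^{-1}_E(x')=n-c-1$ for every $x'\in X'$. In particular no fibre of $\varphi$ is contained in $E$; there is no jumping fibre.

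With equidimensionality in hand the two conclusions follow formally. Since $E=\mathbb{P}(N_{Z/\mathbb{G}})$ is smooth and $X'$ is a smooth Grassmannian, and all fibres of $\varphi_E$ have the expected dimension $n-c-1=\dim E-\dim X'$, miracle flatness shows that $\varphi_E$ is flat. To upgrade flatness to a genuine projective bundle I would use that $E\in|\sshf{\varphi}(1)|$, which follows from $\sshf{X}(E)\iso\sshf{\varphi}(1)$ under our standing Assumption: the defining section of $E$ restricts to a linear form on each $\mathbb{P}^{n-c}$-fibre of $\varphi$, and equidimensionality forces this form to be nonzero on every fibre. Hence the section is nowhere vanishing, cuts out a rank-$(n-c)$ subbundle of the bundle defining $\varphi$, and exhibits $\varphi_E\colon E\to X'$ as the associated $\mathbb{P}^{n-c-1}$-bundle.

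The main obstacle, and the point deserving the most care, is exactly the uniformity claim of the first paragraph: one must verify that the argument of Lemma \ref{finite morphism} genuinely produces a finite morphism at \emph{special} points $x'$ as well, so that the dimension count rules out jumping fibres everywhere and not only on a dense open locus. Once this is secured, both the flatness assertion and the identification of $\varphi_E$ with a $\mathbb{P}^{n-c-1}$-bundle are routine.
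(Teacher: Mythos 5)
Your proof is correct and follows essentially the same route as the paper's: both rest on applying Lemma \ref{finite morphism} at an arbitrary point of $X'$ together with the constancy of the VMRT of the homogeneous space $X'$, and on the same dimension count. The only difference is presentational — the paper argues by contradiction (a jumping fibre would force $\dim \shf{C}_{x'}=n-c+(c-1)-1=n-2>n-3$), while you argue directly that every fibre has dimension $n-c-1$ and then spell out the concluding steps (miracle flatness and the subbundle cut out by the section of $\sshf{\varphi}(1)$ defining $E$) that the paper leaves implicit.
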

\begin{proof}
Suppose $\varphi_E$ is not flat, then there exists $x'\in X'$ such that $\varphi^{-1}_E(x')=\varphi^{-1}(x')\iso \mathbb{P}^{n-c}$.
As in the beginning of this subsection, we consider the morphism
\[\varphi_{x'}: \mathbb{P}(N_{Z/\mathbb{G}}|_{Z_{x'}})\rightarrow X',\]
(but with $Z_{x'}\iso \prj{n-c}$) and the induced morphism to the VMRT
\[\eta: \mathbb{P}(T_{E_{x'}/Z_{x'}}|_{\varphi^{-1}_E(x')})\rightarrow \shf{C}_{x'}.\]
Applying Lemma \ref{finite morphism}, we deduce that $\dim \shf{C}_{x'}=n-c+(c-1)-1=n-2>n-3$. But Grassmannian is homogeneous, so its VMRT is constant at any point. This gives a contradiction. Thus $\varphi_E: E\rightarrow X'$ is flat.
\end{proof}

\section{Gromov-Witten invariants of $X_{k,n}$}\label{section-GWXkn}
In this section, we  focus on the case
\begin{equation*}
    X_{k,n}:={\rm Bl}_Z\mathbb{G}={\rm Bl}_{G(k, n-1)}G(k, n)
\end{equation*}
where $Z=X^{(1,\cdots, 1)}=\{V\in \mathbb{G}\mid V\leq \Lambda_{n-1}\}=G(k, n-1)$. Note that $X_{1, n}\cong \mathbb{P}^{n-1}$, and we always assume $1<k<n$.
As we will describe in Section \ref{section-Xkngeometry}, there are rich geometric structures on $X_{k, n}$. Consequently,  $H^*(X_{k, n})=H^*(X_{k, n},  \mathbb{Z})$ has bases $\mathcal{B}_1, \mathcal{B}_2$ arising from the blowup structure and the $\mathbb{P}^{n-k}$-bundle structure respectively, which are of the   form:
\begin{eqnarray}
\label{basisB1}\mathcal{B}_1&=&\{\sigma_\lambda\mid \lambda\in \mathcal{P}_{k, n}\}\bigcup\bigcup_{i=1}^{k-1}\{E^i\sigma_\mu\mid \mu\in \mathcal{P}_{k, n-1}\},\\
\label{basisB2}\mathcal{B}_2&=&\bigcup_{i=0}^{n-k}\{E^i\bar\sigma_\lambda \mid\lambda\in \mathcal{P}_{k-1, n-1}\}.
\end{eqnarray}
 The main result of this section is the following precise statement of Theorem \ref{thm:2ptGW},
 computing two-point, genus-zero Gromov-Witten invariants $\langle\alpha, \beta\rangle_\mathbf{d}$ of $X_{k, n}$ explicitly. We refer to Section \ref{subsection-GWef} for more details on Gromov-Witten invariants, and recall that $e, \ell-e$ are irreducible effective curve classes in $H_2(X_{k, n}, \mathbb{Z})$ that span the Mori cone $\overline{\rm NE}(X_{k, n})$
\begin{theorem}\label{thm:2ptGW-precise}
    For genus-zero two-point GW invariants of $X_{k,n}$, we have the following.
\begin{enumerate}
    \item[(a)] If $\mathbf{d}$ admits non-zero invariants,
    then $\mathbf{d}\in \{e, \ell-e, \ell\}$.
  \item[(b)] The only non-zero degree-$e$ invariants with insertions in $\mathcal{B}_1$ are
   \[\<E^{k-1}\sigma_{\mu},E^{k-1}\sigma_{\mu_+^\vee}\>^{X_{k,n}}_e=1.\]
  \item[(c)]
    The only non-zero degree-$(\ell-e)$ invariants with insertions in $\mathcal{B}_2$ are
  \[\<E^{n-k}\bar\sigma_\lambda,E^{n-k}\bar\sigma_{\lambda_-^\vee}\>^{X_{k,n}}_{\ell-e}=1.\]
\end{enumerate}
\end{theorem}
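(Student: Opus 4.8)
The plan is to exploit the two extremal contractions of $X_{k,n}$: the blowdown $\pi\colon X_{k,n}\to\mathbb{G}=G(k,n)$ contracting $e$, and the projective bundle $\varphi\colon X_{k,n}\to G(k-1,n-1)$ of Theorem \ref{bundle over Grassmannian}(i) contracting $\ell-e$. Writing $\mathbf d=ae+b(\ell-e)$ with $a,b\ge 0$ and recording $-K_{X_{k,n}}\cdot e=k-1$, $-K_{X_{k,n}}\cdot(\ell-e)=n-k+1$, one has
\[
\dim_{\mathbb C}\overline{\mathcal M}_{0,2}(X_{k,n},\mathbf d)=k(n-k)-1+a(k-1)+b(n-k+1),
\]
and a nonzero invariant forces this number to equal $\deg_{\mathbb C}\alpha+\deg_{\mathbb C}\beta\le 2k(n-k)$. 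This bound already removes many degrees, but (as one checks, e.g.\ $2\ell$ survives it when $k=3,n=10$) not all of them, so the substance of part (a) is a geometric vanishing beyond dimension counting.

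For the vanishing I would split $\mathbf d$ into the two pure rays and the mixed locus. A stable map of class $ae$ has $H$-degree $0$, hence is $\pi$-contracted into a single fibre $\mathbb{P}^{k-1}=\pi^{-1}(z)$ of $E=\mathbb{P}(N_{Z/\mathbb{G}})\to Z$; the $X_{k,n}$-invariant then reduces to a two-point invariant of $\mathbb{P}^{k-1}$ twisted by the normal direction $N_{E/X}\iso\sshf{E}(-1)$, and since ordinary two-point invariants of projective space vanish in degrees $\ge 2$ I would show the twisted ones do too (the twist contributes an Euler class of $R^1$ of $\mathcal{O}(-1)$, of rank $a-1$). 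Symmetrically, a class-$b(\ell-e)$ map is $\varphi$-contracted into a fibre $\mathbb{P}^{n-k}$ of the \emph{smooth} bundle $\varphi$, so the invariant is computed fibrewise and vanishes for $b\ge 2$ with no twist. The remaining mixed case $a,b\ge 1$, $a+b\ge 3$, where the curve is collapsed by neither contraction, is where I would bring in the curve neighborhood technique of \cite{BCMP13,BM15}: a nonzero $\langle\alpha,\beta\rangle_{\mathbf d}$ needs a connected genus-zero curve of class $\mathbf d$ through general translates of $\mathrm{PD}(\alpha)$ and $\mathrm{PD}(\beta)$, and bounding the dimension of the curve neighborhoods of Schubert cycles — computed through $\pi$ and $\varphi$, under which $\mathbf d$ maps to a degree-$b$ class on $\mathbb{G}$ and a degree-$a$ class on $G(k-1,n-1)$ — should show the incidence has the wrong dimension once $a+b\ge 3$. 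Making this mixed-degree estimate work uniformly in $(k,n)$ is the step I expect to be the main obstacle.

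Parts (b) and (c) are then the explicit computations on the two surviving rays, and (c) is the model. For $\mathbf d=\ell-e$ every curve is a line in a fibre $\mathbb{P}^{n-k}$ of the smooth bundle $\varphi$; under the normalization $\sshf{X}(E)\iso\sshf{\varphi}(1)$ of Section 3, the class $E^{n-k}$ restricts to the point class of a fibre while $\bar\sigma_\lambda$ is pulled back from $G(k-1,n-1)$. The degree-one two-point invariant of $\mathbb{P}^{n-k}$ is nonzero only for the pair $([\mathrm{pt}],[\mathrm{pt}])$, which forces the top power $E^{n-k}$ on both insertions; the residual constraint is then the Poincar\'e pairing on the base $G(k-1,n-1)$, selecting $\lambda_-^\vee$ as the partition dual to $\lambda$ and giving the value $1$.

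For $\mathbf d=e$ in part (b) I would run the same computation on the $\mathbb{P}^{k-1}$-bundle $E\to Z\cong G(k,n-1)$. Here the primitive class is unobstructed — a degree-one line has $h^1(\mathbb{P}^1,\mathcal{O}(-1))=0$, so the twist that caused trouble for $a\ge 2$ disappears — and the fibrewise line count again forces the top power $E^{k-1}$ (up to sign, the fibre point class) on both insertions, while the base constraint is the Poincar\'e pairing on $Z$, picking out $\mu_+^\vee$ and yielding $1$. Throughout, the bookkeeping that the insertions of $\mathcal B_1$ (resp.\ $\mathcal B_2$) not of the required top-$E$-power type contribute zero is exactly the statement that a fibre class of positive degree forces the maximal fibrewise evaluation, which I would phrase through the projection formula along $\pi$ (resp.\ $\varphi$).
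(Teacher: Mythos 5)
Your treatment of parts (b) and (c) and of the pure rays $ae$, $b(\ell-e)$ is essentially the paper's own argument: curves of class $de$ lie in $E=\mathbb{P}(N_{Z/\mathbb{G}})$, the invariant is computed fibrewise over $G(k,n-1)$ with the excess class $\mathbf{e}\bigl(R^1(ft_3)_*ev_3^*\sshf{E}(-1)\bigr)$ of rank $d-1$, and the dimension constraint forces $d=1$ and the top powers $E^{k-1}$ on both insertions, leaving the Poincar\'e pairing on the base; the $(\ell-e)$-ray is the untwisted analogue on the $\mathbb{P}^{n-k}$-bundle $\pi_2:X_{k,n}\to G(k-1,n-1)$. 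Your observation that the naive dimension count does not kill all higher degrees (e.g.\ $2\ell$) is also correct and is why the paper does not rely on it.

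The genuine gap is the mixed-degree vanishing in part (a), which you yourself flag as the main obstacle and do not resolve. This is where almost all of the technical work in the paper's proof lives, and the route you sketch is not the one that works there. You propose to bound curve neighborhoods ``through $\pi$ and $\varphi$,'' i.e.\ by pushing $\mathbf{d}$ forward to the two Grassmannian targets; but the curve neighborhood in $X_{k,n}$ is not controlled by its images under the two contractions (a connected degree-$\mathbf{d}$ curve can trade components between the two rays, and the preimage of a curve neighborhood downstairs is far too large). The paper instead realizes $X_{k,n}$ as the Schubert variety $Y_\varpi\subset F\ell_{k-1,k;n}$, uses the Buch--Mihalcea formula $\hat\Gamma_{\mathbf{d}}(Y(u))=Y(u\cdot z_{\mathbf{d}}^P)$ in the ambient homogeneous space together with $\Gamma_{\mathbf{d}}(X(u))\subseteq\hat\Gamma_{\mathbf{d}}(Y(u))$, and then proves a sharp combinatorial estimate $len(z_{\mathbf{d}}^P)\le \deg\mathbf{q}^{\mathbf{d}}-2$ (Lemma \ref{lenzdp}) by exhibiting reduced words for $z_{\mathbf{d}}$ as products of transpositions and counting inversions case by case. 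Moreover the estimate is \emph{not} strict enough for the borderline class $\mathbf{d}=2e+(\ell-e)=\ell+e$, which is a mixed class with $a+b=3$ in your decomposition: there the paper needs a separate geometric argument (Lemma \ref{lem--compX}) showing that a specific $T$-fixed point of $\hat\Gamma_{\mathbf{d}}(Y(u))$ does not lie in $Y_\varpi$, so the containment $\Gamma_{\mathbf{d}}(X(u))\subsetneq\hat\Gamma_{\mathbf{d}}(Y(u))$ is strict and the irreducibility of the Schubert variety $\hat\Gamma_{\mathbf{d}}(Y(u))$ drops the dimension by at least one. Without the embedding into $F\ell_{k-1,k;n}$, the length estimate, and the handling of this exceptional class, part (a) is not proved.
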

\noindent Here   $\mu_+^\vee$ (resp. $\lambda_-^\vee$) denotes the dual partition of $\mu\in \mathcal{P}_{k, n-1}$ (resp. $\lambda\in \mathcal{P}_{k-1, n-1}$). That is,
$\mu_+^\vee=(n-1-k-\mu_k, \cdots, n-1-k-\mu_1)$ and  $\lambda_-^\vee=(n-k-\lambda_{k-1}, \cdots, n-k-\lambda_1)$. We will prove part (a) of  Theorem \ref{thm:2ptGW-precise} in Section 4.3 and prove parts (b),(c) in Section 4.2.
\begin{remark}
    For degree-$\ell$ invariants, we can use blow-up formula \cite{ChDu23} to get
\[
\<\sigma_\mu,\sigma_\lambda\>^{X_{k,n}}_\ell=\<\sigma_\mu,\sigma_\lambda\>^{\mathbb{G}}_\ell,\quad\mu,\lambda\in\mathcal{P}_{k,n},
\]
and then  can use WDVV equations to determine the rest degree-$\ell$ invariants. We will discuss this elsewhere together with the quantum Chevalley formula.
\end{remark}

\subsection{Bases of  $H^*(X_{k,n})$}\label{section-Xkngeometry}
Denote $\bC_+^{n-1}:=\Lambda_{n-1}$ and $\bC_-:=\mathbb{C}\{e_n\}$. Denote
\begin{align}
	G_+:=G(k,\bC^{n-1}_+)=X^{(1, \cdots,1)}, &\qquad G_-:=\{V\leq\bC^n\mid\bC_-\leq V\}=G(k-1,\bC^n/\bC_-).
\end{align}
 {Here when $k=n-1$, we view $G_+$ as a single point.}


\noindent Recall the Pl\"{u}cker embedding
\[
G(k,n)\hookrightarrow\bP(\wedge^k\bC^n)\quad\mbox{ and }\quad G(k,\bC^{n-1}_+)\hookrightarrow\bP(\wedge^k\bC_+^{n-1}).
\]
We give a geometric  construction of  $X_{k, n}$   as a subvariety of the blowup of $\bP:=\bP(\wedge^k\bC^n)$ along $\bP_+:=\bP(\wedge^k\bC_+^{n-1})$   as follows. Let $[p_{i_1...i_k}]_{1\leq i_1<...<i_k\leq n}$ be the homogeneous coordinate on $\bP$ corresponding to the basis $(e_{i_1}\wedge\cdots\wedge e_{i_i})_{1\leq i_1<...<i_k\leq n}$ in $\wedge^k\bC^n$. Then $\bP_+$ is the linear subspace in $\bP$ defined by
$
p_{i_1...i_k}=0\mbox{ whenever }i_k=n.
$
Let $\bP_-$ be the linear subspace in $\bP$ defined by
$
p_{i_1...i_k}=0\mbox{  whenever }i_k<n.
$
Then we have $\bP_+\cap\bP_-=\emptyset$ and
\[
\dim\bP_++\dim\bP_-=\dim\bP-1.
\]
So we can consider the projection $f':\bP\dasharrow\bP_-$ from $\bP_+$, which is a rational map with the indeterminacy locus $\bP_+$. The rational map $f'$ defines a graph $\Gamma_{f'}\subset\bP\times\bP_-$ by the Zariski closure of
$
\{(y,f'(y)\mid y\in\bP\setminus\bP_+\}.
$
The natural projection  $\Gamma_{f'}\xrightarrow{p_1}\bP$ is the blowup of $\bP$ along $\bP_+$. Note that $\bP\setminus\bP_+\xrightarrow{f'}\bP_-$ is the normal bundle of $\bP_-$ in $\bP$, and the natural projection $\Gamma_{f'}\xrightarrow{p_2}\bP_-$  endows $\Gamma_{f'}$ with the projective bundle structure
\[
\Gamma_{f'}=\bP_{\bP_-}\Big(N_{\bP_-/\bP}\oplus\cO\Big),
\]
which can be viewed as the projective compactification of
\[
\bP\setminus\bP_+=N_{\bP_-/\bP}\cong\cO_{\bP_-}(1)^{\oplus{n-1\choose k}}.
\]

\subsubsection{Basis $\mathcal{B}_1$ from the blowup structure}
The Pl\"{u}cker embedding  identifies $\mathbb{G}=G(k,n)$ with a subvariety of $\bP$, and identifies $G_+$ with the intersection of $\mathbb{G}$ and $\bP_+$. Consequently, we can realize $X_{k, n}$ as $p_1^{-1}(\mathbb{G})$, and obtain the blowup
 $$\pi_1=p_1|_{X_{k, n}}: X_{k, n}\to \mathbb{G}.$$

Recall that the Schubert classes $\{\sigma_\lambda\mid\lambda\in \mathcal{P}_{k, n}\}$ form a basis of $H^*(\mathbb{G})$.
By abuse of notation, we denote the class $\pi^*\sigma_\mu$ in $H^*(X(k,n))$ by $\sigma_\mu$.
By Lemma \ref{lemma-normalbundleofsubgrassmannian},  the restriction $\pi_E:E\to G_+$  of  $\pi$ to the exceptional divisor $E$ endows $E$ with a $\bP^{k-1}$-bundle structure
\[
E=\bP_{G_+}(N_{G_+/\mathbb{G}})=\bP_{G_+}(\mathcal{S}^\vee_{G_+}).
\]
Let $\cO_E(-1)$ be the universal subbundle of $\pi_E^*N_{G_+/\mathbb{G}}=\pi_E^*\mathcal{S}^\vee_{G_+}$, and   $\mathcal{Q}_E$ be the universal quotient bundle on $E$ defined by the exact sequence
\[
0\to\cO_E(-1)\to\pi_E^*\mathcal{S}^\vee_{G_+}\to\mathcal{Q}_E\to 0.
\]
We set $\xi:=c_1(\cO_E(-1))\in H^2(E)$. Let $\{\sigma^+_{\mu}\mid\mathcal{P}_{k, n-1}\}$ be the Schubert classes for $G_+=G(k,\bC^{n-1}_+)$. Denote by $1^b$ the   partition $(1,..., 1, 0,..., 0)$ with $b$ copies of $1$. We have
\[
c_a(\mathcal{Q}_E)=\suml_{b=0}^a\pi_E^*\sigma^+_{1^{b}}\cdot(-\xi)^{a-b}, \ 0\leq a\leq k-1.
\]

Let $\iota_+:G_+\hookrightarrow \mathbb{G}$ and $\iota_E:E\hookrightarrow X_{k,n}$ be the natural inclusions. Noting
\[
\pi_E^*\sigma^+_\mu=\pi_E^*\iota_+^*\sigma_\mu=\iota_E^*\sigma_\mu,
\]
we have
\[
(\iota_E)_*\Big(\pi_E^*\sigma^+_{\mu}\cdot\xi^i\Big)=(\iota_E)_*\Big(\iota_E^*\sigma_{\mu}\Big)\cdot E^i=\sigma_\mu\cdot E^{i+1},\ i\geq0.
\]
Therefore a $\bZ$-basis of $H^*(X_{k,n})$ is given by the set
$\mathcal{B}_1$ in \eqref{basisB1}.

\begin{lemma}\label{lemma-Ek}
	$E^k=\suml_{i=0}^{k-1}(-1)^{k-1-i}E^i\sigma_{1^{k-i}}$.
\end{lemma}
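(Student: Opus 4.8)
The plan is to recognize the left-hand side of the identity as (up to sign) the top Chern class of a rank-$k$ bundle on $X_{k,n}$ and to show that this class vanishes. Write $x_1,\dots,x_k$ for the Chern roots of the dual universal subbundle $\mathcal{S}^\vee$ on $\mathbb{G}=G(k,n)$, so that $c_b(\mathcal{S}^\vee)=\sigma_{1^b}$. Then, using $c_1(\mathcal{O}_X(-E))=-E$ and the twist formula $c_k(\mathcal{E}\otimes L)=\sum_{j=0}^k c_j(\mathcal{E})c_1(L)^{k-j}$, the asserted relation is equivalent to
\begin{equation*}
c_k\bigl(\pi^*\mathcal{S}^\vee\otimes\mathcal{O}_X(-E)\bigr)=\prod_{i=1}^{k}(x_i-E)=\sum_{b=0}^{k}(-1)^{k-b}\,\sigma_{1^b}\,E^{k-b}=0,
\end{equation*}
from which one recovers the stated formula by isolating the $b=0$ term $(-1)^kE^k$ and reindexing. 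I emphasize that the naive route—pushing forward the projective-bundle relation $c_k(\mathcal{Q}_E)=0$ on $E=\mathbb{P}(\mathcal{S}^\vee_{G_+})$ along $\iota_E$—is insufficient: because $(\iota_E)_*$ raises cohomological degree by one, it only produces the weaker relation obtained by multiplying the target identity by $E$, namely the relation for $E^{k+1}$.

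To produce the required vanishing, I would exhibit a nowhere-vanishing section of $\pi^*\mathcal{S}^\vee\otimes\mathcal{O}_X(-E)$. The covector $e_n^*\in(\mathbb{C}^n)^*$ induces a section $s\in H^0(\mathbb{G},\mathcal{S}^\vee)$ through $\mathcal{S}\hookrightarrow\mathcal{O}_{\mathbb{G}}^{\oplus n}\xrightarrow{e_n^*}\mathcal{O}_{\mathbb{G}}$, with $s(V)=e_n^*|_V$. Its zero scheme is precisely $\{V\mid V\leq\ker e_n^*=\Lambda_{n-1}\}=G_+=Z$, which is smooth of codimension $k=\operatorname{rank}\mathcal{S}^\vee$; hence $s$ is a regular section cutting out $Z$ (and in particular $c_k(\mathcal{S}^\vee)=[Z]=\sigma_{1^k}$). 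In a local trivialization the components of $s$ form a regular sequence generating $\mathcal{I}_Z$, so by the universal property of the blowup $\pi\colon X_{k,n}\to\mathbb{G}$ we have $\pi^{-1}\mathcal{I}_Z\cdot\mathcal{O}_X=\mathcal{O}_X(-E)$. Writing $s_E$ for the tautological section of $\mathcal{O}_X(E)$ with divisor $E$, this factors $\pi^*s=s_E\cdot\tilde s$, where the local components of $\tilde s$ generate the unit ideal; thus $\tilde s$ is a nowhere-vanishing section of $\pi^*\mathcal{S}^\vee\otimes\mathcal{O}_X(-E)$ and its top Chern class vanishes, giving the displayed identity.

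The crux of the argument—and the step I expect to require the most care—is the factorization $\pi^*s=s_E\cdot\tilde s$ with $\tilde s$ nowhere vanishing, i.e. the claim that $\pi^*s$ vanishes along $E$ to order exactly one with invertible cofactors. This uses that $s$ is Koszul-regular, equivalently that $Z$ is the reduced, transverse zero scheme of $s$, which in turn relies on $Z$ being smooth with $\operatorname{codim}Z=\operatorname{rank}\mathcal{S}^\vee=k$. This regularity is exactly what upgrades the weak $E^{k+1}$-relation of the naive pushforward to the sharp relation for $E^k$; the remaining manipulations (the twist formula and the identification $c_b(\mathcal{S}^\vee)=\sigma_{1^b}$) are routine.
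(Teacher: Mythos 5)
Your proof is correct, but it follows a genuinely different route from the paper's. The paper invokes Fulton's blowup formula (\cite[Proposition 6.7]{Fu98}) to write $\pi^*(\iota_+)_*[G_+]=(\iota_E)_*\big(c_{k-1}(\mathcal{Q}_E)\big)$, expands $c_{k-1}(\mathcal{Q}_E)$ via the tautological sequence on $E=\mathbb{P}(\mathcal{S}_{G_+}^\vee)$, pushes forward term by term, and then uses $(\iota_+)_*[G_+]=\sigma_{1^k}$ to solve for $E^k$; you instead realize the relation as the vanishing of $c_k\big(\pi^*\mathcal{S}^\vee\otimes\mathcal{O}_X(-E)\big)$, witnessed by the nowhere-vanishing section obtained by dividing the tautological section $s$ (induced by $e_n^*$, cutting out $Z=G_+$) by the local equation of $E$. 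Your approach is more self-contained — it needs only the universal property $\pi^{-1}\mathcal{I}_Z\cdot\mathcal{O}_X=\mathcal{O}_X(-E)$ and elementary Chern-class calculus rather than the key formula — and your observation that the naive pushforward of $c_k(\mathcal{Q}_E)=0$ only yields the relation multiplied by $E$ is accurate and explains precisely what Fulton's formula (respectively, your division by $s_E$) buys: it computes $\pi^*[Z]$ itself rather than $E\cdot\pi^*[Z]$. One small point deserves a sentence in a final write-up: smoothness of $Z$ together with $\operatorname{codim}Z=\operatorname{rank}\mathcal{S}^\vee$ gives that $s$ is a regular section, but the identification of the scheme-theoretic zero locus with the \emph{reduced} $Z$ (equivalently, that the local components of $s$ generate $\mathcal{I}_Z$ rather than a non-reduced thickening) requires checking transversality at one point — immediate here since the differential of $s$ at $V\in G_+$ is $\phi\mapsto e_n^*\circ\phi$ on $\mathrm{Hom}(V,\mathbb{C}^n/V)$, which is surjective onto $V^*$ — or, alternatively, comparing $c_k(\mathcal{S}^\vee)=\sigma_{1^k}=[Z]$ to see the multiplicity is one. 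With that noted, the factorization $\pi^*s=s_E\cdot\tilde s$ with $\tilde s$ nowhere vanishing goes through exactly as you argue, and the lemma follows.
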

\begin{proof}
	It follows from  \cite[Proposition 6.7]{Fu98} that
	\begin{align*}
	&(\iota_+)_*[G_+]=(\iota_E)_*(c_{k-1}(\mathcal{Q}_E))=\suml_{b=0}^{k-1}(-1)^{k-1-b}E^{k-b}\sigma_{1^b}\\
	\Rightarrow&E^k=\suml_{b=1}^{k-1}(-1)^{b+1}E^{k-b}\sigma_{1^b}+(-1)^{k+1}(\iota_+)_*[G_+].
	\end{align*}
	
	Since $G_+=X^{1^k}$, it follows that $(\iota_+)_*[G_+]=\sigma_{1^k}$. Then we are done.
\end{proof}
As a consequence, the dual basis $\mathcal{B}_1^\vee$ of $\mathcal{B}_1$ in $H^*(X(k,n),\bZ)$ with respect to the Poincar\'e pairing is given by
\begin{equation}
   \mathcal{B}_1^\vee=\{(\sigma_\lambda)^\vee\mid \lambda\in \mathcal{P}_{k, n}\}\bigcup\bigcup_{i=1}^{k-1}\{(E^i\sigma_\mu)^\vee\mid \mu\in  \mathcal{P}_{k, n-1}\}
\end{equation}
where we recall $\mu_+^\vee:=(n-1-k-\mu_k,...,n-1-k-\mu_1)$ and have
\begin{align}
(\sigma_\lambda)^\vee&=\sigma_{\lambda^\vee},\qquad	(E^i\sigma_\mu)^\vee=\sigma_{\mu_+^\vee}\cdot\suml_{a=1}^{k-i}(-1)^{a+(i-1)}E^a\sigma_{1^{k-i-a}}.
\end{align}


\subsubsection{Basis $\mathcal{B}_2$ from the $\mathbb{P}^{n-k}$-bundle structure}
The restriction $\pi_2:=p_2|_{X_{k,n}}$
endows $X_{k,n}$ with a $\mathbb{P}^{n-k}$-bundle structure. Below we provide an explicit description of the fiber and the base of this bundle. We notice that the intersection of $\mathbb{G}$ and $\bP_-$ is $G_-$. The following lemma shows that the image of the restriction of $f'$ to $\mathbb{G}$ is contained in $G_-$.

\begin{lemma}\label{lemma-imageofrestrctionoff'}
	For $V\in \mathbb{G}\setminus G_+$, we have $f'(V)=(V\cap\bC_+^{n-1})+\bC_-\in G_-$.
\end{lemma}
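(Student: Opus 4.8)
The plan is to make the linear projection $f'$ completely explicit on Plücker vectors and then reduce the claim to an elementary wedge-product computation. First I would record the ambient decomposition
\[
\wedge^k\bC^n=\wedge^k\bC_+^{n-1}\oplus\big(\wedge^{k-1}\bC_+^{n-1}\wedge\bC_-\big),
\]
under which the coordinates $p_{i_1\cdots i_k}$ with $i_k<n$ span the first summand and those with $i_k=n$ span the second. By the defining equations, $\bP_+=\bP(\wedge^k\bC_+^{n-1})$ and $\bP_-=\bP(\wedge^{k-1}\bC_+^{n-1}\wedge\bC_-)$, so the linear projection $f'$ from $\bP_+$ is exactly the map induced by the projection of $\wedge^k\bC^n$ onto the second summand along the first.

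Second, for $V\in\mathbb{G}\setminus G_+$ I would pin down $V\cap\bC_+^{n-1}$. Since $V\not\subseteq\bC_+^{n-1}=\Lambda_{n-1}$ and $\Lambda_{n-1}$ is a hyperplane, $V+\Lambda_{n-1}=\bC^n$, so a dimension count gives $\dim(V\cap\bC_+^{n-1})=k-1$. Consequently $(V\cap\bC_+^{n-1})+\bC_-$ is $k$-dimensional (as $e_n\notin\bC_+^{n-1}$) and contains $e_n$, hence lies in $G_-$; this already identifies the right-hand side as a genuine point of $G_-$.

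Third, I would choose an adapted basis: let $u_1,\dots,u_{k-1}$ be a basis of $V\cap\bC_+^{n-1}$ and complete it to a basis of $V$ by a vector $v=v'+c\,e_n$ with $v'\in\bC_+^{n-1}$ and $c\neq0$ (such $v$ exists precisely because $V\not\subseteq\Lambda_{n-1}$). Then the Plücker vector of $V$ is
\[
u_1\wedge\cdots\wedge u_{k-1}\wedge v=\underbrace{u_1\wedge\cdots\wedge u_{k-1}\wedge v'}_{\in\,\wedge^k\bC_+^{n-1}}+c\,\underbrace{u_1\wedge\cdots\wedge u_{k-1}\wedge e_n}_{\in\,\wedge^{k-1}\bC_+^{n-1}\wedge\bC_-}.
\]
Projecting along the first summand kills the first term, so $f'$ sends $[V]$ to the class of $c\,u_1\wedge\cdots\wedge u_{k-1}\wedge e_n$, which is nonzero (confirming that $V$ avoids the indeterminacy locus $\bP_+$, consistent with $V\notin G_+$) and is precisely the Plücker vector of $\mathrm{span}(u_1,\dots,u_{k-1},e_n)=(V\cap\bC_+^{n-1})+\bC_-$. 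This yields the claimed formula.

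The computation itself is routine once the projection is set up, so the only real care is in the first step: correctly matching the coordinate description of $f'$ (projection from $\bP_+$ toward $\bP_-$) with the intrinsic direct-sum splitting of $\wedge^k\bC^n$, and keeping track of the fact that the equalities hold only up to the scalar $c$, i.e.\ projectively. I expect this bookkeeping, rather than any genuine difficulty, to be the main thing to get right.
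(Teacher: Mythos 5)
Your proposal is correct and follows essentially the same route as the paper's proof: both identify $f'$ as the coordinate/linear projection killing the Pl\"ucker coordinates supported on $\wedge^k\bC_+^{n-1}$, and both evaluate it on an adapted basis of $V$ consisting of $k-1$ vectors spanning $V\cap\bC_+^{n-1}$ together with one vector having nonzero $e_n$-component. Your phrasing via the intrinsic splitting $\wedge^k\bC^n=\wedge^k\bC_+^{n-1}\oplus(\wedge^{k-1}\bC_+^{n-1}\wedge\bC_-)$ is just a coordinate-free restatement of the paper's explicit parametrization of the join of $\bP_+$ and $V$.
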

\begin{proof}
	Let $[p_{i_1...i_k}(V)]_{1\leq i_1<...<i_k\leq n}$ be a homogeneous coordinate of $V$. Note that the coordinate of a point in the smallest linear subspace in $\bP$ containing both $\bP_+$ and $V$ has the form
	\[
	p_{i_1...i_k}=\left\{\begin{array}{cc}\lambda_{i_1...i_k}+z p_{i_1...i_k}(V),&1\leq i_1<...<i_k<n,\\
	\\
	z p_{i_1...i_k}(V),&1\leq i_1<...<i_k=n,\end{array}\right.
	\]
	where $\lambda_{i_1...i_k},z\in\bC$. So we see that the coordinate of $f'(V)$ has the form
	\[
	p'_{i_1...i_k}=\left\{\begin{array}{cc}0,&1\leq i_1<...<i_k<n,\\
	\\
	p_{i_1...i_k}(V),&1\leq i_1<...<i_k=n.\end{array}\right.
	\]
	Note that we can choose a basis of $V$ to have the form in row vectors
	\[
	\begin{bmatrix}
		a_{11}&...&a_{1,n-1}&0\\
		\vdots&&\vdots&\vdots\\
		a_{k-1,1}&...&a_{k-1,n-1}&0\\
		a_{k1}&...&a_{k,n-1}&1
	\end{bmatrix},
	\]
	which gives a basis of $(V\cap\bC^{n-1}_+)+\bC_-$ with the form
	\[
	\begin{bmatrix}
		a_{11}&...&a_{1,n-1}&0\\
		\vdots&&\vdots&\vdots\\
		a_{k-1,1}&...&a_{k-1,n-1}&0\\
		0&...&0&1
	\end{bmatrix}.
	\]
	Thus the coordinate of $(V\cap\bC^{n-1}_+)+\bC_-$ is  given by $p'_{i_1...i_k}$. This proves the lemma.
\end{proof}

Let $f:\mathbb{G}\dasharrow G_-$ be the restriction of the above mentioned $f'$. Then $f$ is a rational map with the indeterminacy locus $G_+$. The graph of $f$ is the strict transform of $\mathbb{G}\subset\bP$ under the blowup $p_1$, so that its closure is just $X_{k, n}=p_1^{-1}(\mathbb{G})$. Note that Lemma \ref{lemma-imageofrestrctionoff'} implies that, for $W\in G_-$, $f^{-1}(W)$ consists of $V\in\mathbb{G}\setminus G_+$ satisfying $V\cap\bC_+^{n-1}=W\cap\bC_+^{n-1}$. One can deduce from this that $\mathbb{G}\setminus G_+\xrightarrow{f}G_-$ is the normal bundle of $G_-$ in $\mathbb{G}$. As a consequence, $
\pi_2:X_{k,n}\to G_-$ endows $X_{k,n}$ with the $\bP^{n-k}$-bundle structure
\begin{equation}\label{XknBundle}
    X_{k,n}=\bP_{G_-}(N_{G_-/\mathbb{G}}\oplus\cO),
\end{equation}
which, by Lemma \ref{lemma-normalbundleofsubgrassmannian}, can be viewed as the projective compactification of
\[
G(k,n)\setminus G_+=N_{G_-/\mathbb{G}}\cong\mathcal{Q}_{G_-}.
\]
Moreover, the exceptional divisor $E$   is exactly the divisor at infinity of the $\bP^{n-k}$-bundle, i.e.
\begin{equation}
    E=\bP_{G_-}(\mathcal{Q}_{G_-}\oplus\{0\})\subset\bP_{G_-}(\mathcal{Q}_{G_-}\oplus\cO)=X_{k,n}.
\end{equation}

Let $\{\bar\sigma_\lambda\mid\lambda\in \mathcal{P}_{k-1, n-1}\}$ be the Schubert classes for $G_-$.
Let $\cO_{X(k,n)}(-1)$ be the universal subbundle of $\pi_2^*(\mathcal{Q}_{G_-}\oplus\cO)$, and let $\mathcal{Q}_{X_{k,n}}$ be the universal quotient bundle on $X_{k,n}$ defined by the exact sequence
\[
0\to \cO_{X_{k,n}}(-1)\to\pi_2^*(\mathcal{Q}_{G_-}\oplus\cO)\to\mathcal{Q}_{X_{k,n}}\to0.
\]
Note   $E=c_1(\cO_{X_{k,n}}(1))$ and simply denote the special partition $(j,0,\cdots, 0)$ as $j$. We have
\[
c_j(\mathcal{Q}_{X_{k,n}})=\sum_{i=0}^jE^i\bar\sigma_{j-i}, \mbox{ implying } H^*(X_{k,n})=H^*(G_-)[E]/(E^{n-k+1}+\suml_{a=1}^{n-k}E^{a}\bar\sigma_{n-k+1-a}).
\]
 Hence, a $\bZ$-basis of $H^*(X_{k,n})$ is given by the set $\mathcal{B}_2$ in \eqref{basisB2}.
By \cite[Lemma 2.4]{LLW16}, the dual basis of $\mathcal{B}_2$ in $H^*(X(k,n))$ with respect to the Poincar\'e pairing is given by
\[\mathcal{B}_2^\vee=\bigcup_{i=0}^{n-k}\{(E^i\bar\sigma_\lambda)^\vee \mid \lambda\in \mathcal{P}_{k-1, n-1}\}, \mbox{ where }
(E^i\bar\sigma_\lambda)^\vee=\bar\sigma_{\lambda_-^\vee}\cdot c_{n-k-i}(\mathcal{Q}_{X_{k,n}})=\bar\sigma_{\lambda_-^\vee}\cdot\suml_{a=0}^{n-k-i}E^a\bar\sigma_{n-k-i-a},
\]
where we recall $\lambda_-^\vee=(n-k-\lambda_{k-1},...,n-k-\lambda_1)$.


\subsection{Gromov-Witten invariants of degrees $e$ and $\ell-e$} \label{subsection-GWef}
For a smooth projective variety $X$, denote by $\overline{\mathcal{M}}_{0, m}(X, \mathbf{d})$
the moduli space of stable maps to $X$ (see e.g. \cite{FuPa}). It consists of  (equivalence classes of) stable maps $(f: C\to X; {\rm pt}_1,\cdots, {\rm pt}_m)$  of degree $\mathbf{d}\in H_2(X, \mathbb{Z})$, where $C$ is a tree of $\mathbb{P}^1$'s and ${\rm pt}_i$ are non-singular  points in $C$.
Let ${\rm ev}_i:\overline{\mathcal{M}}_{0, m}(X, \mathbf{d})\to X$ denote the $i$th   evaluation map, which sends $(f: C\to X; {\rm pt}_1,\cdots, {\rm pt}_m)$ to $f({\rm pt}_i)$.   For $\gamma_1, \cdots, \gamma_m\in H^*(X)$, a genus-zero,
$m$-point Gromov-Witten
invariant is defined by
\begin{equation}
    \langle \gamma_1, \cdots, \gamma_m\rangle^X_{\mathbf{d}}:=\int_{[\overline{\mathcal{M}}_{0,m}(X, \mathbf{d})]^{\rm vir}} {\rm ev}_1^*(\gamma_1)\cup\cdots \cup {\rm ev}_m^*(\gamma_m).
\end{equation}
Here $[\overline{\mathcal{M}}_{0,m}(X, \mathbf{d})]^{\rm vir}\in H_{2{\rm expdim}}(\overline{\mathcal{M}}_{0,m}(X, \mathbf{d}), \mathbb{Q})$ is the virtual fundamental class with
\begin{equation}
    {\rm expdim}= \dim X+\int_{\mathbf{d}}c_1(X)+m-3.
\end{equation}
We have
$     \langle \gamma_1, \cdots, \gamma_m\rangle^X_{\mathbf{d}}=0$   unless $\mathbf{d}\in \overline{\rm NE}(X)$. Moreover, we have
 \begin{equation}\label{divisorAxiom}
     \langle \gamma_1, \cdots, \gamma_m\rangle^X_{\mathbf{d}}=\begin{cases}
         \int_{[X]} \gamma_1\cup\cdots\cup \gamma_m,&\mbox{if } \mathbf{d}=0\text{ and }m=3,\\
         \langle \gamma_2, \cdots, \gamma_m\rangle^X_{\mathbf{d}}\cdot \int_{\mathbf{d}}\gamma_1,&\mbox{if } \gamma_1\in H^2(X),\text{ and either }\mathbf{d}\neq0\text{ or }m\ge 4.
     \end{cases}
\end{equation}

\bigskip

\begin{proof}[Proof of Theorem \ref{thm:2ptGW-precise} (b)]
    The arguments are similar to that for \cite[Lemma 3.7]{HKLS24}. We include the proof for convenience of readers.

  Recall that $E\xrightarrow{\iota_E} X_{k, n}$ is the natural embedding, and also denote by $\iota_E$ the induced embedding of moduli spaces of stable maps:
	\[
	\iota_E:	\overline{\mathcal{M}}_{0,2}(E,de)\hookrightarrow\overline{\mathcal{M}}_{0,2}(X_{k, n},de),\quad d\in\bZ_{>0}.
	\]
	Note that a curve in $X_{k,n}$ with degree $de$ must be contained in $E$. So we have
	\[
	\iota_E\Big(\overline M_{0,2}(E,de)\Big)=\overline M_{0,2}(X_{k,n},de).
	\]
	Consider the universal diagram
	\[
	\begin{CD}
		\overline M_{0,3}(E,de)@>ev_3>>E\\
		@V ft_3 VV  \\
		\overline M_{0,2}(E,de)
	\end{CD}
	\]
	and let $R:=R^1(ft_3)_*ev^*_3N_{E|X_{k,n}}$, where $ft_3$ denotes the  natural morphism by forgetting the third marking point. From the construction of virtual fundamental classes, we have
	\[
	(\iota_E)_*\big(\mathbf{e}(R)\cap[\overline M_{0,2}(E,de)]^{\rm virt}\big)=[\overline M_{0,2}(X_{k,n},de)]^{\rm virt},
	\]
	where $\mathbf{e}(R)$ is the Euler class of $R$. So for $\beta_1,\beta_2\in\mathcal B_1$, we see that
	\[
	\<\beta_1,\beta_2\>^{X_{k,n}}_{de}=\int_{[\overline{M}_{0, 2}(E,de)]^{\rm virt}}ev_1^*\iota_E^*\beta_1\cup ev_2^*\iota_E^*\beta_2\cup\mathbf{e}(R).
	\]
	Recall that $E\xrightarrow{\pi_E}G_+$ is the natural projection. For $i=1,2$, consider the morphism $f_i:\overline M_{0,2}(E,de)\to G_+$ defined by the composition
	\[
	f_i:=\pi_E\circ ev_i.
	\]
	We see that $f_1=f_2$, and we let $f:=f_1=f_2$. Then we have
	\[
	\<\beta_1,\beta_2\>^{X_{k,n}}_{de}=\int_{G_+}PD\Big(f_*\big(ev_1^*\iota_E^*\beta_1\cup ev_2^*\iota_E^*\beta_2\cup\mathbf{e}(R)\cap[\overline M_{0,2}(E,de)]^{\rm virt}\big)\Big).
	\]
 Recall that $\xi:=c_1(N_{E|X_{k,n}})\in H^2(E)$. We have
 \begin{align*}
\iota_E^*\sigma_\mu&=\pi_E^*\sigma_\mu,\quad\mu\in\mathcal{P}_{k,n},\\
\iota_E^*(E^i\sigma_\mu)&=\xi^i\cdot\pi_E^*\sigma_\mu,\quad 1\le i\le k-1, \mu\in\mathcal{P}_{k,n-1}.
\end{align*}
 We see that for $i=1,2$, $\iota_E^*\beta_i$ has the form
\[
\iota_E^*\beta_i=\pi_E^*\sigma_{\mu_i}\cup\xi^{m(i)},\quad \mu_i\in\mathcal{P}_{k,n-1},\quad m(i)\in\{0,...,k-1\},
\]
which implies that
\[
ev_i^*\iota_E^*\beta_i=f^*\sigma_{\mu_i}\cup ev_i^*\xi^{m(i)}.
\]
So we use the projection formula to get
\[
\<\beta_1,\beta_2\>^{X_{k,n}}_{de}=\int_{G_+}\sigma_{\mu_1}\cup\sigma_{\mu_2}\cup PD\Big(f_*\big(ev_1^*\xi^{m(1)}\cup ev_2^*\xi^{m(2)}\cup\mathbf{e}(R)\cap[\overline M_{0,2}(E,de)]^{\rm virt}\big)\Big).
\]
Assume that $\<\beta_1,\beta_2\>_{de}\neq0$. Then the above formula gives
\[
\deg\sigma_{\mu_1}+\deg\sigma_{\mu_2}\leq k(n-1-k)\mbox{ and }\sigma_{\mu_1}\cup\sigma_{\mu_2}\neq0.
\]
We use the dimension constraint to get
\begin{align*}
    (k(n-k)-3)+d(k-1)+2={}&\Big(\deg\sigma_{\mu_1}+\deg\sigma_{\mu_2}\Big)+ \Big(m(1)+m(2)\Big)\\
    \leq{}&k(n-1-k)+2(k-1)\\
    ={}& (k(n-k)-3)+d(k-1)+2- (d-1)(k-1).
\end{align*}
Note that $d\in\bZ_{>0}$. Therefore $\<\beta_1,\beta_2\>_{de}\neq0$ only if
\begin{equation}  \label{equ: de}  d=1,\quad\deg\sigma_{\mu_1}+\deg\sigma_{\mu_2}=k(n-1-k),\quad  m(1)=m(2)=k-1.
\end{equation}

From the condition $\sigma_{\mu_1}\cup\sigma_{\mu_2}\neq0$, we only need to consider the Poincar\'e pairing
\[
 {(\beta_1,\beta_2)}=(E^{k-1}\sigma_\mu,E^{k-1}\sigma_{\mu_+^\vee}),\quad\mu\in\mathcal{P}_{k,n-1}.
\]
Since $d=1$, it follows from $H^1(\bP^1,\cO(-1))=0$ that $\mathbf{e}(R)=1$. So we obtain
\[
\<E^{k-1}\sigma_\mu,E^{k-1}\sigma_{\mu_+^\vee}\>^{X_{k,n}}_{e}=\<\xi^{k-1}\pi_E^*\sigma_\mu,\xi^{k-1}\pi_E^*\sigma_{\mu_+^\vee}\>^E_e.
\]
This Gromov-Witten invariant of $E$ is equal to one, since given two distint points in a fiber of $\pi_E$, there is a unique curve of class $e$ passing through them.
\end{proof}

\begin{corollary}\label{cor:devanish}
    For any $\alpha, \beta\in H^*(X_{k, n})$ and $d>1$, $\langle\alpha, \beta\rangle_{d e}=0$.
\end{corollary}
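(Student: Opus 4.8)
The plan is to deduce this directly from the virtual dimension count already carried out in the proof of Theorem \ref{thm:2ptGW-precise}(b), with essentially no new input. Since the two-point genus-zero Gromov-Witten invariant $\langle \alpha, \beta\rangle_{de}$ is $\mathbb{C}$-bilinear in its two arguments, it suffices to establish the vanishing when $\alpha$ and $\beta$ range over the basis $\mathcal{B}_1$ of $H^*(X_{k,n})$ from \eqref{basisB1}; the general case follows by expanding $\alpha$ and $\beta$ in this basis and using multilinearity.

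First I would observe that the proof of part (b) never invoked $d=1$ as a standing hypothesis: the entire argument---transporting the integral to $G_+$ via $\iota_E$ and $\pi_E$, applying the projection formula, and imposing the virtual dimension constraint---was performed for an arbitrary $d\in\mathbb{Z}_{>0}$. The output of that computation is precisely the chain of (in)equalities recorded just before \eqref{equ: de}, namely
\begin{equation*}
(k(n-k)-3)+d(k-1)+2\leq k(n-1-k)+2(k-1)=(k(n-k)-3)+d(k-1)+2-(d-1)(k-1),
\end{equation*}
which holds whenever $\langle \beta_1, \beta_2\rangle_{de}\neq 0$ for $\beta_1,\beta_2\in\mathcal{B}_1$.

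The remaining step is purely numerical: the displayed relation forces $(d-1)(k-1)\leq 0$. Since we always assume $1<k<n$, we have $k-1\geq 1>0$, whence $d\leq 1$; together with $d\in\mathbb{Z}_{>0}$ this yields $d=1$. Contrapositively, for every $d>1$ and all $\beta_1,\beta_2\in\mathcal{B}_1$ we obtain $\langle \beta_1,\beta_2\rangle_{de}=0$, and therefore $\langle \alpha,\beta\rangle_{de}=0$ for all $\alpha,\beta\in H^*(X_{k,n})$, as claimed.

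There is no real obstacle here. The only points deserving a moment's care are the reduction to basis insertions---which is immediate from bilinearity---and the remark that the dimension estimate in part (b) was genuinely uniform in $d$ rather than specialized to $d=1$ from the start, so that it already encodes the stronger statement for all $d$.
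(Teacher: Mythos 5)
Your proposal is correct and follows exactly the paper's route: the paper's proof of this corollary is a one-line appeal to the constraint \eqref{equ: de} from the dimension count in the proof of Theorem \ref{thm:2ptGW-precise}(b), which is precisely the uniform-in-$d$ estimate you extract, and the forced conclusion $d=1$ (using $k\geq 2$) together with bilinearity gives the vanishing for $d>1$.
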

\begin{proof}
   The statement follows directly from
   \eqref{equ: de}.
\end{proof}

\bigskip

\begin{proof}[Proof of Theorem \ref{thm:2ptGW-precise} (c)]

    By \eqref{XknBundle}, we have the bundle structure
     $\pi_2: X_{k,n}=\bP_{G_-}(\mathcal{Q}_{G_-}\oplus\cO_{G_-})\to G_-$.
    Note that  $\ell-e$ is the homology class of a line in a fiber of $\pi_2$. By using similar arguments as in the proof of part (b) of Theorem \ref{thm:2ptGW-precise}, one can prove that the only non-zero, degree-$(\ell-e)$, two-point invariants with insertions in $\mathcal{B}_2$ are
\[
\<E^{n-k}\bar\sigma_\lambda,E^{n-k}\bar\sigma_{\lambda_-^\vee}\>^{X_{k,n}}_{\ell-e}=1,\quad\lambda\in\mathcal{P}_{k-1,n-1}.
\]
\end{proof}

\subsection{Vanishing of   Gromov-Witten invariants of higher degrees}\label{subsec-vanishing}
The two-step flag variety $$F\ell_{k-1, k; n}=\{V_{k-1}\leqslant V_k\leqslant \mathbb{C}^n\mid \dim V_{k-1}=k-1, \dim V_k=k\}=SL(n, \mathbb{C})/P$$
is an $SL(n, \mathbb{C})$-homogeneous variety, where $P$ is a parabolic subgroup of $SL(n,\mathbb{C})$.
Let $T$ (resp. $B$) be the subgroup of $SL(n, \mathbb{C})$ that consists of diagonal (resp. upper triangular) matrices, and denote by $N(T)$ the normalization of $T$ in $SL(n, \mathbb{C})$. Let $\{\alpha_1, \cdots, \alpha_{n-1}\}$ denote the simple roots (in the canonical ordering). The identification of the simple reflection $s_{\alpha_i}$ with the transposition $s_i=(i, i+1)\in S_n$ gives a canonical isomorphism between
the Weyl group $W:=N(T)/T$ and $S_n$. Denote by $\leq$ the Bruhat order on $W$ and let  $len:W\to \mathbb{Z}_{\geq 0}$ denote the standard length function. The longest element $w_0\in W$, in one-line notation, is given by $w_0=[n\cdots 21]$. Set $W^P:=\{w\in S_n\mid w(1)<w(2)<\cdots<w(k-1); w(k+1)<w(k+2)<\cdots<w(k)\}$.
We have the Bruhat decomposition  $F\ell_{k-1, k; n}=\bigsqcup_{u\in W^P}BuP/P,\,\mbox{ where }\, BuP/P\cong \mathbb{C}^{len(u)}$. The Schubert variety $Y(u)$ of $F\ell_{k-1, k; n},$
\begin{equation}
Y(u):=\overline{BuP/P}=\bigsqcup_{v\leq u, v\in W^P} BvP/P,
\end{equation}
 is of dimension $len(u)$ and can be equivalently defined by using dimension conditions on $V_i\cap \Lambda_j$. We remark that the aforementioned Schubert varieties  $X^\lambda$ of $G(k, n)$ are the image of the (opposite) Schubert varieties $w_0Y(w_0u)$ under the natural projection $F\ell_{k-1, k; n}$ for some $u$.
Let $\varpi\in W$ be the permutation $[(n-k+1)(n-k+2)\cdots n12\cdots (n-k)]$ in one-line notation. Then we have
\[Y_{\varpi}=
\{(V_{k-1}\leq V_k)\in F\ell_{k-1, k; n}\mid V_{k-1} \subset\Lambda_{n-1}\}\cong X_{k, n},
\]
where the natural projection $Y_\varpi \to G(k, n)$ is the blowup $\pi: X_{k, n}\to G(k, n)$ by \cite[Proposition 5.4]{BC12}, and the natural projection
$Y_\varpi \to G(k-1, \Lambda_{n-1})$ endows $X_{k, n}=Y_\varpi$ a $\mathbb{P}^{n-k}$-bundle structure.
Whenever $u\leq \varpi$, $X(u):=Y(u)$ is a Schubert subvariety of $Y_\varpi$. Denote by $[X_u]\in H^{2(len(\varpi)-len(u))}(Y_\varpi,\mathbb{Z})$ the Poincar\'e dual of the homology class $[X(u)]\in H_{2len(u)}(Y_\varpi, \mathbb{Z})$.
It follows from the Bruhat decomposition of $Y_\varpi$ that $\{[X_u]\mid u\in W^P, u\leq \varpi\}$ form a basis of $H^*(Y_\varpi, \mathbb{Z})$.
Moreover, $\overline{NE}(Y_\varpi)=\mathbb{Z}_{\geq 0}[X(s_{k-1})]\oplus \mathbb{Z}_{\geq 0}[X(s_{k})]=\mathbb{Z}_{\geq 0}e\oplus \mathbb{Z}_{\geq 0}(\ell-e)$. By the definition of Schubert varieties, we have \begin{equation}
    [X(s_{k-1})]=e,\qquad  [X(s_{k})]=\ell-e. 
\end{equation}
Recall $c_1(X_{k, n})=nH-(k-1)E$ where $\int_eH=0=\int_\ell E$ and $-\int_e E=1=\int_\ell H$. We introduce quantum variables $q_1, q_2$, and denote $\mathbf{q}^{\mathbf{d}}:=q_1^{d_1}q_2^{d_2}$ for $\mathbf{d}=d_1[X(s_{k-1})]+d_2[X(s_k)]$. The monomial $\mathbf{q}^{\mathbf{d}}$ is  equipped with degree $\deg \mathbf{q}^{\mathbf{d}}=d_1\deg q_1+d_2\deg q_2$, where
\begin{equation}
    \deg q_1:=\int_{[X(s_{k-1})]}c_1(X_{k, n})=k-1,\quad \deg q_2=\int_{[X(s_{k})]}c_1(X_{k, n})=n-k+1.
\end{equation}

The following notion of  curve neighborhood   was introduced by Buch, Chaput,
Mihalcea and Perrin \cite{BCMP13} in their study of the quantum $K$-theory ring of  cominuscule Grassmannians, and was further analyzed  for any  homogeneous variety   by Buch and Mihalcea \cite{BM15}.
\begin{definition}
    The curve neighborhood $\Gamma_{\mathbf{d}}(X(u))$ of $X(u)$ of degree $\mathbf{d}\in H_2(X_{k, n}, \mathbb{Z})$ is a reduced subscheme of $X_{k, n}$ defined by
\begin{equation}
\Gamma_{\mathbf{d}}(X(u)):={\rm ev}_2({\rm ev}_1^{-1}(X(u))\subset X_{k, n}.
\end{equation}

\end{definition}

\begin{proposition}\label{prop-key} Let $u\in W^P$ with $u\leq \varpi$ and $\mathbf{d}=d_1[X(s_{k-1})]+d_2[X(s_k)]$. If either {\rm (i)} $d_1\geq 2$, $d_2\geq 0$ and $(d_1, d_2, k)\neq (2, 0, 2)$  or {\rm (ii)} $d_1\geq 0$, $d_2 \geq 2$ hold, then  we have
     $$\Gamma_{\mathbf{d}}(X(u))<\deg \mathbf{q}^{\mathbf{d}}-1 +len(u).$$
  \end{proposition}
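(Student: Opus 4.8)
The plan is to reduce the inequality to the case $u=\mathrm{id}$ (the minimal $T$-fixed point) and then estimate the dimension of the curve neighborhood of a point by analysing chains of lines intrinsically inside $X_{k,n}=Y_\varpi$. Recall first that a curve neighborhood of a Schubert variety is itself irreducible and $B$-stable, hence $\Gamma_{\mathbf d}(X(u))=X(v)$ for a unique $v\in W^P$ with $u\le v\le\varpi$, and $\dim\Gamma_{\mathbf d}(X(u))=\mathrm{len}(v)$. Invoking the Hecke-product description of Buch--Mihalcea \cite{BM15}, adapted to the Schubert subvariety $Y_\varpi$, one has $v=u\star z_{\mathbf d}$, the Hecke product capped at $\varpi$, where $X(z_{\mathbf d})=\Gamma_{\mathbf d}(X(\mathrm{id}))$ is the base-point neighborhood. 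Since capping only decreases length and the Hecke product is length-subadditive,
\[
\dim\Gamma_{\mathbf d}(X(u))=\mathrm{len}(u\star z_{\mathbf d})\le \mathrm{len}(u)+\mathrm{len}(z_{\mathbf d})=\mathrm{len}(u)+\dim\Gamma_{\mathbf d}(X(\mathrm{id})).
\]
Thus it suffices to prove the point estimate $\dim\Gamma_{\mathbf d}(X(\mathrm{id}))\le\deg\mathbf q^{\mathbf d}-2$ for every $\mathbf d$ in the stated range; the term $\mathrm{len}(u)$ then cancels and yields the desired strict inequality uniformly in $u$.

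Next I would compute $\dim\Gamma_{\mathbf d}(X(\mathrm{id}))$ geometrically, writing points of $Y_\varpi$ as flags $V_{k-1}\subseteq V_k$ with $V_{k-1}\subseteq\Lambda_{n-1}$ and taking the base point $(\Lambda_{k-1},\Lambda_k)$. A line of class $e$ moves $V_{k-1}$ inside $V_k\cap\Lambda_{n-1}$ with $V_k$ fixed, while a line of class $\ell-e$ moves $V_k$ among $k$-planes containing $V_{k-1}$. The crucial point, and the reason the estimate must be made inside $X_{k,n}$ and not in the ambient $F\ell_{k-1,k;n}$, is the constraint $V_{k-1}\subseteq\Lambda_{n-1}$: once an $(\ell-e)$-move has pushed $V_k$ out of $\Lambda_{n-1}$, a subsequent $e$-move is forced to take $V_{k-1}=V_k\cap\Lambda_{n-1}$, killing the naively expected freedom. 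Tracking the optimal interleaving of $d_1$ moves of class $e$ and $d_2$ of class $\ell-e$, I expect the pure directions to saturate, $\dim\Gamma_{d_1 e}(\mathrm{pt})=k-1$ for $d_1\ge1$ and $\dim\Gamma_{d_2(\ell-e)}(\mathrm{pt})=n-k$ for $d_2\ge1$, while in the mixed range $d_1,d_2\ge1$ one obtains
\[
\dim\Gamma_{\mathbf d}(\mathrm{pt})\ \le\ d_1(k-2)+d_2(n-k-1)+2,
\]
capped by $\dim X_{k,n}=k(n-k)$.

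Feeding these into the target bound $\deg\mathbf q^{\mathbf d}-2=d_1(k-1)+d_2(n-k+1)-2$ gives an elementary verification in each regime. In the pure $e$-case the requirement becomes $(d_1-1)(k-1)\ge2$, which holds for $d_1\ge2$ precisely when $(d_1,k)\neq(2,2)$, accounting exactly for the excluded triple $(2,0,2)$; in the pure $(\ell-e)$-case it reduces to $(d_2-1)(n-k+1)\ge1$, automatic for $d_2\ge2$. In the mixed range the inequality collapses to $d_1+2d_2\ge4$, which holds throughout the hypotheses $d_1\ge2$ or $d_2\ge2$ (with both $\ge1$): the borderline $(d_1,d_2)=(2,1)$ gives equality $\dim\Gamma=\deg\mathbf q^{\mathbf d}-2$, whereas the single mixed failure $d_1=d_2=1$ is exactly the excluded class $\ell$. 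The large-degree regime is then disposed of by the saturation cap $\dim\Gamma\le k(n-k)$ against the linear growth of $\deg\mathbf q^{\mathbf d}$.

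The main obstacle is the mixed-range dimension computation: identifying the chain ordering that maximizes the reachable locus and evaluating the resulting Schubert dimension while correctly enforcing $V_{k-1}\subseteq\Lambda_{n-1}$ at every stage. This is precisely the step that separates the intrinsic curve neighborhood in $X_{k,n}$ from the strictly larger one in the ambient flag variety, and pinning the borderline constants, so that $(2,1)$ lands on equality and $(2,0,2)$ is excluded, is the delicate part; I would also need to make the Hecke-product reduction rigorous for the non-homogeneous Schubert variety $Y_\varpi$, rather than for a full $G/P$.
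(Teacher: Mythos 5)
There is a genuine gap — in fact two. Your entire argument funnels through the claim that the curve neighborhood of a Schubert variety \emph{inside} $Y_\varpi=X_{k,n}$ is again a single Schubert variety $X(v)$ with $v$ given by a capped Hecke product, from which you get the subadditivity $\dim\Gamma_{\mathbf d}(X(u))\le len(u)+\dim\Gamma_{\mathbf d}(X(\mathrm{id}))$. That statement is the Buch--Mihalcea theorem for a \emph{homogeneous} space; for the non-homogeneous Schubert variety $Y_\varpi$ it is not automatic (irreducibility of $\Gamma_{\mathbf d}$ rests on irreducibility of the relevant moduli of stable maps, which is known for $G/P$ but not a priori for $Y_\varpi$ — this is exactly the content Mihalcea--Shifler had to establish by hand for odd symplectic Grassmannians), and you acknowledge you have not proved it. The second gap is the mixed-range estimate $\dim\Gamma_{\mathbf d}(\mathrm{pt})\le d_1(k-2)+d_2(n-k-1)+2$, which carries the whole quantitative weight of the argument (including the borderline equality at $(d_1,d_2)=(2,1)$) and is stated only as an expectation. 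Your boundary arithmetic is consistent — the pure cases $\dim\Gamma_{d_1e}(\mathrm{pt})=k-1$, $\dim\Gamma_{d_2(\ell-e)}(\mathrm{pt})=n-k$ do check out against $\deg\mathbf q^{\mathbf d}-2$ and correctly isolate the exclusion $(2,0,2)$ — but as written the proposal is a plan whose two central steps are open.

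For contrast, the paper avoids both difficulties by never working intrinsically in $Y_\varpi$ for the dimension count: it uses only the containment $\Gamma_{\mathbf d}(X(u))\subseteq\hat\Gamma_{\mathbf d}(Y(u))=Y(u\cdot z_d^P)$ in the ambient homogeneous space $F\ell_{k-1,k;n}$, where Buch--Mihalcea applies verbatim, and then proves the purely combinatorial bound $len(z_d^P)\le\deg\mathbf q^{\mathbf d}-2$ (Lemma \ref{lenzdp}) by exhibiting enough inversions in $z_d W_P$. The only degree where that ambient bound is off by one is $(d_1,d_2)=(2,1)$, and there a single targeted geometric argument (Lemma \ref{lem--compX}: the $T$-fixed point $t_{k-1,n}.P$ lies in the ambient curve neighborhood but not in $Y_\varpi$, forcing strict containment and hence a drop in dimension by irreducibility of $Y(u\cdot z_d^P)$) supplies the missing unit. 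If you want to salvage your intrinsic approach, you would need to either prove the Hecke-product formula for $Y_\varpi$ or replace it by the ambient containment as the paper does; the latter also disposes of your mixed-range estimate, which is precisely the hard computation your plan defers.
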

In \cite{MiSh}, Mihalcea and Shifler used the curve neighborhood technique to obtain Gromov-Witten invariants of an odd symplectic Grassmannian, which is a smooth Schubert variety in Grassmannian of Lie type $C$. Here we are following their argument for \cite[Theorem 7.1]{MiSh}  to obtain the vanishing of Gromov-Witten invariants of $X_{k, n}$.  We assume the key Proposition \ref{prop-key} first.
\bigskip

\begin{proof}[Proof Theorem \ref{thm:2ptGW-precise} (a)]
Take   $\alpha,\beta\in H^*(X_{k, n})$ and  $\mathbf{d}=d_1[X(s_{k-1})]+d_2[X(s_k)]\in H_2(X_{k, n}, \mathbb{Z})$. We  assume $d_1\geq 0$ and $d_2\geq 0$, otherwise $\langle \alpha, \beta\rangle_{\mathbf{d}}=0$ holds already.  Assume that either $d_1\geq 2$ or $d_2\geq 2$ holds. For any $u\in W^P$ with $u\leq \varpi$, we have
      \begin{align*}
            \langle [X_u], \beta\rangle^{X_{k,n}}_{\mathbf{d}}&=\int_{[\overline{\mathcal{M}}_{0,2}(X_{k,n}, \mathbf{d})]^{\rm vir}} {\rm ev}_1^*[X_u]\cup {\rm ev}_2^*\beta\\
             &=\int_{X_{k,n}} \beta\cup ({\rm ev}_2)_* ({\rm ev}_1^*([X_u])\cap [\overline{\mathcal{M}}_{0,2}(X_{k,n}, \mathbf{d})]^{\rm vir})
         \end{align*}
         The cycle  $({\rm ev}_2)({\rm ev}_1^{-1}(X(u))$ is supported on the curve neighborhood $\Gamma_{\mathbf{d}}(X(u))$, and the
push-forward   $ ({\rm ev}_2)_* ({\rm ev}_1^*([X_u])\cap [\overline{\mathcal{M}}_{0,2}(X_{k,n}, \mathbf{d})]^{\rm vir})$ is non-zero only if the
curve neighborhood  $\Gamma_{\mathbf{d}}(X(u))$ has components of dimension
  $$\mbox{expdim}   \overline{\mathcal{M}}_{0,2}(X_{k,n}, \mathbf{d})-\mbox{codim} X(u)=\deg \mathbf{q}^{\mathbf{d}}-1 +len(u).$$
By Proposition \ref{prop-key}, for $(d_1, d_2, k)\neq (2, 0, 2)$, we have
$\Gamma_{\mathbf{d}}(X(u))<\deg \mathbf{q}^{\mathbf{d}}-1 +len(u)$.
 Hence,  $\langle [X_u], \beta\rangle_{\mathbf{d}}=0$.
  For $(d_1, d_2, k)= (2, 0, 2)$, we also have $\langle [X_u], \beta\rangle_{2e}=0$ by Corollary \ref{cor:devanish}.
  Since $\{[X_u]\}_u$ is a basis, we have $\langle \alpha, \beta\rangle_{\mathbf{d}}=0$.
Hence, $\langle \alpha, \beta\rangle_{\mathbf{d}}\neq 0$ only if $(d_1, d_2)\in \{(0,0), (1,0), (0,1), (1,1)\}$, namely $\mathbf{d}\in \{0, e, \ell-3, \ell\}$.
\end{proof}

\subsubsection{Proof of Proposition \ref{prop-key}}
 Denote $Y=F\ell_{k-1, k; n}$.
Denote by $[Y(s_i)]$ the curve class of $Y(s_i)=X(s_i)$ in $H_2(Y, \mathbb{Z})$, where $i\in\{k-1, k\}$. We also denote $\mathbf{d}:=d_1[Y(s_{k-1})]+d_2[Y(s_k)]$ by abuse notation.
To distinguish with ${\rm ev}_i$, we use $\hat{\rm ev}_i$ for the evaluation maps on $\overline{\mathcal{M}}_{0, 2}(Y, \mathbf{d})$. We will prove  Proposition \ref{prop-key}, mainly by using a careful estimation of the
dimension of the curve neighborhood $\hat\Gamma_{\mathbf{d}}(Y(u)):=\hat{\rm ev}_2(\hat{\rm ev}_1^{-1}(Y(u))\subset Y$   and noting    $\Gamma_{\mathbf{d}}(X(u))\subseteq \hat\Gamma_{\mathbf{d}}(Y(u))$.

The curve neighborhood of a Schubert variety in $Y$, or more generally in a homogeneous variety, is well studied in \cite{BM15}. The Hecke product on $W$ is involved, which, for any $w\in W$ and $1\leq i\leq n-1$, is defined by
\begin{equation}
    w\cdot s_i=\begin{cases}
        ws_i,&\mbox{if } len(ws_i)>len(w),\\
        w,&\mbox{otherwise}.
   \end{cases}
\end{equation}
Here we notice the following facts for permutation group
\begin{equation}\label{SnBruhatorder}
    len(ws_i)>len(w)\Leftrightarrow w(\alpha_i) \in R^+=\bigoplus_{i=1}^{n-1}\mathbb{Z}_{\geq 0}\alpha_i \Leftrightarrow w(i)<w(i+1).
\end{equation}

 For $1\leq i<j\leq n$, the transposition $t_{i,j}=(i, j)$
 is the reflection $s_{\theta_{ij}}$ with respect to the positive root $\theta_{ij}:=\alpha_i+\alpha_{i+1}+\cdots+\alpha_{j-1}\in R^+$.
 It has reduced decompositions
\begin{equation}\label{tijreduced}
    t_{ij}=s_{i}s_{i+1}\cdots s_{j-2} s_{j-1}s_{j-2}\cdots s_{i+1}s_i=s_{j-1}s_{j-2}\cdots s_{i+1} s_{i}s_{i+1}\cdots s_{j-2}s_{j-1}.
\end{equation}
 Denote by $W_P$ the subgroup of $W$ generated by $\{s_1, \cdots, s_{k-2}\}\cup \{s_{k+1}, \cdots, s_{n-1}\}$. As from  \cite[Section 4.2]{BM15}, we let $z_d^P\in W^P$ denote the (unique) minimal length representative of the coset $z_dW_P$, with the element $z_{\mathbf{d}}\in W$   given by the Hecke products
   \begin{equation}
       z_d=\underbrace{t_{1, n}\cdot   \cdots \cdot t_{1, n}}_{d_2} \cdot \underbrace{t_{1, k}\cdot \cdots \cdot t_{1, k}}_{d_1-d_2} \mbox{ if } d_1\geq d_2,  \mbox{ or }   \underbrace{t_{1, n}\cdot   \cdots \cdot t_{1, n}}_{d_1} \cdot \underbrace{t_{k,n}\cdot \cdots \cdot t_{k, n}}_{d_2-d_1} \mbox{ if } d_1<d_2.
   \end{equation}
 \begin{proposition}[\protect{\cite[Theorem 5.1]{BM15}}] \label{prop-cnBM}
 For any $w\in W^P$, we have
 \label{propdeg}
         $$\Gamma_{\mathbf{d}}(Y(w))=Y(w\cdot z_d^P). 
         $$
      \end{proposition}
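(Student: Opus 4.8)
The plan is to show first that the curve neighborhood $\Gamma_{\mathbf{d}}(Y(w))$ is again a Schubert variety, and then to identify its index by a combinatorial analysis of chains of $T$-fixed curves. For the first part I would exploit equivariance: both evaluation maps $\hat{\rm ev}_1,\hat{\rm ev}_2$ on $\overline{\mathcal{M}}_{0,2}(Y,\mathbf{d})$ are $B$-equivariant for the action induced from $G=SL(n,\mathbb{C})$ on $Y=G/P$. Since $Y(w)$ is $B$-stable, the preimage $\hat{\rm ev}_1^{-1}(Y(w))$ is $B$-stable, and its image under the proper morphism $\hat{\rm ev}_2$ is a closed $B$-stable subvariety of $Y$. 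The crucial point is irreducibility: because $Y$ is convex (being homogeneous) the Kontsevich space $\overline{\mathcal{M}}_{0,2}(Y,\mathbf{d})$ is irreducible, and a fiber-dimension/connectedness argument shows $\hat{\rm ev}_2(\hat{\rm ev}_1^{-1}(Y(w)))$ is irreducible. A closed irreducible $B$-stable subvariety of $Y$ is a Schubert variety, so $\Gamma_{\mathbf{d}}(Y(w))=Y(u)$ for a unique $u\in W^P$, and it remains to prove $u=w\cdot z_d^P$.

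To identify $u$, I would first treat the elementary case in which $\mathbf{d}$ is the class of a single $B$-stable line. A $T$-fixed line in $Y$ joining the fixed points $wP$ and $(ws_i)P$ exists precisely in the direction of a simple root $\alpha_i$ with $i\in\{k-1,k\}$, and chasing the evaluation maps through the corresponding $T$-fixed stable map shows that appending such a line enlarges the Schubert variety exactly by the Hecke product, $\Gamma_{[Y(s_i)]}(Y(w))=Y(w\cdot s_i)$. The Hecke product, rather than ordinary multiplication, appears because when $len(ws_i)<len(w)$, i.e.\ $w(i)>w(i+1)$ by \eqref{SnBruhatorder}, the attached line already lies inside $Y(w)$ and the neighborhood does not grow.

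Next I would bootstrap from lines to arbitrary degree by a greedy iteration. The structural fact to establish is that the degree-$\mathbf{d}$ neighborhood is attained by a connected chain of $T$-fixed lines of total degree $\mathbf{d}$, whose endpoint in Bruhat order is governed by the Hecke product of the corresponding reflections. The problem then reduces to determining the maximal element $z_d$ reachable by such chains; by a recursion on degree one shows $z_d$ is built up as the Hecke product of the transpositions $t_{1,n},t_{1,k},t_{k,n}$ displayed before the proposition, and the passage from $z_d$ to its minimal coset representative $z_d^P$ records the projection to $W^P$. Here the length-additive reduced decompositions \eqref{tijreduced} of the $t_{i,j}$ are what make the Hecke products transparent.

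The main obstacle, and the technical heart of the argument, is matching two bounds for $z_d$: a lower bound obtained by exhibiting explicit $T$-fixed chains of degree $\mathbf{d}$ whose endpoint is $w\cdot z_d$, and an upper bound showing that no stable map of degree $\mathbf{d}$ can reach a strictly larger element in Bruhat order. The upper bound is the delicate step, since it must rule out all degenerate configurations of the domain curve; controlling which permutations are attainable within a fixed degree budget requires the combinatorics of Hecke products in $S_n$ and a careful comparison of the two degree directions indexed by $s_{k-1}$ and $s_k$. I expect the irreducibility input in the first paragraph to be comparatively soft (general homogeneous-space machinery), with essentially all the type-$A$-specific work concentrated in this maximality analysis.
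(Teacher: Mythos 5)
First, a point of reference: the paper does not prove this proposition at all --- it is imported verbatim from \cite[Theorem 5.1]{BM15} --- so your attempt must be measured against Buch--Mihalcea's proof. Your first paragraph (equivariance of $\hat{\rm ev}_1,\hat{\rm ev}_2$, properness, irreducibility, hence the curve neighborhood is a closed irreducible $B$-stable subvariety and therefore a Schubert variety) does match their soft setup. The genuine gap is in your identification of the index, and it begins with a false base case. In $Y=G/P$ the $T$-stable curves through $wP$ join $wP$ to $ws_\beta P$ for \emph{every} positive root $\beta\notin R_P$, not only simple ones, and many roots share one degree class because $H_2(Y,\mathbb{Z})$ only records the coefficients of $\alpha_{k-1}$ and $\alpha_k$: all roots $\theta_{i,k}=\alpha_i+\cdots+\alpha_{k-1}$ with $i\leq k-1$ have degree $[Y(s_{k-1})]$. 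Consequently $\Gamma_{[Y(s_{k-1})]}(\mathrm{pt})=Y(z^P_{(1,0)})$ with $z_{(1,0)}=t_{1,k}$, a Schubert variety of dimension $len(z^P_{(1,0)})=k-1$ (geometrically, all hyperplanes $V_{k-1}'$ of the fixed $V_k$, a $\mathbb{P}^{k-1}$), and \emph{not} the line $Y(s_{k-1})$. Your claimed identity $\Gamma_{[Y(s_i)]}(Y(w))=Y(w\cdot s_i)$ thus contradicts, already at $\mathbf{d}=(1,0)$, the very formula for $z_d$ stated before the proposition, which gives $t_{1,k}$ rather than $s_{k-1}$.

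Worse, even after replacing your simple reflections by the correct maximal-root reflections $t_{1,k}$ and $t_{k,n}$, the degree-by-degree bootstrap cannot work, because curve neighborhoods are not multiplicative in the degree: one only has $\Gamma_{\mathbf{d}+\mathbf{e}}(\Omega)\supseteq\Gamma_{\mathbf{e}}(\Gamma_{\mathbf{d}}(\Omega))$, and the inclusion is strict here. For $\mathbf{d}=(1,1)$ the formula gives $z_{(1,1)}=t_{1,n}$ with $len(t_{1,n})=2n-3$, while any chain of a degree-$(1,0)$ line and a degree-$(0,1)$ line ends at a Hecke product of length at most $len(t_{1,k})+len(t_{k,n})=(2k-3)+(2(n-k)-1)=2n-4$; the irreducible $T$-stable curve $C_{\theta_{1,n}}$ of degree $(1,1)$ reaches strictly farther than any chain of lines, so ``chains of $T$-fixed lines'' do not sweep the neighborhood. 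This is precisely why \cite{BM15} organizes the argument around the greedy recursion $z_d=s_\alpha\cdot z_{d-d(\alpha)}$ over \emph{maximal} roots $\alpha$ with $d(\alpha)\leq d$, together with two substantive inputs: the degeneration of an arbitrary degree-$\mathbf{d}$ stable map to a chain of $T$-stable curves, and the key lemma that a maximal-root curve can always be split off without shrinking the reachable locus. Your final paragraph correctly names this maximality statement as the heart of the matter, but offers no argument for it --- you explicitly ``expect'' it rather than prove it. Since that upper bound \emph{is} the content of \cite[Theorem 5.1]{BM15}, the proposal as written is not a proof: its elementary step is false and its essential step is left open.
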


 \begin{lemma} A reduced decomposition of  $z_d$ is by joining reduced decompositions of $t_{ij}$'s:
   \begin{equation}\label{zdreduce}
       z_d=\begin{cases}
            t_{1, n}t_{2, n-1}   \cdots  t_{d_2, n-d_2+1}{t_{d_2+1, m}  \cdots   t_{d_1, m+d_2-d_1+1}},&\mbox{if } d_1\geq d_2,\\
            t_{1, n}t_{2, n-1}   \cdots  t_{d_1, n-d_1+1}{t_{r, n-d_1}  \cdots   t_{r+d_2-d_1-1, n-d_2+1}},&\mbox{if }  d_1<d_2,
       \end{cases}
   \end{equation}
   where $m=\min\{n-d_2, k\}, r=\max\{d_1+1, k\}$ and $t_{i, j}=t_{ij}$ if $i<j$ or identity if $i\geq j$.
 \end{lemma}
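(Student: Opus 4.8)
The plan is to split the statement into two essentially independent tasks: (I) identify $z_d$ as an explicit permutation, and (II) check that the word obtained by concatenating the reduced decompositions \eqref{tijreduced} of the individual $t_{i,j}$'s in \eqref{zdreduce} is reduced. Throughout I will write $*$ for the Hecke product (the ``$\cdot$'' in the defining formula for $z_d$) to distinguish it from the ordinary product, which is the juxtaposition appearing in \eqref{zdreduce}; note the two genuinely differ, since $t_{1,n}\cdot t_{1,n}=\id$ as ordinary product while $t_{1,n}*t_{1,n}\neq\id$. The structural feature that makes everything tractable is that, after discarding degenerate factors with $i\ge j$, the transpositions on the right of \eqref{zdreduce} have pairwise disjoint two–element supports whose index–intervals $[i_a,j_a]$ form a laminar (pairwise nested or disjoint) family: the factors $t_{1,n},t_{2,n-1},\dots$ form a descending chain of nested intervals, while the second block sits as a second nested chain strictly inside the central gap left by the first, pressed against the ``wall'' at index $k$. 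I would record at once the one-line notation of the right-hand product $P_{\mathbf d}$: it is the involution swapping $a\leftrightarrow n+1-a$ for the outer block and the analogous inner pairs for the second block, fixing all other indices.

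The core of task (I), and the cleanest route to (II) at the same time, is to prove that \emph{each factor of the defining Hecke product contributes exactly one nested transposition via a length-additive ordinary multiplication}. Concretely, writing $z^{(s)}=(t_{1,n})^{*d_2}*(t_{1,k})^{*s}$ in the case $d_1\ge d_2$, I would show by induction that
\[
z^{(s)}=z^{(s-1)}\cdot t_{d_2+s,\,m-s+1},\qquad len\big(z^{(s)}\big)=len\big(z^{(s-1)}\big)+len\big(t_{d_2+s,\,m-s+1}\big),
\]
as long as $d_2+s<m-s+1$, and $z^{(s)}=z^{(s-1)}$ once this transposition degenerates. The same mechanism, run first on the factors $t_{1,n}$ alone, gives the outer chain $(t_{1,n})^{*e}=(t_{1,n})^{*(e-1)}\cdot t_{e,\,n-e+1}$. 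Because each step is a length-additive \emph{ordinary} right multiplication by a transposition whose reduced word is \eqref{tijreduced}, concatenating these words is automatically reduced; summing the steps yields simultaneously $z_d=P_{\mathbf d}$ and the reducedness claimed in the lemma. The case $d_1<d_2$ is symmetric, appending the factors $t_{k,n}$ to build the inner chain anchored by $r$. (One could instead identify $z_d$ through the characterization of $*$ as the Bruhat-maximal element of $\{xy:x\le u,\,y\le v\}$, but the step-by-step computation is more transparent and yields reducedness for free.)

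To establish the single step I would compute the Demazure (bubble-sort) action of one further factor on the one-line notation of $z^{(s-1)}$ via a reduced word of that factor. For $t_{1,n}$ this action simply reverses the next outer pair, appending $t_{e,n-e+1}$. For $t_{1,k}$ the sort is localized to positions $1,\dots,k$; the outer-reversed entries at positions $1,\dots,d_2$ are the largest values $n,\dots,n-d_2+1$ sitting at the front, so they are never moved, and the reversal produced is exactly $t_{d_2+s,\,m-s+1}$. The appearance of $m=\min\{n-d_2,k\}$ is precisely where this is decided: if $k\le n-d_2$ the reversal stops at the wall $k$, whereas if $k>n-d_2$ the right-hand outer block (the smallest values, at positions $n-d_2+1,\dots$) intrudes into the window and stops it at $n-d_2$ instead. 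Symmetrically, for $t_{k,n}$ the right wall is anchored at $n-d_1$ by the small outer values at the end, while the left endpoint starts at $k$ or is pushed to $d_1+1$ by the left outer block, producing $r=\max\{d_1+1,k\}$.

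The hard part will be exactly this localized bookkeeping in the inductive step: verifying that the large outer entries are genuinely fixed by the relevant bubble-sort, that the reversal window is the asserted one, and that the two sources of truncation — collision with the outer block versus the wall at $k$ — combine into the single expressions $m$ and $r$. All the case distinctions in the statement ($d_1\ge d_2$ versus $d_1<d_2$, and $\min$ versus $\max$) originate here, from the interplay between the outer reversals created by the $t_{1,n}$–factors and the inner reversals created near index $k$. Each boundary case, together with the degeneration convention $t_{i,i}=\id$, will need to be checked against the range of $\mathbf d$ for which this description of $z_d$ (hence of the curve neighborhood in Proposition~\ref{prop-cnBM}) is the relevant one.
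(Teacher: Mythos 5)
Your proposal is correct and takes essentially the same route as the paper's proof: the paper likewise computes the Hecke product letter by letter through the reduced words \eqref{tijreduced}, using the descent criterion \eqref{SnBruhatorder} on one-line notation to see that each Hecke factor contributes exactly one nested transposition length-additively, and it gets reducedness of the concatenated word from the nesting $N_j>N_{j+1}$ of the supports. Your single induction merely packages the paper's two steps (reducedness of the target word, then identification of the Hecke product with the ordinary product) into one, with the same ``bubble-sort'' bookkeeping supplying the values of $m$ and $r$.
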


 \begin{proof}
   Assume $d_1\geq d_2$ first. For $1\leq i\leq d_1$, we set $N_i=n-i+1$ if $i\leq d_2$ or $N_i=m+d_2+1-i$  if $i>d_2$.
   Take one reduced decomposition $t_{i, N_i}=s_{\beta_1}\cdots s_{\beta_{2N_i-2i-1}}$ in the first form in
   \eqref{tijreduced}.
   Then for any $1\leq a\leq 2N_i-2i-1$, we have $s_{\beta_a}=s_b$ for some $i\leq b\leq N_i-1$, and
   $i\leq s_{\beta_1}\cdots s_{\beta_{a-1}}(b)<s_{\beta_1}\cdots s_{\beta_{a-1}}(b+1)\leq N_i$.
   Notice $N_{j}>N_{j+1}$ for all $j$. We have
$t_{1, N_1}\cdots t_{i-1, N_{i-1}}s_{\beta_1}\cdots s_{\beta_{a-1}}(b)<s_{\beta_1}\cdots s_{\beta_{a-1}}(b+1)=t_{1, N_1}\cdots t_{i-1, N_{i-1}}s_{\beta_1}\cdots s_{\beta_{a-1}}(b+1)$. Hence,   the expression \eqref{zdreduce} is a reduced decomoposition of $z_d$, by using \eqref{SnBruhatorder}.

    Since  $len(t_{1n}s_1)>len(t_{1n})$, we have the Hecke product $t_{1n}\cdot s_1=t_{1n}$.
   It follows from the above argument for the reduced decomposition of $z_d$ that
   $len(t_{1n}t_{2,n-1})=len(t_{1n})+len(t_{2,n-1})$. Consequently, the Hecke product
   $t_{1,n}\cdot s_2\cdot \cdots \cdot s_{n-2}$
   coincides with the group product
   $t_{1,n} s_2\cdots  s_{n-2}$.
   Note that for any $i< r$,
the expression     $s_{i}s_{i+1}\cdots s_r s_is_{i+1}\cdots s_{r-1}$ is reduced and  $s_{i}s_{i+1}\cdots s_r s_is_{i+1}\cdots s_{r}=s_{i+1}\cdots s_r s_is_{i+1}\cdots s_{r-1}$
    is of smaller length than $len(s_{i}s_{i+1}\cdots s_r s_is_{i+1}\cdots s_{r-1})$. Hence,
    it follows the second reduced decomposition in \eqref{tijreduced} that the Hecke product
    $(t_{1n}s_2s_{3}\cdots s_{n-2})\cdot s_{n-1}=t_{1n}s_2s_{3}\cdots s_{n-2}$
    It follows from the reduced decomposition of
    $t_{1n}t_{2,n-1}$ that the Hecke
    products $ (t_{1n}s_2s_{3}\cdots s_{n-2})\cdot s_{n-3}\cdot \cdots \cdot s_2$ coincides with the group product $t_{1n}t_{2, n-1}$. Since $t_{1n}t_{2,n-1}(1)=n>n-1=t_{1n}t_{2,n-1}(2)$, by \eqref{SnBruhatorder} we have Hecke product $(t_{1n}t_{2,n-1})\cdot s_1= t_{1n}t_{2,n-1}$. Hence, the Hecke product  $t_{1n}\cdot t_{1n}$ coincides with the group product
   $t_{1n}t_{2,n-1}$. By induction, we conclude that $z_d$ is given by the group product of
   the transpositions in \eqref{zdreduce}.

   The arguments for $d_1<d_2$ are similar.
 \end{proof}


 \begin{lemma}\label{lenzdp}
    Let $\mathbf{d}=d_1[Y(s_{k-1})]+d_2[Y(s_k)]$ where either ($d_1\geq 2$ and $d_2\geq 0$) or ($d_1\geq 2$ and $d_2\geq 0$) hold. We have
     $$len(z_{\mathbf{d}}^P)-\deg q_1^{d_1}q_2^{d_2}<\begin{cases} 0,&\mbox{if } (d_1, d_2)=(2, 1) \mbox{ or } (d_1, d_2, k)=(2, 0, 2),\\
     -1,&\mbox{otherwise}.
     \end{cases}$$
     \end{lemma}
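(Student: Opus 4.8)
The plan is to reduce the inequality to an explicit inversion count for the permutation $z_{\mathbf d}^P$ and then compare two elementary quadratics in $d_1,d_2$. Since $\deg q_1=k-1$ and $\deg q_2=n-k+1$, we have $\deg q_1^{d_1}q_2^{d_2}=d_1(k-1)+d_2(n-k+1)$, so everything comes down to evaluating $len(z_{\mathbf d}^P)$ exactly.

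First I would make $z_d$ explicit as a permutation. The reduced word from the previous lemma writes $z_d$ as a product of transpositions $t_{ij}$ supported on nested index intervals; away from the degenerate range these factors have pairwise disjoint supports, so they commute and the one-line notation of $z_d$ can be read off immediately. Passing from $z_d$ to the minimal representative $z_d^P$ of $z_dW_P$ (with $W_P\cong S_{k-1}\times S_{n-k}$) is exactly right multiplication by the element of $W_P$ that rearranges the values in positions $1,\dots,k-1$ and in positions $k+1,\dots,n$ into increasing order. Thus $z_d^P$ has the three-block increasing shape $A\mid c\mid B$, with $A$ the sorted set of values in positions $1,\dots,k-1$, $c=z_d^P(k)$, and $B$ the sorted set in positions $k+1,\dots,n$, whence
\[
len(z_d^P)=\#\{(a,b)\in A\times B:a>b\}+\#\{a\in A:a>c\}+\#\{b\in B:b<c\},
\]
there being no inversions inside $A$ or inside $B$.

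Carrying out this count in the principal range (where no $t_{ij}$ degenerates and position $k$ is moved by the inner block but not by the outer one) gives the closed formulas
\[
len(z_d^P)=\begin{cases} d_2(n-d_2)+(k-1), & d_1>d_2,\\ d(n-d+1), & d_1=d_2=d,\\ d_1(n-d_1)+(n-k), & d_1<d_2.\end{cases}
\]
Subtracting $\deg q_1^{d_1}q_2^{d_2}$ then yields, for $d_1>d_2$,
\[
len(z_d^P)-\deg q_1^{d_1}q_2^{d_2}=d_2(k-1-d_2)-(k-1)(d_1-1)\le -d_2^2,
\]
using $d_1-1\ge d_2$; likewise one gets $-d(d-1)$ when $d_1=d_2=d\ge 2$, and a bound that is $<-1$ throughout the range $d_2\ge 2$ when $d_1<d_2$. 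Inspecting when the value $-1$ is attained isolates exactly the two triples $(d_1,d_2)=(2,1)$ (any $k$) and $(d_1,d_2,k)=(2,0,2)$ as the cases giving $-1$ (hence $<0$ but not $<-1$), while every other admissible $(d_1,d_2)$ gives a value $<-1$.

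The remaining work, which I expect to be the main obstacle, is the bookkeeping for the degenerate configurations: when the $\min$ in $m=\min\{n-d_2,k\}$ or the $\max$ in $r=\max\{d_1+1,k\}$ switches, when some $t_{ij}$ collapses to the identity, or when position $k$ is itself swapped by the outer block (which changes $c$). In each of these the sets $A,B$ and the count must be recomputed. Two facts keep this under control: the curve neighborhoods are nested, so $len(z_d^P)$ is monotone non-decreasing in $\mathbf d$ and bounded by $\dim F\ell_{k-1,k;n}=\binom n2-\binom{k-1}2-\binom{n-k}2$; and $\deg q_1^{d_1}q_2^{d_2}$ grows linearly, so once $\deg q_1^{d_1}q_2^{d_2}\ge \dim F\ell_{k-1,k;n}+2$ the bound $<-1$ is automatic. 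This confines the genuine checking to finitely many small degrees, all of which lie in the principal range above where the two exceptional triples are pinned down by the exact formula, the saturated degenerate cases only making the gap more negative. Assembling these pieces proves the stated dichotomy.
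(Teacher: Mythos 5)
Your proposal follows the same basic route as the paper: both start from the explicit reduced decomposition of $z_{\mathbf d}$ into commuting transpositions, pass to the minimal coset representative, count inversions, and reduce the lemma to comparing a quadratic in $(d_1,d_2)$ with the linear function $d_1(k-1)+d_2(n-k+1)$. The difference is that you compute $len(z_{\mathbf d}^P)$ \emph{exactly} in the non-degenerate range (your closed formulas $d(n-d+1)$, $d_2(n-d_2)+(k-1)$, $d_1(n-d_1)+(n-k)$ check out, and they cleanly isolate $(2,1)$ and $(2,0,2)$ as the only degrees with gap exactly $-1$), whereas the paper only derives upper bounds on $len(z_{\mathbf d}^P)$ by counting an explicit subset of the inversions removed by sorting, which happens to suffice since only an upper bound is needed. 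Your exactness buys a sharper identification of the exceptional cases; the paper's bounds buy a shorter computation per case.

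The soft spot is your treatment of the degenerate configurations, which is where the paper spends most of its effort (its cases 2, 3, 5, 6 and the preliminary reductions $d_2\ge n/2$, $d_2\ge k$). Your claim that the finitely many degrees surviving the bound $\deg \mathbf q^{\mathbf d}\ge \dim F\ell_{k-1,k;n}+2$ ``all lie in the principal range'' is not literally true: e.g.\ for $k=n-1$ and $(d_1,d_2)=(0,2)$ the inner block already collapses while $\deg q_2^{2}=4$ is far below the dimension bound. What saves you is exactly the other tool you name -- once a block saturates, $len(z_{\mathbf d}^P)$ stops growing (the paper makes this precise by substituting $n-k+1$ for $d_2$ and invoking monotonicity of $x\mapsto xn+x-x^2$ on $(-\infty,(n+1)/2)$) while the degree keeps growing linearly, and one must check that no \emph{new} gap of exactly $-1$ appears there (it does not, because saturation forces $d_2\ge n-k+1\ge 2$ in the relevant cases, pushing the gap to $\le -3$). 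So your plan is sound, but the degenerate-case bookkeeping you defer is genuine work, not an afterthought, and as written one of your supporting claims would need to be corrected rather than merely elaborated.
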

 \begin{proof}
 Recall  $\deg  q_1^{d_1}q_2^{d_2}=d_1(k-1)+d_2(n-k+1)
 $. We discuss the case $d_1\geq d_2$ first.

 Note $z_d^P\in W^P$ and the longest element in $W^P$ is of length equal to $\dim F\ell_{k-1, k; n}=k(n-k)+k-1$. If $d_2\geq {n\over 2}$,
 then $\deg  q_1^{d_1}q_2^{d_2}=(d_1-d_2)(k-1)+d_2 n\geq {n^2\over 2}> k(n-k)+k\geq len(z_d^P)+1
 $ (where $2\leq k<n$).
If $d_2\geq k$, then $\deg  q_1^{d_1}q_2^{d_2}=(d_1-d_2)(k-1)+d_2 n\geq kn=k(n-k)+k+k^2-k\geq len(z_d^P)+1$.

 Now assume $d_2<\min\{{n\over 2}, k\}$. Since $W_P$ is generated by $\{s_i\}_{i< k-1 \mbox{ or } i>k}$, it follows from \eqref{zdreduce} that
  $z_dW_P=vW_P$, where  $v$ is taken  one the of the following cases.
  \begin{enumerate}

       \item[1)]   $d_1=d_2 \mbox{ and }k\leq n-d_2$.  Take $v=t_{1, n}t_{2, n-1}   \cdots  t_{d_2, n-d_2+1}$.
  Hence, $v=z_d^P\hat v$ for a unique $\hat v\in W_P$, and we have
 $$len(z_{d}^P)=len(v)-len(\hat v)=len(v)-\sharp
 \{(a, b)\mid  a<b\leq k-1 \mbox{ or } k+2\leq a<b ;  v(a)>v(b)\}.$$
Clearly, for $d_2<b\leq k-1$, we have
 $v(d_2)=t_{1n}\cdots t_{d_2-1, n-d_2+2}(n-d_2+1)=n-d_2+1> k-1=v(k-1)$.
  For $k+2\leq a<n-d_2+1$, we have
 $v(a)=a\geq k+2>d_2=t_{1n}\cdots t_{d_2-1, n-d_2+2}(d_2)=v(n-d_2+1)$.
By similar calculations, we see that
$inv_1:=\{(a, b)\mid 1\leq a\leq d_2,  a<b\leq k-1\}\cup\{(a, b)\mid k+2\leq a<b, n-d_2+1\leq b\leq n\}$ all satisfy $v(a)>v(b)$.
Hence,
 \begin{align*}
    len(z_d^P)\leq len(v)-\sharp inv_1
 &= \sum_{i=1}^{d_2}(2n+1-4i)-\sum_{a=1}^{d_2}(k-1-a)-\sum_{b=n-d_2+1}^n (b-k-1)\\
 &=d_2n+d_2-d_2^2\\
 &\leq  \deg q_1^{d_1}q_2^{d_2}-2.
 \end{align*}

  \item[2)]   $d_1=d_2 \mbox{ and }k>n-d_2$.
 Take $v=t_{1, n}t_{2, n-1}   \cdots  t_{n-k+1, k}$.
 Similar to case 1), $inv_2:=\{(a, b)\mid 1\leq a\leq n-k+1,  a<b\leq k-1\}\cup\{(a, b)\mid k+2\leq a<b, k\leq b\leq n\}$ all satisfy $v(a)>v(b)$. Namely, we just replace $d_2$ in case 1) by $n-k+1$.
 Since  $d_2<{n\over 2}$ and the real function $xn+x-x^2$   is increasing for   $x\in (-\infty, {n+1\over 2})$, we have
  $$len(z_d^P)\leq (n-k+1)n+(n-k+1)-(n-k+1)^2\leq d_2n+d_2-d_2^2
   \leq \deg q_1^{d_1}q_2^{d_2}-2.$$
   \item[3)]   $d_1>d_2 \mbox{ and }k> n-d_2$.   Take $v=t_{1, n}t_{2, n-1}   \cdots  t_{n-k+1, k}$. By the same argument in case 2), we have
    \begin{align*}
        len(z_d^P) \leq d_2n+d_2-d_2^2
  &= \deg q_1^{d_1}q_2^{d_2}+(d_2-d_1)(k-1)+d_2-d_2^2\\
  &\leq \begin{cases}
       \deg q_1^{d_1}q_2^{d_2}-1,&\mbox{if } (d_1, d_2, k)=(2, 1, 2),\\
       \deg q_1^{d_1}q_2^{d_2}-2,&\mbox{otherwise}.
  \end{cases}
    \end{align*}
     \item[4)]   $d_1>d_2 \mbox{ and }k\leq n-d_2$.   Take $t_{1, n}t_{2, n-1}   \cdots  t_{d_2, n-d_2+1} t_{d_2+1, k}$. Then
$inv_4:=\{(a, b)\mid 1\leq a\leq d_2,  a<b\leq k-1\}\cup\{(a, b)\mid k+2\leq a<b, n-d_2+1\leq b\leq n\}\cup \{(d_2+1, b)\mid d_2+1<b\leq k-1\}$ all satisfy $v(a)>v(b)$.
 Compared $inv_4$ with $inv_1$, we have
  \begin{align*}
        len(z_d^P) &\leq d_2n+d_2-d_2^2+len(t_{d_2+1, k})-(k-d_2-2)\\
        &=d_2n+d_2-d_2^2+k-d_2-1\\
  &= \deg q_1^{d_1}q_2^{d_2}+(d_2+1-d_1)(k-1)-d_2^2\\
  &\leq \begin{cases}
       \deg q_1^{d_1}q_2^{d_2}-1,&\mbox{if } (d_1, d_2)=(2, 1) \mbox{ or } (d_1, d_2, k)=(2, 0, 2),\\
       \deg q_1^{d_1}q_2^{d_2}-2,&\mbox{otherwise}.
  \end{cases}
    \end{align*}
\end{enumerate}

Now we consider the case $d_1<d_2$. If $d_1\geq {n\over 2}$,
then  $\deg  q_1^{d_1}q_2^{d_2}=(d_2-d_1)(n-k+1)+d_1 n\geq n-k+1+{n^2\over 2}>k(n-k)+k$. Assume $d_1<{n\over 2}$ now.
   \begin{enumerate}
       \item[5)]$d_1<d_2$ and ($k<d_1+1$ or $k\geq n-d_1$). Take  $t_{1, n}t_{2, n-1}   \cdots  t_{d_1, n-d_1+1}$. By the same argument for case 1) (by replacing $d_2$ there with $d_1$), we have  \begin{align*}
        len(z_d^P) &\leq d_1n+d_1-d_1^2
        =\deg q_1^{d_1}q_2^{d_2}-(d_2-d_1)(n-k+1)+d_1-d_1^2\leq \deg q_1^{d_1}q_2^{d_2}-2.
        \end{align*}
     \item[6)]$d_1<d_2$ and $d_1+1\leq k<n-d_1$.  Take  $t_{1, n}t_{2, n-1}   \cdots  t_{d_1, n-d_1+1}t_{k, n-d_1}$. Similar to case 5), we have
  \begin{align*}
        len(z_d^P) &\leq d_1n+d_1-d_1^2+len(t_{k, n-d_1})-\sharp\{(j, n-d_1)\mid k<j<n-d_1\}\\
        &=d_1n+d_1-d_1^2+n-k-d_1\\
  &= \deg q_1^{d_1}q_2^{d_2}+(d_1-d_2)(n-k+1)+n-k-d_1^2\\
  &\leq    \deg q_1^{d_1}q_2^{d_2}-2.
    \end{align*}
In the last inequality, we notice if  $d_1=0$ then $d_2\geq 2$ by our assumption.
   \end{enumerate}
\end{proof}
\begin{lemma}\label{lem--compX}
 For any $u\in W^P$ with $u\leq \varpi$,
   $\dim \Gamma_{2[X(s_{k-1})]+[X(s_{k})]}(X(u))<\hat\Gamma_{2[Y(s_{k-1})]+[Y(s_{k})]}(Y(u))$.
\end{lemma}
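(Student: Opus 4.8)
The plan is to squeeze $\Gamma_{2[X(s_{k-1})]+[X(s_k)]}(X(u))$ between $X_{k,n}$ and the ambient $Y$-neighborhood, and then to reduce the strict dimension drop to a single Bruhat-order statement. Since every stable curve in $X_{k,n}=Y_\varpi$ is a curve in $Y$ lying in $Y_\varpi$, and $X(u)=Y(u)$ for $u\le\varpi$, the inclusion $\Gamma_{\mathbf d}(X(u))\subseteq\hat\Gamma_{\mathbf d}(Y(u))$ together with $\Gamma_{\mathbf d}(X(u))\subseteq X_{k,n}$ gives
\[
\Gamma_{2[X(s_{k-1})]+[X(s_k)]}(X(u))\ \subseteq\ Y(\varpi)\cap Y\!\left(u\cdot z_{(2,1)}^P\right),
\]
where I use Proposition \ref{prop-cnBM} to identify $\hat\Gamma_{\mathbf d}(Y(u))=Y(u\cdot z_{(2,1)}^P)$. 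Both factors are $B$-stable Schubert varieties, so the intersection equals $\bigsqcup_{z\in W^P,\,z\le\varpi,\,z\le u\cdot z_{(2,1)}^P} BzP/P$ and has dimension $\max\{\mathrm{len}(z):z\le\varpi,\ z\le u\cdot z_{(2,1)}^P\}$. As $\dim\hat\Gamma_{\mathbf d}(Y(u))=\mathrm{len}(u\cdot z_{(2,1)}^P)$, it therefore suffices to prove the combinatorial assertion $u\cdot z_{(2,1)}^P\not\le\varpi$ for every $u\in W^P$ with $u\le\varpi$: then no $z$ in the index set equals $u\cdot z_{(2,1)}^P$, and $z\le u\cdot z_{(2,1)}^P$ with $z\neq u\cdot z_{(2,1)}^P$ forces $\mathrm{len}(z)<\mathrm{len}(u\cdot z_{(2,1)}^P)$, so the maximum drops strictly.

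To establish this, I would first extract a necessary condition for lying below $\varpi$. Since $\varpi=[(n-k+1)\cdots n\,1\cdots(n-k)]$ has $\varpi^{-1}(n)=k$, the rank form of the Bruhat order yields, for any $w$, that $w\le\varpi$ implies $\#\{a\le k-1:w(a)\ge n\}\le\#\{a\le k-1:\varpi(a)\ge n\}=0$, i.e. $n\notin w(\{1,\dots,k-1\})$. Hence it is enough to show $n\in(u\cdot z_{(2,1)}^P)(\{1,\dots,k-1\})$. By \eqref{zdreduce} one has $z_{(2,1)}=t_{1,n}\cdot t_{2,k}$ (with $t_{2,k}$ trivial when $k=2$), so associativity of the Hecke product gives $u\cdot z_{(2,1)}=(u\cdot t_{1,n})\cdot t_{2,k}$. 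The crux is the local computation that $u\cdot t_{1,n}$ always carries the maximal value $n$ to position $1$: reading $t_{1,n}=s_1\cdots s_{n-1}s_{n-2}\cdots s_1$ from \eqref{tijreduced}, the initial segment $s_1\cdots s_{n-1}$ moves $n$ from position $u^{-1}(n)$ to position $u^{-1}(n)-1$ (every relevant step is an ascent because $n$ is maximal), and the terminal segment $s_{n-2}\cdots s_1$ then pushes it successively left to position $1$. Because $t_{2,k}$ is a word in $s_2,\dots,s_{k-1}$, it fixes position $1$, so $(u\cdot z_{(2,1)})(1)=n$. Finally, writing $z_{(2,1)}=z_{(2,1)}^P\,w_P$ with $w_P\in W_P$ and additive length gives $u\cdot z_{(2,1)}=(u\cdot z_{(2,1)}^P)\,w_P$; since right Hecke multiplication by $W_P=\langle s_1,\dots,s_{k-2},s_{k+1},\dots,s_{n-1}\rangle$ only permutes entries within the position blocks $\{1,\dots,k-1\}$ and $\{k+1,\dots,n\}$, the set of values occupying $\{1,\dots,k-1\}$ is unchanged, so $n$ still lies in $(u\cdot z_{(2,1)}^P)(\{1,\dots,k-1\})$. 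This yields $u\cdot z_{(2,1)}^P\not\le\varpi$ and closes the argument.

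The hard part will be the local Hecke computation just sketched: one must verify carefully, and uniformly in $u$, that multiplication by the long transposition $t_{1,n}$ forces the value $n$ into position $1$, and that replacing $z_{(2,1)}$ by its minimal coset representative $z_{(2,1)}^P$ cannot move $n$ out of the first $k-1$ positions. This combinatorial statement is the precise shadow of the geometric reason the inclusion is strict: a degree-$(2,1)$ curve through $Y(u)$ has enough freedom in its $d_1=2$ part to rotate the $(k-1)$-plane $V_{k-1}$ into an $e_n$-component and thereby leave $X_{k,n}=\{V_{k-1}\subseteq\Lambda_{n-1}\}$, so the $Y$-neighborhood genuinely sticks out of $X_{k,n}$.
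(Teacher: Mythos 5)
Your proof is correct, but it takes a genuinely different route from the paper's. The paper argues geometrically: it produces an explicit reducible $T$-stable curve $C_\alpha\cup C_\beta$ of degree $2[Y(s_{k-1})]+[Y(s_k)]$ through the point $1.P\in Y(u)$ (with $\alpha=\alpha_{k-1}+\cdots+\alpha_{n-1}$ and $\beta=\alpha_1+\cdots+\alpha_{k-1}$), so that the $T$-fixed point $t_{k-1,n}.P$ lies in $\hat\Gamma_{\mathbf{d}}(Y(u))$; since $t_{k-1,n}\not\le\varpi$ this point is outside $Y_\varpi=X_{k,n}$, and the strict dimension drop follows from the irreducibility of the Schubert variety $\hat\Gamma_{\mathbf{d}}(Y(u))=Y(u\cdot z_{\mathbf{d}}^P)$ given by Proposition \ref{prop-cnBM}. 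You instead prove the combinatorial statement $u\cdot z_{(2,1)}^P\not\le\varpi$ for every $u\le\varpi$ --- via the rank criterion at position $k-1$ and value $n$, together with the observation that Hecke right-multiplication by $s_i$ sorts positions $i,i+1$ decreasingly so that $(u\cdot t_{1,n})(1)=n$ for every $u$ --- and then bound $\dim\bigl(Y(\varpi)\cap Y(u\cdot z_{(2,1)}^P)\bigr)$ through the Bruhat decomposition, with no need for irreducibility or explicit curves. Both are sound; the paper's argument is shorter because exhibiting a single point of $\hat\Gamma_{\mathbf{d}}(Y(u))$ outside $X_{k,n}$ suffices, while yours isolates the combinatorial mechanism and uses only \eqref{zdreduce}, \eqref{tijreduced} and \eqref{SnBruhatorder}. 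One small point to tighten: $u\cdot z_{(2,1)}^P$ need not lie in $W^P$, so a priori $\dim\hat\Gamma_{\mathbf{d}}(Y(u))=\mathrm{len}\bigl((u\cdot z_{(2,1)}^P)^P\bigr)$ rather than $\mathrm{len}(u\cdot z_{(2,1)}^P)$; your argument survives because passing to the minimal coset representative does not change the value set on positions $\{1,\dots,k-1\}$, hence $(u\cdot z_{(2,1)}^P)^P\not\le\varpi$ as well, and every $z\in W^P$ with $z\le\varpi$ and $z\le u\cdot z_{(2,1)}^P$ then satisfies $\mathrm{len}(z)<\mathrm{len}\bigl((u\cdot z_{(2,1)}^P)^P\bigr)$.
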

\begin{proof}
  Write $d=2[Y(s_{k-1})]+[Y(s_{k})]$.  Geometrically, $\hat\Gamma_{2[Y(s_{k-1})]+[Y(s_{k})]}(Y(u))$ consists of points $p\in Y$ such that there exit
     a rational curve $C_d$ of degree $d$ passing through $p$ and $Y(u)$. For $\alpha=\alpha_{k-1}+\alpha_k+\cdots+\alpha_{n-1}$ (resp. $\beta=\alpha_1+\alpha_2+\cdots+\alpha_{k-1}$), there is a unique irreducible $T$-stable rational curve $C_\alpha$ of degree $[Y(s_{k-1})]+[Y(s_{k})]$ (resp. $C_\beta$ of degree $[Y(s_{k})]$) jointing the $T$-fix points $t_{k-1, n}.P$ (resp. $t_{1k}.P$) and $1.P$. The union $C_\alpha\cup C_\beta$ is a rational curve of degree $d$ that passing through $1.P\in Y(u)$ and $t_{k-1, n}.P$. Hence,  $t_{k-1, n}.P\in  \hat\Gamma_{d}(Y(u))$.  However, $t_{k-1, n}\not \leq  \varpi$ in the Bruhat order (since the increasing sequence $(1, 2, \cdots, k-2, n)$ obtained by sorting $\{t_{k-1,n}(1), \cdots, t_{k-1, n}(k-1)\}$     is not smaller than
     the increasing sequence $(n-k+1, \cdots, n-1)$ from $\{\varpi(1), \cdots, \varpi(k-1)\}$ in the standard partial order for real vectors). Hence, $t_{k-1, n}.P\notin Y_\varpi=X_{k,n}$. Hence, $\Gamma_{2[X(s_{k-1})]+[X(s_{k})]}(X(u))\subsetneq\hat\Gamma_{2[Y(s_{k-1})]+[Y(s_{k})]}(Y(u))$. Then we are done, by noting  that
    $\hat\Gamma_{2[Y(s_{k-1})]+[Y(s_{k})]}(Y(u))=Y(u\cdot z_d^P)$ is a Schubert variety which is irreducible.
\end{proof}
  \begin{proposition} For any $\mathbf{d}>(1,1)$, we have
     $$\Gamma_{\mathbf{d}}(X(w))<\deg \mathbf{q}^{\mathbf{d}}-1 +\ell(w).$$
  \end{proposition}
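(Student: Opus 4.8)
The plan is to bound $\dim\Gamma_{\mathbf{d}}(X(w))$ from above by the dimension of the corresponding curve neighborhood in the ambient flag variety $Y=F\ell_{k-1,k;n}$, and then read the latter off from the Hecke-product formula. Concretely, for $w\in W^P$ with $w\le\varpi$ I would start from the inclusion $\Gamma_{\mathbf{d}}(X(w))\subseteq\hat\Gamma_{\mathbf{d}}(Y(w))$ noted above, and use Proposition \ref{prop-cnBM} to identify $\hat\Gamma_{\mathbf{d}}(Y(w))=Y(w\cdot z_d^P)$ as a single irreducible Schubert variety of dimension $len(w\cdot z_d^P)$. Since the Hecke product is length-subadditive (each right multiplication by a simple reflection raises the length by at most one), this gives
\[
\dim\Gamma_{\mathbf{d}}(X(w))\le len(w\cdot z_d^P)\le len(w)+len(z_d^P),
\]
reducing everything to a bound on $len(z_d^P)$.

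For the generic degrees I would simply invoke Lemma \ref{lenzdp}. Writing $\mathbf{d}=(d_1,d_2)$ with $d_1,d_2\ge1$ and $(d_1,d_2)\ne(1,1)$, every such $\mathbf{d}$ meets the hypotheses of that lemma (either $d_1\ge2$, or $d_1=1$ and $d_2\ge2$); and for all of them except $\mathbf{d}=(2,1)$ it yields $len(z_d^P)\le\deg\mathbf{q}^{\mathbf{d}}-2$. Plugging this into the chain above gives $\dim\Gamma_{\mathbf{d}}(X(w))\le len(w)+\deg\mathbf{q}^{\mathbf{d}}-2<\deg\mathbf{q}^{\mathbf{d}}-1+len(w)$, which is exactly the required strict inequality. (The other exceptional triple $(2,0,2)$ in Lemma \ref{lenzdp} has $d_2=0$ and so does not arise under $\mathbf{d}>(1,1)$; the pure degrees are instead disposed of by Corollary \ref{cor:devanish}.)

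The one genuinely delicate case—and the step I expect to be the main obstacle—is the borderline degree $\mathbf{d}=(2,1)$. Here Lemma \ref{lenzdp} only furnishes the weaker bound $len(z_d^P)\le\deg\mathbf{q}^{\mathbf{d}}-1$, so the purely numerical estimate above falls exactly one dimension short and is useless on its own. To recover the missing unit I would appeal to Lemma \ref{lem--compX}, which exploits the honest geometry of $X_{k,n}=Y_\varpi$ as a \emph{proper} Schubert subvariety of $Y$: the degree-$(2,1)$ curve realizing the extremal length in $Y$ passes through the $T$-fixed point $t_{k-1,n}.P$, which lies outside $Y_\varpi$, so the inclusion $\Gamma_{(2,1)}(X(w))\subsetneq\hat\Gamma_{(2,1)}(Y(w))$ is strict. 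As $\hat\Gamma_{(2,1)}(Y(w))$ is irreducible, its dimension drops by at least one upon restriction, giving
\[
\dim\Gamma_{(2,1)}(X(w))\le len(w)+len(z_d^P)-1\le len(w)+\deg\mathbf{q}^{\mathbf{d}}-2,
\]
again strictly below $\deg\mathbf{q}^{\mathbf{d}}-1+len(w)$. This completes the plan; the only real subtlety is that the homogeneous (flag-variety) length estimate is sharp precisely at $\mathbf{d}=(2,1)$, and it is solely the non-homogeneity of the blowup—the escape of the extremal curve from $Y_\varpi$—that rescues the inequality there.
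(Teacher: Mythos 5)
Your proposal is correct and follows essentially the same route as the paper: the inclusion $\Gamma_{\mathbf{d}}(X(w))\subseteq\hat\Gamma_{\mathbf{d}}(Y(w))$, the identification $\hat\Gamma_{\mathbf{d}}(Y(w))=Y(w\cdot z_d^P)$ from Proposition \ref{prop-cnBM} together with length-subadditivity of the Hecke product, the bound $len(z_d^P)\le\deg\mathbf{q}^{\mathbf{d}}-2$ from Lemma \ref{lenzdp} in all cases except $(d_1,d_2)=(2,1)$, and the strict inclusion from Lemma \ref{lem--compX} to recover the missing unit at $(2,1)$. Your observation that the exceptional triple $(2,0,2)$ is excluded by the hypothesis $\mathbf{d}>(1,1)$ is also exactly how the paper's case analysis plays out.
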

   \begin{proof}[Proof of Proposition \ref{prop-key}]
 Note $\Gamma_{\mathbf{d}}(X(u))\subseteq \Gamma_{\mathbf{d}}(Y(u))$. By Proposition \ref{prop-cnBM}, we have
  \begin{align*}
    \dim \Gamma_{\mathbf{d}}(X(u))\leq  \dim\Gamma_{\mathbf{d}}(Y(u))=\dim Y(u\cdot z_d^P)\leq len(u)+len(z_d^P).
  \end{align*}
  Assume $(d_1, d_2)\neq (2, 1)$. Since $(d_1, d_2, k)\neq (2, 0, 2)$,   by Lemma \ref{lenzdp} we have
  $$ \dim \Gamma_{\mathbf{d}}(X(u))\leq len(u)+\deg q^{d_1}_1q_2^{d_2}-2<\deg \mathbf{q}^{\mathbf{d}}+len(u)-1.$$
  If $(d_1, d_2)= (2, 1)$, by Lemma \ref{lem--compX} and Lemma \ref{lenzdp},  we have
    $$ \dim \Gamma_{\mathbf{d}}(X(u))<\dim\Gamma_{\mathbf{d}}(Y(u))\leq   \deg \mathbf{q}^{\mathbf{d}}+len(u)-1.$$
  \end{proof}


\section{ Mirror symmetry for  $X_{2,n}$: A-side}

In this section, we  provide a precise presentation of the quantum cohomology ring $QH^*(X_{2, n})$ on the A-side, and in Section \ref{section-Bside}, we will study the Jacobi ring ${\rm Jac}(f_{\rm tor})$ of the toric superpotential $f_{\rm tor}$ on the B-side. As we will show, both of them are isomorphic to the following algebra (up to an extension on the B-side):
 \begin{equation}
      \mathbb{C}[h, x, q_1, q_2]/(R_{\rm b}(n-1), R_{\sigma}(n-1)-q_2).
 \end{equation}
Here we recall that  $R_{\rm b}(n)$ are  polynomials in $\mathbb{C}[h, x, q_1, q_2]$ recursively defined by
 \begin{equation*}
  R_{\rm b}(0)=1, \quad R_{\rm b}(1)=h,\quad  R_{\rm b}(n)=(h+x)R_{\rm b}(n-1)-(h+q_1)x R_{\rm b}(n-2), \forall n\geq 2,
 \end{equation*}
 and    $R_{\sigma}(n)$ are  polynomials in $\mathbb{C}[h, x, q_1, q_2]$ defined by
  \begin{equation*}
     R_{\sigma}(n)=\sum_{j=0}^{n} x^{n-j} R_{b}(j),\,\,\forall n\in \mathbb{Z}_{\geq 0}.
  \end{equation*}

\subsection{Ring presentation  of $H^*(X_{2,n})$}
As we have discussed in Section \ref{section-Xkngeometry}, there are natural bases $\mathcal{B}_2, \mathcal{B}_1$ (together with their dual bases) of $H^*(X_{2, n})$, arising from  the $\mathbb{P}^{n-2}$-bundle structure and the blowup structure  respectively. Let us re-list them below.
\begin{align*}
\mathcal{B}_2&=\{E^i\bar{H}^j\}_{0\leq i,j\leq n-2}, \mbox{ with dual basis }
	(E^i\bar{H}^j)^\vee=\suml_{a=0}^{n-2-i}E^a\bar{H}^{n-2-i-a}\cdot \bar{H}^{n-2-j}.\\
\mathcal{B}_1&=\{\sigma_{a,b}\}_{n-2\geq a\geq b\geq0}\cup\{E\sigma_{a,b}\}_{n-3\geq a\geq b\geq0}, \quad (\sigma_{a,b})^\vee=\sigma_{n-2-b,n-2-a},\,\,
	(E\sigma_{a,b})^\vee=-E\sigma_{n-3-b,n-3-a}.
\end{align*}
Here   $\bar{H}$ denotes the hyperplane class of $G_-=\bP^{n-2}$. In particular, we have $\bar \sigma_a=\bar H^a$ for all $1\leq a\leq n-2$.
From the $\mathbb{P}^{n-2}$-bundle (resp. blowup) structure, the cohomology ring $H^*(X_{2, n})$ has  natural generators $\{E, \bar H\}$ (resp. $\{E, \sigma_1, \cdots, \sigma_{n-2}\}$). Note that $\sigma_1=\bar H+E$ (see Lemma \ref{lemma-geometricrelationofspecialschubert}). We have the following presentation of $H^*(X_{2 n})$, whose proof will be given at the end of this subsection.
 \begin{proposition}\label{prop-H(X2n)}
  As a $\mathbb{C}$-algebra, the classical cohomology $H^*(X_{2,n})$ is
    generated by $\{\bar H, E\}$ subject to the relations
      \begin{equation*}
          \bar{H}^{n-1}=0,\quad  E^{n-1}+E^{n-2}\bar{H}+\cdots+E\bar{H}^{n-2}=0;
      \end{equation*}
     and it is also generated by $\{E, \sigma_1, \cdots, \sigma_{n-2}\}$ subject to the relations
    \begin{equation}\label{relationHX2n}
       (\sigma_1-E)^{n-1}=0,\quad \sigma_a=\suml_{b=0}^a(-1)^bf_b(a)\sigma_1^{a-b}E^b,\quad 2\leq a\leq n-1.
    \end{equation}
where $\sigma_{n-1}=0$ by convention, and
\(
f_b(a):={a\choose b}+(-1){a-1\choose b-1}+\cdots+(-1)^b{a-b\choose b-b}.
\)
   \end{proposition}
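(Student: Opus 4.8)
The plan is to obtain the first presentation directly from the projective bundle structure, and then to derive the second from it by an explicit change of generators. For the first presentation, recall from \eqref{XknBundle} that $\pi_2$ realizes $X_{2,n}=\mathbb{P}_{G_-}(\mathcal{Q}_{G_-}\oplus\cO)$ with base $G_-\cong\mathbb{P}^{n-2}$, so that $H^*(G_-)=\mathbb{C}[\bar H]/(\bar H^{n-1})$ with $\bar\sigma_a=\bar H^a$. The projective bundle theorem exhibits $H^*(X_{2,n})$ as a free $H^*(G_-)$-module on $1,E,\dots,E^{n-2}$ subject to the single Grothendieck relation $\sum_{j=0}^{n-1}c_j(\mathcal{Q}_{G_-}\oplus\cO)\,E^{n-1-j}=0$, where $E=c_1(\cO_{X_{2,n}}(1))$. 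Since $c_j(\mathcal{Q}_{G_-}\oplus\cO)=c_j(\mathcal{Q}_{G_-})=\bar H^j$ for $0\le j\le n-2$ and $c_{n-1}=0$, this relation is exactly $E^{n-1}+E^{n-2}\bar H+\dots+E\bar H^{n-2}=0$. Together with $\bar H^{n-1}=0$ this yields the first presentation.

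For the second presentation I would first change generators using $\sigma_1=\bar H+E$ (Lemma \ref{lemma-geometricrelationofspecialschubert}), i.e. $\bar H=\sigma_1-E$; the relation $\bar H^{n-1}=0$ then becomes $(\sigma_1-E)^{n-1}=0$. The main step is to express each $\sigma_a=\pi^*c_a(\mathcal{Q})$ in terms of $\bar H$ and $E$. For this I would use the realization $X_{2,n}\cong Y_{\varpi}\subset F\ell_{1,2;n}$ from Section \ref{subsec-vanishing}, carrying the tautological flag $V_1\subset V_2\subset\mathbb{C}^n$ with $\pi(V_1,V_2)=V_2$, so that $V_2=\pi^*\mathcal{S}$ and $\sigma_a=c_a(\mathbb{C}^n/V_2)$. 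Since the bundle projection $\pi_2\colon(V_1,V_2)\mapsto V_1$ identifies $V_1$ with the pullback of the tautological subbundle on the base $\mathbb{P}(\Lambda_{n-1})\cong G_-$, one gets $c_1(V_1)=-\bar H$; combining this with $c_1(V_2)=c_1(\pi^*\mathcal{S})=-\sigma_1=-(\bar H+E)$ and the exact sequence $\ses{V_1}{V_2}{V_2/V_1}$ forces $c_1(V_2/V_1)=-E$. By the Whitney formula $c(V_2)=(1-\bar H)(1-E)$, hence $c(\mathcal{Q})=c(V_2)^{-1}=\big((1-\bar H)(1-E)\big)^{-1}$, which gives $\sigma_a=\sum_{j=0}^{a}\bar H^{\,a-j}E^{\,j}$ for every $a$.

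Substituting $\bar H=\sigma_1-E$ into $\sigma_a=\sum_{j=0}^a(\sigma_1-E)^{a-j}E^j$ and collecting the coefficient of $\sigma_1^{a-b}E^b$, one obtains $\sum_{i=0}^b(-1)^{b-i}\binom{a-i}{b-i}=(-1)^b f_b(a)$, which is precisely the claimed relation for each $2\le a\le n-1$; at $a=n-1$ one uses $\sigma_{n-1}=c_{n-1}(\mathcal{Q})=0$. To conclude that \eqref{relationHX2n} is a genuine presentation I would argue as follows: the relations for $2\le a\le n-2$ let one eliminate $\sigma_2,\dots,\sigma_{n-2}$, reducing the abstract algebra to $\mathbb{C}[E,\sigma_1]/\big((\sigma_1-E)^{n-1},\,r_{n-1}\big)$, where $r_{n-1}$ is the $a=n-1$ relation; under $\bar H=\sigma_1-E$ these two relations become exactly $\bar H^{n-1}$ and $E^{n-1}+\dots+E\bar H^{n-2}$, i.e. the defining relations of the first presentation. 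Thus the abstract algebra is isomorphic to $H^*(X_{2,n})$.

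I expect the crux to be the identification $c_1(V_2/V_1)=-E$, which links the flag-bundle geometry to the exceptional divisor through Lemma \ref{lemma-geometricrelationofspecialschubert}, together with the bookkeeping needed to match the normalization of $\bar H$ across the two descriptions of the base $\mathbb{P}^{n-2}$ (as $G_-=\mathbb{P}(\mathbb{C}^n/\mathbb{C}_-)$ and as $\mathbb{P}(\Lambda_{n-1})$). Once $c(\mathcal{Q})=\big((1-\bar H)(1-E)\big)^{-1}$ is in hand, the combinatorial identity for $f_b(a)$ is only routine manipulation of binomial coefficients.
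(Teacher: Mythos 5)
Your proposal is correct, and for the first presentation it coincides with the paper's argument (the projective bundle theorem for $X_{2,n}=\bP_{G_-}(\mathcal{Q}_{G_-}\oplus\cO)$ over $G_-\cong\bP^{n-2}$, with the Grothendieck relation $c_{n-1}(\mathcal{Q}_{X_{2,n}})=0$). Where you genuinely diverge is in establishing the key identity $\sigma_a=\sum_{i=0}^aE^i\bar H^{a-i}$. The paper obtains this from Corollary \ref{lemma-algebraicrelationofspecialschubert}, which rests on Lemma \ref{lemma-geometricrelationofspecialschubert} ($\bar\sigma_a=\sigma_a-E\sigma_{a-1}$ for all $a$), proved by identifying the strict transform of $X^a$ inside $Y_\varpi\subset F\ell_{1,2;n}$ and invoking the blow-up formula of \cite[Theorem 6.7]{Fu98} together with a multiplicity-one computation. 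You instead use only the $a=1$ case of that lemma ($\sigma_1=\bar H+E$, which follows from pairing with curve classes), feed it into the tautological flag $V_1\subset V_2$ on $Y_\varpi$ to get $c(V_2)=(1-\bar H)(1-E)$ by Whitney, and read off $c(\mathcal{Q})=\big((1-\bar H)(1-E)\big)^{-1}$, hence $\sigma_a=\sum_{j}\bar H^{a-j}E^j$ for all $a$ at once. This is a cleaner and more self-contained derivation in the $k=2$ case, at the cost of invoking the flag-variety realization of $X_{2,n}$; the paper's Lemma \ref{lemma-geometricrelationofspecialschubert} has the advantage of being formulated so that its argument extends to general $X_{k,n}$. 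Your bookkeeping ($c_1(V_1)=-\bar H$ via the identification $\mathbb{P}(\Lambda_{n-1})\cong G_-$, $V_1\mapsto V_1+\bC_-$, and the binomial identity giving $(-1)^bf_b(a)$) checks out, and your closing observation that the two sets of relations correspond under the invertible change of variables $\bar H=\sigma_1-E$ — so that the second list really does generate the full ideal of relations — is a point the paper leaves implicit.
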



We start with the transformation   between bases $\mathcal{B}_1$ and $\mathcal{B}_2$. In the following lemma, we use $\bar \sigma_a$ (other than $\bar H^a$) to indicate that it holds for general $X_{k, n}$ with the same arguments.

\begin{lemma}\label{lemma-geometricrelationofspecialschubert}
	For any $a\in\{1,2,...,n-2\}$, we have $\bar \sigma_a=\sigma_a-
	E\sigma_{a-1}$.
\end{lemma}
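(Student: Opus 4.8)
The plan is to deduce the identity from a single bundle-theoretic fact, namely that the universal quotient bundle $\mathcal{Q}_{X_{k,n}}$ of the $\mathbb{P}^{n-k}$-bundle $\pi_2$ agrees with the pullback $\pi^*\mathcal{Q}$ of the universal quotient bundle $\mathcal{Q}$ of $\mathbb{G}=G(k,n)$ along the blowup $\pi$. Granting this, recall that the special Schubert class satisfies $\sigma_a=c_a(\mathcal{Q})$ on $\mathbb{G}$, so that $\sigma_a=\pi^*c_a(\mathcal{Q})=c_a(\mathcal{Q}_{X_{k,n}})$; combined with the Chern class formula $c_a(\mathcal{Q}_{X_{k,n}})=\sum_{i=0}^a E^i\bar\sigma_{a-i}$ established in Section \ref{section-Xkngeometry}, this gives $\sigma_a=\sum_{i=0}^a E^i\bar\sigma_{a-i}$. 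The lemma then follows by telescoping: since $E\sigma_{a-1}=\sum_{i=0}^{a-1}E^{i+1}\bar\sigma_{a-1-i}=\sum_{j=1}^a E^j\bar\sigma_{a-j}$, subtracting yields $\sigma_a-E\sigma_{a-1}=\bar\sigma_a$ (with the convention $\bar\sigma_0=1$). Thus the entire content is concentrated in the bundle identification.

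To establish $\mathcal{Q}_{X_{k,n}}\cong\pi^*\mathcal{Q}$, I would use the realization $X_{k,n}=Y_\varpi\subset F\ell_{k-1,k;n}$ from Section \ref{subsec-vanishing}, carrying a universal flag $V_{k-1}\subset V_k\subset\mathbb{C}^n$ with $V_{k-1}\subset\Lambda_{n-1}$. Here $\pi$ records $V_k$, so $\pi^*\mathcal{Q}$ is the bundle with fiber $\mathbb{C}^n/V_k$, while the projection recording $V_{k-1}$ is the $\mathbb{P}^{n-k}$-bundle, whose relative tautological sequence is
\[
0\to V_k/V_{k-1}\to\mathbb{C}^n/V_{k-1}\to\mathbb{C}^n/V_k\to0.
\]
In particular the tautological subbundle is $V_k/V_{k-1}$ and the corresponding quotient is exactly $\mathbb{C}^n/V_k=\pi^*\mathcal{Q}$. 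It remains only to match this with the presentation of $\pi_2$ in \eqref{XknBundle}: under the isomorphism $G(k-1,\Lambda_{n-1})\xrightarrow{\sim}G_-$, $V_{k-1}\mapsto V_{k-1}\oplus\mathbb{C}_-$, the fixed decomposition $\mathbb{C}^n=\Lambda_{n-1}\oplus\mathbb{C}_-$ induces a canonical splitting $\mathbb{C}^n/V_{k-1}\cong(\Lambda_{n-1}/V_{k-1})\oplus\mathbb{C}_-\cong\mathcal{Q}_{G_-}\oplus\mathcal{O}$, which identifies the two $\mathbb{P}^{n-k}$-bundle structures together with their tautological subbundles $\mathcal{O}_{X_{k,n}}(-1)$, hence their divisors $E=c_1(\mathcal{O}_{X_{k,n}}(1))$ and their quotient bundles.

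The main obstacle, and the step deserving the most care, is precisely this last matching: one must verify that the two a priori different $\mathbb{P}^{n-k}$-bundle structures (over $G_-$ via the rational map $f$ of Lemma \ref{lemma-imageofrestrctionoff'}, and over $G(k-1,\Lambda_{n-1})$ via the flag) are literally identified, not merely that both bases are isomorphic to $G(k-1,n-1)$, and that under this identification the tautological line subbundle $V_k/V_{k-1}$ corresponds to $\mathcal{O}_{X_{k,n}}(-1)\subset\pi_2^*(\mathcal{Q}_{G_-}\oplus\mathcal{O})$ with the paper's convention $\mathbb{P}(\mathcal{E})=\mathrm{Proj}(\mathrm{Sym}\,\mathcal{E}^*)$. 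Concretely I would check this on the affine chart $\mathbb{G}\setminus G_+$: a generic $V_k$ has the form $V_{k-1}\oplus\mathbb{C}(e_n+v')$ with $v'\in\Lambda_{n-1}$, whose tautological line in $\mathbb{C}^n/V_{k-1}\cong\mathcal{Q}_{G_-}\oplus\mathcal{O}$ is spanned by $([v'],1)$, matching the tautological line of $X_{k,n}=\mathbb{P}_{G_-}(\mathcal{Q}_{G_-}\oplus\mathcal{O})$ and confirming that $E$ is the divisor at infinity as in \eqref{XknBundle}. Once the splitting and this chart computation are in place, the isomorphism $\mathcal{Q}_{X_{k,n}}\cong\pi^*\mathcal{Q}$ is immediate, and the lemma follows.
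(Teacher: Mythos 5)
Your proof is correct, but it takes a genuinely different route from the paper's. The paper argues cycle-theoretically: it identifies the strict transform $S_a$ of the special Schubert variety $X^a$ inside $Y_\varpi$, checks that $X^a\cap G_+$ is the Schubert variety $\sigma^+_{a-1}$ of $G_+$, and then invokes Fulton's blowup formula $\sigma_a=\bar\sigma_a+(\iota_E)_*\bigl(c(\mathcal{Q}_E)\cap\pi_E^*s(X^a\cap G_+,X^a)\bigr)$, reducing the lemma to showing that the multiplicity of $X^a$ along $X^a\cap G_+$ equals one (verified by exhibiting a general point of the intersection as a smooth point of $X^a$) together with a dimension count. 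You instead prove the single bundle identification $\mathcal{Q}_{X_{k,n}}\cong\pi^*\mathcal{Q}$, from which $\sigma_a=c_a(\pi^*\mathcal{Q})=c_a(\mathcal{Q}_{X_{k,n}})=\sum_{i=0}^aE^i\bar\sigma_{a-i}$ and the lemma follows by telescoping. Your identification is sound: the two $\mathbb{P}^{n-k}$-bundle structures (via $f$ and via the flag projection $(V_{k-1},V_k)\mapsto V_{k-1}$) agree on the dense open set $X_{k,n}\setminus E$ and hence everywhere, the splitting $\mathbb{C}^n=\Lambda_{n-1}\oplus\mathbb{C}_-$ gives $\mathbb{C}^n/V_{k-1}\cong\mathcal{Q}_{G_-}\oplus\mathcal{O}$, and your chart computation correctly matches $V_k/V_{k-1}=\mathbb{C}[e_n+v']$ with the line $\mathbb{C}([v'],1)$, so the tautological subbundle is $V_k/V_{k-1}$ and its quotient is $\mathbb{C}^n/V_k=\pi^*\mathcal{Q}$; there is also no circularity, since the Chern class formula $c_j(\mathcal{Q}_{X_{k,n}})=\sum_iE^i\bar\sigma_{j-i}$ is a Whitney-sum computation independent of the lemma (the paper derives its Corollary on $\sigma_{a,b}$ \emph{from} the lemma, whereas you obtain the $b=0$ case directly). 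What each approach buys: the paper's blowup-formula argument is in principle applicable to arbitrary Schubert classes (not just the special ones $\sigma_a$) and stays within intersection theory on the blowup, at the cost of the Segre class and multiplicity analysis; your argument is shorter and more structural, concentrating all the work into one transparent bundle isomorphism, but is tailored to classes expressible through $\mathcal{Q}$.
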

\begin{proof}
The statement holds for $a=1$ by  considering pairing with curve classes.
 	
    Notice the Schubert variety $X^a  =\{V_2\in G(2, n)\mid\dim(V_2\cap \Lambda_{n-1-a})\geq 1\}.
	$
	Consider the complete flag $\Lambda^-_\bullet$ in $\bC^n/\bC_-$ defined by
	$
	\Lambda^-_i:=(\mathbb{C}_-+\Lambda_i)/\bC_-$ for all $i$.
	Then the Schubert class $\bar\sigma_a\in H^*(X_{2,n})$ is the Poincar\'e dual of the homology class of  the subvareity
	\begin{align*}
	S_a:={}&\{(V_{1},V_2)\in F\ell_{1, 2; n}\mid  V_{1}\subset\bC_+^{n-1},
	 \quad\dim\Big(\big((V_{1}+\bC_-)/\bC_-\big)\cap \Lambda^-_{n-1-a}\Big)\geq 1\}.
	\end{align*}
	One can verify that
	\begin{align*}
	S_a={}&\{(V_{1},V_2)\in F\ell_{1, 2; n}\mid    V_{1}\subset\bC_+^{n-1},
 \quad\dim\Big((V_{1}+\bC_-)\cap\mathrm{span}\{e_n,e_1,...,e_{n-1-a}\}\Big)\geq 2\}\\
	={}&\{(V_{1},V_2)\in F\ell_{1, 2; n}\mid V_{1}\subset\bC_+^{n-1},
	\quad\dim(V_{1}\cap \Lambda_{n-1-a})\geq 1\}.
	\end{align*}
	Now we want to show that $S_a$ is the strict transform of $X^a$ under the blow-up $\pi$. By the irreducibility of $S_a$ and $X^a$, it suffices to prove $\pi(S_a)=X^a$. Indeed, for $(V_{1},V_2)\in S_a$,  noting $V_{1}\subset V_2$, we have
$ \dim(V_{2}\cap \Lambda_{n-1-a})\geq \dim(V_{1}\cap \Lambda_{n-1-a})\geq 1$. Thus
 $\pi(S_a)\subset X^a$. Since $S_a$ is irreducible, it follows that $\pi(S_a)$ is an irreducible subvariety of $X^a$. Note that $\pi$ is an isomorphism from $S_a\setminus E\neq\emptyset$ onto its image. So $\dim\pi(S_a)=\dim S_a=\dim X^a$. Now it follows from the irreducibility of $X^a$ that $\pi(S_a)=X^a$.

Consider the complete flag $\Lambda^+_\bullet$ in $\bC_+^{n-1}$ defined by
	\(
	\Lambda^+_i:=\Lambda_i,\,\, 1\leq i\leq n-1.
	\)
	We have
	\begin{align*}
	 X^a\cap G_+
	=\{V_2\subset\bC_+^{n-1}\mid\dim(V_2\cap F_{n-1-a})\geq 1\}
	={}&\{V_2\subset\bC_+^{n-1}\mid\dim(V_2\cap F_{(n-1)-1-(a-1)})\geq 1\}.
	\end{align*}
That is, $X^a\cap G_+$ is a Schubert variety in $G_+$ indexed by the special partition $a-1$. We denote its corresponding Schubert class as $\sigma^+_{a-1}\in H^{2a-2}(G_+)$.

By the blow-up formula \cite[Theorem 6.7]{Fu98}, we have
\[
\sigma_a=\bar\sigma_a+(\iota_E)_*\Big(c(\mathcal{Q}_E)\cap\pi_E^* s\big(X^{a}\cap G_+, X^a\big)\Big),
\]
where $s\big(X^{a}\cap G_+, X^a\big)$ is the Segre class of $X^{a}\cap G_+$ in $X^a$. Note that
\[
s\big(X^{a}\cap G_+, X^a\big)=m\sigma^+_{a-1}+  {\mbox{Poincar\'e duals of lower dimensional cycles}},
\]
where $m$ is the multiplicity of $X^a$ along $X^a\cap G_+$. By comparing dimensions of cycles, we get
\[
\sigma_a=\bar\sigma_a+m(\iota_E)_*\Big(\pi_E^*\sigma^+_{a-1}\Big).
\]
Now it suffices to prove $m=1$. To this end, we only need to show that a general point in $X^a\cap G_+$ is a smooth point in $X^a$. Consider $V_2\in X^a\cap G_+$ such that
\(
\dim(V_2\cap \Lambda^+_{(n-1)-1-(a-1)})=1.
\)
Then $V_2$ is a general point in $X^a\cap G_+$. Since $\Lambda^+_i=\Lambda_i$, it follows that \(
\dim(V_2\cap \Lambda_{n-1-a})=1.
\)
Therefore $V_2$ is a smooth point in $X^a$. This finishes the proof of the lemma.
\end{proof}

\begin{corollary}
    \label{lemma-algebraicrelationofspecialschubert}
	For any  $n-2\geq a\geq b\geq 0$, we have   $\sigma_{a,b}=\suml_{i=b}^aE^{i}\bar\sigma_{a+b-i}$.
\end{corollary}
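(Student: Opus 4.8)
The plan is to reduce the general formula to the already-established relation for special Schubert classes and then propagate it through the Giambelli (Jacobi--Trudi) determinant. First I would settle the case $b=0$, i.e. $\sigma_{a,0}=\sigma_a=\sum_{i=0}^a E^i\bar\sigma_{a-i}$, by induction on $a$: the case $a=0$ is trivial, and Lemma \ref{lemma-geometricrelationofspecialschubert} gives $\sigma_a=\bar\sigma_a+E\sigma_{a-1}$, so substituting the inductive expression for $\sigma_{a-1}$ and reindexing yields the claim. It is convenient to record this by noting that, under the ring homomorphism $\phi\colon\mathbb{C}[E,v]\to H^*(X_{2,n})$ with $E\mapsto E$ and $v\mapsto\bar H$, the special class $\sigma_c$ is the image of the complete homogeneous polynomial $h_c(E,v)=\sum_{i=0}^c E^i v^{c-i}$, because $\bar\sigma_c=\bar H^c=\phi(v^c)$.

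Next I would invoke the Giambelli formula for $G(2,n)$, namely $\sigma_{a,b}=\sigma_a\sigma_b-\sigma_{a+1}\sigma_{b-1}$, which transports verbatim to $H^*(X_{2,n})$ because $\pi^*$ is a ring homomorphism and we use the same symbols for pullbacks. Since the classes $\bar\sigma_m$ multiply by $\bar\sigma_m\bar\sigma_p=\bar\sigma_{m+p}$ (the product rule of $H^*(G_-)$ for $G_-\cong\mathbb{P}^{n-2}$, carried into $X_{2,n}$ by $\pi_2^*$), the whole computation takes place inside the image of $\phi$. Thus it suffices to prove the polynomial identity $h_a(E,v)h_b(E,v)-h_{a+1}(E,v)h_{b-1}(E,v)=\sum_{i=b}^a E^i v^{a+b-i}$ in $\mathbb{C}[E,v]$ and then apply $\phi$. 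This is precisely the statement that the two-variable Schur polynomial $s_{(a,b)}(E,v)$ equals both the Jacobi--Trudi determinant and its monomial expansion; I would verify it by a direct count, writing $h_a h_b=\sum_k N_1(k)E^k v^{a+b-k}$ and $h_{a+1}h_{b-1}=\sum_k N_2(k)E^k v^{a+b-k}$ with $N_1(k)=\min(a,k)-\max(0,k-b)+1$ and $N_2(k)=\min(a+1,k)-\max(0,k-b+1)+1$, and checking by elementary case analysis that $N_1(k)-N_2(k)=1$ for $b\le k\le a$ and $0$ otherwise. Applying $\phi$ then gives $\sigma_{a,b}=\sum_{i=b}^a E^i\bar\sigma_{a+b-i}$.

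The one point requiring care is the boundary behaviour when $a=n-2$, where $\sigma_{a+1}=\sigma_{n-1}=0$ in $H^*(G(2,n))$ and where $\bar\sigma_c=\bar H^c$ vanishes for $c\ge n-1$. Working inside $\mathbb{C}[E,v]$ and applying $\phi$ only at the very end is exactly what makes these degenerations automatic: the formal identity above is an honest identity of polynomials with no truncation, while $\phi$ correctly encodes the vanishing, since $\phi(h_{n-1}(E,v))=\sum_{i=0}^{n-1}E^i\bar H^{n-1-i}=\bar H^{n-1}+\sum_{i=1}^{n-1}E^i\bar H^{n-1-i}=0$ by the two relations of Proposition \ref{prop-H(X2n)}, matching $\pi^*\sigma_{n-1}=0$. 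I expect this bookkeeping at the edge of the rectangle to be the only genuinely delicate part; the rest is the base case of Step 1 together with the routine coefficient count.
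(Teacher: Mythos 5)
Your proposal is correct and follows essentially the same route as the paper: the case $b=0$ comes from Lemma \ref{lemma-geometricrelationofspecialschubert} by induction on $a$, and the general case comes from the Giambelli formula $\sigma_{a,b}=\sigma_a\sigma_b-\sigma_{a+1}\sigma_{b-1}$, with your explicit two-variable Jacobi--Trudi coefficient count simply replacing the paper's (unwritten) induction on $b$. Your careful treatment of the boundary case $a=n-2$ via the homomorphism $\phi$ is sound, since the relations $\bar H^{n-1}=0$ and $\sum_{i}E^{i}\bar H^{n-1-i}=0$ you need are established from the $\mathbb{P}^{n-2}$-bundle structure independently of the corollary (so the forward reference to Proposition \ref{prop-H(X2n)} is harmless).
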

\begin{proof}
The case $b=0$ follows from the proceding lemma. The general case follows from induction on $b$ and Giambelli's formula for $G(2,n)$:
\[
\sigma_{a,b}=\begin{vmatrix}
	\sigma_{a}&\sigma_{a+1}\\
	\sigma_{b-1}&\sigma_b
\end{vmatrix}.
\]
\end{proof}


\begin{lemma}\label{lemma-classicalpowerofEforX2n}
	For  {$1\leq a\leq n-2$}, we have $E^{a+1}=-\sigma_{a1}+E\sigma_a$.
\end{lemma}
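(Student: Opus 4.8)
The plan is to derive the identity directly from Corollary \ref{lemma-algebraicrelationofspecialschubert}, which expands every Schubert class $\sigma_{a,b}$ of $X_{2,n}$ as a polynomial in $E$ whose coefficients are the pulled-back hyperplane powers $\bar\sigma_c = \bar H^c$ of $G_- \cong \mathbb{P}^{n-2}$. Concretely, I would apply that corollary twice: once to $\sigma_{a,1}$ and once to $\sigma_a = \sigma_{a,0}$. This gives
\[
\sigma_{a,1} = \sum_{i=1}^a E^i \bar\sigma_{a+1-i}, \qquad \sigma_a = \sum_{i=0}^a E^i \bar\sigma_{a-i},
\]
both valid because $1 \le a \le n-2$ keeps all indices inside the range required by the corollary.

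Next I would multiply the second expansion by $E$ and reindex. Substituting $j = i+1$ turns $E\sigma_a$ into $\sum_{j=1}^{a+1} E^j \bar\sigma_{a+1-j}$, whose terms for $j = 1, \dots, a$ coincide termwise with the expansion of $\sigma_{a,1}$. Subtracting therefore telescopes away everything except the top term $j = a+1$, namely $E^{a+1}\bar\sigma_0 = E^{a+1}$ (using $\bar\sigma_0 = 1$). This yields $E\sigma_a - \sigma_{a,1} = E^{a+1}$, i.e. the claimed identity $E^{a+1} = -\sigma_{a1} + E\sigma_a$.

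There is essentially no hard step: the whole content is the bookkeeping of index ranges together with the observation that the only monomial surviving the cancellation is $E^{a+1}$. The single point warranting care is the upper bound $a \le n-2$, which guarantees that both $\sigma_a$ and $\sigma_{a,1}$ are genuine Schubert classes (so that Corollary \ref{lemma-algebraicrelationofspecialschubert} applies) rather than vanishing or out-of-range symbols. An alternative, should one prefer to avoid the explicit summation, is a short induction on $a$ using Lemma \ref{lemma-geometricrelationofspecialschubert} in the form $\bar\sigma_a = \sigma_a - E\sigma_{a-1}$ together with Giambelli's formula for $G(2,n)$; but the direct telescoping from the corollary is the most transparent route, so that is the one I would write up.
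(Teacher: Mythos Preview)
Your proof is correct and is in fact cleaner than the paper's own argument, which proceeds by induction on $a$: the base case $a=1$ is handled via $\sigma_{11}=E\bar\sigma_1$ (a special case of Corollary \ref{lemma-algebraicrelationofspecialschubert}), and the inductive step multiplies $E^{a_0+1}=-\sigma_{a_01}+E\sigma_{a_0}$ by $E$, rewrites $E^2$ using the base case, and then invokes the Pieri and Giambelli formulas for $G(2,n)$ to reorganize the resulting products of Schubert classes. Your route---applying Corollary \ref{lemma-algebraicrelationofspecialschubert} directly to both $\sigma_{a,1}$ and $\sigma_a$ and telescoping---bypasses the induction and the appeal to Pieri/Giambelli entirely, at the cost of relying on the full strength of the corollary (whose proof already absorbed the Giambelli input). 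Amusingly, you flag the inductive alternative at the end as a backup; that is precisely the argument the paper chose to present.
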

\begin{proof}
	We use induction on $a$.
    By Corollary \ref{lemma-algebraicrelationofspecialschubert},  $\sigma_{11}=E\bar{\sigma}_1$. Therefore, $-\sigma_{11}+E\sigma_1=E(-\bar \sigma_1+\sigma_1)=E^2$. That is, the statement holds for $a=1$.
      Now assume that the required equality holds for $a=a_0\geq1$. Then for $a=a_0+1$, we have
	\begin{align*}
		E^{(a_0+1)+1}=E\cdot E^{a_0+1}=E(-\sigma_{a_01}+E\sigma_{a_0})&=-E\sigma_{a_01}+(-\sigma_{11}+E\sigma_1)\sigma_{a_0}\\
		&=-\sigma_{11}\sigma_{a_0}+E(\sigma_1\sigma_{a_0}-\sigma_{a_01}).
	\end{align*}
	Now the required equality follows from Pieri rule  and Giambelli formula.
\end{proof}

 \bigskip

 \begin{proof}[Proof of Proposition \ref{prop-H(X2n)}]
  Since $\bar H$ is the hyperplane of the base $G_-=\mathbb{P}^{n-2}$, we have $\bar{H}^{n-1}=0$.
  Moreover, since $E$ is the relative hyperplane class of the $\bP^{n-k}$-bundle $X_{k,n}=\bP_{G_-}(\mathcal{Q}_{G_-}\oplus\cO)$, it follows that $\suml_{b=0}^{n-2}\bar{H}^bE^{n-1-b}=0$.

On the other hand, for $a\geq0$, we have $\sigma_a=\suml_{i=0}^aE^i\bar{H}^{a-i}$ by Corollary \ref{lemma-algebraicrelationofspecialschubert}. Therefore
\begin{align}\label{eq-sigmaintermsofE}
\sigma_a=\suml_{i=0}^aE^i(\sigma_1-E)^{a-i}=\suml_{b=0}^a(-1)^bf_b(a)\sigma_1^{a-b}E^b,
\end{align}
 \end{proof}

\subsection{Ring presentation of $QH^*(X_{2, n})$}

By Theorem \ref{thm:2ptGW-precise},   the only non-zero, degree-$(\ell-e)$ (resp. degree-$e$), two-point Gromov-Witten invariants with insertions in $\mathcal{B}_2$ (resp. $\mathcal{B}_1$) are
\begin{align}\label{eq-GWinvofX2nforf}
\<E^{n-2}\bar{H}^j,E^{n-2}\bar{H}^{n-2-j}\>_{\ell-e}=1,\quad 0\leq j\leq n-2,
\end{align}
\begin{align}\label{eq-GWinvofX2nfore}
\<E\sigma_{a,b},E\sigma_{n-3-b,n-3-a}\>_e=1,\quad n-3\geq a\geq b\geq0.
\end{align}

By \cite[Proposition 2.2]{SiTi97}, the quantum cohomology $QH^*(X_{2, n})=\big(H^*(X_{2, n})\otimes \mathbb{C}[q_1, q_2], \star\big)$ of $X_{2,n}$ is generated by the same generators of the classical cohomology $H^*(X_{2, n})$, and the quotient ideal for $QH^*(X_{2, n})$ is simply obtained by  quantizing that for $H^*(X_{2, n})$.   Here  $q_1$ (resp. $q_2$) is the quantum variable corresponding to $e$ (resp. $\ell-e$).  The main result of this section is the following theorem, whose proof will be given at the end of this subsection.
\begin{theorem}\label{thm: QHX2n}
     As a $\mathbb{C}[q_1, q_2]$-algebra,  the quantum cohomology $QH^*(X_{2, n})$ is generated by $\{E, \sigma_1, \cdots, \sigma_{n-2}\}$ subject to the relations:
   \begin{align*}
   	(\sigma_1-E)^{\star(n-1)}&=q_1\suml_{k=2}^{n-1}{(-1)^k}{n-1\choose k}\sigma_1^{\star(n-1-k)}\star(E^{\star(k-1)}+E^{\star(k-2)}\star\sigma_1+\cdots+E\star\sigma_{k-2}),\\
      \sigma_a&=  \suml_{b=0}^a(-1)^bf_b(a)\sigma_1^{\star (a-b)}\star E^{\star b}-q_1\suml_{c=0}^{a-2}\sigma_c\star\suml_{b=c+2}^a(-1)^bf_b(a)\sigma_1^{\star (a-b)}\star E^{\star(b-1-c)}\\
      &\qquad -\delta_{a, n-1} q_2\mathbf{1},\qquad \forall 2\leq a \leq n-1.
	 \end{align*}
    Here $\sigma_{n-1}=0$ by convention.  Moreover,  the map defined by  $\sigma_1\mapsto h+x, E\mapsto x, q_1\mapsto q_1$ and $q_2\mapsto q_2$  is an ring isomorphism:
      \begin{equation*}
  QH^*(X_{2, n})\overset{\cong}{\longrightarrow}  \mathbb{C}[h, x, q_1, q_2]/(R_{\rm b}(n-1), R_{\sigma}(n-1)-q_2).
 \end{equation*}

\end{theorem}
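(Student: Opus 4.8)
The plan is to realize $QH^*(X_{2,n})$ as a quotient of a polynomial ring via an explicit surjection whose kernel I identify, using only the two-point invariants of Theorem \ref{thm:2ptGW-precise} together with the divisor axiom \eqref{divisorAxiom}, and to finish with a rank count. Since $H^*(X_{2,n})$ is generated in degree two by $E$ and $\bar H=\sigma_1-E$ (Proposition \ref{prop-H(X2n)}), \cite[Proposition 2.2]{SiTi97} guarantees that $QH^*(X_{2,n})$ is generated by the same classes and that its ideal of relations is a $q$-deformation of the classical one. Accordingly I would introduce the $\mathbb{C}[q_1,q_2]$-algebra homomorphism $\Phi\colon\mathbb{C}[h,x,q_1,q_2]\to(QH^*(X_{2,n}),\star)$ with $h\mapsto\bar H$, $x\mapsto E$ and $q_i\mapsto q_i$; it is surjective, and the entire content of the theorem is the assertion $\ker\Phi=(R_{\rm b}(n-1),R_\sigma(n-1)-q_2)$.

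The computational engine is quantum multiplication by the divisors $\bar H$ and $E$. The divisor axiom gives $D\star\gamma=D\cup\gamma+\sum_{\mathbf{d}>0}(D\cdot\mathbf{d})\big(\sum_\mu\langle\gamma,\mu^\vee\rangle_{\mathbf{d}}\,\mu\big)q^{\mathbf{d}}$. From $c_1(X_{2,n})=nH-E$ and the intersection numbers $H\cdot e=0,\ H\cdot\ell=1,\ E\cdot e=-1,\ E\cdot\ell=0$ one finds $\bar H\cdot e=1,\ \bar H\cdot(\ell-e)=0,\ E\cdot e=-1,\ E\cdot(\ell-e)=1$, and $\deg q_1=1,\ \deg q_2=n-1$. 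A key simplification I would record at the outset is that degree-$\ell$ (i.e. $q_1q_2$) corrections play no role: in any product of cohomological degree $\le n-1$ a $q^{\mathbf{d}}$-term needs a coefficient of degree $(n-1)-\deg q^{\mathbf{d}}\ge 0$, and $\deg(q_1q_2)=n>n-1$ rules such terms out. Hence only the degree-$e$ and degree-$(\ell-e)$ invariants of Theorem \ref{thm:2ptGW-precise}(b),(c) are needed, and evaluating them against the dual bases of Section \ref{section-Xkngeometry} yields the clean operators $\bar H\star\gamma=\bar H\cup\gamma-q_1P(\gamma)$ and $E\star\gamma=E\cup\gamma+q_1P(\gamma)+q_2P'(\gamma)$, where $P$ is the projection onto the $E$-part of the basis $\mathcal{B}_1$ and $P'$ records the degree-$(\ell-e)$ pairing. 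In particular, using $\bar H^{a}=\sigma_a-E\sigma_{a-1}$ (Lemma \ref{lemma-geometricrelationofspecialschubert}), one has $P(\bar H^{a})=-E\sigma_{a-1}$ and $P'(\bar H^{a})=0$ for $a\le n-2$.

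With these operators I would establish the two required identities. For the first, I claim $\Phi(R_{\rm b}(m))=\bar H^{\cup m}$, the classical cup power viewed inside $QH^*(X_{2,n})$, for all $0\le m\le n-1$; granting this, $\Phi(R_{\rm b}(n-1))=\bar H^{\,n-1}=0$ because $\bar H$ is the hyperplane of the $\mathbb{P}^{n-2}$-base $G_-$. The claim follows by induction from the defining recursion $R_{\rm b}(m)=\sigma_1R_{\rm b}(m-1)-(\bar H+q_1)x\,R_{\rm b}(m-2)$ (note $h+x=\sigma_1$): feeding $\Phi(R_{\rm b}(m-1))=\bar H^{\cup(m-1)}$ and $\Phi(R_{\rm b}(m-2))=\bar H^{\cup(m-2)}$ into the operators above, the $q_1$- and $q_1^2$-corrections telescope and cancel against the $-(\bar H+q_1)x$ term — the decisive cancellation reduces to the classical identity $P(E\bar H^{m-2})=E\bar H^{m-2}-\bar H\cup E\sigma_{m-3}$, checked using $E^2=-\sigma_{11}+E\sigma_1$ (Lemma \ref{lemma-classicalpowerofEforX2n}) and Pieri's rule — leaving $\bar H^{\cup m}=\bar H\cup\bar H^{\cup(m-1)}$; the base cases $m\le 2$ (e.g. $\bar H^{\star2}=\bar H^2+q_1E$, so $\Phi(R_{\rm b}(2))=\bar H^2$) are immediate. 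For the second identity, $\Phi(R_\sigma(n-1))=\sum_{j=0}^{n-1}E^{\star(n-1-j)}\star\bar H^{\cup j}$ is the quantum deformation of the fiber relation $\sum_j E^{n-1-j}\bar H^j=0$ of the $\mathbb{P}^{n-2}$-bundle $X_{2,n}=\mathbb{P}_{G_-}(\mathcal{Q}_{G_-}\oplus\mathcal{O})$; running the analogous telescoping induction on the partial sums $T_m=\sum_{j=0}^{m}E^{\star(m-j)}\star\bar H^{\cup j}$, the only surviving correction is a multiple of $q_2\mathbf{1}$ (the unique term permitted by the degree count), whose coefficient is pinned to $1$ by the normalization $\langle E^{n-2}\bar H^j,E^{n-2}\bar H^{n-2-j}\rangle_{\ell-e}=1$, so that $\Phi(R_\sigma(n-1))=q_2\mathbf{1}$.

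Finally I would close with a rank argument. The two identities show that $\Phi$ descends to a surjection $\bar\Phi\colon A:=\mathbb{C}[h,x,q_1,q_2]/(R_{\rm b}(n-1),R_\sigma(n-1)-q_2)\to QH^*(X_{2,n})$. Both relations are homogeneous of degree $n-1$ for the weighting $\deg h=\deg x=\deg q_1=1$, $\deg q_2=n-1$, and setting $q_1=q_2=0$ recovers exactly the classical presentation $\mathbb{C}[h,x]/(h^{n-1},\sum_{j=0}^{n-1}x^{n-1-j}h^j)\cong H^*(X_{2,n})$ of Proposition \ref{prop-H(X2n)}, an Artinian complete intersection of dimension $(n-1)^2$. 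Hence $A$ is a free $\mathbb{C}[q_1,q_2]$-module of rank $(n-1)^2$, matching $\dim_{\mathbb{C}}H^*(X_{2,n})=(n-1)^2=\#\mathcal{B}_2$; since a surjection of free modules of equal finite rank over the domain $\mathbb{C}[q_1,q_2]$ is an isomorphism (base-change to the fraction field and compare dimensions), $\bar\Phi$ is an isomorphism, which is precisely the isomorphism $\sigma_1\mapsto h+x$, $E\mapsto x$ asserted in the theorem. The displayed presentation in the generators $\{E,\sigma_1,\dots,\sigma_{n-2}\}$ is then obtained by rewriting the two relations through \eqref{eq-sigmaintermsofE} and Lemma \ref{lemma-classicalpowerofEforX2n}. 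The main obstacle is the inductive identity $\Phi(R_{\rm b}(m))=\bar H^{\cup m}$ and its analogue for $R_\sigma$: one must control the iterated quantum products $\bar H\star(-)$ and $E\star(-)$ on the classes $\bar H^{\cup j}$, reduce the resulting high powers of $E$ via $E^{a+1}=-\sigma_{a1}+E\sigma_a$, and verify that every $q_1$-correction cancels exactly as dictated by the two recursions.
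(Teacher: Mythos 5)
Your proposal is correct and follows essentially the same route as the paper's proof: the same divisor-axiom computations driven by the two-point invariants of Theorem \ref{thm:2ptGW-precise}, the same three-term recursion identifying the quantum powers of $\bar H$ with $R_{\rm b}(j)$ (the paper's Lemma \ref{lemma: Hjrecursive}), the same telescoping identification $\sigma_j=R_\sigma(j)$ pinning the $q_2\mathbf{1}$ term, and the same degree count excluding $q_1q_2$-corrections (which the paper packages instead as the auxiliary products $\star_1$ and $\star_2$). The only cosmetic differences are that you target the $(h,x)$-presentation directly, deferring the displayed relations in $\{E,\sigma_1,\dots,\sigma_{n-2}\}$ to a rewriting step, and that you supplement \cite[Proposition 2.2]{SiTi97} with a freeness/rank argument that the paper does not need.
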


The quantized relations of \eqref{relationHX2n} are again homogeneous polynomials of degree up to $n-1$. Since $\deg q_1=1$ and $\deg q_2=n-1$, it follows that no product term $q_1q_2$ can appear    on the right hand  of the quantized relations. To prove the above theorem,  we can first consider  the restriction  $\star_1$ (resp. $\star_2$)  of $\star$ by setting $q_2=0$ (resp. $q_1=0$). In other words, $\star_i$ only remembers $q_i$.


\begin{lemma}\label{lemma-divisorproduct2forX2n}
For any $\alpha \in H^*(X_{2, n})$ and $0\leq a\leq n-1$,	we have
	\[
	\bar{H}\star_2 \alpha=\bar{H}\cup \alpha,\quad \quad
	E^{\star_2a}=\left\{\begin{array}{ll}
		E^a,&\mbox{if }\,\, 0\leq a\leq n-2,\\
		E^{n-1}+q_2\mathbf{1},&\mbox{if }\,\, a=n-1.
	\end{array}\right.
	\]
\end{lemma}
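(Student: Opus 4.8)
The plan is to compute both $\bar H \star_2 \alpha$ and $E^{\star_2 a}$ directly from the definition of the quantum product, $\alpha \star \beta = \sum_{\mathbf{d}}\sum_j \langle \alpha, \beta, T_j\rangle^{X_{2,n}}_{\mathbf{d}}\, T^j\, \mathbf{q}^{\mathbf{d}}$, where $\{T_j\}$ runs over a basis of $H^*(X_{2,n})$ with Poincar\'e-dual basis $\{T^j\}$. Because $\star_2$ records only $q_2$, a nonzero correction term can survive only for classes $\mathbf{d}=d_2(\ell-e)$ with $d_2\geq 1$. Since both $\bar H$ and $E$ lie in $H^2(X_{2,n})$, the relevant three-point invariants reduce, via the divisor axiom \eqref{divisorAxiom}, to two-point invariants of degree $\mathbf{d}$; then part (a) of Theorem \ref{thm:2ptGW-precise} forces these to vanish unless $\mathbf{d}=\ell-e$. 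Thus throughout the computation only the single class $\ell-e$ can contribute.

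The first step is to record the intersection numbers against $\ell-e$. Using $\int_e H=\int_\ell E=0$, $-\int_e E=\int_\ell H=1$ and $\bar H=\sigma_1-E=H-E$ (Lemma \ref{lemma-geometricrelationofspecialschubert}), one gets $\int_{\ell-e}\bar H=0$ and $\int_{\ell-e}E=1$. For the first identity of the lemma, the divisor axiom gives $\langle \bar H,\alpha,T_j\rangle_{\ell-e}=\big(\int_{\ell-e}\bar H\big)\,\langle \alpha,T_j\rangle_{\ell-e}=0$; combined with the vanishing of all two-point invariants of degree $d_2(\ell-e)$ for $d_2\geq 2$, every quantum correction to $\bar H\star_2\alpha$ disappears, so $\bar H\star_2\alpha=\bar H\cup\alpha$ for all $\alpha$.

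For the powers of $E$ I would induct on $a$. Since $\int_{\ell-e}E=1$, the same divisor-axiom reduction yields $E\star_2\beta=E\cup\beta+q_2\sum_j \langle \beta,T_j\rangle_{\ell-e}\,T^j$. Taking $\beta=E^{a-1}=E^{a-1}\bar H^0\in\mathcal{B}_2$ and $T_j\in\mathcal{B}_2$, part (c) of Theorem \ref{thm:2ptGW-precise} tells us that $\langle E^{a-1}\bar H^0,T_j\rangle_{\ell-e}$ is nonzero only when the first insertion has the form $E^{n-2}\bar H^j$; for $a\leq n-2$ we have $a-1<n-2$, so the correction vanishes and inductively $E^{\star_2 a}=E^a$. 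When $a=n-1$ the insertion $E^{n-2}\bar H^0$ does have the required form, and (c) gives $\langle E^{n-2}\bar H^0,T_j\rangle_{\ell-e}=1$ exactly for $T_j=E^{n-2}\bar H^{n-2}$; the explicit dual-basis formula for $\mathcal{B}_2^\vee$ gives $(E^{n-2}\bar H^{n-2})^\vee=\mathbf{1}$, whence $E^{\star_2(n-1)}=E^{n-1}+q_2\mathbf{1}$.

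The only delicate points, which constitute the heart of the argument, are ensuring that no degree $d_2\geq 2$ survives (this is precisely part (a) of Theorem \ref{thm:2ptGW-precise}) and pinning down the unique surviving dual basis vector in the top case $a=n-1$ from the explicit description of $\mathcal{B}_2^\vee$. Everything else is a formal manipulation of the divisor axiom together with the two recorded intersection numbers $\int_{\ell-e}\bar H=0$ and $\int_{\ell-e}E=1$.
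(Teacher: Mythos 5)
Your argument is correct and follows essentially the same route as the paper: the first identity comes from $\bar H\cdot(\ell-e)=0$ together with the divisor axiom, and the powers of $E$ are computed from the degree-$(\ell-e)$ two-point invariants of Theorem \ref{thm:2ptGW-precise}(c) (the paper cites these in the form \eqref{eq-GWinvofX2nforf}) and the dual-basis identity $(E^{n-2}\bar H^{n-2})^\vee=\mathbf{1}$. Your explicit appeal to part (a) to rule out degrees $d_2(\ell-e)$ with $d_2\geq 2$ is a point the paper leaves implicit, but it is the same underlying mechanism.
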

\begin{proof}
The first equality follows from $\bar{H}.(\ell-e)=0$ and the divisor axiom \eqref{divisorAxiom}.
For the second equality, from \eqref{eq-GWinvofX2nforf}, we get
\(
E\star_2 E^a=E\cup E^{a} \mbox{ for } 0\leq a\leq n-3,
\)
and
\[
E\star_2 E^{n-2}=E^{n-1}+\suml_{\beta\in\mathcal{B}_2}\<E,E^{n-2},\beta\>_{\ell-e}q_2\beta^\vee=E^{n-1}+q_2\mathbf{1}.
\]
For the second equality,  we notice that  only the case $\beta=E^{n-2}\bar{H}^{n-2}$ will make   non-zero contribution and that   $(E^{n-2}\bar{H}^{n-2})^\vee=\mathbf{1}$.
\end{proof}

\begin{lemma}\label{lemma-divisorproductforX2n}
	We have
	\begin{align*}
		\sigma_1\star_1\alpha&=\sigma_1\cup \alpha,\quad\forall \alpha\in H^*(X_{2,n}),\\
		E\star_1\sigma_{ab}&=E\cup\sigma_{ab},\quad \forall n-2\geq a\geq b\geq0,\\
		E\star_1E\sigma_{ab}&=E\cup E\sigma_{ab}+q_1E\sigma_{ab},\quad \forall  n-3\geq a\geq b\geq0.
	\end{align*}
\end{lemma}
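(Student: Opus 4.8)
The plan is to treat all three identities uniformly. Since $\sigma_1$ equals the pullback $H$ of the Pl\"ucker hyperplane (so $\sigma_1=H$ in $H^*(X_{2,n})$) and $E$ is a divisor class, each quantum product can be expanded, for $D\in\{\sigma_1,E\}$, as
\[
D \star_1 \alpha = D \cup \alpha + \sum_{d \geq 1} q_1^d \sum_{\beta} \langle D, \alpha, \beta \rangle_{de}\, \beta^\vee,
\]
where $\beta$ runs over the basis $\mathcal{B}_1$ and $\beta^\vee$ is its Poincar\'e dual. First I would apply the divisor axiom \eqref{divisorAxiom} to rewrite each structure constant as $\langle D, \alpha, \beta\rangle_{de} = (\int_{de} D)\,\langle \alpha, \beta\rangle_{de}$, and then invoke Corollary \ref{cor:devanish}, which kills every term with $d \geq 2$. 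Thus only the single degree-$e$ correction can survive, and the whole computation reduces to evaluating $\int_e D$ together with the degree-$e$ two-point invariants classified in Theorem \ref{thm:2ptGW-precise}(b). The first identity is then immediate: using $\int_e \sigma_1 = H\cdot e = 0$, the divisor axiom forces $(\int_e \sigma_1)\langle \alpha, \beta\rangle_e = 0$ for every $\beta$, so no quantum correction appears and $\sigma_1 \star_1 \alpha = \sigma_1 \cup \alpha$ for all $\alpha$.

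For the remaining two identities $D=E$, where $\int_e E = -1$. In the second identity the insertion $\sigma_{ab}$ is a basis element of $\mathcal{B}_1$ carrying no factor of $E$; since Theorem \ref{thm:2ptGW-precise}(b) shows that (for $k=2$) the only non-vanishing degree-$e$ two-point invariants pair $E\sigma_\mu$ with $E\sigma_{\mu_+^\vee}$, both carrying an $E$, the pairing $\langle \sigma_{ab}, \beta\rangle_e$ vanishes for every $\beta\in\mathcal{B}_1$, whence $E \star_1 \sigma_{ab} = E\cup \sigma_{ab}$. In the third identity $E\sigma_{ab}$ does carry an $E$, and Theorem \ref{thm:2ptGW-precise}(b) singles out exactly one basis element, $\beta = E\sigma_{n-3-b,\,n-3-a}$, with $\langle E\sigma_{ab}, \beta\rangle_e = 1$. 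The degree-$e$ correction is therefore $q_1\,(\int_e E)\,(E\sigma_{n-3-b,\,n-3-a})^\vee$, which I would evaluate using $\int_e E = -1$ and the dual-basis formula $(E\sigma_{cd})^\vee = -E\sigma_{n-3-d,\,n-3-c}$ recorded for $\mathcal{B}_1$; applying it with $c=n-3-b$, $d=n-3-a$ gives $(E\sigma_{n-3-b,\,n-3-a})^\vee = -E\sigma_{ab}$, so the two minus signs cancel and the correction is exactly $+q_1 E\sigma_{ab}$.

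The only step requiring genuine care is this last sign-and-index bookkeeping in the third identity: one must simultaneously track the sign $\int_e E = -1$ coming from the divisor axiom and the sign built into the dual basis of $\mathcal{B}_1$, and verify that they cancel so as to produce the clean coefficient $+q_1$ (rather than $-q_1$). Everything else is a direct consequence of the divisor axiom \eqref{divisorAxiom}, the classification of degree-$e$ invariants in Theorem \ref{thm:2ptGW-precise}, and the higher-multiple vanishing in Corollary \ref{cor:devanish}.
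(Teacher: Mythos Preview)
Your proposal is correct and follows essentially the same route as the paper's proof: divisor axiom for the first identity (using $\sigma_1\cdot e=0$), the classification \eqref{eq-GWinvofX2nfore} for the second and third, and the dual-basis formula $(E\sigma_{n-3-b,n-3-a})^\vee=-E\sigma_{ab}$ for the sign bookkeeping in the third. The only difference is one of explicitness: you invoke Corollary~\ref{cor:devanish} to kill the $d\geq 2$ contributions, whereas the paper simply writes down the expansion with a single $q_1$-term, leaving that vanishing implicit (it is established inside the proof of Theorem~\ref{thm:2ptGW-precise}(b), see \eqref{equ: de}).
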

\begin{proof}
	The first equality follows from $\sigma_1.e=0$ and the divisor axiom \eqref{divisorAxiom}. The second equality follows from \eqref{eq-GWinvofX2nfore}. For the third equality, we have
	\[
		E\star_1E\sigma_{ab}=E\cup E\sigma_{ab}+\suml_{\beta\in\mathcal{B}_1}\<E,E\sigma_{ab},\beta\>_e^{X(2,n)}q_1\beta^\vee.
	\]
	It follows from the divisor axiom and \eqref{eq-GWinvofX2nfore} that only $\beta=E_{n-3-b,n-3-a}$ has non-zero contribution. Now the required equality follows from $(E\sigma_{n-3-b,n-3-a})^\vee=-E\sigma_{ab}$.
\end{proof}

\begin{lemma}\label{lemma:Eprod}
	For  {$1\leq a\leq n-2$}, we have
	\begin{align*}
	E\star_1 E^a&=E^{a+1}+q_1E\sigma_{a-1},\quad
	\bar{H}\star_1E^a=\bar{H}E^a-q_1E\sigma_{a-1}.
	\end{align*}
\end{lemma}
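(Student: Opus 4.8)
The plan is to reduce both identities to the divisor-type quantum products already computed in Lemma \ref{lemma-divisorproductforX2n}, using the purely classical relation of Lemma \ref{lemma-classicalpowerofEforX2n} to rewrite the powers of $E$ into classes to which that lemma applies, and then to reassemble the result using the same classical relation in reverse.

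First I would establish the first identity for $2\leq a\leq n-2$. Applying Lemma \ref{lemma-classicalpowerofEforX2n} with index $a-1$ gives the classical decomposition $E^a=-\sigma_{a-1,1}+E\sigma_{a-1}$, where $\sigma_{a-1}=\sigma_{a-1,0}$. By $\mathbb{C}[q_1]$-bilinearity of $\star_1$,
\[
E\star_1 E^a=-\,E\star_1\sigma_{a-1,1}+E\star_1(E\sigma_{a-1,0}).
\]
The first term is classical by the second identity of Lemma \ref{lemma-divisorproductforX2n} (legitimate since $n-2\geq a-1\geq 1$), and the second term is computed by the third identity there (legitimate since $n-3\geq a-1\geq 0$), producing $E\cup E\sigma_{a-1,0}+q_1E\sigma_{a-1,0}$. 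Collecting terms,
\[
E\star_1 E^a=\bigl(E^2\sigma_{a-1}-E\sigma_{a-1,1}\bigr)+q_1E\sigma_{a-1}=E\cdot\bigl(E\sigma_{a-1}-\sigma_{a-1,1}\bigr)+q_1E\sigma_{a-1}.
\]
Since the parenthesized factor is again the classical $E^a$ by Lemma \ref{lemma-classicalpowerofEforX2n}, the cup product equals $E\cdot E^a=E^{a+1}$, whence $E\star_1 E^a=E^{a+1}+q_1E\sigma_{a-1}$. The boundary case $a=1$ is handled directly: $E=E\sigma_{0,0}$, so the third identity of Lemma \ref{lemma-divisorproductforX2n} yields $E\star_1 E=E^2+q_1E=E^{2}+q_1E\sigma_0$, as required (here $n\geq 3$).

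For the second identity I would exploit the relation $\bar H=\sigma_1-E$, which is the $a=1$ instance of Lemma \ref{lemma-geometricrelationofspecialschubert} (namely $\bar\sigma_1=\sigma_1-E\sigma_0$). By the first identity of Lemma \ref{lemma-divisorproductforX2n} the divisor $\sigma_1$ multiplies classically, so $\sigma_1\star_1 E^a=\sigma_1\cup E^a$, and therefore
\[
\bar H\star_1 E^a=\sigma_1\star_1 E^a-E\star_1 E^a=\sigma_1 E^a-E^{a+1}-q_1E\sigma_{a-1}=\bar H E^a-q_1E\sigma_{a-1},
\]
where the last equality uses $\sigma_1-E=\bar H$ classically together with the first identity just proved. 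The only delicate point is the bookkeeping of index ranges needed to guarantee that every quantum product that appears lies in the scope of Lemma \ref{lemma-divisorproductforX2n}; since $1\leq a\leq n-2$, the partitions $(a-1,1)$ and $(a-1,0)$ always satisfy the required inequalities, so no genuine obstacle arises and the computation is forced.
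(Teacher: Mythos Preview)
Your proof is correct and follows exactly the approach the paper indicates: the first identity is derived by combining the classical decomposition of $E^a$ from Lemma \ref{lemma-classicalpowerofEforX2n} with the divisor products of Lemma \ref{lemma-divisorproductforX2n}, and the second identity is then deduced from the first via $\bar H=\sigma_1-E$. The only imprecision is in your closing summary, where you say the partition $(a-1,1)$ is valid for all $1\leq a\leq n-2$; in fact for $a=1$ this is not a partition, but you have already treated $a=1$ separately, so the argument stands.
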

\begin{proof}
	The first identity follows from Lemmas \ref{lemma-classicalpowerofEforX2n} and \ref{lemma-divisorproductforX2n}, and the second one follows from the first one.
\end{proof}
\begin{lemma}\label{lemma-powerofEforX2n}
	For  {$2\leq a\leq n-2$}, we have
	\begin{align*}
	E^{\star_1a}&=E^a+q_1\Big(E^{\star_1(a-1)}+\suml_{i=1}^{a-2}E^{\star_1(a-1-i)}\sigma_i\Big),\\
	\bar{H}\star_1E^{\star_1a}&=\bar{H}E^a+q_1\Big(\bar{H}\star_1E^{\star_1(a-1)}+\suml_{i=1}^{a-2}\bar{H}\star_1E^{\star_1(a-1-i)}\sigma_i-E\sigma_{a-1}\Big).
	\end{align*}
\end{lemma}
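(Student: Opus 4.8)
The plan is to prove both displayed identities by a single induction on $a$ (for $2\le a\le n-2$), deriving the second from the first at each stage. The engine is the defining recursion $E^{\star_1 a}=E\star_1 E^{\star_1(a-1)}$ together with two already-established computational inputs: Lemma~\ref{lemma-divisorproductforX2n}, which records how $E\star_1$ acts on the basis $\mathcal B_1$, and Lemma~\ref{lemma:Eprod}, which gives $E\star_1 E^a=E^{a+1}+q_1E\sigma_{a-1}$ and $\bar H\star_1 E^a=\bar HE^a-q_1E\sigma_{a-1}$ for the \emph{classical} powers $E^a$ (the latter itself repackaging Lemma~\ref{lemma-classicalpowerofEforX2n}).

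Writing $P_m:=E^{\star_1 m}$, the first identity goes by induction. The base case $a=2$ is $E\star_1 E=E^2+q_1E$ (Lemma~\ref{lemma:Eprod} with $a=1$ and $\sigma_0=\mathbf 1$), matching the claim with an empty sum. For the step I substitute the induction hypothesis $P_{a-1}=E^{a-1}+q_1\bigl(P_{a-2}+\sum_{i=1}^{a-3}P_{a-2-i}\sigma_i\bigr)$ into $P_a=E\star_1 P_{a-1}$ and peel off the classical leading term: Lemma~\ref{lemma:Eprod} gives $E\star_1 E^{a-1}=E^a+q_1E\sigma_{a-2}$, the term $E\star_1 P_{a-2}$ equals $P_{a-1}$ by associativity of $\star_1$, and $E\sigma_{a-2}=P_1\sigma_{a-2}$ is absorbed as the $i=a-2$ summand. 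Reassembling yields exactly $P_a=E^a+q_1\bigl(P_{a-1}+\sum_{i=1}^{a-2}P_{a-1-i}\sigma_i\bigr)$, \emph{provided} one can justify the commutation identity $E\star_1(P_m\sigma_i)=(E\star_1 P_m)\sigma_i=P_{m+1}\sigma_i$ on the cross terms, for $1\le i\le a-3$ and $m=a-2-i$.

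This commutation identity is the crux and the main obstacle. I would first rewrite Lemma~\ref{lemma-divisorproductforX2n} in the compact form $E\star_1\gamma=E\cup\gamma+q_1\Pi(\gamma)$, where $\Pi$ is the projection of $H^*(X_{2,n})$ onto $\operatorname{span}\{E\sigma_{cd}\}$ along $\operatorname{span}\{\sigma_{cd}\}$ (the two families forming $\mathcal B_1$); the two bullets of that lemma say precisely that $\Pi$ annihilates the $\sigma_{cd}$-part and is the identity on the $E\sigma_{cd}$-part. The commutation then reduces to the module identity $\Pi(P_m\sigma_i)=\Pi(P_m)\,\sigma_i$. Since $\sigma_i=\pi^*\sigma_i$ is pulled back from $\mathbb G$, cupping with it preserves both the subring $\pi^*H^*(\mathbb G)=\operatorname{span}\{\sigma_{cd}\}$ and the exceptional part $E\cdot\pi^*H^*(\mathbb G)=\operatorname{span}\{E\sigma_{cd}\}$; the only danger is an index leaving the basis range. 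A degree count removes it: every cohomology component of $P_m$ has degree at most $m\le a-3$, so after cupping with $\sigma_i$ every Schubert class produced has total degree at most $m+i=a-2\le n-4$, whence its first part stays well inside the ranges $a\le n-2$ and $a\le n-3$ required to apply the two bullets of Lemma~\ref{lemma-divisorproductforX2n} termwise. This makes $\Pi$ genuinely $\sigma_i$-linear on the classes at hand and closes the induction.

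Finally, the second identity needs no separate induction. Applying the bilinear operation $\bar H\star_1(-)$ to the first identity and invoking $\bar H\star_1 E^a=\bar HE^a-q_1E\sigma_{a-1}$ from Lemma~\ref{lemma:Eprod} yields $\bar H\star_1 E^{\star_1 a}=\bar HE^a+q_1\bigl(\bar H\star_1 E^{\star_1(a-1)}+\sum_{i=1}^{a-2}\bar H\star_1 E^{\star_1(a-1-i)}\sigma_i-E\sigma_{a-1}\bigr)$, where the isolated correction $-q_1E\sigma_{a-1}$ coming from the classical term $\bar H\star_1 E^a$ is exactly the term displayed in the statement.
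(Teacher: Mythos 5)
Your proof is correct and takes essentially the same route as the paper's, whose entire argument is ``induction on $a$ using Lemma \ref{lemma:Eprod}.'' The only substantive addition is your explicit justification of the commutation $E\star_1(\gamma\cup\sigma_i)=(E\star_1\gamma)\cup\sigma_i$ via the projection onto the $E\sigma_{cd}$-part of $\mathcal{B}_1$ together with the degree count keeping all indices in range --- a step the paper leaves implicit.
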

\begin{proof}
	This follows from Lemma \ref{lemma:Eprod} and induction on $a$.
\end{proof}

\begin{lemma}\label{lemma:Hjreduction1}
	For  {$0\leq j\leq n-2$}, we have
	\begin{align*}
E\star_1\bar{H}\star_1\bar{H}^j+q_1E\star_1\bar{H}^j&=E\star_1\bar{H}^{j+1}+q_1E\sigma_j.
\end{align*}

\end{lemma}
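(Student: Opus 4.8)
The plan is to reduce every quantum product in the statement to the $\star_1$-action of the two generators $E$ and $\sigma_1=\bar H+E$, whose behaviour is already pinned down: $\sigma_1$ acts by ordinary cup product (Lemma \ref{lemma-divisorproductforX2n}), while multiplication by $E$ is given on the \emph{entire} basis $\mathcal{B}_1$ by the formulas $E\star_1\sigma_{ab}=E\cup\sigma_{ab}$ and $E\star_1(E\sigma_{ab})=E\cup E\sigma_{ab}+q_1E\sigma_{ab}$. The conceptual point is that $\bar H$ itself is \emph{not} $\star_1$-classical (indeed $\bar H\cdot e=1$), so instead of multiplying by $\bar H$ directly I would trade it for the classical operator $\sigma_1$ minus the quantum operator $E$, via $\bar H=\sigma_1-E$. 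The case $j=0$ is immediate since $\sigma_0=\mathbf{1}$, so I would assume $1\le j\le n-2$.

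First I would establish the closed form $E\star_1\bar H^{j}=E\bar H^{j}-q_1E\sigma_{j-1}$ for $1\le j\le n-2$. Writing $\bar H^{j}=\bar\sigma_j=\sigma_j-E\sigma_{j-1}$ (Lemma \ref{lemma-geometricrelationofspecialschubert}) gives $E\star_1\bar H^{j}=E\star_1\sigma_j-E\star_1(E\sigma_{j-1})$, and applying Lemma \ref{lemma-divisorproductforX2n} to both terms yields $E\star_1\bar H^{j}=E\sigma_j-E^2\sigma_{j-1}-q_1E\sigma_{j-1}$; here the second application needs $j-1\le n-3$, i.e. exactly $j\le n-2$. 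Since $\sigma_j=\bar H^{j}+E\sigma_{j-1}$ (Lemma \ref{lemma-geometricrelationofspecialschubert}) gives $E\sigma_j=E\bar H^{j}+E^2\sigma_{j-1}$, the two $E^2\sigma_{j-1}$ terms cancel and the claimed formula drops out.

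Next I would feed this into $\bar H\star_1\bar H^{j}=\sigma_1\cup\bar H^{j}-E\star_1\bar H^{j}$. Using $\sigma_1\cup\bar H^{j}=\bar H^{j+1}+E\bar H^{j}$, the classical term $E\bar H^{j}$ cancels against the one from Step~1, leaving $\bar H\star_1\bar H^{j}=\bar H^{j+1}+q_1E\sigma_{j-1}$. Applying $E\star_1(-)$ and invoking $E\star_1(E\sigma_{j-1})=E^2\sigma_{j-1}+q_1E\sigma_{j-1}$ gives $E\star_1\bar H\star_1\bar H^{j}=E\star_1\bar H^{j+1}+q_1E^2\sigma_{j-1}+q_1^{2}E\sigma_{j-1}$. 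Adding the term $q_1E\star_1\bar H^{j}=q_1E\bar H^{j}-q_1^{2}E\sigma_{j-1}$ (again by Step~1), the $q_1^{2}$-terms cancel and the $q_1$-terms combine as $q_1\bigl(E^2\sigma_{j-1}+E\bar H^{j}\bigr)=q_1E\bigl(E\sigma_{j-1}+\bar H^{j}\bigr)=q_1E\sigma_j$, producing $E\star_1\bar H^{j+1}+q_1E\sigma_j$, which is the right-hand side.

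I expect the only real obstacle to be the bookkeeping of the $q_1$-corrections rather than any conceptual difficulty. Because $\bar H$ is not $\star_1$-classical, the temptation to treat $\bar H^{j}$ as a genuine divisor power must be resisted: one must expand it as the Schubert combination $\sigma_j-E\sigma_{j-1}$ before any quantum multiplication, and one must keep track of the domain of validity $j\le n-2$ for the formula $E\star_1(E\sigma_{ab})=E\cup E\sigma_{ab}+q_1E\sigma_{ab}$, which is precisely the range in which the lemma is asserted. Once these are handled carefully, the cancellations are forced and the identity follows.
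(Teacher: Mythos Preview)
Your proof is correct and uses the same ingredients as the paper's argument: the decomposition $\bar H^{j}=\sigma_j-E\sigma_{j-1}$ from Lemma~\ref{lemma-geometricrelationofspecialschubert} and the $\star_1$-divisor formulas from Lemma~\ref{lemma-divisorproductforX2n}, together with the substitution $\bar H=\sigma_1-E$. The difference is purely in the order of operations. You first isolate $\bar H\star_1\bar H^{j}=\bar H^{j+1}+q_1E\sigma_{j-1}$ and only then apply $E\star_1(-)$; since the correction term $E\sigma_{j-1}$ is already a $\mathcal B_1$-basis element, the subsequent quantum multiplication by $E$ is immediate. The paper instead keeps the factor $E$ attached from the start and is forced to compute $E\star_1(E\bar H^{j})$, which requires the auxiliary decomposition $E\bar H^{j}=E(\sigma_j-\sigma_1\sigma_{j-1})+\sigma_{11}\sigma_{j-1}$ (invoking Lemma~\ref{lemma-classicalpowerofEforX2n}) before Lemma~\ref{lemma-divisorproductforX2n} can be applied. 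Your route is therefore a genuine streamlining: it avoids that detour entirely and makes the cancellation of the $q_1^2$-terms transparent.
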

\begin{proof}
	 The case $j=0$ is clear.  Now assume $j\geq1$. By  Lemma \ref{lemma-geometricrelationofspecialschubert},  $\bar{H}^j=\bar \sigma_j=\sigma_j-E\sigma_{j-1}$. By Lemma \ref{lemma-divisorproductforX2n}, it suffices to show
	$ 	E\star_1\bar{H}\star_1\bar{H}^j-E\star_1\bar{H}^{j+1}=q_1E\star_1E\sigma_{j-1}.$
    Using Lemma \ref{lemma-divisorproductforX2n} again, we have
	\(
	E\star_1\bar{H}^j=E\bar{H}^j-q_1E\sigma_{j-1}.
	\)
	Therefore 	\begin{align*}
	&E\star_1\bar{H}\star_1\bar{H}^j-E\star_1\bar{H}^{j+1}\\
	={}&\bar{H}\star_1(E\star_1\bar{H}^j)-(E\bar{H}^{j+1}-q_1E\sigma_j)\\
	={}&(\sigma_1-E)\star_1(E\bar{H}^j-q_1E\sigma_{j-1})-(E\bar{H}^{j+1}-q_1E\sigma_j)\\
	={}&(E\sigma_1\bar{H}^j-q_1E\sigma_1\sigma_{j-1}-E\star_1E\bar{H}^j+q_1E\star_1E\sigma_{j-1})-(E\bar{H}^{j+1}-q_1E\sigma_j)\\
	={}&(E\sigma_1\bar{H}^j-E\bar{H}^{j+1}-q_1E\sigma_1\sigma_{j-1}+q_1E\sigma_j)-E\star_1E\bar{H}^j+q_1E\star_1E\sigma_{j-1}\\
	={}&E^2\bar{H}^j+q_1E(\sigma_j-\sigma_1\sigma_{j-1})-E\star_1E\bar{H}^j+q_1E\star_1E\sigma_{j-1}.
	\end{align*}
 So it suffices to prove that $E\star_1E\bar{H}^j=E^2\bar{H}^j+q_1E(\sigma_j-\sigma_1\sigma_{j-1})$. Observe that
\[
E\bar{H}^j=E\sigma_j-E^2\sigma_{j-1}=E\sigma_j-(-\sigma_{11}+E\sigma_1)\sigma_{j-1}=E(\sigma_j-\sigma_1\sigma_{j-1})+\sigma_{11}\sigma_{j-1}.
\]
So the expression for $E\star_1E\bar{H}^j$ follows from Lemma \ref{lemma-divisorproductforX2n}. This finishes the proof.
\end{proof}

In the rest of this subsection, we combine $\star_1$ and $\star_2$ to study $\star$.

\begin{lemma}\label{lemma: Hjrecursive}
	For $0\leq j\leq n-3$, we have
	\begin{equation*}
	   \bar{H}^{j+2}=(\bar H+E)\star \bar H^{j+1}-(\bar H+q_1)\star E\star \bar H^j.
	\end{equation*}
\end{lemma}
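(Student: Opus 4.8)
The plan is to reduce the identity to a computation with the $q_1$-specialized product $\star_1$, for which Lemma~\ref{lemma:Hjreduction1} is tailor-made, and to control the $q_2$-corrections by a degree count. First I would replace the left-hand side by its classical factorization: using $\sigma_1=\bar H+E$ (Lemma~\ref{lemma-geometricrelationofspecialschubert}),
\[
\bar{H}^{j+2}=\bar H\cup\bar{H}^{j+1}=(\sigma_1-E)\cup\bar{H}^{j+1}=\sigma_1\cup\bar{H}^{j+1}-E\cup\bar{H}^{j+1}.
\]
It therefore suffices to show $(\bar H+E)\star\bar{H}^{j+1}=\sigma_1\cup\bar{H}^{j+1}$ and $(\bar H+q_1)\star E\star\bar{H}^j=E\cup\bar{H}^{j+1}$.

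The crucial reduction is that every $\star$ appearing here may be replaced by $\star_1$. Since $\deg q_1=1$ and $\deg q_2=n-1$, and all products in sight have total weighted degree $j+2\le n-1$, no mixed monomial $q_1^aq_2^b$ with $a,b\ge1$ (weighted degree $\ge n$) can occur, and a pure $q_2$-term (forced by grading to a multiple of $q_2\mathbf{1}$, of weighted degree $n-1$) can occur only when $j=n-3$. The factor $\bar H$ contributes no $q_2$ by Lemma~\ref{lemma-divisorproduct2forX2n}, and $E\star\bar{H}^j$ has degree $j+1<n-1$, hence no $q_2$ either; the only product that could carry a $q_2\mathbf{1}$ is $\sigma_1\star\bar{H}^{n-2}$ at $j=n-3$. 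Its coefficient is $q_2\langle\sigma_1,\bar{H}^{n-2},E^{n-2}\bar{H}^{n-2}\rangle_{\ell-e}$, which by the divisor axiom equals $q_2\langle\bar{H}^{n-2},E^{n-2}\bar{H}^{n-2}\rangle_{\ell-e}$ and vanishes, since Theorem~\ref{thm:2ptGW-precise}(c) requires both insertions to carry the factor $E^{n-2}$ while $\bar{H}^{n-2}=E^0\bar{H}^{n-2}$ does not (as $n>2$).

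With $\star$ replaced by $\star_1$, the first identity is immediate: $(\bar H+E)\star_1\bar{H}^{j+1}=\sigma_1\star_1\bar{H}^{j+1}=\sigma_1\cup\bar{H}^{j+1}$ by the divisor axiom, since $\int_e\sigma_1=0$. For the second I would use the formula $E\star_1\bar{H}^m=E\bar{H}^m-q_1E\sigma_{m-1}$ (a consequence of Lemma~\ref{lemma-divisorproductforX2n}) together with Lemma~\ref{lemma:Hjreduction1}, rewritten as $\bar H\star_1E\star_1\bar{H}^j+q_1E\star_1\bar{H}^j=E\star_1\bar{H}^{j+1}+q_1E\sigma_j$, to obtain
\[
(\bar H+q_1)\star_1 E\star_1\bar{H}^j=\bar H\star_1(E\star_1\bar{H}^j)+q_1(E\star_1\bar{H}^j)=E\star_1\bar{H}^{j+1}+q_1E\sigma_j=E\bar{H}^{j+1},
\]
the two $q_1E\sigma_j$ terms cancelling. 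Combining the two computations yields $\sigma_1\cup\bar{H}^{j+1}-E\cup\bar{H}^{j+1}=\bar{H}^{j+2}$, as wanted.

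The main obstacle is the second step: certifying that no $q_2$ (and no mixed $q_1q_2$) corrections survive in these products. The degree count removes the mixed terms outright and confines a possible $q_2\mathbf{1}$ to the single boundary case $\sigma_1\star\bar{H}^{n-2}$, where its vanishing hinges on the precise list of nonzero degree-$(\ell-e)$ two-point invariants from Theorem~\ref{thm:2ptGW-precise}(c). Once that is settled, the remainder is the short $\star_1$-manipulation above.
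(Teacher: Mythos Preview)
Your proof is correct and follows essentially the same approach as the paper: both arguments reduce $\star$ to $\star_1$ by the degree count (isolating a single possible $q_2\mathbf{1}$ at $j=n-3$), both invoke Lemma~\ref{lemma:Hjreduction1} as the core $\star_1$-identity, and both verify the boundary $q_2$-vanishing via the degree-$(\ell-e)$ information. The only difference is organizational: you split the right-hand side into two pieces and match each to a classical cup product, whereas the paper instead computes $\bar H\star\bar H^{j+1}=\bar H^{j+2}+q_1E\sigma_j$ directly from the degree-$e$ invariants in the $\mathcal{B}_1$-basis and checks $m=0$ by specializing to $\star_2$; the underlying ingredients are the same.
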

\begin{proof}
    By counting degrees and Lemma \ref{lemma:Hjreduction1}, we see that
		\[
	E\star\bar{H}\star\bar{H}^j-E\star\bar{H}^{j+1}+q_1E\star\bar{H}^j-q_1E\sigma_j=\left\{\begin{array}{cc}
0,&\mbox{if }0\leq j<n-3,\\
 m q_2,&\mbox{if }j=n-3,
	\end{array}
	\right.
	\]
	 By Lemma \ref{lemma-divisorproduct2forX2n} and \eqref{eq-GWinvofX2nforf},  we have $m=0$.
By divisor axiom, we have
\[
\bar{H}\star\bar{H}^{j+1}=\bar{H}^{j+2}+\suml_{\beta\in\mathcal{B}_1}\<\bar{H},\bar{H}^{j+1},\beta\>_e^{X_{2,n}}q_1\beta^\vee=\bar H^{j+2}+q_1E\sigma_j.
    \]
    Recall   $\bar{H}^{j+1}=\sigma_{j+1}-E\sigma_j$. By \eqref{eq-GWinvofX2nfore},    only $\beta=E\sigma_{n-3-j}$ has non-zero contribution. Then the second equality above follows from $(E\sigma_{n-3-j})^\vee=-E\sigma_j$. The statement follows from the combination of  the above identities.
\end{proof}

 \bigskip

\begin{proof}[Proof Theorem \ref{thm: QHX2n}]
For the first required equality, by Lemma \ref{lemma-powerofEforX2n}, we have
\begin{align*}
    (\sigma_1-E)^{\star_1 a}
    ={}&\suml_{k=0}^{a}(-1)^k{a\choose k}\sigma_1^{\star_1(a-k)}\star_1E^{\star_1k}\\
    ={}&\sigma_1^{\star_1 a}-a\sigma_1^{\star_1(a-1)}\star_1E+\suml_{k=2}^{a}(-1)^k{a\choose k}\sigma_1^{\star_1(a-k)}\star_1\bigg(E^k+q_1\Big(\sum_{i=1}^{k-1} E^{\star_1(i)}\star_1\sigma_{k-i-1}\Big)\bigg)\\
    ={}&\suml_{k=0}^{a}(-1)^k{a\choose k}\sigma_1^{\star_1(a-k)}\star_1E^k+q_1\suml_{k=2}^{a}(-1)^k{a\choose k}\sigma_1^{\star_1(a-k)}\star_1\Big(\sum_{i=1}^{k-1} E^{\star_1(i)}\star_1\sigma_{k-i-1}
    \Big).
\end{align*}
By Lemma \ref{lemma-divisorproductforX2n}, we have
\[
\suml_{k=0}^{a}(-1)^k{a\choose k}\sigma_1^{\star_1(a-k)}\star_1E^k=\suml_{k=0}^{a}(-1)^k{a\choose k}\sigma_1^{(a-k)}E^k=(\sigma_1-E)^{a}=\bar{H}^{a}.
\]
Then  by counting degrees, we can write
\begin{align*}
     (\sigma_1-E)^{\star a}
    ={}&\bar{H}^a+q_1\suml_{k=2}^{a}(-1)^k{a\choose k}\sigma_1^{\star(a-k)}\star\Big(\sum_{i=1}^{k-1} E^{\star_1(i)}\star\sigma_{k-i-1}\Big)+\delta_{a,n-1}q_2\alpha,
\end{align*}
for some $\alpha\in H^0(X_{2,n})$. When $a=n-1$, we have
\[q_2\alpha=\bar H^{n-1}+q_2\alpha=(\sigma_1-E)^{\star_2(n-1)}=\bar{H}^{\star_2(n-1)}=0,
\]
where the last equality follows from Lemma \ref{lemma-divisorproduct2forX2n}.

Similarly, we have
\begin{align*}
&\suml_{b=0}^a(-1)^bf_b(a)\sigma_1^{\star_1(a-b)}\star_1E^{\star_1b}\\
={}&f_0(a)\sigma_1^{\star_1a}-f_1(a)\sigma_1^{\star_1(a-1)}\star_1E
+\suml_{b=2}^a(-1)^bf_b(a)\sigma_1^{\star_1(a-b)}\star_1\Big(E^b+q_1\sum_{i=1}^{b-1}E^{\star_1(i)} \star_1  \sigma_{b-i-1}\Big)\\
={}&\suml_{b=0}^a(-1)^bf_b(a)\sigma_1^{\star_1(a-b)}\star_1E^{b}+q_1\suml_{b=2}^a(-1)^bf_b(a)\sigma_1^{\star_1(a-b)}\star_1\Big(\sum_{i=1}^{b-1}E^{\star_1(i)} \star_1  \sigma_{b-i-1}\Big)\\
=&\sigma_a+q_1\suml_{c=0}^{a-2}\sigma_c\star_1\suml_{b=c+2}^a(-1)^bf_b(a)\sigma_1^{\star_1(a-b)}\star_1E^{\star_1(b-1-c)},
\end{align*}
where the last equality follows from  Lemma \ref{lemma-divisorproductforX2n} and Proposition \ref{prop-H(X2n)}.
By counting degrees,
\begin{align*}
 \suml_{b=0}^a(-1)^bf_b(a)\sigma_1^{\star(a-b)}\star E^{\star b}
={}&\sigma_a+q_1\suml_{c=0}^{a-2}\sigma_c\star\suml_{b=c+2}^a(-1)^bf_b(a)\sigma_1^{\star(a-b)}\star E^{\star(b-1-c)}+\delta_{a,n-1}q_2\beta,
\end{align*}
for some $\beta\in H^0(X_{2,n})$. When $a=n-1$, and we consider $\star_2$ to get
\begin{align*}
\suml_{b=0}^{n-1}(-1)^bf_b(n-1)\sigma_1^{\star_2(n-1-b)}\star_2 E^{\star_2 b}
={}&\suml_{b=0}^{n-1}(-1)^bf_b(a)(\bar{H}+E)^{\star_2(n-1-b)}\star_2 E^{\star_2 b}\\
={}&\suml_{b=0}^{n-1}\bar{H}^{\star_2b}\star_2E^{\star_2(n-1-b)}\\
={}&\suml_{b=0}^{n-1}\bar{H}^bE^{n-1-b}+q_2\mathbf{1}\qquad\text{(by Lemma \ref{lemma-divisorproduct2forX2n})}\\
={}&q_2\mathbf{1}\qquad\qquad\qquad\quad\text{(by Proposition \ref{prop-H(X2n)}).}
\end{align*}
This proves the second requied equality.

Under the identification $\sigma\mapsto h+x$ and $E\mapsto x$, we have $\bar H^0=1=R_{\rm b}(0)$ and $\bar H=h=R_{\rm b}(1)$.
By Lemma \ref{lemma: Hjrecursive}. $\bar H^j$ and $R_{\rm b}(j)$ satisfy the same recursive relationship. Hence, $\bar H^j=R_{\rm b}(j)$ for all $0\leq j\leq n-1$.
It follows from   the definition that  $R_{\sigma}(j)$'s can also be recursively determined by $R_{\rm b}(j)$'s by the relation $R_{\sigma}(j)-x R_{\sigma}(j-1)=R_{\rm b}(j)$.
By counting degrees and  Lemmas \ref{lemma-geometricrelationofspecialschubert} and  \ref{lemma-divisorproductforX2n},  we have
 $\sigma_j-E\star \sigma_{j-1}=\sigma_j-E \sigma_{j-1}=\bar H^j$ in $QH^*(X_{2, n})$, and conseqently $\sigma_j=R_{\sigma}(j)$ for $0\leq j \leq n-2$.
 Then $R_{\sigma}(n-1)=R_{\rm b}(n-1)+x R_{\sigma}(n-2)=\bar H^{n-1}+E\star \sigma_{n-2}=\bar H^{n-1}+E  \sigma_{n-2}+q_2=\sigma_{n-1}+q_2$.
\end{proof}

\begin{example}\label{exa:X23}
  The case  $X_{2, 3}$ is the blowup of $\mathbb{P}^2$ at one point.
   $R_{\rm b}(2)=h^2-q_1x, R_\sigma(1)=h+x, R_\sigma(2)= R_{\rm b}(2)+ h x   + x^2$. Therefore,
    $QH^*(X_{2, 3})=\mathbb{C}[h, x, q_1, q_2]/I$ with the ideal
    $$I= (R_{\rm b}(2),\,\,  R_\sigma(2)-q_2)
         =(h^2-q_1x, \,\,  hx+x^2-q_2),$$
  coinciding with \cite[Example 7.3]{CM95}.
  \end{example}

\section{ Mirror symmetry for $X_{2, n}$: B-side}
\label{section-Bside}
\subsection{Toric Landau-Ginzburg model} Denote by $([p_i]_i,[p_{jk}]_{j<k}, t)$ the coordinates of  $\mathbb{P}(\wedge^1\mathbb{C}^n)\times \mathbb{P}(\wedge^2\mathbb{C}^n)\times \mathbb{C}$ and  consider the  subvariety  $\mathcal{X}$  defined by
 $$p_{i}p_{jk}-p_{j}p_{ik}+t^{9k-9j}p_kp_{ij}=0,\qquad \forall 1\leq i<j<k\leq n.$$
It is a flat family over $\mathbb{C}$ by the natural projection to the last factor, and there is an isomorphism
  $\mathcal{X}|_{t=1}\times \mathbb{C}^*\cong \mathcal{X}|_{\mathbb{C}^*}$ defined by $p_i\mapsto t^{10i}p_i$ and $p_{jk}\mapsto t^{10j+k}p_{jk}$. Here
  $\mathcal{X}|_{t=1}$ is simply the Pl\"ucker embedding of the two-step flag variety $F\ell_{1, 2; n}$.
  We    identify $X_{2, n}$ with the Schubert divisor $\{V_1\leq V_2\leq \mathbb{C}^n\mid V_1\leq \Lambda_{n-1}\}$ in $F\ell_{1, 2; n}$.
  In particular, $\{p_n=0\}\cap \mathcal{X}$ gives a toric degeneration of  $X_{2, n}$ to the toric variety
    $X_\Delta= \{p_n=0\}\cap \mathcal{X}|_{t=0}$. By \cite[Theorem 3.5]{HLLL22},
    $X_\Delta$ is the toric variety associated to the polytope $\Delta\subset \mathbb{R}^{2(n-2)}$ defined by the ladder diagram in Figure \ref{GCpic}.
       \begin{figure}[h]
    \caption{Ladder diagram for $X_{2, n}$ and its dual graph}\label{GCpic}
    \includegraphics[scale=0.6]{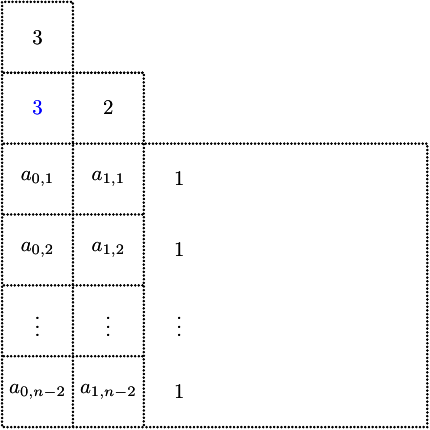}\qquad\qquad
    \includegraphics[scale=0.6]{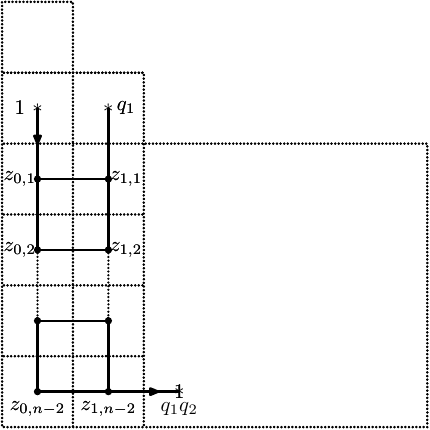}
  \end{figure}

    \noindent That is, after the notation conventions $a_{0,0}=3, a_{1,0}=2$ and $a_{2,j}=1$ for all $j$, we have

       $$\Delta=\{(a_{i,j})_{0\leq i\leq 1\leq j\leq n-2}\in \mathbb{R}^{2(n-2)}\mid a_{i,j-1}\geq a_{i,j}\geq a_{i+1, j},\,\,\forall 0\leq i\leq 1\leq j\leq n-2\}$$
  We view each vertical (resp. horizontal) edge of the dual graph as an arrow with the orientation from top to bottom (resp. from left to right). Then every dot node of the the dual graph becomes the head $h_{a}$ or the tail $t_{a}$ of some arrow $a$. Similar to the case of partial flag varieties in \cite{BCFKS}, the toric superpotential  $f_{\rm tor}$  for $X_{2, n}$  is simply given by $\sum_{a: \rm arrow} {h_a\over t_a}$.
Recalling its  precise  definition in the introduction, we have
  $$f_{\rm tor}: (\mathbb{C}^*)^{2(n-2)}\times (\mathbb{C}^*)^2\to \mathbb{C},$$
   \begin{equation*}
    f_{\rm tor}(z_{i,j}, \mathbf{q}):=z_{0,1}+\sum_{j=2}^{n-2} {z_{0, j}\over z_{0, j-1}}+ {z_{1,1}\over q_1}+\sum_{j=2}^{n-2} {z_{1, j}\over z_{1, j-1}}+\sum_{j=1}^{n-2} {z_{1, j}\over z_{0, j}}+{q_1q_2\over z_{1, n-2}}.
 \end{equation*}
The Jacobi ring ${\rm Jac}(f_{\rm tor})$ is the quotient of the ring  $\mathbb{C}[z_{i,j}^{\pm 1}, q_1^{\pm 1}, q_2^{\pm 1}\mid 0\leq i\leq 1 \leq j\leq n-2]$ by its  Jacobi ideal
\begin{equation}
  \mathcal{J}:=\big(\partial_{z_{i, j}}f_{\rm tor}\mid  0\leq i\leq 1 \leq j\leq n-2\big).
\end{equation}
We freely exchange the notations $z_{ij}=z_{i,j}$ and denote $z_{00}:=1$.
Notice
\begin{equation}\begin{cases}
   z_{0j}^2\partial_{z_{0j}}f_{\rm tor}={z_{0j}^2\over z_{0,j-1}}-{z_{1j}-z_{0,j+1}},&\forall 1\leq j\leq n-3,\\
   z_{0,n-2}^2\partial_{z_{0,n-2}}f_{\rm tor}={z_{0, n-2}^2\over z_{0,n-3}}-{z_{1,n-2}}, \\
   z_{11}^2\partial_{z_{11}}f_{\rm tor}=({1\over q_1}+{1\over z_{01}})z_{11}^2-{z_{12} },& \\
z_{1j}^2\partial_{z_{1j}}f_{\rm tor}=({1\over z_{1,j-1}}+{1\over z_{0j}})z_{1j}^2-{z_{1,j+1}},&\forall 2\leq j\leq n-3,\\
 z_{1, n-2}^2\partial_{z_{1,n-2}}f_{\rm tor}=({1\over z_{1,n-3}}+{1\over z_{0,n-2}})z_{1, n-2}^2-{q_1q_{2}  }, & \\
\end{cases}
\end{equation}
where   $z_{ij}$ are units in  $\mathbb{C}[z_{ij}^{\pm 1}, q_1^{\pm 1}, q_2^{\pm 1}\mid 0\leq i\leq 1 \leq j\leq n-2]$.
Hence, we can view $z_{ij}=z_{ij}(z_{01}, z_{11}, q_1)\in  \mathbb{C}[z_{01}^{\pm 1}, z_{11}^{\pm 1}, q_1^{\pm 1}, q_2^{\pm 1}]$
by using the relations from $\partial_{z_{ij}}f_{\rm tor}$ with $(i=0, 1\leq j\leq n-2)$ and $(i=1, 1\leq j\leq n-4)$. Then
 for $n\geq 4$, we have
    \begin{equation*}
       {\rm Jac}(f_{\rm tor})={\mathbb{C}[z_{ij}^{\pm 1}, q_1^{\pm 1}, q_2^{\pm 1}\mid 0\leq i\leq 1 \leq j\leq n-2]\over \mathcal{J}}\cong
      {\mathbb{C}[z_{01}^{\pm 1}, z_{11}^{\pm 1}, q_1^{\pm 1}, q_2^{\pm 1}]\over (z_{1,n-3}^2\partial_{z_{1,n-3}}f_{\rm tor}, \,\,\, z_{1, n-2}^2\partial_{z_{1,n-2}}f_{\rm tor})}
    \end{equation*}
 up to a step-by-step natural extension of $z_{ij}\in \mathbb{C}^*$ to $z_{ij}\in \mathbb{C}$ on the left hand side.
 In the rest, by   ${\rm Jac}(f_{\rm tor})$ we always take such extension and hence mean the right hand one.
For $n=3$ or $4$, we compute $\mathcal{J}$ directly as follows.

\begin{example}\label{exa:X23B}
  For $n=3$, the Jacobi ideal in $\mathbb{C}[z_{01}^{\pm 1}, z_{11}^{\pm 1}, q_1^{\pm 1}, q_{2}^{\pm 1}]$ is given by
   \begin{align*}
      (1 - {z_{11}\over z_{01}^2}, {1\over q_1} + {1\over z_{01}} - {q_1 q_2\over z_{11}^2})
      &=(z_{01}^2-       z_{11},       {z_{11}^2z_{01}\over q_1}  +z_{11}^2-q_1q_2z_{01}  )  \\
      &=(z_{01}^2-       z_{11},       {z_{11}^2z_{01}\over q_1}  +z_{11}z_{01}^2-q_1q_2z_{01}  )  \\
      &=(z_{01}^2-       z_{11},       {z_{11}^2\over q_1^2}  +{z_{11}z_{01}\over q_1}-q_2  ).
   \end{align*}
 After the substitutions $h=z_{01}$ and $x={z_{11}\over q_1}$, the generators coincide with the polynomials   $R_{b}(2)$ and $R_{\sigma}(2)-q_2$ in Example \ref{exa:X23} respectively.
\end{example}
 \begin{example}\label{exa:X24}
  For  $n=4$,   we have $z_{02}=z_{01}^2-z_{11}, z_{12}={z_{02}^2\over z_{01}}$, and
  \begin{align*}
     -z_{11}^2\partial_{z_{11}}f_{\rm tor}&=z_{12}-({1\over q_1}+{1\over z_{01}})z_{11}^2
      =  {z_{02}^2\over z_{01}}-({1\over q_1}+{1\over z_{01}})z_{11}^2
       =  {(z_{02}-z_{11})(z_{02}+z_{11})\over z_{01}}- {z_{11}^2\over q_1},\\
     z_{12}^2\partial_{z_{12}}f_{\rm tor}+q_1q_2&= z_{12}^2({1\over z_{11}}  +{1\over z_{02}})
      =  {z_{12}^2 z_{01}^2 \over z_{11} z_{02}}
     =  {z_{12}  z_{01} z_{02}       \over z_{11}  }
        =  {  z_{01} z_{02}       \over z_{11}  }({1\over q_1}+{1\over z_{01}})z_{11}^2. \\
  \end{align*}
Hence, after the substitutions $h=z_{01}$ and $x={z_{11}\over q_1}$, we have
  \begin{align*}
     -z_{11}^2\partial_{z_{11}}f_{\rm tor} & =  {(z_{01}^2-2z_{11})  z_{01}}- {z_{11}^2\over q_1}=h^3 - 2 h q_1 x - q_1 x^2=R_{\rm b}(3), \\
    z_{12}^2\partial_{z_{12}}f_{\rm tor}&= {  z_{01} z_{02}     z_{11}  \over q_{1}  }+ {z_{02}z_{11} }-q_1q_2 \\
       &= hx(h^2-q_1x)+q_1x(h^2-q_1x)-q_1q_2
    = q_1 (R_{\sigma}(3)-q_2)+(x-q_1) R_{\rm b}(3),
  \end{align*}
  where
    $R_\sigma(3)= R_{\rm b}(3)+R_{\rm b}(2)x+R_{\rm b}(1)x^2+x^3=R_{\rm b}(3)+(h^2-q_1x)x+hx^2+x^3.$

   Hence, we have
   $${\rm Jac}(f_{\rm tor})\cong {\mathbb{C}[z_{01}^{\pm 1}, z_{11}^{\pm 1}, q_1^{\pm 1}, q_{2}^{\pm 1}]\over
                (-z_{11}^2\partial_{z_{11}}f_{\rm tor}, z_{12}^2\partial_{z_{12}}f_{\rm tor})}
                \cong   {\mathbb{C}[h^{\pm 1}, x^{\pm 1}, q_1^{\pm 1}, q_{2}^{\pm 1}]\over
                (R_{\rm b}(3), R_{\sigma}(3)-q_2)}.   $$

      \end{example}

      \begin{theorem}\label{thm:QHJac} Let $n\geq 4$.  Under the identifications $h=z_{01}$ and $x={z_{11}\over q_1}$, we have
  \begin{align*}
     z_{1, n-3}^2\partial_{z_{1,n-3}}f_{\rm tor}&=-R_{\rm b}(n-1),\qquad  z_{1, n-2}^2\partial_{z_{1,n-2}}f_{\rm tor}= q_1 R_{\sigma}(n-1)+(x-q_1) R_{\rm b}(n-1)-q_1q_2.
  \end{align*}
  Consequently, we have ${\rm Jac}(f_{\rm tor})\cong   {\mathbb{C}[h^{\pm 1}, x^{\pm 1}, q_1^{\pm 1}, q_{2}^{\pm 1}]\over
                (R_{\rm b}(n-1), \,\,\,R_{\sigma}(n-1)-q_2)}$.
\end{theorem}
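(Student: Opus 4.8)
The plan is to make the elimination that precedes the statement completely explicit and then read off the two surviving partial derivatives. Adopt the conventions $z_{0,0}=1$ (already in force) and $z_{1,0}:=q_1$, so that the relation coming from $\partial_{z_{1,1}}f_{\rm tor}$ fits the uniform shape of the others. Setting the eliminating derivatives to zero gives the substitution rules
\begin{equation*}
z_{0,j+1}=\frac{z_{0,j}^2}{z_{0,j-1}}-z_{1,j}\ \ (1\le j\le n-3),\qquad z_{1,n-2}=\frac{z_{0,n-2}^2}{z_{0,n-3}},\qquad z_{1,j+1}=\Big(\frac{1}{z_{1,j-1}}+\frac{1}{z_{0,j}}\Big)z_{1,j}^2\ \ (1\le j\le n-4).
\end{equation*}
Under $h=z_{01}$, $x=z_{11}/q_1$, I would prove by one simultaneous induction on $j$ that, on the open locus where all $z_{ij}$ are invertible,
\begin{equation*}
z_{0,j}=R_{\rm b}(j)\ (0\le j\le n-2),\qquad z_{1,j}=\frac{q_1(h+q_1)^{j-1}x^{j}}{R_{\rm b}(j-1)}\ (1\le j\le n-3),\qquad z_{1,n-2}=\frac{R_{\rm b}(n-2)^2}{R_{\rm b}(n-3)}.
\end{equation*}
The factors $R_{\rm b}(j-1)$ in the denominators are exactly what the ``natural extension of the spectrum'' inverts.

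The engine making this induction close is a Casoratian (Tur\'an-type) identity for the sequence $R_{\rm b}$. From $R_{\rm b}(j)=(h+x)R_{\rm b}(j-1)-(h+q_1)x\,R_{\rm b}(j-2)$ one gets the one-step relation $R_{\rm b}(j-1)^2-R_{\rm b}(j)R_{\rm b}(j-2)=(h+q_1)x\big(R_{\rm b}(j-2)^2-R_{\rm b}(j-1)R_{\rm b}(j-3)\big)$, which together with the initial value $R_{\rm b}(1)^2-R_{\rm b}(2)R_{\rm b}(0)=q_1x$ yields
\begin{equation*}
R_{\rm b}(j-1)^2-R_{\rm b}(j)R_{\rm b}(j-2)=q_1(h+q_1)^{j-2}x^{\,j-1},\qquad j\ge 2.
\end{equation*}
Granting this, the $z_0$-step becomes the identity $R_{\rm b}(j)^2-q_1(h+q_1)^{j-1}x^{j}=R_{\rm b}(j+1)R_{\rm b}(j-1)$ and the $z_1$-step the same identity with $j$ lowered by one, so both recursions reproduce the claimed closed forms. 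This verification is where I expect essentially all of the bookkeeping to live.

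With the explicit formulas in hand the two surviving relations fall out. Write $z_{1,n-2}^{\mathrm{bulk}}:=\big(\tfrac1{z_{1,n-4}}+\tfrac1{z_{0,n-3}}\big)z_{1,n-3}^2=q_1(h+q_1)^{n-3}x^{n-2}/R_{\rm b}(n-3)$, computed exactly as in the $z_1$-step. The Casoratian identity gives $z_{1,n-2}-z_{1,n-2}^{\mathrm{bulk}}=R_{\rm b}(n-1)$, hence
\begin{equation*}
z_{1,n-3}^2\partial_{z_{1,n-3}}f_{\rm tor}=z_{1,n-2}^{\mathrm{bulk}}-z_{1,n-2}=-R_{\rm b}(n-1)
\end{equation*}
as an exact identity (the $R_{\rm b}(n-3)$ cancels). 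For the second relation I would compute $z_{1,n-2}^2\partial_{z_{1,n-2}}f_{\rm tor}+q_1q_2=\big(\tfrac1{z_{1,n-3}}+\tfrac1{z_{0,n-2}}\big)z_{1,n-2}^2$, replace one factor $z_{1,n-2}$ by $z_{1,n-2}^{\mathrm{bulk}}$ (legitimate modulo $R_{\rm b}(n-1)$), and then use the Casoratian identity at $j=n-2$ to rewrite the numerator $q_1(h+q_1)^{n-4}x^{n-3}$ of $z_{1,n-3}$ as $R_{\rm b}(n-3)^2-R_{\rm b}(n-2)R_{\rm b}(n-4)$; the bracket telescopes to $R_{\rm b}(n-3)^2$, the denominators cancel, and the result is $(h+q_1)x\,R_{\rm b}(n-2)$. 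A second elementary identity, $q_1R_\sigma(m)+(x-q_1)R_{\rm b}(m)=(h+q_1)x\,R_{\rm b}(m-1)$ for all $m\ge 1$ (proved by induction from the recursions for $R_{\rm b}$ and $R_\sigma$), then rewrites this, so that modulo $R_{\rm b}(n-1)$,
\begin{equation*}
z_{1,n-2}^2\partial_{z_{1,n-2}}f_{\rm tor}\equiv q_1R_\sigma(n-1)+(x-q_1)R_{\rm b}(n-1)-q_1q_2\equiv q_1\big(R_\sigma(n-1)-q_2\big).
\end{equation*}

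Finally I would assemble the isomorphism. Since $q_1$ is a unit and the second relation is congruent to $q_1\big(R_\sigma(n-1)-q_2\big)$ modulo the first, the ideal generated by the two surviving relations in $\mathbb{C}[h^{\pm1},x^{\pm1},q_1^{\pm1},q_2^{\pm1}]$ is exactly $\big(R_{\rm b}(n-1),\,R_\sigma(n-1)-q_2\big)$. Combining this with the reduction ${\rm Jac}(f_{\rm tor})\cong \mathbb{C}[z_{01}^{\pm1},z_{11}^{\pm1},q_1^{\pm1},q_2^{\pm1}]/\big(z_{1,n-3}^2\partial_{z_{1,n-3}}f_{\rm tor},\,z_{1,n-2}^2\partial_{z_{1,n-2}}f_{\rm tor}\big)$ recorded just before the examples, and with the fact that $z_{01}\mapsto h$, $z_{11}\mapsto q_1x$ is an isomorphism of Laurent rings, gives the stated ${\rm Jac}(f_{\rm tor})\cong \mathbb{C}[h^{\pm1},x^{\pm1},q_1^{\pm1},q_2^{\pm1}]/(R_{\rm b}(n-1),R_\sigma(n-1)-q_2)$; the cases $n=3,4$ are already Examples \ref{exa:X23B} and \ref{exa:X24} and serve as a cross-check. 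The main obstacle is entirely the combinatorial one of forcing the alternating elimination to close, which the Casoratian identity resolves; the only conceptual subtlety is that the second displayed equality holds only modulo $R_{\rm b}(n-1)$ (equivalently, after using the first relation), which is precisely what the ideal computation requires.
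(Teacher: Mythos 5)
Your proof is correct and follows essentially the same route as the paper's: the same elimination, the identification $z_{0,j}=R_{\rm b}(j)$, the closed form $z_{1,j}=q_1(h+q_1)^{j-1}x^{j}/R_{\rm b}(j-1)$ (which is exactly Lemma \ref{lemma:z1j} once one notes $A=(h+q_1)x$ and $z_{11}=q_1x$), and the identity $q_1R_\sigma(m)+(x-q_1)R_{\rm b}(m)=(h+q_1)x\,R_{\rm b}(m-1)$ (Proposition \ref{prop-zrelation}(3)); the only difference is that you telescope the Tur\'an/Casoratian identity $R_{\rm b}(j-1)^2-R_{\rm b}(j)R_{\rm b}(j-2)=q_1(h+q_1)^{j-2}x^{j-1}$ where the paper instead derives the three-term recursion for $z_{0,j}$ and telescopes $z_{0,j+1}z_{0,j-2}-z_{0,j}z_{0,j-1}=-A^{j-2}(h+x)z_{11}$. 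Your explicit remark that the second displayed equality holds only modulo $R_{\rm b}(n-1)$ is accurate and in fact slightly more careful than the paper's phrasing, since the paper's own derivation (invoking Lemma \ref{lemma:z1j} at $j=n-3$) likewise uses the first surviving relation, and this is all that is needed for the computation of the ideal.
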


\subsection{Proof of Theorem \ref{thm:QHJac}}

We simply denote by $A$ the following quantity, which occurs a bit often.
\begin{equation}
   A:= {z_{01}z_{12}\over z_{11}}= z_{01}({1\over q_1}+{1\over z_{01}})z_{11}={z_{01}z_{11}\over q_1}+z_{11}=h x+ q_1 x.
\end{equation}
\begin{lemma}\label{lemma:z1j} For any $1\leq j\leq n-3$, we have
   \begin{equation} \label{formula:z1j}
      z_{1, j+1}=  A{z_{1, j}z_{0, j-1}\over z_{0, j}} =A^{j}{z_{11}\over z_{0j}}.
   \end{equation}
\end{lemma}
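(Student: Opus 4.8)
The plan is to prove the two asserted equalities in turn, with the first one (the three-term recursion $z_{1,j+1}=A\,z_{1,j}z_{0,j-1}/z_{0,j}$) doing all the work and the second (the closed form $z_{1,j+1}=A^{j}z_{11}/z_{0,j}$) following by a short telescoping argument. Throughout I would use freely the two families of relations coming from the Jacobi ideal: the ``$z_0$-relations'' $z_{0,j+1}=z_{0,j}^2/z_{0,j-1}-z_{1,j}$ (valid for $1\le j\le n-3$, with the convention $z_{00}=1$) and the ``$z_1$-relations'' $z_{1,j+1}=(z_{1,j-1}^{-1}+z_{0,j}^{-1})z_{1,j}^2$ (valid for $2\le j\le n-3$), together with the defining identity $z_{12}=A\,z_{11}/z_{01}$, which is just a rewriting of $A=z_{01}z_{12}/z_{11}$.

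First I would establish the recursion by induction on $j$, $1\le j\le n-3$. The base case $j=1$ reads $z_{12}=A\,z_{11}z_{00}/z_{01}=A\,z_{11}/z_{01}$ (recall $z_{00}=1$), which is exactly the defining identity for $A$. For the inductive step with $2\le j\le n-3$ I would start from the $z_1$-relation, written as $z_{1,j+1}=z_{1,j}^2(z_{1,j-1}+z_{0,j})/(z_{1,j-1}z_{0,j})$; comparing this with the target $A\,z_{1,j}z_{0,j-1}/z_{0,j}$ and cancelling the common factor $z_{1,j}/z_{0,j}$ reduces the claim to the single identity $z_{1,j}(z_{1,j-1}+z_{0,j})=A\,z_{0,j-1}z_{1,j-1}$. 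Substituting the inductive hypothesis $z_{1,j}=A\,z_{1,j-1}z_{0,j-2}/z_{0,j-1}$ and cancelling the factor $A\,z_{1,j-1}$ turns this into $z_{0,j-2}(z_{1,j-1}+z_{0,j})=z_{0,j-1}^2$, which is precisely the $z_0$-relation at index $j-1$ (valid since $1\le j-1\le n-4$) cleared of denominators. This closes the induction.

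With the recursion in hand, the closed form is immediate. Rewriting $z_{1,j+1}=A\,z_{1,j}z_{0,j-1}/z_{0,j}$ as $z_{1,j+1}z_{0,j}=A\,z_{1,j}z_{0,j-1}$ and setting $u_j:=z_{1,j+1}z_{0,j}$, I obtain $u_j=A\,u_{j-1}$ with $u_0=z_{11}z_{00}=z_{11}$; hence $u_j=A^{j}z_{11}$, i.e. $z_{1,j+1}=A^{j}z_{11}/z_{0,j}$, which is the second equality.

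I do not expect a genuine obstacle here: the computation is elementary once one sees that the two families of relations must be interleaved, the $z_1$-relation supplying the recursion for the $z_{1,\bullet}$ and the $z_0$-relation at the shifted index $j-1$ being exactly what is needed to verify the resulting algebraic identity. The only points requiring care are the bookkeeping of the index ranges (the boundary relations at $j=1$ and near $j=n-3$ take slightly different forms) and the convention $z_{00}=1$, which is what makes the base case and the case $j=2$ of the inductive hypothesis consistent. These should be stated explicitly but pose no real difficulty.
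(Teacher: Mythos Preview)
Your proof is correct and follows essentially the same route as the paper: both combine the $z_1$-relation at index $j$ with the $z_0$-relation at index $j-1$ to obtain the recursion, and then telescope to get the closed form. The only cosmetic difference is that the paper computes the ratio $z_{1,j+1}/z_{1,j}$ directly and telescopes products, whereas you structure it as an explicit induction; the underlying algebraic identity $z_{0,j-2}(z_{1,j-1}+z_{0,j})=z_{0,j-1}^2$ is the same in both.
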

\begin{proof}
  By definition of $A$, the statement holds for $j=1$. By relations from $ z_{0,j-1}^2\partial_{z_{0,j-1}}f_{\rm tor}$ and $z_{1j}^2\partial_{z_{1j}}f_{\rm tor}$,
 we have
  $  {z_{1,j+1}\over z_{1,j}}={z_{1, j-1} +z_{0,j}\over z_{1, j-1} z_{0,j}}\cdot {z_{1, j}}={z_{1,j}\over z_{1, j-1} }\cdot {z_{0, j-1}^2\over z_{0, j-2}z_{0, j}}.$
 Hence, we have $$ {z_{1,j+1}\over z_{1,j}}={z_{12}\over z_{11}}\prod_{i=2}^j  {z_{0, i-1}^2\over z_{0, i-2}z_{0, i}}={z_{12}\over z_{11}}\cdot {z_{01}z_{0, j-1}\over z_{0, j}}=A{z_{0, j-1}\over z_{0, j}}.$$ Thus
$
     {z_{1,j+1}\over z_{1,2}}= \prod_{i=2}^j  {z_{1, i+1}\over z_{1, i}}= \prod_{i=2}^j A{z_{0, i-1}\over z_{0, i}}=A^{j-1}{z_{01}\over z_{0j}}$. We are done by noting $A z_{11}=z_{01}z_{12}$.
\end{proof}
\begin{lemma}\label{lemma-reduction}For $n\geq 5$, we have $z_{1, n-2}^2\partial_{z_{1,n-2}}f_{\rm tor}= A z_{0, n-2}-q_1q_2$ and
   $$ z_{1, n-3}^2\partial_{z_{1,n-3}}f_{\rm tor}=   {A z_{1,n-3}z_{0,n-5}+z_{1,n-3}z_{0,n-3} -z_{0, n-2}z_{0,n-3} \over z_{0, n-4}}.$$
\end{lemma}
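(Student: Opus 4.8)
The plan is to evaluate the two generators directly from the explicit expressions for $z_{1,j}^2\partial_{z_{1,j}}f_{\rm tor}$ recorded in the displayed system, reducing each to the $z_{0,\bullet}$ variables by two inputs. The first input is the set of relations coming from the $\partial_{z_{0,j}}f_{\rm tor}$, namely $z_{1,j}+z_{0,j+1}=z_{0,j}^2/z_{0,j-1}$ for $1\le j\le n-3$ together with $z_{1,n-2}=z_{0,n-2}^2/z_{0,n-3}$; the second is the recursion of Lemma \ref{lemma:z1j}, which I would use in the ratio form $z_{1,j+1}/z_{1,j}=A\,z_{0,j-1}/z_{0,j}$. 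The recurring algebraic move is to collapse a coefficient $\tfrac{1}{z_{1,j-1}}+\tfrac{1}{z_{0,j}}=(z_{1,j-1}+z_{0,j})/(z_{1,j-1}z_{0,j})$ using $z_{1,j-1}+z_{0,j}=z_{0,j-1}^2/z_{0,j-2}$.

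For the second identity I would begin from $z_{1,n-3}^2\partial_{z_{1,n-3}}f_{\rm tor}=(\tfrac{1}{z_{1,n-4}}+\tfrac{1}{z_{0,n-3}})z_{1,n-3}^2-z_{1,n-2}$, substitute $z_{1,n-2}=z_{0,n-2}^2/z_{0,n-3}$, and treat the two summands separately. Lemma \ref{lemma:z1j} turns $z_{1,n-3}^2/z_{1,n-4}$ into $A\,z_{0,n-5}z_{1,n-3}/z_{0,n-4}$, while $z_{1,n-3}^2/z_{0,n-3}-z_{1,n-2}=(z_{1,n-3}^2-z_{0,n-2}^2)/z_{0,n-3}$ factors and, after applying $z_{1,n-3}+z_{0,n-2}=z_{0,n-3}^2/z_{0,n-4}$, becomes $(z_{1,n-3}-z_{0,n-2})z_{0,n-3}/z_{0,n-4}$. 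Placing both pieces over the common denominator $z_{0,n-4}$ gives exactly the stated formula. Every relation invoked here is among those already used to define the substitution, so this is a genuine identity; a direct rewriting moreover puts it in the equivalent shape $z_{0,n-3}\,z_{1,n-3}^2\partial_{z_{1,n-3}}f_{\rm tor}=A\,z_{0,n-4}z_{1,n-3}-z_{0,n-2}^2$, which is what the first identity needs.

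For the first identity I would combine the two fractions in $z_{1,n-2}^2\partial_{z_{1,n-2}}f_{\rm tor}=z_{1,n-2}^2\,(z_{1,n-3}+z_{0,n-2})/(z_{1,n-3}z_{0,n-2})-q_1q_2$, collapse $z_{1,n-3}+z_{0,n-2}=z_{0,n-3}^2/z_{0,n-4}$, and use $z_{1,n-2}z_{0,n-3}=z_{0,n-2}^2$ to reach $z_{0,n-2}^3/(z_{0,n-4}z_{1,n-3})-q_1q_2$; the final step replaces $z_{0,n-2}^2$ by $A\,z_{0,n-4}z_{1,n-3}$ to produce $A\,z_{0,n-2}-q_1q_2$.

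The step that deserves care — and is the only real obstacle — is this last replacement. The equality $z_{0,n-2}^2=A\,z_{0,n-4}z_{1,n-3}$ is precisely the $j=n-3$ case of Lemma \ref{lemma:z1j}, which, unlike the lower cases, invokes the relation $z_{1,n-3}^2\partial_{z_{1,n-3}}f_{\rm tor}=0$ itself (it is the vanishing of the quantity in the equivalent shape above). Hence the first identity is \emph{not} a bare Laurent-polynomial identity under the substitution but holds in the Jacobi ring: the two legitimate expressions $z_{1,n-2}=z_{0,n-2}^2/z_{0,n-3}$ and $z_{1,n-2}=A\,z_{1,n-3}z_{0,n-4}/z_{0,n-3}$ agree only modulo the first generator. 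I would make the correction explicit, recording
\[
z_{1,n-2}^2\partial_{z_{1,n-2}}f_{\rm tor}=A\,z_{0,n-2}-q_1q_2-\frac{z_{0,n-2}z_{0,n-3}}{z_{0,n-4}z_{1,n-3}}\,z_{1,n-3}^2\partial_{z_{1,n-3}}f_{\rm tor},
\]
so that it is transparent why the exact reduction in Theorem \ref{thm:QHJac} carries the extra $(x-q_1)R_{\rm b}(n-1)$ term. Finally I would verify the base case $n=5$, where $z_{0,n-5}=z_{00}=1$ and $z_{1,n-4}=z_{11}$, so that Lemma \ref{lemma:z1j} is used only at $j=1$ and the displayed relations degenerate correctly.
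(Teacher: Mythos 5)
Your proposal is correct and follows the same route as the paper's own proof. For the second identity the paper performs exactly your decomposition: it splits off $z_{1,n-3}^2/z_{1,n-4}$ via the ratio form of Lemma \ref{lemma:z1j} at $j=n-4$, and factors $z_{1,n-3}^2/z_{0,n-3}-z_{1,n-2}$ as a difference of squares collapsed by the relation $z_{1,n-3}+z_{0,n-2}=z_{0,n-3}^2/z_{0,n-4}$. For the first identity it likewise combines the two fractions, collapses $z_{1,n-3}+z_{0,n-2}$, uses $z_{1,n-2}z_{0,n-3}=z_{0,n-2}^2$, and then invokes Lemma \ref{lemma:z1j} at $j=n-3$.

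The one place you go beyond the paper is the caveat about that last step, and you are right to raise it. Under the substitution scheme fixed before the lemma, $z_{1,n-2}$ is defined by the relation $\partial_{z_{0,n-2}}f_{\rm tor}=0$, whereas the $j=n-3$ case of Lemma \ref{lemma:z1j} is proved using $\partial_{z_{1,n-3}}f_{\rm tor}=0$, which is one of the two relations that are \emph{not} substituted away. Hence $z_{0,n-2}^2=A\,z_{0,n-4}z_{1,n-3}$, and with it the first displayed identity of the lemma, holds only modulo the first generator; for $n=5$ one checks directly that $z_{0,3}^2=(h^3-2hq_1x-q_1x^2)^2$ while $A\,z_{0,1}z_{1,2}=(h+q_1)^2q_1x^3$, so it fails as a bare Laurent identity. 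Your recorded error term
\[
z_{1,n-2}^2\partial_{z_{1,n-2}}f_{\rm tor}=A\,z_{0,n-2}-q_1q_2-\frac{z_{0,n-2}z_{0,n-3}}{z_{0,n-4}z_{1,n-3}}\,z_{1,n-3}^2\partial_{z_{1,n-3}}f_{\rm tor}
\]
is the correct one, and since $(g_1,g_2)=(g_1,g_2+ug_1)$ this does not affect the conclusion of Theorem \ref{thm:QHJac} (where $q_1$ is a unit); but both the statement of the first identity and the paper's own proof of it, which silently uses the inexact case of Lemma \ref{lemma:z1j}, should indeed be read modulo $z_{1,n-3}^2\partial_{z_{1,n-3}}f_{\rm tor}$, exactly as you say. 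The second identity, as you note, is an exact identity, since every relation it uses belongs to the substitution.
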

\begin{proof}
For $n\geq 5$, we have
 \begin{align*} z_{1, n-2}^2\partial_{z_{1,n-2}}f_{\rm tor}+q_1q_2&={z_{1,n-3}+z_{0,n-2}\over z_{1,n-3}z_{0,n-2}}\cdot z_{1, n-2}^2
                     ={z_{0,n-3}^2\over z_{0, n-4}z_{1,n-3}z_{0,n-2}}\cdot {z_{1, n-2}}\cdot {z_{0, n-2}^2\over z_{0, n-3}}  ={A z_{0,n-2}}.
\end{align*}
\begin{align*}
   z_{1, n-3}^2\partial_{z_{1,n-3}}f_{\rm tor}&=({1\over z_{1,n-4}}+{1\over z_{0,n-3}})z_{1,n-3}^2-{z_{0, n-2}^2\over z_{0,n-3}}  \\
          &=z_{1,n-3}\cdot  {z_{1,n-3}\over z_{1,n-4}}+{(z_{1,n-3}-z_{0, n-2})(z_{1,n-3}+z_{0, n-2})\over z_{0,n-3}}  \\
           &=A z_{1,n-3}\cdot  {z_{0,n-5}\over z_{0,n-4}}+{(z_{1,n-3}-z_{0, n-2})z_{0,n-3} \over z_{0, n-4}}.  \qquad\qquad (\mbox{by Lemma } \ref{lemma:z1j})
           \end{align*}
Hence, the statement follows.
\end{proof}
\begin{proposition}\label{prop-zrelation}Under the identifications $h=z_{01}$ and $x={z_{11}\over q_1}$, the following hold.
 \begin{enumerate}
   \item $z_{0, j}=(h+x)z_{0, j-1}-Az_{0,j-2}$, \quad    $\forall 2\leq j\leq n-2$.
   \item $z_{0, j}=R_{\rm b}(j)=R_\sigma(j)-x R_{\sigma }(j-1)$,\quad $\forall 0\leq j\leq n-2$.
   \item $Az_{0, j-1}+(q_1-x)z_{0, j}=q_1R_{\sigma}(j), \quad \forall  1\leq j\leq n-2$.
 \end{enumerate}
 \end{proposition}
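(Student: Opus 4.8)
The plan is to treat part (1) as the crux and to deduce parts (2) and (3) as essentially formal consequences. The starting point is to read off, from the displayed Jacobi relations, that the vanishing of $z_{0,j-1}^2\partial_{z_{0,j-1}}f_{\rm tor}$ amounts to $z_{0,j}=z_{0,j-1}^2/z_{0,j-2}-z_{1,j-1}$ for $2\le j\le n-2$, and then to eliminate the second-row variable $z_{1,j-1}$ using the closed form $z_{1,j-1}=A^{j-2}z_{11}/z_{0,j-2}$ provided by Lemma \ref{lemma:z1j}. Since all the $z_{ij}$ are units, this substitution yields at once the Cassini-type identity $z_{0,j}z_{0,j-2}=z_{0,j-1}^2-A^{j-2}z_{11}$ for $2\le j\le n-2$. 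I expect this elimination to be the conceptual heart of the matter, since it is exactly what collapses the two-dimensional array of Jacobi relations into a single three-term recurrence for the one-parameter family $z_{0,\bullet}$.

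With the Cassini identity available, I would prove part (1), namely $z_{0,j}=(h+x)z_{0,j-1}-Az_{0,j-2}$, by induction on $j$. The base case $j=2$ is a direct check: $z_{02}=h^2-q_1x=(h+x)h-A$, using $A=hx+q_1x$ and $z_{11}=q_1x$. For the inductive step I rewrite the Cassini identity at index $j+1$ as $z_{0,j+1}=\bigl(z_{0,j}^2-A^{j-1}z_{11}\bigr)/z_{0,j-1}$ and reduce the desired formula to the quadratic identity $z_{0,j}^2-(h+x)z_{0,j}z_{0,j-1}+Az_{0,j-1}^2=A^{j-1}z_{11}$. Feeding the inductive hypothesis in the form $z_{0,j}-(h+x)z_{0,j-1}=-Az_{0,j-2}$ into the left-hand side collapses it to $A\bigl(z_{0,j-1}^2-z_{0,j}z_{0,j-2}\bigr)$, which equals $A\cdot A^{j-2}z_{11}$ by the Cassini identity at index $j$, i.e. exactly the right-hand side. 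The only thing to watch here is index bookkeeping (the ranges over which each Jacobi relation and Lemma \ref{lemma:z1j} are valid), together with the fact that the low cases $n=3,4$ are already settled by Examples \ref{exa:X23B} and \ref{exa:X24}.

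Part (2) is then immediate: the recurrence from part (1) coincides with the defining recurrence $R_{\rm b}(j)=(h+x)R_{\rm b}(j-1)-(h+q_1)x\,R_{\rm b}(j-2)$ once one notes the key simplification $A=(h+q_1)x$, and the initial data match, $z_{00}=1=R_{\rm b}(0)$ and $z_{01}=h=R_{\rm b}(1)$, so $z_{0,j}=R_{\rm b}(j)$ follows by induction; the equality $R_{\rm b}(j)=R_\sigma(j)-xR_\sigma(j-1)$ is a one-line telescoping from $R_\sigma(j)=\sum_{i=0}^j x^{j-i}R_{\rm b}(i)$. For part (3) I would substitute $z_{0,i}=R_{\rm b}(i)$ and prove the purely polynomial identity $A\,R_{\rm b}(j-1)+(q_1-x)R_{\rm b}(j)=q_1R_\sigma(j)$ by induction on $j$, using $R_\sigma(j+1)=xR_\sigma(j)+R_{\rm b}(j+1)$ and the $R_{\rm b}$-recurrence; the inductive step closes because the leftover coefficient $A-x(h+x)-(q_1-x)x$ simplifies to $0$ after inserting $A=(h+q_1)x$. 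I anticipate no genuine difficulty in parts (2)--(3); the real obstacle is the passage from the Jacobi relations to the scalar recurrence in part (1), that is, making sure the elimination of the variables $z_{1,\bullet}$ via Lemma \ref{lemma:z1j} is legitimate across the entire required range of indices.
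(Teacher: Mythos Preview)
Your proposal is correct and follows essentially the same route as the paper: induction on $j$ using Lemma \ref{lemma:z1j} together with the Jacobi relation $z_{0,j}=z_{0,j-1}^2/z_{0,j-2}-z_{1,j-1}$ for part (1), then matching recurrences and initial data for part (2), and a straightforward induction for part (3). The only cosmetic difference is that you first package the elimination of $z_{1,j-1}$ as the Cassini-type identity $z_{0,j}z_{0,j-2}=z_{0,j-1}^2-A^{j-2}z_{11}$ and deduce the linear recurrence from it, whereas the paper substitutes the ratio form $z_{1,j-1}=A\,z_{1,j-2}z_{0,j-3}/z_{0,j-2}$ of Lemma \ref{lemma:z1j} directly and closes the induction using the Jacobi relation at index $j-2$; the ingredients and logical structure are the same.
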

  \begin{proof}
Recall $h=z_{01}$ and $x={z_{11}\over q_1}$. We prove the statements by induction on $j$.

   By direct calculations, we have  $z_{02}=z_{01}^2-z_{11}=h^2-q_1x=(h+x)h-(hx+q_1x)\cdot 1$.
Thus statement (1) holds for $j=2$.
    Assume it holds for $2\leq j-1<n-2$.
  \begin{align*}
     z_{0j}
          &={z_{0,j-1}^2\over z_{0, j-2}}-A{z_{1, j-2} z_{0,j-3}  \over z_{0,j-2} }\qquad\qquad\qquad\qquad\qquad (\mbox{by Lemma } \ref{lemma:z1j})\\
          &={z_{0,j-1}((h+x)z_{0, j-2}-Az_{0, j-3})-z_{1, j-2} z_{0,j-3} A \over z_{0,j-2} }\quad (\mbox{by the induction hypothesis})\\
          &=(h+x)z_{0,j-1}- {Az_{0, j-3}(z_{0,j-1}+z_{1, j-2}) \over z_{0,j-2}  } \\
           &=(h+x)z_{0,j-1}- {Az_{0, j-2}  }\qquad\qquad\qquad\qquad\qquad (\mbox{by } z_{0, j-2}^2\partial_{z_{0, j-2}}f_{\rm tor})
  \end{align*}
  Hence,  we conclude statement (1) by induction.

  Notice that both $z_{0j}$ and $R_{\rm b}(j)$ have the same recursive relations as well as the same initial values $z_{00}=1=R_{\rm b}(0)$ and $z_{01}=h=R_{\rm b}(1)$. Hence, the first equality in statement (2) follows. The second equality follows directly from the definition of $R_\sigma(j)$.

  Statement (3) holds for $j=1$ by noting $R_{\sigma}(1)=h+x$. Assuming it holds for $1\leq j-1< n-2$,  we have
   \begin{align*}
     Az_{0, j-1}+(q_1-x)z_{0, j}
          &=A z_{0,j-1}+q_1(R_{\sigma}(j)-x R_{\sigma}(j-1))-xz_{0, j} \\
          &= A z_{0,j-1}+q_1R_{\sigma}(j)-x (Az_{0, j-2}+(q_1-x)z_{0, j-1})-xz_{0, j}\\
          &=q_1R_{\sigma}(j)+x((h+x)z_{0,j-1}-  Az_{0, j-2} - z_{0,j-2}  ) \\
           &=q_1R_{\sigma}(j).
  \end{align*}
  Here the second equality follows from the induction hypothesis, and the last equality holds by statement (1).
  Hence, statement (3) holds by induction.
  \end{proof}

\begin{proof}[Proof of Theorem \ref{thm:QHJac}]By Example \ref{exa:X24}, the statement holds for $n=4$. Assume  $n\geq 5$ now.

The equality for $z_{1, n-2}^2\partial_{z_{1,n-2}}f_{\rm tor}$ holds directly from Lemma \ref{lemma-reduction} and Proposition \ref{prop-zrelation}.

By Proposition \ref{prop-zrelation} (1), for any $3\leq j\leq n-3$, we have
\begin{align*}
   z_{0, j+1}z_{0, j-2}-z_{0, j}z_{0, j-1}&=
 ((h+x)z_{0, j}-Az_{0, j-1})z_{0, j-2}-z_{0, j}((h+x)z_{0,j-2}-Az_{0, j-3})\\
 &=A(z_{0, j}z_{0, j-3}-z_{0, j-1}z_{0, j-2}).
\end{align*}
Thus by induction, we have
 $$ z_{0, j+1}z_{0, j-2}-z_{0, j}z_{0, j-1}=A^{j-2}(z_{03}z_{00}-z_{02}z_{01})
  = -A^{j-2}(h+x)z_{11}$$
Combining this with Lemma \ref{lemma:z1j} and Proposition \ref{prop-zrelation} (1), we have
 \begin{align*}
   z_{1, n-3}^2\partial_{z_{1,n-3}}f_{\rm tor}&=   {z_{1,n-3}(A z_{0,n-5}+z_{0,n-3}) -z_{0, n-2}z_{0,n-3} \over z_{0, n-4}}\\
    &={z_{1,n-3}(h+x)z_{0,n-4}) -z_{0, n-2}z_{0,n-3} \over z_{0, n-4}}\\
    &={(h+x)A^{n-4}z_{11} -z_{0, n-2}z_{0,n-3} \over z_{0, n-4}}
     ={  -z_{0, n-1}z_{0,n-4} \over z_{0, n-4}}
     =-z_{0, n-1}=-R_{\rm b}(n-1).
 \end{align*}
  Note   $q_1$ is a unit.
Hence, the Jacobi ideal  $\mathcal{J}$ is generated by $\{R_{\rm b}(n-1),\, R_{\sigma}(n-1)-q_2\}$.
\end{proof}

\end{document}